\documentclass[times,sort&compress,3p,12pt]{elsarticle}
\usepackage{natbib}
\usepackage{setspace}
\usepackage[labelfont=bf]{caption}

\usepackage{amssymb}
\usepackage[utf8]{inputenc}
\usepackage{amssymb,amsmath,mathrsfs}
\usepackage{amsthm}
\usepackage[colorlinks,
            linkcolor=blue,
            anchorcolor=red,
            citecolor=green
            ]{hyperref}
\numberwithin{equation}{section}
\usepackage{graphicx}
\usepackage{amsmath}
\usepackage{amsmath,amsfonts,amssymb,amsthm,booktabs,color,epsfig,graphicx,hyperref,url}
\usepackage{enumitem}
\usepackage{array}
\usepackage{graphicx}
\usepackage{tikz}
\usetikzlibrary{backgrounds}
\usepackage{xcolor}
\usepackage{yhmath} 
\usepackage{dsfont} 

\theoremstyle{plain}

\definecolor{mycustompurple}{RGB}{154, 36, 79}
\usepackage[utf8]{inputenc}

\usepackage{hyperref} %

\hypersetup{
	colorlinks=true,            %
	linkcolor=blue,              %
	citecolor=blue,            %
	filecolor=mycustompurple,   %
	urlcolor=magenta            %
}

\newtheorem{theorem}{Theorem}[section]
\newtheorem{proposition}[theorem]{Proposition}
\newtheorem{corollary}[theorem]{Corollary}
\newtheorem{lemma}[theorem]{Lemma}
\newtheorem{definition}[theorem]{Definition}

\newtheorem{remark}[theorem]{Remark}

\numberwithin{equation}{section}

\def\C{\mathbb C}
\def\R{\mathbb R}
\def\Re{{\rm Re\hspace{0.05cm}}}
\def\d{\hspace{0.05cm}\mathrm{d}}

\begin{document}
	
	\begin{frontmatter}
		
		\title	{On the direct scattering theory for the
			Calogero-Moser derivative nonlinear Schr\"{o}dinger equation}
		
		\author[1]{Ruoci Sun}
		\author[1]{Deng-Shan Wang}
		\author[2]{Yi Zhao\corref{mycorrespondingauthor}}

		\address[1]{Laboratory of Mathematics and Complex Systems (Ministry of Education), 
			School of Mathematical Sciences, 
			Beijing Normal University, 
			Beijing 100875, China.}
		
		\address[2]{College of Science, Minzu University of China, Beijing 100081, China.}
		
		\cortext[mycorrespondingauthor]{Corresponding author: \url{zhaoyi202406@163.com}}  
		
		\begin{abstract}
 This paper establishes  the direct scattering transform for the Calogero–Moser derivative nonlinear Schrödinger (CMDNLS) equation on the real line. For potentials in a weighted Hardy–Sobolev space, we prove the existence and uniqueness of the Jost functions associated with the Lax operator, and construct the corresponding scattering coefficients including the transmission coefficient and the reflection coefficient. Under the CMDNLS flow, we determine the time evolution of these Jost functions and scattering data, revealing a simple dynamics: the transmission coefficient is invariant, and the reflection coefficient acquires a purely rotational phase. Furthermore, we establish the invariance of a suitable weighted Sobolev space under the flow, which guarantees the persistence of the required regularity for the solutions.

		\end{abstract}

		\begin{keyword} 
			Jost Functions, Lax Pair, Direct Scattering Transform, Calogero-Moser Derivative Nonlinear Schr\"odinger Equation, Invariant Space.

		\end{keyword}
		
	\end{frontmatter}
\tableofcontents

\section{Introduction}\label{sec1}

The  Calogero-Moser derivative nonlinear Schr\"{o}dinger (CMDNLS) equation on the real line reads as 
\begin{equation}\label{21.1}
	i\partial_tu+\partial_x^2u+u(\mathbb{H}-i)\partial_x(|u|^2)=0, \quad (t,x)\in \R\times \R,
	\end{equation}
where $u=u(t,x)$ is a complex-valued function, and
$\mathbb{H}$ is the Hilbert transform
\begin{equation}
	 \mathbb{H} f(x)=\mathrm{P. V.}\frac 1 {\pi} \int_{\R}\frac{f(y)}{x-y}\ \mathrm{d} y.
\end{equation}

 On the one hand, the CMDNLS equation \eqref{21.1} can be regarded as the thermodynamic continuum limit of the classical Calogero-Moser model:
\begin{equation}\label{CMHamil}
 		\mathcal{H}_{CM} = \frac{1}{2} \sum_{j=1}^{N} p_j^2 + \sum_{1 \leq j \neq k \leq N} \frac{1}{2(x_j - x_k)^2},
 	\end{equation}by  Abanov-Bettelheim-Wiegmann  \cite{1}.  
  On the other hand,  
P. G\'erard and E. Lenzmann have completely classified all traveling solitary waves and multisoliton solutions of equation \eqref{21.1} in \cite{2}. Every  multisoliton solution is global in time. For any $s >0$, the $H^s$-norm of each multisoliton solution has polynomial growth, as the time goes to infinity. When  $|t|$ is sufficiently large, every pure multisoliton solution $u$ of \eqref{21.1} has distinct $N$ poles $z_1(t), z_2(t), \cdots, z_N(t) $ in the lower half-plane $\mathbb{C}^-=\{a+b i \in \mathbb{C}: b <0\}$. So $u$ admits the following partial fraction expansion
\begin{equation}
        u(t,x)=\sum_{j=1}^N \frac{a_j(t)}{x-z_j(t)} , \qquad \forall x \in \mathbb{R} , 
    \end{equation}where $a_j(t) \ne 0$, if $   |t| \gg 1$. Moreover, the poles satisfy the following system  
    \begin{equation}\label{CMSys}
    \partial_t^2 z_j = - \sum_{k \ne j} \frac{8}{(z_k-z_j)^3}, \qquad |t| \gg 1,
\end{equation}for any $j=1,2, \cdots, N$. System \eqref{CMSys} is the complexified version of classical Calogero--Moser system. This is the reason why equation \eqref{21.1} is called the Calogero-Moser derivative nonlinear Schr\"odinger equation.

A key feature of  equation \eqref{21.1} is its set of symmetries: it is invariant under phase rotation, scaling, and spatial translation, corresponding to the transformation  
\begin{equation}
    u(t,x) \mapsto e^{i\theta}\lambda^{\frac12}u(\lambda^2t,\lambda x+x_0),\quad x_0\in\mathbb{R},  \quad \theta\in\R, \quad \lambda>0. 
\end{equation}
Equation \eqref{21.1} is  $L^2$-critical. Additionally, it is invariant under the Galilean   transformation
\begin{equation}
    u(t,x)\mapsto e^{i\eta x-it\eta^2} u(t,x-2t\eta),\quad \eta\in \R.
\end{equation}According to G\'erard-Lenzmann  \cite{2}, the CMDNLS equation \eqref{21.1} is locally well-posed in the Hardy-Sobolev space
\begin{equation}\label{HardySobolevHs+}
     H^s_+:= \{f \in H^s(\mathbb{R}; \mathbb{C}) : \mathrm{supp}\hat{f} \subset [0, +\infty) \},
 \end{equation}for any  $s>\frac 12$. Here $\hat{f}$ denotes the Fourier transform of $f$:
 $$\hat{f}(\xi) : = \int_{\mathbb{R}} f(x) e^{-ix \xi} \mathrm{d}x, \quad \forall f \in L^1(\mathbb{R}; \mathbb{C}).$$
Let $L^2_+=L^2_+(\R; \C) = H^0_+$ denote the Hilbert space that consists of all $L^2$ functions whose Fourier transform is   supported on $\mathbb{R}_+ = [0, +\infty)$. Let 
$\Pi=\frac{\mathrm{id}+ i \mathbb{H} }{2} : L^2 (\mathbb{R}; \mathbb{C}) \to L^2 (\mathbb{R}; \mathbb{C})$ denote the Cauchy-Szeg\H{o}  projector. It cancels all negative Fourier modes and preserves all positive Fourier modes.  In other words, for any $f \in L^2= L^2 (\mathbb{R}; \mathbb{C})$, we have
\[
\widehat{\Pi f}(\xi)=\mathds{1}_{\xi\geq 0}\hat f(\xi)=\begin{cases} \hat{f}(\xi), & \xi \geq 0, \\ 0, & \xi < 0. \end{cases}
\]

Equation \eqref{21.1} is  completely integrable. It  admits infinitely many conservation laws, but none of them controls the high regularity Sobolev  norms. P. G\'erard and E. Lenzmann introduced the Lax pair structure of equation \eqref{21.1} in  \cite{2}, i.e. there exist two operators $(\mathbb{L}_u, \mathbb{B}_u)$, depending on $u$, such that if $u : t \mapsto u(t)$ solves the CMDNLS equation \eqref{21.1}, then we have
\begin{equation}\label{LaxEq}
    \partial_t \mathbb{L}_{u}=[\mathbb{B}_u,\mathbb{L}_u].
\end{equation}For any $u \in H^1_+$, the Lax operator {$\mathbb{L}_u$ is defined as follows,}
\begin{equation} \label{21.4}
    \mathbb{L}_u (h):= -i \partial_x h - u \Pi (\overline{u}h) \in L^2_+, \quad \forall h \in H^1_+.
\end{equation}The Lax operator $\left(\mathbb{L}_u, \; \mathrm{Dom}(\mathbb{L}_u) = H^1_+\right)$ is an unbounded self-adjoint operator on the Hilbert space $L^2_+$. When $u \in H^2_+$,  the operator $ \mathbb{B}_u$ is a  bounded   anti-symmetric operator on $L^2_+$.

Matsuno also derived  that  the CMDNLS equation \eqref{21.1} is the compatible condition of the following system  \cite{3}
\begin{align}
	&\quad i\partial_xf + \lambda f + ug^+ = 0, \label{21.6}\\
	&\quad g^+ - g^- - \overline uf = 0, \label{21.7}\\
	&\quad \partial_tf +2\lambda\partial_xf + i\partial_{xx}f +2  g^+\partial_x u  = 0, \\
	&\quad \partial_tg^\pm + 2\lambda \partial_xg^\pm + i\partial_{xx}g^\pm + [ (\pm 1 + i\mathbb{H})\partial_x(|u|^2)  ]g^\pm = 0\label{21.9},
\end{align}
where $\lambda$ is the spectral parameter. The function $g^+(t,x)$ ($g^-(t,x)$) denotes the boundary value of the analytic function in the upper (lower) half complex $z$ plane for $z=x+iy$ and $y>0$ ($y<0$). 
The system \eqref{21.6}-\eqref{21.9} is equivalent to the Lax pair given by \eqref{LaxEq}. The Lax operator $\mathbb{L}_u$ is also called the direct scattering operator.

The direct and inverse scattering transforms (DST and IST) provide a powerful framework for analyzing integrable nonlinear PDEs. The DST maps the initial datum of a nonlinear system into scattering data associated with a corresponding linear wave equation.  The time evolution determines the scattering data at an arbitrary time. Conversely, the IST reconstructs the solution  from the time-evolved scattering data, effectively solving the Cauchy problem via this nonlinear  Fourier transform. This is achieved by solving a linear integral equation, the Gelfand–Levitan–Marchenko equation, or equivalently, by formulating and solving the corresponding Riemann–Hilbert problem.

Beyond solving initial value problems, these methods enable the explicit construction of wide classes of solutions, including multisoliton solutions.  Moreover, the IST serves as a key tool for investigating the long‑time asymptotics of integrable systems. For instance, it plays an important role in addressing the soliton resolution conjecture for integrable equations such as the cubic nonlinear Schrödinger (NLS), derivative NLS,   KdV equation, etc.

We recall some properties about eigenvalues and eigenvectors of $\mathbb{L}_u$.
By Proposition 5.1 of Gérard--Lenzmann  \cite{2}, the self-adjoint operator $\mathbb{L}_u$ has only finitely many eigenvalues and each eigenvalue is simple. We denote its pure point spectrum, with eigenvalues arranged in strictly increasing order, by
\begin{equation}\label{ppspecLu}
\sigma_{pp}(\mathbb{L}_u) = \{\lambda_1^u, \lambda_2^u, \ldots, \lambda_N^u\} \subset \mathbb{R}, \quad \text{with } \; \lambda_j^u < \lambda_{j+1}^u \;\text{ for } \; j = 1, \cdots, N-1.
\end{equation}
Furthermore, the number $N$ of eigenvalues satisfies the estimate
$N \leq \frac{1}{2\pi}\|u\|^2_{L^2}$.
The standard eigenvector $ \varphi_j^u  $ associated to the eigenvalue $\lambda_j^u \in \sigma_{pp}(\mathbb{L}_u)$ is defined as follows
\begin{equation}\label{defEigenVec}
  \mathbb{L}_u  \varphi_j^u  =\lambda_j^u \varphi_j^u , \quad \|\varphi_j^u\|_{L^2_+}=1 \quad \mathrm{and} \quad \langle u, \varphi_j^u \rangle_{L^2_+} = \sqrt{2 \pi}>0.
\end{equation}
\subsection{Main results}

The goal of this paper is to rigorously establish the direct scattering transform for equation \eqref{21.1}. To this end, we construct the Jost functions and scattering data using the Lax operator \eqref{21.4}, the Hardy--Sobolev space $H^s_+$  defined in \eqref{HardySobolevHs+}, and the following weighted $L^2$-space 
\begin{equation}\label{WeightedL2Sp}
    L_s^2 : = \left\{ f : \mathbb{R} \to \mathbb{C} \, : \, \| f \|_{L_s^2}^2 = \int_{\mathbb{R}} |f(x)|^2 (1+x^2)^{s} \d x < +\infty \right\}, \qquad \forall s\geq 0.
\end{equation}Let $\mathbb{C}^+= - \mathbb{C}^- = \{a+bi \in \mathbb{C} : b>0\}$ denote the upper half-plane. The first result concerns the existence and uniqueness of Jost functions of the Lax operator $\mathbb{L}_u$ given by \eqref{21.4}.

\begin{theorem}\label{1.4}
  Assume that $u\in H^1_+\cap L^2_s$ for some $s> \frac 12$. 
\begin{enumerate}
\item 
For every $k\in \C^+\cup \C^-\cup  \big( (-\infty, 0)\setminus \sigma_{pp}(\mathbb{L}_u)\big)$,  there exists a  unique Jost function $\phi_k\in H^1_+ \cap L_{-s}^{\infty}$ such that 
\begin{equation}
\mathbb{L}_u\phi_{k}=k\phi_{k}+u.
\end{equation}
\item 
For every $\lambda\in (0,+\infty)\setminus \sigma_{pp}(\mathbb{L}_u)$, there exist   unique Jost functions $\left( \phi_\lambda^+, \phi_\lambda^- \right)\in L_{-s}^{\infty}$ such that 
\begin{equation}
\begin{split}
&\mathbb{L}_u\phi_{\lambda}^\pm=\lambda\phi_{\lambda}^\pm+u,\\
&\lim_{x\to-\infty}\phi_\lambda^+(x)=0,\\
&\lim_{x\to+\infty}\phi_\lambda^-(x)= 0.
\end{split}
\end{equation}
\end{enumerate}
\end{theorem}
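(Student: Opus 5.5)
We recast the equation $\mathbb{L}_u\phi=k\phi+u$ as an integral equation built from the free resolvent, and then use Fredholm theory together with the spectral facts on $\mathbb{L}_u$ quoted above. Write $\phi$ for the unknown. Since $\mathbb{L}_u\phi=-i\partial_x\phi-u\Pi(\bar u\phi)$, the equation is
\[
-i\partial_x\phi-k\phi=u\bigl(1+\Pi(\bar u\phi)\bigr).
\]
For $k\in\C^+\cup\C^-$ we invert $-i\partial_x-k$ on $L^2_+$ by the Fourier multiplier $(\xi-k)^{-1}\mathds{1}_{\xi\ge0}$. This is bounded from $L^2_+$ to $H^1_+$ because $|\xi-k|\ge|\mathrm{Im}\,k|$. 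The same multiplier works for $k<0$, since then $\xi-k\ge |k|>0$ on the support. Writing $R_k$ for this inverse, the equation becomes
\[
(\mathrm{I}-K_k)\phi=R_k u,\qquad K_k\phi:=R_k\bigl(u\,\Pi(\bar u\phi)\bigr).
\]

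\textbf{Case 1: $k\in\C^+\cup\C^-$ or $k<0$.} The operator $K_k$ is compact on $L^2_+$. Indeed $\phi\mapsto u\Pi(\bar u\phi)$ is bounded $L^2_+\to L^2_+$ since $u\in H^1\subset L^\infty$, and it is compact: one approximates $u$ by Schwartz functions, and $R_k$ gains a derivative while $u$ decays. By the Fredholm alternative, $\mathrm{I}-K_k$ is invertible unless the homogeneous equation has a nonzero solution. Such a solution $\psi\in H^1_+$ would satisfy $\mathbb{L}_u\psi=k\psi$. This is impossible for nonreal $k$ because $\mathbb{L}_u$ is self-adjoint, and impossible for $k<0$ with $k\notin\sigma_{pp}(\mathbb{L}_u)$ by assumption. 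So $\phi_k=(\mathrm{I}-K_k)^{-1}R_ku\in H^1_+$ exists and is unique, and $H^1\subset L^\infty\subset L^\infty_{-s}$. Uniqueness in the larger class $H^1_+\cap L^\infty_{-s}$ is immediate, because the difference of two solutions is an $H^1_+$ eigenvector.

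\textbf{Case 2: $\lambda>0$.} Here $\xi-\lambda$ vanishes at $\xi=\lambda$, which lies inside the support, so there is no $L^2$ inverse and we pass to physical space. The function
\[
\phi(x)=i\int_{-\infty}^x e^{i\lambda(x-y)}f(y)\,\mathrm d y
\]
solves $-i\phi'-\lambda\phi=f$ and vanishes at $-\infty$. The analogous formula with $-\int_x^{+\infty}$ gives a solution vanishing at $+\infty$. We take $f=u(1+\Pi(\bar u\phi))$. The weight $u\in L^2_s$ with $s>1/2$ gives $u\in L^1$ by Cauchy–Schwarz, and more precisely $\int(1+y^2)^{s/2}|u||g|\,\mathrm d y\le\|u\|_{L^2_s}\|g\|_{L^2}$.

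The Jost equation is then a Volterra-type equation in $L^\infty_{-s}$. The kernel part $\Pi(\bar u\phi)$ is nonlocal, however, so the equation is not genuinely Volterra. We therefore treat it as a Fredholm equation $(\mathrm{I}-K^\pm_\lambda)\phi=R^\pm_\lambda u$ on the space $L^\infty_{-s}$, or on a space mixing $L^\infty_{-s}$ with $L^2$ control of $\bar u\phi$. Compactness comes from the decay of $u$: the map $\phi\mapsto\bar u\phi$ sends $L^\infty_{-s}$ into $L^2$ because $u\in L^2_s$. We also need that $\Pi$ maps $L^2$ to $L^2$ and that multiplication by $u$ maps $L^2$ to $L^1$.

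For the Fredholm alternative we must exclude a nonzero homogeneous solution $\psi\in L^\infty_{-s}$ with $\psi(-\infty)=0$ and $\mathbb{L}_u\psi=\lambda\psi$. The plan is to show that such a $\psi$ lies in $L^2_+$, hence is an $H^1_+$ eigenvector, which contradicts $\lambda\notin\sigma_{pp}(\mathbb{L}_u)$. The natural tool is the identity obtained by pairing the equation with $\bar\psi$ and taking imaginary parts: self-adjointness of $u\Pi\bar u$ gives $\frac{d}{dx}|\psi|^2$-type flux conservation. This forces the amplitude $\lim_{x\to+\infty}e^{-i\lambda x}\psi(x)=-i\int e^{-i\lambda y}u(1+\ldots)$, which equals a Fourier-type coefficient $\widehat{u\Pi(\bar u\psi)}(\lambda)$, to vanish. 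Once the oscillating tail vanishes, $\psi$ decays on both sides and one can bootstrap $\psi\in L^2$.

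This flux/decay argument is the main obstacle, since one must also check that the solution lies in the closure of the Hardy space. The $-$ case is analogous.
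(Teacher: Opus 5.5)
Your proposal is correct. For $\lambda>0$ it is essentially the paper's argument. Because $u\Pi(\overline{u}\phi)$ has Fourier support in $[0,+\infty)$, the $\widetilde G_\lambda$ part of $G_\lambda^\pm$ annihilates it, so $G_\lambda^\pm*$ reduces to your Volterra kernel. The paper also applies the Fredholm alternative on $L^\infty_{-s}$. Your flux identity, obtained by integrating $-\tfrac12(|\psi|^2)'=\mathrm{Im}(F\overline{\psi})$ with $F=u\Pi(\overline{u}\psi)$, is exactly what Lemma \ref{Tlaminj} extracts from Wu's identity $\langle G_\lambda^+*F,F\rangle=i|\widehat F(\lambda)|^2+\langle F,G_\lambda^+*F\rangle$ to force $\widehat F(\lambda)=0$. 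Both arguments then conclude the same way: two-sided decay, $\psi\in L^2$, and an $H^1_+$ eigenvector.

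The genuine difference is the case $k\notin[0,+\infty)$. The paper runs Fredholm theory for $T_k=G_k*(u\Pi(\overline{u}\,\cdot))$ on $L^\infty_{-s}$, with compactness via Arzel\`a--Ascoli. It then needs a separate bootstrap, Lemma \ref{phikequi}, to show the solution lies in $H^1_+$. You instead invert $D-k$ on $L^2_+$. There, compactness of the Fourier multiplier $\mathds{1}_{\xi\ge0}(\xi-k)^{-1}$ composed with multiplication by $u\in C_0$ is standard, $H^1_+$ membership is automatic, and the weight hypothesis is not even used. Your route is shorter for this case. The paper's choice places $T_k$ and $T_\lambda^\pm$ on a single Banach space, which it exploits later for the boundary values $k\to\lambda\pm i0$ and for the behaviour near $k=0$ (e.g. the bound on $\|\widetilde T_k-\widetilde T_0\|_{\mathcal{B}(L^\infty_{-s})}$ in Lemma \ref{lemma3.2}). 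In that regime the $L^2_+$ resolvent blows up.

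There are some loose ends in the $\lambda>0$ case, none serious.

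\begin{itemize}
\item The step you flag as the main obstacle is immediate. If $\psi\in L^2$ and $-i\psi'-\lambda\psi=F$ with $\widehat F$ supported in $[0,+\infty)$, then $(\xi-\lambda)\widehat\psi=\widehat F$. Since $\xi-\lambda\neq0$ for $\xi<0$, $\widehat\psi=0$ on $(-\infty,0)$, so $\psi\in H^1_+$.
\item For the homogeneous problem the amplitude is $\lim_{x\to+\infty}e^{-i\lambda x}\psi(x)=i\widehat F(\lambda)$, with no source term $u$.
\item The reality of $\int F\overline{\psi}\,dx=\|\Pi(\overline{u}\psi)\|_{L^2}^2$ comes from $\overline{u}\psi\in L^2$ (as $u\in L^2_s$, $\psi\in L^\infty_{-s}$) together with $\Pi^*=\Pi=\Pi^2$. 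It does not come from self-adjointness of an operator on $L^2$, since $\psi\notin L^2$ a priori.
\item Compactness of $K_\lambda^\pm$ on $L^\infty_{-s}$ comes from the images being uniformly bounded in $L^\infty$ and equicontinuous (from $u\in L^2_s\cap H^1$), together with the decaying weight $\langle x\rangle^{-s}$. The fact that $\overline{u}\phi\in L^2$ alone only gives boundedness.
\item Uniqueness is asserted for the boundary-value problem, not the integral equation, so you should state the link explicitly. Any $L^\infty_{-s}$ solution with $\phi(-\infty)=0$ satisfies $(e^{-i\lambda x}\phi)'=ie^{-i\lambda x}f$ with $f=u+u\Pi(\overline{u}\phi)\in L^1$, and hence equals your Volterra formula. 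This variation-of-constants remark replaces the paper's Fourier-side $\varepsilon$-regularization in Lemma \ref{philamequi}.
\end{itemize}
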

 
For spectral parameter $k \in  \left( \mathbb{C}\backslash [0,+\infty) \right) \backslash \sigma_{pp}(\mathbb{L}_u) $, the Jost function $\phi_k$ belongs to  $H^1_+$ and therefore decays to zero as  $x \to \pm\infty$. In contrast, when $\lambda \in  (0, +\infty)\backslash \sigma_{pp}(\mathbb{L}_u) $, the Jost functions  $\phi_{\lambda}^+$ and $\phi_{\lambda}^-$ vanish only at one end: $\phi_{\lambda}^+(x)\to 0$ as $x\to-\infty$ and $\phi_{\lambda}^-(x)\to 0$ as $x\to+\infty$.  To capture the jump relation between $\phi_\lambda^+$ and $\phi_\lambda^-$ across the continuous spectrum
  and to set up the inverse scattering transform via some specific Riemann–Hilbert problem, it is convenient to introduce another pair of Jost functions, $\psi_\lambda^\pm$, which behave like the exponential $e^{i\lambda x}$ at infinity.

\begin{theorem}\label{thm2}
 	 Assume that $u\in H^1_+\cap L^2_s$ for some $s> \frac 12$. 
     For every $\lambda\in (0,+\infty)\setminus \sigma_{pp}(\mathbb{L}_u)$,
    there exists   unique Jost functions {$(\psi_\lambda^+,\psi_\lambda^-)\in L_{-s}^{\infty} $} such that  
 	\begin{equation} 
    \begin{split}
&\mathbb{L}_u\psi_{\lambda}^\pm=\lambda\psi_{\lambda}^\pm,\\
        &\lim_{x\to -\infty}\left(\psi_\lambda^+-e^{ix\lambda}\right)=  0,\\
    &\lim_{x\to +\infty}\left(\psi_\lambda^--e^{ix\lambda} \right)= 0.
    \end{split}
 	\end{equation}
 \end{theorem}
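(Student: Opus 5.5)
We construct $\psi^\pm_\lambda$ directly from the Jost functions $\phi^\pm_\lambda$ of Theorem \ref{1.4}, reducing the eigenvalue problem $\mathbb{L}_u\psi=\lambda\psi$ to a Fredholm-type integral equation. Write $\mathbb{L}_u\psi=\lambda\psi$ as $-i\psi'-\lambda\psi=u\,\Pi(\overline u\psi)$. For a candidate $\psi$ behaving like $e^{i\lambda x}$, the quantity $\Pi(\overline u\psi)$ makes sense because $\overline u\psi\in L^2$ when $u\in L^2_s$ and $\psi\in L^\infty_{-s}$; here the weight $s>\frac12$ is exactly what places $\overline u\,e^{i\lambda x}$ in $L^1\cap L^2$. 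Variation of constants then gives, for the ``$+$'' normalization at $-\infty$,
\begin{equation*}
\psi_\lambda^+(x)=e^{i\lambda x}+i\int_{-\infty}^x e^{i\lambda(x-y)}u(y)\,\Pi(\overline u\psi_\lambda^+)(y)\,\mathrm{d}y ,
\end{equation*}
and symmetrically for $\psi_\lambda^-$ with $-\int_x^{+\infty}$. Any $L^\infty_{-s}$ solution of the ODE with the prescribed limit satisfies this equation, and conversely. This is because $u\Pi(\overline u\psi)\in L^1$, which follows from Cauchy–Schwarz and $u\in L^2$.

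\textbf{Reduction to Theorem \ref{1.4}.} Rather than solving the integral equation by a fresh Fredholm argument, we look for $\psi^+_\lambda$ in the form
\begin{equation*}
\psi^+_\lambda=e^{i\lambda x}+c\,\phi^+_\lambda .
\end{equation*}
Since $\Pi$ only sees the positive-frequency part, compute
\begin{equation*}
\mathbb{L}_u e^{i\lambda x}=\lambda e^{i\lambda x}-u\,\Pi(\overline u e^{i\lambda x}).
\end{equation*}
The term $\Pi(\overline u e^{i\lambda x})$ is not a multiple of $u$, so the ansatz must be corrected. The natural correction is a function $w$ solving
\begin{equation*}
\mathbb{L}_u w-\lambda w=u\,\Pi(\overline u e^{i\lambda x}),
\end{equation*}
vanishing at $-\infty$; this is an inhomogeneous problem of the same type as in Theorem \ref{1.4}(2), with the source $u$ replaced by $uF$, where $F=\Pi(\overline ue^{i\lambda x})\in H^1_+$. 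We therefore reprove the existence part of Theorem \ref{1.4}(2) for a general source of the form $u\cdot(\text{bounded analytic})$, reusing its proof, which handles the equation $(I-K_\lambda)\phi=f$ with $K_\lambda$ compact on $L^\infty_{-s}$. Then $\psi^+_\lambda=e^{i\lambda x}+w$, and $\lim_{x\to-\infty}w=0$ gives the asymptotics.

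\textbf{Compactness and injectivity.} Let
\begin{equation*}
K_\lambda g(x)=i\int_{-\infty}^x e^{i\lambda(x-y)}u\,\Pi(\overline u g)\,\mathrm{d}y .
\end{equation*}
We show $K_\lambda$ is compact on $L^\infty_{-s}$, or on $C_0$-type spaces with weight, using $u\in H^1\cap L^2_s$: the kernel is equicontinuous and has uniform tail decay, and Arzelà–Ascoli applies. Fredholm alternative reduces existence to injectivity of $I-K_\lambda$.

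\textbf{Main obstacle.} We expect the main difficulty to be injectivity. A kernel element $g$ solves $\mathbb{L}_ug=\lambda g$, lies in $L^\infty_{-s}$, and vanishes at $-\infty$. We need to show $g\in L^2_+$, so that $g$ would be an eigenvector; this is excluded since $\lambda\notin\sigma_{pp}(\mathbb{L}_u)$. The first attempt is to use the representation $g=e^{i\lambda x}\int_{-\infty}^x(\cdots)$. The limit at $+\infty$ is $e^{i\lambda x}\int_{\mathbb{R}}e^{-i\lambda y}u\Pi(\overline ug)$, and the goal is to show this coefficient vanishes. For that we pair the equation with $g$ and take imaginary parts, i.e.\ use the current identity $\partial_x|g|^2=2\,\Im(\ldots)$ from the self-adjoint structure. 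Granting that, decay at both ends follows, and then $L^2$ integrability via the $L^2_s$ tail of $u$ gives $g\in H^1_+$ (the Fourier support condition follows from $g=\Pi g$ in the limit). This is presumably the same argument used for Theorem \ref{1.4}.

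\textbf{Uniqueness.} The difference of two solutions is a kernel element, hence zero. The case $\psi^-_\lambda$ is identical with $x\mapsto$ integration from $+\infty$.
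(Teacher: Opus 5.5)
Your proposal is correct and is essentially the paper's proof: your $K_\lambda$ coincides with $T_\lambda^+$ (because $\widetilde G_\lambda*F=0$ for $F=u\Pi(\overline u g)\in L^2_+$), and the paper likewise proves compactness on $L^\infty_{-s}$, gets injectivity by showing $\hat F(\lambda)=0$ from the reality of $\int_{\mathbb R}\overline g\,u\,\Pi(\overline u g)\,\mathrm{d}x=\|\Pi(\overline u g)\|_{L^2}^2$ (your current identity is the elementary form of the identity the paper cites from Wu), and concludes that a kernel element is an $L^2$ eigenfunction. The remaining differences are cosmetic: the detour through $w$ is unnecessary, since $(\mathrm{id}-T_\lambda^+)w=T_\lambda^+e_\lambda$ is the same equation as $(\mathrm{id}-T_\lambda^+)\psi_\lambda^+=e_\lambda$; and your variation-of-constants derivation of the integral equation replaces the paper's $\varepsilon$-regularized Fourier argument in Lemma~\ref{psi-equi}.
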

\begin{remark}
    Here the Jost functions $\phi_k$, $\phi_{\lambda}^{\pm}$ and $\psi_{\lambda}^{\pm}$ all depend on $u \in H^1_+ \cap L^2_s$.
\end{remark}

To establish both direct and inverse scattering transform, we now introduce the scattering coefficients $\Gamma(\lambda)$ and $\beta(\lambda)$. These quantities arise naturally from the relations among the Jost functions: $\Gamma(\lambda)$ connects $\psi_\lambda^+$ and $\psi_\lambda^-$, while $\beta(\lambda)$ characterizes the jump between $\phi_\lambda^+$ and $\phi_\lambda^-$ in terms of $\psi_\lambda^-$. Together with an additional relation linking $\phi_\lambda^-$ and $\psi_\lambda^-$, they yield a nonlocal jump condition that no longer involves $\psi_\lambda^\pm$. 
Their precise definitions  are given in the following theorem.

\begin{theorem}\label{Thm3}    
    Assume that $u\in H^1_+\cap L_s^2$ for some $s>\frac12$. For any $k\in \C^+\cup \C^-\cup  \big( (-\infty, 0)\setminus \sigma_{pp}(\mathbb{L}_u)\big)$ and   $\lambda\in (0,+\infty)\setminus \sigma_{pp}(\mathbb{L}_u)$, let  $\phi_\lambda^\pm$ and $\psi_\lambda^\pm$ be defined in Theorem \ref{1.4} and \ref{thm2}. Set
    \begin{align}
&\Gamma(\lambda):=1 + i\int_\R u\Pi(\overline{u}	\psi_\lambda^+)e^{-i\lambda y}
\mathrm{d} y,\label{introGamma}\\
&	
\beta(\lambda):=i\int_{\mathbb{R}}(u+u\Pi(\overline{u}\phi_{\lambda }^+))e^{-i\lambda y}         \ \mathrm{d} y\label{introbeta},
\end{align}
where the bar denotes  complex conjugate. Then the following identities hold: 
        \begin{align} 
			&\psi_\lambda^+=\Gamma(\lambda)	\psi_{\lambda}^-,\\
&\phi_{\lambda}^+-\phi_{\lambda}^-=\beta(\lambda)\psi_{\lambda}^-,
\\
&
		e_\lambda \partial_{\lambda} \left( e_{-\lambda} \psi_{\lambda}^- \right) = \left( \frac{1}{2\pi} \int_{\mathbb{R}} \overline{u} \psi_{\lambda}^-      \ \mathrm{d} x \right) \phi_{\lambda}^-.
	\end{align}
\end{theorem}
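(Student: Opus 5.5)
The plan is to recast each Jost function as the solution of an integral equation of Volterra type, anchored at $-\infty$ or $+\infty$. Each of the three identities then says that two such solutions satisfy the same normalized problem, and the uniqueness parts of Theorems \ref{1.4} and \ref{thm2} force them to agree. Write $e_\lambda(x)=e^{i\lambda x}$. The equation $\mathbb{L}_u\psi=\lambda\psi$ reads $-i\psi'-\lambda\psi=u\Pi(\overline u\psi)$. For solutions in $L^\infty_{-s}$ with the prescribed behaviour at infinity (using $u\in L^2_s$, $s>\frac12$, so that $u\Pi(\overline u\psi)\in L^1$), it is equivalent to
\[
\psi_\lambda^\pm(x)=e^{i\lambda x}+i\int_{\mp\infty}^{x}e^{i\lambda(x-y)}u\Pi(\overline u\psi_\lambda^\pm)(y)\,\mathrm{d}y .
\]
Similarly, $\phi_\lambda^\pm$ satisfies the same formula with the inhomogeneous term $e^{i\lambda x}$ removed and the source $u+u\Pi(\overline u\phi_\lambda^\pm)$ in place of $u\Pi(\overline u\psi_\lambda^\pm)$.

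\textbf{Step 1 (the $\Gamma$ relation).} Splitting $\int_{-\infty}^x=\int_{\mathbb R}-\int_x^{+\infty}$ in the equation for $\psi_\lambda^+$ gives
\[
\psi_\lambda^+(x)=\Gamma(\lambda)e^{i\lambda x}-i\int_x^{+\infty}e^{i\lambda(x-y)}u\Pi(\overline u\psi_\lambda^+)(y)\,\mathrm{d}y .
\]
So $\psi_\lambda^+$ solves the $+\infty$-anchored equation with data $\Gamma(\lambda)e_\lambda$, and $\psi_\lambda^+-\Gamma(\lambda)\psi_\lambda^-$ solves the homogeneous problem $\mathbb{L}_uf=\lambda f$ with $f\in L^\infty_{-s}$ and $f\to0$ at $+\infty$. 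Uniqueness in Theorem \ref{thm2} gives $f=0$: indeed, $\psi_\lambda^-+f$ would be a second solution with the same normalization at $+\infty$.

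\textbf{Step 2 (the $\beta$ relation).} The same splitting applied to $\phi_\lambda^+$ gives
\[
\phi_\lambda^+=\beta(\lambda)e_\lambda-i\int_x^{+\infty}e^{i\lambda(x-y)}\bigl(u+u\Pi(\overline u\phi_\lambda^+)\bigr)\,\mathrm{d}y .
\]
Hence $\phi_\lambda^+-\beta(\lambda)\psi_\lambda^-$ satisfies $\mathbb{L}_uh=\lambda h+u$ and tends to $0$ at $+\infty$, because the term $\beta e_\lambda$ cancels against the asymptotics of $\beta\psi_\lambda^-$. By uniqueness in Theorem \ref{1.4}(2), it equals $\phi_\lambda^-$.

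\textbf{Step 3 (the $\lambda$-derivative).} Set $w=e_\lambda\partial_\lambda(e_{-\lambda}\psi_\lambda^-)=\partial_\lambda\psi_\lambda^--ix\psi_\lambda^-$. Differentiating $(\mathbb{L}_u-\lambda)\psi_\lambda^-=0$ in $\lambda$ gives $(\mathbb{L}_u-\lambda)\partial_\lambda\psi_\lambda^-=\psi_\lambda^-$. A direct computation using the commutator $[\Pi,x]f=\frac{i}{2\pi}\int_{\mathbb R}f$ (sign to be checked against the convention for $\hat f$) gives
\[
(\mathbb{L}_u-\lambda)(ix\psi)=ix(\mathbb{L}_u-\lambda)\psi+\psi-\frac{1}{2\pi}\Bigl(\int\overline u\psi\Bigr)u .
\]
Hence $(\mathbb{L}_u-\lambda)w=c\,u$ with $c=\frac1{2\pi}\int\overline u\psi_\lambda^-$. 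Moreover, from $e_{-\lambda}\psi_\lambda^-=1-i\int_x^{+\infty}e^{-i\lambda y}u\Pi(\overline u\psi_\lambda^-)\,\mathrm{d}y$, the function $w$ is $e_\lambda$ times the $\lambda$-derivative of a tail integral, which tends to $0$ as $x\to+\infty$. Uniqueness in Theorem \ref{1.4}(2), applied to $w/c$ when $c\neq0$ (or directly to $w$ via the homogeneous problem when $c=0$), yields $w=c\,\phi_\lambda^-$.

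\textbf{Main obstacle.} I expect the hard part to be the rigorous justification in Step 3: differentiability of $\lambda\mapsto\psi_\lambda^-$ in $L^\infty_{-s}$, and the facts that $w\in L^\infty_{-s}$ and $w\to0$ at $+\infty$. The derivative of $e^{-i\lambda y}$ produces a factor $y$, and $s>\frac12$ is only marginal for this. My first attempt would avoid a pointwise $\lambda$-derivative. I would instead differentiate the Fredholm/Volterra integral equation for $e_{-\lambda}\psi_\lambda^-$, whose kernel operator $K_\lambda$ depends smoothly on $\lambda$ in weighted spaces, and use invertibility of $I-K_\lambda$ for $\lambda\notin\sigma_{pp}(\mathbb{L}_u)$ (the same fact that underlies uniqueness in Theorem \ref{thm2}). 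This identifies $e_{-\lambda}\partial_\lambda(\ldots)$ as $(I-K_\lambda)^{-1}$ applied to an explicit source. If the weight is insufficient, I would first prove the identity for $u$ in a dense class (e.g.\ Schwartz functions in $H^1_+$) and pass to the limit using continuity of the Jost solutions in $u$.
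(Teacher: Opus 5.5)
Your proposal is correct and is essentially the paper's proof. Steps~1--2 are the paper's identity $G_\lambda^+-G_\lambda^-=ie_\lambda$ combined with injectivity of $\mathrm{id}-T_\lambda^-$, which is what your appeals to uniqueness in Theorems~\ref{1.4} and~\ref{thm2} reduce to. Step~3 is an operator-level version of the paper's computation: the paper differentiates $\psi_\lambda^-=e_\lambda+G_\lambda^-*(u\Pi(\overline u\psi_\lambda^-))$ in $\lambda$ using $\partial_\lambda G_\lambda^-=-\frac{1}{2\pi\lambda}+ixG_\lambda^-$ and $\int_{\mathbb R}u\Pi(\overline u\psi_\lambda^-)\,\mathrm{d}x=0$, then applies the same commutator Lemma~\ref{xPi-Pix} and injectivity, so it never has to verify $w\to0$ at $+\infty$ separately as you do.

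The analytic points you flag are not addressed in the paper's proof either: differentiability of $\lambda\mapsto\psi_\lambda^-$, $w\in L^\infty_{-s}$, and $\overline u\psi_\lambda^-\in L^2_1$, which $s>\frac12$ alone does not give.
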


In analogy with the standard scattering theory, the function $\Gamma(\lambda)$ is referred to as the transmission coefficient, and $\beta(\lambda)$ as the reflection coefficient.
These scattering coefficients are fundamental for formulating the associated Riemann–Hilbert problem and completing the inverse scattering transform.

In previous results, the potential $u$ depends only on the spatial variable $x \in \mathbb{R}$. 
Since the Jost functions  $\phi_k$, $\phi_{\lambda}^{\pm}$ and $\psi_{\lambda}^{\pm}$  in Theorem \ref{1.4} and \ref{thm2} depend on the potential $u$, we write
\begin{equation}\label{Jostdeponu}
    \boldsymbol{\phi}_k^u := \phi_k , \quad   \boldsymbol{\phi}_{\lambda}^{\pm, u} :=  \phi_{\lambda}^{\pm} \quad \mathrm{and} \quad \boldsymbol{\psi}_{\lambda}^{\pm, u} :=  \psi_{\lambda}^{\pm},  
\end{equation}for any    $k\in \C^+\cup \C^- \cup  \big((-\infty,0)\setminus \sigma_{pp}(\mathbb{L}_{u})\big) $ and  $\lambda\in (0,+\infty)\setminus \sigma_{pp}(\mathbb{L}_{u})$.   In Theorem \ref{Thm3},  the scattering coefficients $\Gamma $ and $\beta$ also depend on $u$. So we write
\begin{equation}\label{boldGamma}
    \boldsymbol{\Gamma}_u (\lambda): = \Gamma (\lambda)  \quad \mathrm{and} \quad      \boldsymbol{\beta}_u (\lambda): = \beta (\lambda) ,
\end{equation}for any $\lambda\in (0,+\infty)\setminus \sigma_{pp}(\mathbb{L}_{u})$.
When the potential $u$ depends on the time variable $t$, the Jost functions in \eqref{Jostdeponu} and scattering coefficients in \eqref{boldGamma} also become time-dependent. To make this dependence explicit, we introduce the following notations:
 
\begin{equation}\label{JostFctdepont}
\begin{cases}
  \phi_k (t) : =   \boldsymbol{\phi}_k^{u(t)}  , \quad    \phi_{\lambda}^{\pm} (t)  : = \boldsymbol{\phi}_{\lambda}^{\pm, u(t) },    \quad   \psi_{\lambda}^{\pm}(t) : = \boldsymbol{\psi}_{\lambda}^{\pm, u(t)}  ,  \\\Gamma(\lambda, t) : = \boldsymbol{\Gamma}_{u(t)} (\lambda), \quad  \mathrm{and} \quad \beta(\lambda, t) : = \boldsymbol{\beta}_{u(t)} (\lambda),
  \end{cases}
\end{equation}for any    $k\in \C^+\cup \C^- \cup  \big((-\infty,0)\setminus \sigma_{pp}(\mathbb{L}_{u})\big) $ and  $\lambda\in (0,+\infty)\setminus \sigma_{pp}(\mathbb{L}_{u})$.  
The following theorem describes how these Jost functions and scattering coefficients evolve in time when $u$ is an  $H^3_+$-solution of \eqref{21.1} such that $u(t)$ belongs to some weighted Sobolev space. If $n\geq 1$ is an integer,  the weighted Sobolev space $H^{n}_s$ is defined as follows:    \begin{equation}\label{WeightSoboH2s}
            H^{n}_s =H^{n}_s(\mathbb{R}; \mathbb{C}): =\{f \in H^{n} (\mathbb{R}; \mathbb{C}): f, \partial_x f, \partial_x^2 f, \cdots, \partial_x^n f\in L^2_s\}, \qquad \forall s \geq 0.
    \end{equation}

\begin{theorem}\label{ThmTimEvoJostScatt}
Assume that
    $u \in C^0\left( (T^-, T^+); H^2_s\cap H^3_+\right)$ solves the CMDNLS equation \eqref{21.1} with initial datum $u(0)=u_0 \in H^2_s \cap H^3_+$, 
     for some $T^- < 0 <T^+$ and  \( s > \frac 12 \).   For any $t \in (T^-, T^+)$,  $$k\in \C^+\cup \C^- \cup  \big((-\infty,0)\setminus \sigma_{pp}(\mathbb{L}_{u_0})\big),\quad \lambda\in (0,+\infty)\setminus \sigma_{pp}(\mathbb{L}_{u_0}),$$	 the Jost functions $\phi_k(t)$, $\phi_\lambda^\pm(t)$ , $\psi_\lambda^\pm(t)$ and the scattering coefficients $\Gamma(\lambda,t)$,  $\beta(\lambda,t)$ are defined in  \eqref{JostFctdepont}.  
    Then the following identities hold:  
    \begin{equation}\label{TimEvphik}
        \partial_t \phi_k = i \partial_x^2 \phi_k + 2 u \partial_x \Pi (\overline{u}\phi_k ),
    \end{equation}
\begin{equation}\label{TimEvphilambda}
        \partial_t \phi_\lambda^\pm = i \partial_x^2 \phi_\lambda^\pm + 2 u \partial_x \Pi (\overline{u}\phi_\lambda^\pm ),
    \end{equation}
\begin{equation}\label{TimEvPsiLambda}
        \partial_t \psi_\lambda^\pm = i \partial_x^2 \psi_\lambda^\pm + 2 u \partial_x \Pi (\overline{u}\psi_\lambda^\pm )+i \lambda^2 \psi_\lambda^\pm,
    \end{equation}
\begin{equation}\label{TimEvScattCoeff}
        \partial_t\Gamma(\lambda, t)=0,\quad \partial_t\beta(\lambda,t)=-i\lambda^2\beta(\lambda,t),\quad \forall t\in (T^-,T^+).
    \end{equation}
\end{theorem}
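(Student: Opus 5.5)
The plan is the standard Lax-pair argument: show that the right-hand side of each evolution law solves the same spectral equation with the same boundary behaviour as the time derivative of the Jost function, and then conclude by the uniqueness statements of Theorems \ref{1.4} and \ref{thm2}.

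Set
$$\mathbb{B}_u h := i\partial_x^2 h + 2u\,\partial_x\Pi(\overline u h) \qquad \text{(up to the Gérard–Lenzmann form)},$$
with $\mathbb{B}_u$ anti-symmetric and satisfying $\partial_t\mathbb{L}_u=[\mathbb{B}_u,\mathbb{L}_u]$ as in \eqref{LaxEq}. Here $\partial_t\mathbb{L}_u h = -\partial_t u\,\Pi(\overline u h) - u\Pi(\overline{\partial_t u}\,h)$. The Lax identity is derived for $u\in H^2_+$ acting on $H^1_+$. I will first check that it extends, as an identity of distributions, to bounded functions with polynomial weight such as $\phi_\lambda^\pm,\psi_\lambda^\pm\in L^\infty_{-s}$. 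This is where $u\in H^2_s\cap H^3_+$ enters: $\overline u h$ is then in a weighted $L^2$ with enough decay for $\Pi(\overline u h)$ to make sense, and the commutator computation only involves products of $u$-terms with $h$.

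\textbf{Step 1: differentiability.} I would show that $t\mapsto\phi_k(t)$ is differentiable (in $H^1_+$ for $\phi_k$, in $L^\infty_{-s}$ locally for the others). To do this, I would write $\phi_k=(\mathbb{L}_u-k)^{-1}u$ in case 1. For the real-line cases I would use the Volterra-type integral equations from the constructions of Theorems \ref{1.4} and \ref{thm2}, whose kernels depend continuously and differentiably on $u$ in $L^2_s$. Differentiating the Fredholm/Volterra equation in $t$ should give this, provided the equation \eqref{21.1} gives $\partial_t u\in C^0(H^1_s)$ or similar. I expect this regularity step to be the main technical obstacle. In particular, $\partial_t u$ involves $u\,\mathbb{H}\partial_x|u|^2$, and one must verify that it lies in a weighted space, using that $\mathbb{H}$ preserves $L^2_s$ for $s<3/2$ (or a suitable variant otherwise).

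\textbf{Step 2: the evolution laws.} Differentiate $\mathbb{L}_u\phi=\mu\phi+u$ in $t$ to get
$$(\mathbb{L}_u-\mu)\partial_t\phi = -(\partial_t\mathbb{L}_u)\phi+\partial_t u = -[\mathbb{B}_u,\mathbb{L}_u]\phi+\partial_t u.$$
Since $(\mathbb{L}_u-\mu)\phi=u$, we also have
$$(\mathbb{L}_u-\mu)\mathbb{B}_u\phi = \mathbb{B}_u u - [\mathbb{B}_u,\mathbb{L}_u]\phi.$$
The CMDNLS equation can be written exactly as $\partial_t u=\mathbb{B}_u u$; this is a direct computation using $\Pi(|u|^2)$ and $(\mathbb{H}-i)=-2i\Pi$-type identities. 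Subtracting, $w:=\partial_t\phi-\mathbb{B}_u\phi$ solves $(\mathbb{L}_u-\mu)w=0$.

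For $k$ off $[0,\infty)$, $w\in L^2_+$ and $k\notin\sigma_{pp}$, so $w=0$. For $\phi_\lambda^+$, I check that $w\to0$ as $x\to-\infty$. Then $\phi_\lambda^++w$ is again a Jost solution with the same normalisation, so uniqueness in Theorem \ref{1.4} gives $w=0$. The same argument handles $\phi_\lambda^-$.

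For $\psi_\lambda^\pm$ the spectral equation is homogeneous. The boundary behaviour of $\mathbb{B}_u\psi_\lambda^+$ at $-\infty$ is $i\partial_x^2 e^{i\lambda x}=-i\lambda^2e^{i\lambda x}$, because the $u$-terms decay. Hence $\partial_t\psi_\lambda^+-\mathbb{B}_u\psi_\lambda^+-i\lambda^2\psi_\lambda^+$ is an eigen-solution tending to $0$ at $-\infty$. By the uniqueness in Theorem \ref{thm2} (the zero-limit version), it vanishes, which gives \eqref{TimEvPsiLambda}.

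\textbf{Step 3: scattering coefficients.} Differentiate $\psi_\lambda^+=\Gamma\psi_\lambda^-$ in $t$ and use \eqref{TimEvPsiLambda} for both functions. Since $\mathbb{B}_u+i\lambda^2$ is linear, this gives $(\partial_t\Gamma)\psi_\lambda^-=0$, and therefore $\partial_t\Gamma=0$ because $\psi_\lambda^-\not\equiv0$. Similarly, differentiate $\phi_\lambda^+-\phi_\lambda^-=\beta\psi_\lambda^-$. By \eqref{TimEvphilambda} the left side evolves by $\mathbb{B}_u$, while the right side gives $(\partial_t\beta)\psi_\lambda^-+\beta(\mathbb{B}_u+i\lambda^2)\psi_\lambda^-$. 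Comparing the two yields
$$\partial_t\beta=-i\lambda^2\beta.$$
Alternatively, one can differentiate the explicit formulas \eqref{introGamma} and \eqref{introbeta} directly as a consistency check.
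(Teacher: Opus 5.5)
Your proposal is correct and is essentially the paper's argument. The paper likewise works with $\widetilde{\mathbb{B}}_u=i\partial_x^2+2\mathbb{T}_u\partial_x\mathbb{T}_{\overline u}=\mathbb{B}_u-i\mathbb{L}_u^2$, which has the same commutator with $\mathbb{L}_u$ and satisfies $\partial_t u=\widetilde{\mathbb{B}}_u u$. It shows that $\partial_t\phi-\widetilde{\mathbb{B}}_u\phi$ (resp.\ $\partial_t\psi_\lambda^\pm-\widetilde{\mathbb{B}}_u\psi_\lambda^\pm-i\lambda^2\psi_\lambda^\pm$) solves the homogeneous spectral equation with zero limit at the relevant end, concludes by injectivity of $\mathrm{id}-T_k$ and $\mathrm{id}-T_\lambda^\pm$ (equivalently, the uniqueness in Theorems \ref{1.4} and \ref{thm2}), and then differentiates $\psi_\lambda^+=\Gamma\psi_\lambda^-$ and $\phi_\lambda^+-\phi_\lambda^-=\beta\psi_\lambda^-$.

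Two small points. First, the ingredient you leave implicit, that $\partial_t\phi_\lambda^\pm(t,x)$ and $\partial_t\psi_\lambda^\pm(t,x)$ tend to $0$ as $x\to\mp\infty$, is obtained in the paper exactly as your Step 1 suggests, by differentiating the integral equations in $t$. Second, no weighted bound on $\mathbb{H}$ is needed, which is just as well since $\mathbb{H}$ does not map $L^2_s$ into itself for $s\geq\frac12$. Indeed $\partial_t u=i\partial_x^2u+2u\,\Pi\partial_x(|u|^2)$ lies in $L^2_s$ directly from $u,\partial_x^2u\in L^2_s$ and $\Pi\partial_x(|u|^2)\in L^\infty$.
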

Equations \eqref{TimEvphik}, \eqref{TimEvphilambda}, and \eqref{TimEvPsiLambda} govern the time evolution of the Jost functions, while the evolution of the scattering coefficients is given by \eqref{TimEvScattCoeff}. Let $u : t \in (T^-, T^+) \mapsto u(t) \in H^1_+$ be a solution to \eqref{21.1}. Then  the Lax pair structure \eqref{LaxEq} implies that $\mathbb{L}_{u(t)}$ is unitarily equivalent to $\mathbb{L}_{u(0)}$, for any $t \in (T^-,  T^+)$. Consequently, the pure point spectrum $\sigma_{pp}(\mathbb{L}_u)$ is invariant under the flow of \eqref{21.1}. The time evolution of the standard eigenvectors   is given by the following theorem.

\begin{theorem}
    For any $T^- < 0  $ and $T^+>0$, let $u\in C^0((T^-, T^+),H^1_+)$ be the unique solution to   equation \eqref{21.1} with initial datum $u(0)=u_0\in H^1_+$.  Then for any    $j\in \{1,2, \cdots, N\}$,   the standard eigenvector $\varphi_j^u : t \in (T^-, T^+) \mapsto \varphi_j^{u(t)}$ given by \eqref{defEigenVec} satisfies the following equation: 
\begin{equation}
     \partial_t \varphi_j^u = i \partial_x^2 \varphi_j^u + 2 u \partial_x \Pi (\overline{u}\varphi_j^u ),\quad \forall t\in (T^-,T^+).
\end{equation}
\end{theorem}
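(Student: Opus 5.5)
The plan is to use the Lax pair structure \eqref{LaxEq} in the explicit form given by Gérard--Lenzmann. The operator $\mathbb{B}_u$ acts by
\[
\mathbb{B}_u h = i\partial_x^2 h + 2u\,\partial_x\Pi(\overline{u}h) + c(u)h,
\]
where $c(u)$ is a possible multiple of the identity, which drops out of commutators. Thus the claimed evolution is exactly $\partial_t\varphi_j = \widetilde{\mathbb{B}}_u\varphi_j$ with $\widetilde{\mathbb{B}}_u := i\partial_x^2 + 2u\partial_x\Pi(\overline{u}\,\cdot)$. We work first with smooth solutions, $u_0\in H^3_+$ say, where all manipulations are justified. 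We then pass to $H^1_+$ by density and the continuity of the flow map in $H^1_+$; this uses the fact that simple isolated eigenvalues and their normalized eigenvectors depend continuously on $u$ in the norm-resolvent sense.

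\textbf{Step 1: the evolved eigenvector stays an eigenvector.} Let $U(t)$ solve $\partial_t U = \mathbb{B}_{u(t)}U$ with $U(0)=\mathrm{id}$; it is unitary because $\mathbb{B}_u$ is anti-symmetric. The Lax equation gives $\mathbb{L}_{u(t)}=U(t)\mathbb{L}_{u_0}U(t)^*$. Hence $\tilde\varphi(t):=U(t)\varphi_j^{u_0}$ satisfies
\[
\mathbb{L}_{u(t)}\tilde\varphi=\lambda_j\tilde\varphi, \qquad \|\tilde\varphi\|=1 .
\]
Since $\lambda_j$ is simple, $\varphi_j^{u(t)}=e^{i\theta(t)}\tilde\varphi(t)$ for some real phase $\theta(t)$. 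The phase is fixed by the normalization $\langle u,\varphi_j^u\rangle=\sqrt{2\pi}$, and it is differentiable because $\langle u(t),\tilde\varphi(t)\rangle\neq0$ is $C^1$. So $\partial_t\varphi_j=\mathbb{B}_u\varphi_j+i\theta'\varphi_j$, and it remains to show that the total scalar term vanishes once we use $\widetilde{\mathbb{B}}_u$.

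\textbf{Step 2: fixing the phase.} We differentiate the normalization condition:
\[
0=\partial_t\langle u,\varphi_j\rangle=\langle\partial_t u,\varphi_j\rangle+\langle u,\partial_t\varphi_j\rangle .
\]
Write $\partial_t\varphi_j=\widetilde{\mathbb{B}}_u\varphi_j+i\mu\varphi_j$, with $\mu$ real. This form holds because $\widetilde{\mathbb{B}}_u$ differs from the anti-symmetric $\mathbb{B}_u$ by an imaginary multiple of the identity, which we verify from the formula. It then suffices to prove the identity
\[
\langle\partial_t u,\varphi_j\rangle+\langle u,\widetilde{\mathbb{B}}_u\varphi_j\rangle=0 ,
\]
since this forces $\mu\sqrt{2\pi}=0$. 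To compute it, insert the CMDNLS equation written as
\[
\partial_t u=i\partial_x^2u+2u\,\partial_x\Pi(|u|^2)\quad\text{(i.e.\ } \partial_t u=\widetilde{\mathbb{B}}_u u\text{, using } (\mathbb{H}-i)=-2i\Pi \text{ on real functions)}.
\]
This is the key observation: $u$ itself evolves by $\widetilde{\mathbb{B}}_u$. The identity then becomes
\[
\langle\widetilde{\mathbb{B}}_u u,\varphi_j\rangle+\langle u,\widetilde{\mathbb{B}}_u\varphi_j\rangle=\langle(\widetilde{\mathbb{B}}_u+\widetilde{\mathbb{B}}_u^*)u,\varphi_j\rangle .
\]
So we need to compute $\widetilde{\mathbb{B}}_u+\widetilde{\mathbb{B}}_u^*$ on $L^2_+$. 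Integrating by parts, $i\partial_x^2$ is anti-symmetric. The term $T:=2u\partial_x\Pi(\overline{u}\,\cdot)$ has adjoint $-2\Pi(u\partial_x(\overline{u}\,\cdot))$ after moving $\partial_x$, so that
\[
T+T^*=2u\Pi(\partial_x\overline{u}\,\cdot)-2\Pi(u\,\partial_x\overline u\,\cdot)+\text{commutator terms}.
\]
I expect this to reduce to $T_{\mathrm{sym}}:=-2\Pi(\cdot\,\Pi(u\partial_x\overline u))$-type terms whose pairing $\langle T_{\mathrm{sym}}u,\varphi_j\rangle$ must be shown to vanish, or to be $i\cdot$real and cancelled by the phase. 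This is the main obstacle. If the direct symmetrization is messy, I would instead use the eigen-equation $\mathbb{L}_u\varphi_j=\lambda_j\varphi_j$, paired against $u$ and against $\partial_x u$, to rewrite $\langle u,\partial_x\Pi(\overline u\varphi_j)\rangle$ in terms of quantities like $\lambda_j\sqrt{2\pi}$ and $\langle \partial_x u,\varphi_j\rangle$, whose contributions then cancel. A useful fact here is the Gérard--Lenzmann identity that $\langle u,\varphi_j\rangle$ is controlled through $\overline{u}\varphi_j$ being, up to the eigen-equation, a derivative with zero mean.

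\textbf{Step 3: density argument.} We conclude by approximating $u_0\in H^1_+$ by smooth data. The equation is understood in $H^{-1}$, i.e.\ in integrated form, and both sides converge by the continuity of $\varphi_j^{u}$ in $u$ established in Step 1.
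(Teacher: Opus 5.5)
\textbf{The gap is in Step 2.} Your argument hinges on the identity $\langle\widetilde{\mathbb{B}}_u u,\varphi_j\rangle+\langle u,\widetilde{\mathbb{B}}_u\varphi_j\rangle=0$. You leave it open as ``the main obstacle'', and the computation you begin is incorrect. For $f,h\in H^1_+$ one has
\[
\langle 2u\,\partial_x\Pi(\overline{u}f),h\rangle_{L^2}=2\langle \partial_x\Pi(\overline{u}f),\Pi(\overline{u}h)\rangle_{L^2}=-2\langle \Pi(\overline{u}f),\partial_x\Pi(\overline{u}h)\rangle_{L^2}=-\langle f,2u\,\partial_x\Pi(\overline{u}h)\rangle_{L^2}.
\]
So the $L^2_+$-adjoint of $2\mathbb{T}_u\partial_x\mathbb{T}_{\overline u}$ is $-2\mathbb{T}_u\partial_x\mathbb{T}_{\overline u}$, not $-2\Pi(u\partial_x(\overline u\,\cdot))$; you dropped the inner $\Pi$ when moving $\partial_x$ across. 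Together with the anti-symmetry of $i\partial_x^2$, this shows $\widetilde{\mathbb{B}}_u$ is anti-symmetric. Hence, for $u,\varphi_j\in H^2_+$, the identity is immediate and forces $\mu=0$. Your proposed fallbacks are not arguments; ``cancelled by the phase'' is circular, since the phase is the unknown.

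Relatedly, $\mathbb{B}_u-\widetilde{\mathbb{B}}_u$ is not a multiple of the identity: by \eqref{tildBudef1} it equals $i\mathbb{L}_u^2$. This operator commutes with $\mathbb{L}_u$ and acts as $i\lambda_j^2$ on $\varphi_j$. That is the real reason your form $\partial_t\varphi_j=\widetilde{\mathbb{B}}_u\varphi_j+i\mu\varphi_j$, with $\mu=\theta'+\lambda_j^2\in\mathbb{R}$, holds. Since $i\mathbb{L}_u^2$ is anti-symmetric, the same relation already shows $\widetilde{\mathbb{B}}_u$ is anti-symmetric. In fact, even under your own premise of an imaginary scalar difference, $\widetilde{\mathbb{B}}_u$ would be anti-symmetric, so the obstacle should not have arisen.

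With this repair, your argument is a valid variant of the paper's. The paper also writes $W(t)^*\varphi_j^{u(t)}=c(t)\varphi_j^{u_0}$ with $|c(t)|=1$, but it determines $c(t)$ through the explicit formula $u(t)=W(t)e^{-it\mathbb{L}_{u_0}^2}u_0$. This gives $\langle W(t)^*u(t),\varphi_j^{u_0}\rangle=e^{-it\lambda_j^2}\sqrt{2\pi}$, hence $c(t)=e^{-it\lambda_j^2}$, and the paper then differentiates $\varphi_j^{u(t)}=W(t)e^{-it\mathbb{L}_{u_0}^2}\varphi_j^{u_0}$. Your differential route uses only $\partial_t u=\widetilde{\mathbb{B}}_u u$ plus anti-symmetry and yields the same phase, $\theta'=-\lambda_j^2$. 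It avoids quoting the explicit formula, which is just the integrated form of $\partial_t(W^*u)=-i\mathbb{L}_{u_0}^2W^*u$, that is, of $\mathbb{B}_u-\widetilde{\mathbb{B}}_u=i\mathbb{L}_u^2$.

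\textbf{Step 3 is also not justified as written.} Eigenvalues of $\mathbb{L}_u$ need not be isolated: they may be embedded in the continuous spectrum $[0,+\infty)$, as the paper stresses. For such $\lambda_j$, norm-resolvent convergence $\mathbb{L}_{u_0^n}\to\mathbb{L}_{u_0}$ does not produce eigenvalues of $\mathbb{L}_{u_0^n}$ near $\lambda_j$, let alone convergent eigenvectors. The passage from smooth data to $H^1_+$ therefore needs a different argument in that case.
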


Recall that $H^s_+$ is defined in \eqref{HardySobolevHs+}, $\forall s \geq 0$. The smooth Hardy-Sobolev space is denoted by $H^{\infty}_+: = \cap_{s \geq 0} H^s_+$. 
Theorem \ref{ThmTimEvoJostScatt} requires the $H^3_+$-solution $u(t)$ to remain in the weighted Sobolev space $H^2_s$ (with $s>\frac12$) for all time. A natural question is whether this condition can be weakened. Ideally, we would like to find a weighted Sobolev space $H^2_s$   that is invariant under the $H^{\infty}_+$-flow of \eqref{21.1}; that is, if the initial datum $u_0=u(0) $ belongs to $H^\infty_+ \cap H^2_s$, then the corresponding solution $u(t)$ stays in this space for all $t \in (T^-, T^+)$. The following theorem shows that such an invariant space exists, namely $H^2_1=\{f \in H^2 : f , \; \partial_x f, \;  \partial_x^2 f \in L^2_1\}$ (the case $s=1$). 
Indeed, we can prove that $H^4_+ \cap H^2_1$ is invariant under the flow of equation \eqref{21.1}.

\begin{theorem}\label{ThmInvariantSpace}
Let \( u \in C^0((T^-, T^+); H_+^4) \)  be the unique solution to the 
CMDNLS equation \eqref{21.1}
with initial datum \( u(0) = u_0 \in H_+^4 \). 
If \( u_0, \partial_x u_0, \partial_x^2 u_0 \in L^2_1 \), then 
we have 
\begin{equation}
u(t), \partial_x u(t), \partial_x^2 u(t) \in L_1^2,\quad \forall t\in (T^-, T^+).
\end{equation}
\end{theorem}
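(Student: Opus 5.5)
We propose to prove Theorem \ref{ThmInvariantSpace} by an a priori energy estimate on the weighted quantities $\|x\partial_x^j u(t)\|_{L^2}$, $j=0,1,2$. The estimate is run on a truncated weight, and the truncation is removed by monotone convergence. Concretely, fix $\varepsilon>0$ and set $w_\varepsilon(x)=\frac{x^2}{1+\varepsilon x^2}$, which is bounded, smooth, and satisfies $|w_\varepsilon'|\lesssim w_\varepsilon^{1/2}$ and $|w_\varepsilon''|\lesssim 1$ uniformly in $\varepsilon$. Define
\[
E_\varepsilon(t)=\sum_{j=0}^{2}\int_{\mathbb{R}} w_\varepsilon(x)\,|\partial_x^j u(t,x)|^2\,\mathrm{d}x .
\]
This is finite for $u\in H^4_+$, and it is differentiable in $t$ because $u\in C^0H^4_+$ and $\partial_t u\in C^0H^2$. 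The goal is a Gronwall inequality $E_\varepsilon'(t)\le C(t)\,(1+E_\varepsilon(t))$, where $C(t)$ depends only on $\|u(t)\|_{H^4}$ and is therefore locally bounded on $(T^-,T^+)$. Once this holds, $E_\varepsilon(t)$ is bounded uniformly in $\varepsilon$ on compact time intervals. Letting $\varepsilon\to0$ then gives $u,\partial_xu,\partial_x^2u\in L^2_1$, using $(1+x^2)\le 1+x^2$ together with the unweighted $H^2$ bound.

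\textbf{Step 1: the linear part.} Differentiating $\int w_\varepsilon|\partial_x^ju|^2$ and inserting $\partial_t u=i\partial_x^2u+\dots$, the Schr\"odinger term contributes $-2\,\mathrm{Im}\int w_\varepsilon'\,\partial_x^{j+1}u\,\overline{\partial_x^ju}$ after one integration by parts. Using $|w_\varepsilon'|\lesssim w_\varepsilon^{1/2}$, this is bounded by $\|w_\varepsilon^{1/2}\partial_x^ju\|_{L^2}\,\|\partial_x^{j+1}u\|_{L^2}\lesssim E_\varepsilon^{1/2}\|u\|_{H^3}$.

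\textbf{Step 2: the nonlinear part.} We write the nonlinearity as $N(u)=u(\mathbb{H}-i)\partial_x(|u|^2)=-2i\,u\,\partial_x\Pi(|u|^2)$. Then $\partial_x^jN(u)$ is a finite sum of terms $\partial_x^a u\cdot\partial_x^{b+1}\Pi(\partial_x^c u\,\overline{\partial_x^d u})$ with $a+b+c+d=j\le2$. The weight is always placed on the first factor $\partial_x^a u$. The second factor is then estimated in $L^\infty$ by Sobolev embedding: $\|\partial_x^{b+1}\Pi(|u|^2)\|_{L^\infty}\lesssim\|\,|u|^2\|_{H^{4}}\lesssim\|u\|_{H^4}^2$, where we use that $\Pi$ is bounded on every $H^m$. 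This gives the bound $\|w_\varepsilon^{1/2}\partial_x^aN\|_{L^2}\lesssim \|u\|_{H^4}^2E_\varepsilon^{1/2}$. Since the weight never has to cross $\Pi$ or $\mathbb{H}$, no commutator estimate with the nonlocal operator is needed.

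\textbf{Main obstacle.} The delicate point is the regularity bookkeeping for the top-order term $j=2$. Putting the weight on $u$ while the $L^\infty$ factor is $\partial_x^3\Pi(|u|^2)$ requires $|u|^2\in H^{3+1/2+}$, which is why $H^4_+$ is assumed. We must check that the case $a=0$, $b=2$ only needs $\|u\|_{H^4}$, and that no term forces the weight onto a $\Pi$-factor. A second point is to justify differentiating $E_\varepsilon$ in time with only $u\in C^0H^4$. We plan to handle this in one of two ways. The first is to write the integral form $E_\varepsilon(t)-E_\varepsilon(0)=\int_0^t(\cdots)$ directly, since $w_\varepsilon$ is bounded and every pairing is well defined in $H^2$. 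The second is to approximate by $H^\infty_+$ data and use the continuity of the flow in $H^4_+$, which follows from the Gérard--Lenzmann well-posedness theory cited in the introduction.
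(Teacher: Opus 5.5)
Your argument is correct, and it proves the theorem by a genuinely different route from the paper.

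\textbf{Your route.} You run a truncated-weight Gronwall estimate directly on $\partial_x^j u$ for $j=0,1,2$. Two facts make it work. First, $|w_\varepsilon'|\le 2w_\varepsilon^{1/2}$, so a single integration by parts bounds the dispersive term by $E_\varepsilon^{1/2}\|u\|_{H^3}$. Second, in $\partial_x^j\bigl(u\,\partial_x\Pi(|u|^2)\bigr)=\sum_{a+b=j}\binom{j}{a}\partial_x^a u\,\partial_x^{b+1}\Pi(|u|^2)$ the weight can always sit on the factor outside $\Pi$, while $\|\partial_x^{b+1}\Pi(|u|^2)\|_{L^\infty}\lesssim\|u\|_{H^4}^2$ for $b\le 2$. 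Two small slips do not affect this: your index bookkeeping $a+b+c+d=j$ double counts derivatives, and $\partial_x^aN$ should read $\partial_x^jN$. The time-differentiability issue is also harmless. The equation gives $\partial_t u\in C^0H^2$, so $\partial_x^ju\in C^1L^2$ for $j\le 2$. Since $w_\varepsilon$ is bounded, $E_\varepsilon\in C^1$, and no approximation argument is needed.

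\textbf{The paper's route.} The paper proves a weighted estimate only at order zero, but for the \emph{linear} flow $\partial_t\Psi=\widetilde{\mathbb{B}}_{u}\Psi$ (Lemma \ref{gronwall}, using commutator bounds from \cite{SunCMP2021}). It deduces that $W(t)e^{-it\mathbb{L}_{u_0}^2}$ preserves $L^2_1\cap H^2_+$ (Lemma \ref{wexp}). It then uses the Lax structure, $u(t)=W(t)e^{-it\mathbb{L}_{u_0}^2}u_0$ and $\mathbb{L}_{u(t)}^k u(t)=W(t)e^{-it\mathbb{L}_{u_0}^2}\mathbb{L}_{u_0}^k u_0$, to transport the weighted information on $u_0,\mathbb{L}_{u_0}u_0,\mathbb{L}_{u_0}^2u_0$ to time $t$. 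Finally it converts back to derivatives via Lemmas \ref{utoLu} and \ref{Lutou}.

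\textbf{What each buys.} Your route is more elementary and self-contained. It needs no unitary propagator $W$, no Stone theorem, and no explicit formula from \cite{11}. It uses no integrability at all, and it extends verbatim to $\partial_x^ju\in L^2_1$ for higher $j$ given correspondingly more Sobolev regularity. The paper's route replaces the higher-order weighted estimates on the nonlinear equation by a single order-zero estimate on a linear equation. It also sets up the conjugation mechanism $W(t)e^{-it\mathbb{L}_{u_0}^2}$ that is reused in Section \ref{sec6} for the eigenvectors. Both routes need $u\in H^4_+$ for essentially the same reason: control of $\partial_x^2\partial_t u$, respectively of $\mathbb{L}_{u_0}^2u_0\in H^2_+$.
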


The  following result is a direct consequence of Theorem \ref{ThmTimEvoJostScatt} and \ref{ThmInvariantSpace}.

\begin{corollary}
    For any 
    $u_0 \in H^2_1 \cap H^4_+$, let $u \in C^0 \left( (T^-, T^+); H^4_+ \right) $ be the unique solution to   equation \eqref{21.1} with initial datum $u(0)=u_0$.     Then identities \eqref{TimEvphik}, \eqref{TimEvphilambda}, \eqref{TimEvPsiLambda} and  \eqref{TimEvScattCoeff} hold.
\end{corollary}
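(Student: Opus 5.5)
The corollary follows by checking the hypotheses of Theorem \ref{ThmTimEvoJostScatt} with $s=1$, using Theorem \ref{ThmInvariantSpace} for the weighted part. Let $u_0\in H^2_1\cap H^4_+$ and let $u\in C^0((T^-,T^+);H^4_+)$ be the solution. By the definition \eqref{WeightSoboH2s}, $u_0\in H^2_1$ means exactly $u_0,\partial_x u_0,\partial_x^2u_0\in L^2_1$. Theorem \ref{ThmInvariantSpace} then gives $u(t),\partial_xu(t),\partial_x^2u(t)\in L^2_1$ for every $t\in(T^-,T^+)$, so $u(t)\in H^2_1$. Since $H^4_+\subset H^3_+$ continuously, we also have $u(t)\in H^3_+\cap H^2_1$ pointwise in time, with $s=1>\tfrac12$.

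The one point needing care is that Theorem \ref{ThmTimEvoJostScatt} asks for $u\in C^0((T^-,T^+);H^2_s\cap H^3_+)$. This is continuity in time in the weighted norm, not just membership at each time. I would obtain it from the proof of Theorem \ref{ThmInvariantSpace}. That proof presumably derives differential inequalities of the form
\[
\frac{d}{dt}\|x\partial_x^j u\|_{L^2}^2\le C\big(\|u(t)\|_{H^4}\big)\Big(1+\sum_{l\le 2}\|x\partial_x^l u\|_{L^2}^2\Big),\qquad j=0,1,2,
\]
justified through a regularized weight $x^2/(1+\varepsilon x^2)$. These inequalities give local boundedness of $t\mapsto\|u(t)\|_{H^2_1}$.

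Continuity then follows by a standard interpolation. From the equation, $\partial_t u\in C^0(H^2)$ since $u\in C^0(H^4_+)$. For the weighted part, write
\[
x\partial_x^j u(t)-x\partial_x^j u(t')=\int_{t'}^{t}x\,\partial_x^j\partial_\tau u\,d\tau .
\]
I would bound this using the same weighted estimates applied to $\partial_t u$. If that is too costly in derivatives, there is a fallback. First, $\|u(t)-u(t')\|_{H^2}\to0$ gives convergence of $\partial_x^j u(t')$ to $\partial_x^j u(t)$ in $L^2$. Second, local boundedness in $L^2_1$ gives weak convergence in $L^2_1$. Third, the time derivative of $\|x\partial_x^ju\|^2$ is locally bounded, so $t\mapsto\|u(t)\|_{L^2_1}$-type norms are continuous. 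Weak convergence together with convergence of norms yields strong convergence in the Hilbert space $L^2_1$, hence $u\in C^0(H^2_1)$.

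With this continuity in hand, all hypotheses of Theorem \ref{ThmTimEvoJostScatt} hold with $s=1$. Identities \eqref{TimEvphik}, \eqref{TimEvphilambda}, \eqref{TimEvPsiLambda} and \eqref{TimEvScattCoeff} follow directly. The main obstacle is this upgrade from pointwise-in-time invariance to continuity in the weighted norm, and the norm-continuity argument is the first thing I would try.
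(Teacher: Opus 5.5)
Your proposal is correct and follows the paper's route. The paper presents the corollary as an immediate consequence of Theorem~\ref{ThmTimEvoJostScatt} and Theorem~\ref{ThmInvariantSpace} with $s=1$, and does not address the point you raise: Theorem~\ref{ThmInvariantSpace} only gives $u(t)\in H^2_1$ for each fixed $t$, while Theorem~\ref{ThmTimEvoJostScatt} assumes $u\in C^0((T^-,T^+);H^2_s\cap H^3_+)$. Its proofs do use this continuity, for instance to get continuity in time of the source terms in $L^1$. So your extra step supplements the paper rather than departing from it. Your fallback is the right argument: strong $H^2$ convergence, plus local $L^2_1$ bounds giving weak $L^2_1$ convergence, plus continuity of the weighted norms upgrading this to strong convergence. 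Your first attempt does not close for $j=1,2$, because $x\partial_x^j\partial_t u$ contains $ix\partial_x^{j+2}u$, which you do not control.

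One correction: the proof of Theorem~\ref{ThmInvariantSpace} does not derive differential inequalities for $\|x\partial_x^ju\|_{L^2}$, so you cannot cite it for them. Instead it proceeds in three steps.
\begin{enumerate}
\item It writes $u(t)=W(t)e^{-it\mathbb{L}_{u_0}^2}u_0$.
\item It proves a weighted Gronwall bound (Lemma~\ref{gronwall}) only for $\|xu\|_{L^2}$ and for $\|x\Psi\|_{L^2}$ with $\partial_t\Psi=\widetilde{\mathbb{B}}_u\Psi$, and applies it to $\Psi=\mathbb{L}_uu$ and $\Psi=\mathbb{L}_u^2u$.
\item It recovers $\partial_xu,\partial_x^2u\in L^2_1$ algebraically through Lemma~\ref{Lutou}.
\end{enumerate}

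You can supply the missing inequality in either of two ways.

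\emph{Directly.} Take $w_\varepsilon=x^2/(1+\varepsilon x^2)$ and $v=\partial_x^ju$ with $j\le 2$. The dispersive term gives $2\operatorname{Re}\, i\int w_\varepsilon\bar v\,\partial_x^2v\,\mathrm{d}x=-2\operatorname{Re}\, i\int w_\varepsilon'\bar v\,\partial_xv\,\mathrm{d}x$. Since $|w_\varepsilon'|\le 2\sqrt{w_\varepsilon}$, this is bounded by $4\|\sqrt{w_\varepsilon}v\|_{L^2}\|\partial_x^{j+1}u\|_{L^2}$. The nonlinear terms $\partial_x^mu\,\partial_x^{j-m+1}\Pi(|u|^2)$ contribute at most $C\|\sqrt{w_\varepsilon}v\|_{L^2}\|\sqrt{w_\varepsilon}\partial_x^mu\|_{L^2}\|u\|_{H^4}^2$. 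This gives your two-sided inequality uniformly in $\varepsilon$ for $u\in C^0(H^4_+)$, and in fact reproves Theorem~\ref{ThmInvariantSpace} without the Lax pair.

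\emph{Within the paper.} The bound $|\partial_tK(R,t)|\le AK(R,t)+\gamma$ in Lemma~\ref{gronwall} is two-sided and uniform in $R$. Apply it to $\Psi=\mathbb{L}_uu\in C^0(H^3_+)$ and $\Psi=\mathbb{L}_u^2u\in C^0(H^2_+)$, both of which solve $\partial_t\Psi=\widetilde{\mathbb{B}}_u\Psi$. This makes $\|xu\|^2$, $\|x\mathbb{L}_uu\|^2$ and $\|x\mathbb{L}_u^2u\|^2$ locally Lipschitz in $t$. Your weak-plus-norm argument then gives continuity of these three quantities in $L^2_1$. The identities in the proof of Lemma~\ref{Lutou} transfer this continuity to $\partial_xu$ and then to $\partial_x^2u$, because the multipliers $\Pi(|u|^2)$, $\partial_x\Pi(|u|^2)$ and $\Pi(\bar u\,\mathbb{L}_uu)$ are continuous in $L^\infty$.
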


\subsection{Comparison with  DST of the Benjamin-Ono equation}
The direct and inverse scattering transform for the Benjamin-Ono (BO) equation has been  investigated by \cite{12, 13, 14, 15, 16}. 
In the present paper, we rigorously  establish  a direct scattering framework for the CMDNLS equation \eqref{21.1}.
This paper differs fundamentally from the direct scattering framework of the BO equation in several key aspects.

\begin{enumerate}

  \item 
    The spectrum of the Lax operator for the BO equation differs significantly from that of $\mathbb{L}_u$ given by \eqref{21.4}. The Lax operator for the BO equation does not allow embedded eigenvalues, meaning that any eigenvalue must be  discrete, whereas embedded eigenvalues are permitted for the Lax operator $\mathbb{L}_u$ of the CMDNLS equation \eqref{21.1}.

\item The construction of Jost functions for the CMDNLS equation \eqref{21.1} is fundamentally different from the BO case. In particular, imposing nonzero constant boundary conditions yields only trivial Jost functions, as shown in the following proposition.
\begin{proposition}\label{remark1}
Assume that $u\in H^1_+\cap L^2_s$ for some $s> \frac 12$. Then for every $k\in \C^+\cup \C^-\cup \big( (-\infty, 0)\setminus \sigma_{pp}(\mathbb{L}_u)\big)$, there exists a unique solution to
\begin{equation}\label{2.22}
\phi(x,k)=1+G_k*u\Pi( \overline u \phi),
\end{equation}
and this solution is given by $\phi \equiv 1$.
\end{proposition}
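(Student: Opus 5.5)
The plan is to check existence by direct substitution and then reduce uniqueness to the absence of eigenvalues, using Theorem \ref{1.4} and the spectral facts recalled in the introduction. I read $G_k$ as the Green kernel of $-i\partial_x - k$ on the Hardy side, i.e. the Fourier multiplier $\mathds{1}_{\xi\ge 0}/(\xi-k)$. For $k\in \C^+\cup\C^-\cup(-\infty,0)$ the denominator $\xi - k$ never vanishes on $[0,+\infty)$. Hence $G_k*$ maps $L^2_+$ boundedly into $H^1_+$ and satisfies $(-i\partial_x - k)(G_k*f) = f$ for $f\in L^2_+$. I work in the same class as Theorem \ref{1.4}, namely $\phi\in L^\infty_{-s}$, meaning $(1+x^2)^{-s/2}\phi$ is bounded; the uniqueness claim is relative to this class.

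\textbf{Existence.} Plug $\phi\equiv 1$ into \eqref{2.22}; this requires $\Pi(\overline u)=0$. Since $u\in L^2_+$, we have $\widehat{\overline u}(\xi)=\overline{\hat u(-\xi)}$, which is supported in $(-\infty,0]$. The Cauchy--Szeg\H{o} projector removes all of it, the point $\xi=0$ having measure zero. Thus $u\Pi(\overline u\cdot 1)=0$, so $G_k*u\Pi(\overline u)=0$ and $\phi\equiv1$ solves \eqref{2.22}.

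\textbf{Uniqueness.} Let $\phi\in L^\infty_{-s}$ be any solution and set $w=\phi-1$. Subtracting the equation for $1$ and using $\Pi(\overline u)=0$ gives
\begin{equation*}
w = G_k*\big(u\Pi(\overline u w)\big).
\end{equation*}
I first upgrade the regularity of $w$. Write
\begin{equation*}
\overline u w = \big[(1+x^2)^{s/2}\overline u\big]\cdot\big[(1+x^2)^{-s/2} w\big].
\end{equation*}
The first factor is in $L^2$ because $u\in L^2_s$, and the second is in $L^\infty$ because $w\in L^\infty_{-s}$. Hence $\overline u w\in L^2$ and $\Pi(\overline u w)\in L^2_+$. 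Since $u\in H^1_+\subset L^\infty$ and $u$ has nonnegative frequencies, $u\Pi(\overline uw)\in L^2_+$. Applying $G_k*$ then gives $w\in H^1_+$.

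Next I apply $-i\partial_x-k$ to the equation for $w$, which yields $-i\partial_x w - u\Pi(\overline u w)=kw$, i.e. $\mathbb{L}_u w = k w$ with $w\in H^1_+=\mathrm{Dom}(\mathbb{L}_u)$. If $k\notin\R$, then $w=0$ because $\mathbb{L}_u$ is self-adjoint and has no non-real eigenvalues. If $k\in(-\infty,0)\setminus\sigma_{pp}(\mathbb{L}_u)$, then $w=0$ because $k$ is not an eigenvalue. So $\phi\equiv1$.

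The main obstacle is the regularity upgrade, i.e. showing that a solution a priori only in the weighted $L^\infty_{-s}$ class lies in $H^1_+$. The weight pairing $L^2_s\times L^\infty_{-s}$ handles this, provided $G_k$ is exactly the Hardy-space resolvent kernel described above. If $G_k$ were a full-line kernel, I would additionally note that its input $u\Pi(\overline uw)$ lies in $L^2_+$, so only the multiplier on $\xi\ge0$ matters. That multiplier is bounded precisely because $k\notin[0,+\infty)$.
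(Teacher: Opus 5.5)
Your proof is correct and is essentially the argument the paper relies on: existence follows from $\Pi(\overline u)=0$ (the paper also uses this identity in Section \ref{sec4}). Uniqueness reduces, after subtracting $1$, to the injectivity of $\mathrm{id}-T_k$ on $L^\infty_{-s}$, which the paper proves in Lemma \ref{lemma2.3} exactly as you do: it upgrades a kernel element to $H^1_+$ via the multiplier $\mathds{1}_{\xi\ge 0}/(\xi-k)$ and then uses $\mathbb{L}_u g=kg$ with $k\notin\sigma_{pp}(\mathbb{L}_u)$. The paper states this proposition without a separate proof, so your write-up supplies it faithfully, including the check that $u\Pi(\overline u w)\in L^2_+$, which the paper leaves implicit.
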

Hence the Jost function of \eqref{21.1} obtained in this manner is identically constant, and therefore insufficient for establishing a direct and inverse scattering transform that links the potential $u$ to its scattering data and spectral parameters. Instead, we must rely on the nontrivial Jost functions $\phi_k$, $\phi_{\lambda}^\pm$ and $\psi_\lambda^\pm$ constructed in Theorem \ref{1.4} and Theorem \ref{thm2}, which will be used to formulate the direct scattering transform.

 This marks the central difference from the direct scattering transform for the BO equation. In the case of the BO equation, the Jost functions also approach constant boundary values (typically normalized to $1$ as $|x|\to +\infty$), yet they yield nontrivial solutions that are effective in the direct scattering framework.

\item Compared with the  direct scattering framework of the BO equation, we provide a 
detailed proof for the time evolution of the Jost functions and the scattering coefficients. In particular, we exploit the unitary equivalence of the Lax operator to analyze the time evolution of the eigenvectors. And we prove the invariant subspace property, which enables us to weaken the conditions imposed on the solution $u$ in the time evolution argument.

\end{enumerate}

\subsection{Related work}

The CMDNLS equation \eqref{21.1} arises as the Hamiltonian equation associated with the energy functional
\begin{equation}\label{Hinvarphi}
{\mathcal{H}}=\tfrac{1}{2}\int_{\mathbb{R}} \big|\partial_x \varphi  +  \varphi\mathbb{H}(|\varphi|^2))\big|^2 \ \mathrm{d}x,
\end{equation}
which can be derived from the Calogero–Moser model \eqref{CMSys} in the thermodynamic limit via the work  \cite{1}. Under the gauge transformation 
\[
u(x):=\varphi(x)\exp\Bigl(\frac{i}{2}\int_{-\infty}^x |\varphi(y)|^2 \mathrm{d}y\Bigr),
\]
this Hamiltonian PDE reduces to the CMDNLS equation \eqref{21.1} on the real line. For further details on the physical derivation, we refer the reader to \cite{1}.

When the initial datum has sufficiently small $L^2$-mass, G\'erard and  Lenzmann proved that the $H^1_+$-solution to \eqref{21.1} will not blow up in finite time in \cite{2}.
More recently, Kim-Kim-Kwon   \cite{7} constructed the finite-time blow-up solutions with mass arbitrarily close to ground state and initial datum belongs to Schwartz space. Kim-Kwon   \cite{8} solved  the soliton resolution conjecture for both finite-time blow-up solutions and global solutions.

 The defocusing version of  equation \eqref{21.1}  is given by  \begin{equation}\label{21.2}
	 i\partial_tu+\partial_x^2u-u(\mathbb{H}-i)\partial_x(|u|^2)=0, \quad (t,x)\in \R\times \R,
	 \end{equation}
 which traces back to the seminal work of Pelinovsky and Grimshaw   \cite{9,10}.
   Equation \eqref{21.2} is the deep-water limit of the intermediate nonlinear Schr\"{o}dinger (INLS) equation, introduced in Pelinovsky  \cite{9}, 
 \begin{equation}\label{iNLS}
 	i\partial_tu-\partial_x^2u-u(i+T_\delta)\partial_x(|u|^2)=0,
 	\end{equation}
 where the integral operator $T_\delta$ is given by 
 \[
 T_\delta(f)(x):=\mathrm{P.V.} \int_{\R} \coth\left(  \frac{\pi}{2\delta} (y-x)
 \right) f(y) \frac{         \ \mathrm{d} y}{2\delta},        \quad   \forall \delta>0.
 \]

 In Killip-Laurens-Vi\c{s}an \cite{6}, the flow map  was continuously extended from the high-regularity Hardy-Sobolev space $H^{\infty}_+ \cap L^2_1$ to the space $L^2_+$. 
  According to Proposition 2.3 of \cite{6}, the space $H^{\infty}_+ \cap L^2_1$ is invariant under the flow of \eqref{21.1}.
Theorem $\ref{ThmInvariantSpace}$ of this paper shows that the space $H^4_+ \cap H^2_1$ is also invariant under the flow of \eqref{21.1}, 
which follows from the unitary equivalence of the Lax operator $\mathbb{L}_u$ along the flow. Thus, while \cite{6} establishes the invariance of the weighted $L^2$-norm of the solution itself, our result further guarantees that the weighted $L^2$-norms of its first and second derivatives remain bounded as well, providing a refined regularity persistence property not previously established.

Frank and Read first derived the Jost solutions and scattering theory  for the CMDNLS equation under weaker assumptions  \cite{Frank}, and established the corresponding trace formula. Our work adopts a distinct technical route:
  we specifically explore the invariant subspace and the detailed time evolution of the eigenvectors and Jost functions. We employ different techniques, such as unitary equivalence arguments rather than direct differentiation with respect to time, to analyze the relations between  scattering coefficients and their asymptotic behaviors.

 In the periodic setting, Badreddine \cite{4} derived an explicit formula for general solutions and extended the flow map to low regularity Sobolev spaces for both the focusing and defocusing equations.  
   In
addition, the traveling wave and finite-gap potentials are studied in \cite{5}.
Sun   studied the two-variable extension for \eqref{21.1} and \eqref{21.2} in   \cite{11}.

\subsection{Notation}
We introduce the following notation.
\begin{enumerate}
\item Let $\langle x \rangle=\sqrt{1+x^2}$ for $x\in\R$. 

\item The upper and lower   half-complex planes are denoted by $\mathbb{C}^+$ and $\mathbb{C}^-$, i.e., $$\mathbb{C}^+=\{a+b i \in \mathbb{C}: b >0\} ,\quad \mathbb{C}^-=\{a+b i \in \mathbb{C}: b <0\}.$$

\item The operator $D$ is given by $ D=-i\partial_x$.

\item

For all  $f,g \in H^s$ with $s > \frac{1}{2}$, there exists $ C_s' > 0$ such that 
\begin{equation}\label{inequalityCspri}
    \|fg\|_{H^s} \leq C_s' \|f\|_{H^s} \|g\|_{H^s} .
\end{equation}
Due to 
 $H^1 \hookrightarrow L^\infty$, we define $\left(C_1''\right)^{-1} := \inf_{f \in H^1 \setminus \{0\}} \frac{\|f\|_{H^1}}{\|f\|_{L^\infty}}$ such that 
 \begin{equation}\label{ineqC1pripri}
     \|f\|_{L^\infty} \leq C_1'' \|f\|_{H^1}.
 \end{equation}
  
\item Let $\mathcal{B}(X)$ denote the space consists of  all bounded $\C$-linear transforms from $X$ to $X$.

\item For every real number $\mu$, define $e_\mathbf{\mu}(x):=e^{i\mathbf{\mu} x}$.

\item The $1$-dimension Schwartz space is denoted by $\mathcal{S}$, and the tempered distribution space is denoted by $\mathcal{S}'$.

\item $\mathcal{F} : \mathcal{S}' \to \mathcal{S}'$ and $\mathcal{F}^{-1}: \mathcal{S}' \to \mathcal{S}'$ denote Fourier transform and  the inverse Fourier transform.

\end{enumerate}

\subsection{Organization of the paper}
The paper is organized as follows.  In Sec.\ref{sec2},  we define the Jost functions and prove their existence and uniqueness. In Sec.\ref{sec3}, we construct the scattering coefficients, and show the relation of the Jost functions and the basic properties of scattering coefficients. In Sec.\ref{sec4}, we investigate the spectral asymptotic behavior near $k=0$ and $k=\infty$. 
In Sec.\ref{sec5}, we prove that certain subspaces are invariant under the flow of the CMDNLS equation \eqref{21.1}. Finally,  in Sec.\ref{sec6}, we explore the time evolution of the eigenfunctions, Jost functions and the scattering coefficents. 

\section{Jost functions}\label{sec2}
For any $s>0$,  the Hardy-Sobolev space $H^s_+$ contains all $H^s$-functions whose Fourier transforms are positively supported, i.e., 

\begin{equation}
   H^s_+  =H^s(\R; \C)\cap L^2_+=\Pi (H^s), \quad \forall s \geq 0.
\end{equation}
For any $v \in L^2$, the  Toeplitz operator of symbol $v$ is defined as follows
\begin{equation}
    \mathbb{T}_v(h):=\Pi(vh)\in L^2_+,\quad \forall h\in H^1_+,
\end{equation}where $\Pi=\frac{1}{2}(1 + i \mathbb{H}) : L^2 \to L^2$ denotes the Szeg\H{o} projector. If the symbol $b\in L^\infty$, the Toeplitz operator $\mathbb{T}_b$ is a bounded operator on $L^2_+$, whose $L^2_+$-adjoint is $\mathbb{T}_{\overline{b}}$. Then the Lax operator given by \eqref{21.4} can be expressed by
\begin{equation}
    \mathbb{L}_u = D - \mathbb{T}_u \mathbb{T}_{\overline{u}} :H^1_+ \to L^2_+, \quad \mathrm{with} \quad D = -i\partial_x.
\end{equation} 

Following the approach in  \cite{12,13,16}, we express the Jost functions as solutions to a convolution integral equation involving a Green function.  For any $x\in \mathbb{R}$, we define the Green functions as 
\begin{equation}\label{gk}
	G_k(x)
	:=\mathcal F^{-1}\left(\frac{\mathds{1}_{\xi\geq 0}}{\xi-k} \right)(x)
	=\frac{1}{2\pi}\int_0^{\infty} \frac{e^{ix\xi}}{\xi-k}d\xi,\quad k\in  \C\setminus [0,+\infty),
	\end{equation}
\begin{equation}\label{Gkt}
	\widetilde G_k(x):=\mathcal F^{-1}\left(\frac{\mathds{1}_{\xi\leq 0}}{\xi-k} \right)(x)=\frac{1}{2\pi}\int_{-\infty}^0 \frac{e^{ix\xi}}{\xi-k}d\xi, \quad k \in \C\setminus (-\infty, 0].
	\end{equation}
    Here $\mathcal{F}$ and $\mathcal{F}^{-1}$ denote Fourier transform and  the inverse Fourier transform. 
     Then we have 
    \begin{equation}\begin{split}\label{Gkc+c-}
&G_k(x)=ie^{ikx}\mathds{1}_{x\geq 0}-\widetilde G_k(x),\quad k\in\C^+,\\
    &G_k(x)=-ie^{ikx}\mathds{1}_{x\leq 0}-\widetilde G_k (x),\quad k\in\C^-.
    \end{split}\end{equation}
    For $\lambda>0$, by  Plancherel formula, we get 
    \begin{equation}\label{cvL2}
        \| \widetilde{G}_{\lambda+i\varepsilon}- \widetilde{G}_\lambda \|_{L^2}=\int_{-\infty}^0 \bigg|\frac{1}{\xi-(\lambda+i\varepsilon)}-\frac{1}{\xi-\lambda}\bigg|^2 \d \xi\to 0, \quad  \varepsilon\to 0^+.
    \end{equation}
    Then  define $G_\lambda^+$ as  $G_{\lambda+i\varepsilon}$ when $\varepsilon\to 0^+$,
    \begin{equation}\label{glambda}
        G_\lambda^+ (x):=\lim_{\varepsilon\to 0^+} G_{\lambda + i \varepsilon} (x)= ie^{i\lambda x}\mathds{1}_{x\geq 0}-\widetilde G_\lambda (x),
    \end{equation}
    where the first  term converges pointwisely, and the second term converges in $L^2$.
    Similarly, define $G_\lambda^-$ as 
    \begin{equation}\label{glambda-}
        G_\lambda^-(x):=\lim_{\varepsilon\to 0^+} G_{\lambda - i \varepsilon}(x)= -ie^{i\lambda x}\mathds{1}_{x\leq 0}-\widetilde G_\lambda(x).
    \end{equation}

  Using the Green functions, we will define  the convolution operators $T_k$ and $T_\lambda^\pm$. Applying the Fredholm alternative theorem, the  bijectivity of $\mathrm{id}-T_k$ and $\mathrm{id}-T_\lambda^\pm$ will be proved. Then we define the Jost functions by  the bijective  operator. 
  
    Assume that  $u\in H^1_+\cap L^2_s $ for some $s> 0$. For every $k\in \C\setminus [0, +\infty)$ and $\lambda>0$, define the  operators $T_k: L_{-s}^\infty \to L^\infty_{-s} $  and  $T_\lambda^\pm: L_{-s}^\infty \to L^\infty_{-s} $  as 
\begin{align}
	T_k\mathbf{f}_k:=G_k*(u\Pi( \overline u \mathbf{f}_k)), \quad T_\lambda^\pm \mathbf{f}_\lambda^\pm:=G_\lambda^\pm*(u\Pi( \overline u \mathbf{f}_\lambda^\pm)),\quad \forall \  \mathbf{f}_k, \mathbf{f}_\lambda^\pm\in L^{\infty}_{-s}.\label{Tlamf}
	\end{align}
    Since $G_k\in L^2$, the  operators $T_k$ and $T_\lambda^\pm$ are also well-defined  from $L^\infty$ to $L^\infty$.

   \begin{lemma}\label{lemma2.2}
Assume that $u\in H^1_+\cap L^2_s $ for some $s > 0$. Then for every $\lambda>0$, the   operators $T_\lambda^\pm$ are compact from $L_{-s}^{\infty} $ to $L^{\infty} $  .
	\end{lemma}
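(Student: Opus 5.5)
We prove compactness of $T_\lambda^+$; the case of $T_\lambda^-$ is identical after reflecting $x\mapsto -x$ in the explicit term of \eqref{glambda-}. The plan is to factor $T_\lambda^+$ as a composition of a bounded map from $L^\infty_{-s}$ into a nice space with a fixed convolution. Then we verify Arzelà–Ascoli-type compactness in $L^\infty$, namely uniform boundedness, equicontinuity, and uniform decay or control at $\pm\infty$. Since $L^\infty$ is not separable, "compact into $L^\infty$" will be checked via the image lying in $C_b$ with a uniform modulus of continuity and uniform tail control, which suffices for relative compactness in the sup norm.

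\emph{Step 1 (the multiplier map).} For $\mathbf f\in L^\infty_{-s}$ we have $\overline u\mathbf f=\langle x\rangle^{s}\overline u\cdot\langle x\rangle^{-s}\mathbf f\in L^2$ with $\|\overline u\mathbf f\|_{L^2}\le \|u\|_{L^2_s}\|\mathbf f\|_{L^\infty_{-s}}$. Hence $\Pi(\overline u\mathbf f)\in L^2_+$ with the same bound, and $g:=u\Pi(\overline u\mathbf f)\in L^1\cap L^2$, since $u\in L^2\cap L^\infty$ (because $H^1\hookrightarrow L^\infty$). Thus $\mathbf f\mapsto g$ is bounded from $L^\infty_{-s}$ to $L^1\cap L^2$. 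The bounded set $\{\Pi(\overline u\mathbf f)\}$ is merely bounded in $L^2$, not compact. However, multiplication by $u$, with $u\in H^1\cap L^2_s$, gives extra structure: $\widehat{g}=\frac1{2\pi}\hat u*\widehat{\Pi(\overline u\mathbf f)}$ is uniformly equicontinuous, since $\hat u\in L^2$ and translations are continuous in $L^2$.

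\emph{Step 2 (split the Green function).} By \eqref{glambda}, $T_\lambda^+\mathbf f=i\int_{-\infty}^x e^{i\lambda(x-y)}g(y)\,dy-\widetilde G_\lambda*g$.

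For the first term, $A g(x):=e^{i\lambda x}\int_{-\infty}^x e^{-i\lambda y}g(y)\,dy$. This term is bounded by $\|g\|_{L^1}$. Its equicontinuity follows from $|Ag(x)-Ag(x')|\le |\lambda||x-x'|\|g\|_{L^1}+\|g\|_{L^2}|x-x'|^{1/2}$. For tail control we use the weight: $\int_{-\infty}^{-R}|g|\le \|u\|_{L^\infty}\|\langle y\rangle^{-s'}\|\ldots$. More concretely, $\int_{|y|>R}|g|\le \|u\mathds 1_{|y|>R}\|_{L^2}\|u\|_{L^2_s}\|\mathbf f\|_{L^\infty_{-s}}\to0$ uniformly. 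So as $x\to-\infty$, $Ag\to 0$ uniformly. As $x\to+\infty$, $Ag(x)$ equals $e^{i\lambda x}$ times a quantity approaching $\hat g(\lambda)$ uniformly in $\mathbf f$. Truncating $g$ outside $[-R,R]$ therefore reduces the term to a family of the form $e^{i\lambda x}\cdot(\text{bounded scalar})$ on $[R,\infty)$ plus equicontinuous functions on a compact set. Arzelà–Ascoli then gives total boundedness in $L^\infty$.

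For the second term, $\widetilde G_\lambda\in L^2$ and $g$ ranges over a bounded set in $L^2$, so $\widetilde G_\lambda*g$ is bounded and uniformly continuous with modulus $\sup_g\|g(\cdot-h)-g\|$. This modulus is obtained instead by moving the translation onto $\widetilde G_\lambda$: $\|\widetilde G_\lambda(\cdot-h)-\widetilde G_\lambda\|_{L^2}\to0$. For decay we approximate $g$ by its truncation $g\mathds 1_{|y|\le R}$, uniformly in $L^2$ as above, and $\widetilde G_\lambda*(g\mathds 1_{[-R,R]})(x)$ is bounded by $\|\widetilde G_\lambda\mathds 1_{|\cdot-x|\le R}\|_{L^2}\|g\|_{L^2}$, which tends to $0$ as $|x|\to\infty$ uniformly.

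\emph{Main obstacle.} The delicate points are the logarithmic singularity of $\widetilde G_\lambda$ near the origin and the fact that $\widetilde G_\lambda$ is only $L^2$, not $L^1$. These are handled exactly by the $L^2$ translation continuity above, together with the uniform smallness of the tails $\|u\mathds 1_{|y|>R}\|_{L^2}$, which comes from $u\in L^2_s$ and is where the weight is essential. Combining these, $T_\lambda^+$ maps the unit ball of $L^\infty_{-s}$ into a totally bounded subset of $L^\infty$.
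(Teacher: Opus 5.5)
Your argument is correct, and it takes a genuinely different route from the paper.

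The paper computes $D(T_\lambda^+ f_n)$ to get uniform $W^{1,2}[-N,N]$ bounds. It then uses Rellich--Kondrachov, Arzel\`a--Ascoli and a Cantor diagonal extraction to get a subsequence converging uniformly on compact sets, and afterwards only checks that the limit is bounded.

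You instead prove total boundedness directly in the sup norm:
\begin{itemize}
\item equicontinuity comes straight from the explicit kernel, via $|\lambda||x-x'|\,\|g\|_{L^1}+|x-x'|^{1/2}\|g\|_{L^2}$;
\item uniform smallness at $-\infty$ comes from $\int_{|y|>R}|g|\le \|u\mathds{1}_{|y|>R}\|_{L^2}\|u\|_{L^2_s}\|\mathbf{f}\|_{L^\infty_{-s}}$;
\item at $+\infty$ you use that $T_\lambda^+\mathbf{f}$ is uniformly close to $i\hat g(\lambda)e^{i\lambda x}$, a bounded one-parameter family.
\end{itemize}

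This control at infinity is precisely what the paper's proof omits. Locally uniform convergence plus a uniform $L^\infty$ bound does not give convergence in $L^\infty(\mathbb{R})$; translated bumps are a counterexample. So the diagonal argument by itself only yields compactness into $L^\infty_{-s}$, where the weight $\langle x\rangle^{-s}$ absorbs the tails. That is enough for Corollary \ref{Tlamb-compact}, but not for the lemma as stated. Your route proves the stated $L^\infty$-valued version, and it transfers directly to Lemma \ref{T-lam-inf-inf}, whose proof has the same issue. In exchange, the paper's route is purely local and needs no knowledge of the asymptotics of $T_\lambda^+\mathbf{f}$, which suffices for the Fredholm argument in weighted spaces.

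One simplification you missed: $g=u\Pi(\overline{u}\mathbf{f})\in L^2_+$, while $\widehat{\widetilde G_\lambda}$ is supported in $(-\infty,0]$, so $\widetilde G_\lambda*g\equiv 0$. The paper itself uses this in Lemmas \ref{Tlaminj} and \ref{philamequi}. Your concern about the logarithmic singularity of $\widetilde G_\lambda$ is therefore moot. Your treatment of that term is nonetheless correct, except that its $L^2$ truncation step needs $\|u\mathds{1}_{|y|>R}\|_{L^\infty}\to 0$ rather than the $L^2$ tail of $u$; this holds since $u\in H^1$.
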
 

        \begin{proof}
Let $(f_n)_{n\in \mathbb{N}}$ be a bounded sequence in  $L_{-s}^\infty$ and  $A=\sup\limits_{n\in \mathbb{N}}\|  f_n\|_{L_{-s}^\infty}<+\infty$.
Since $u\in H^1_+\cap L_s^2$, by Young's inequality, we deduce
\begin{equation}\begin{split}\label{Tlaminf}
\| T_\lambda^+ f_n\|_{L^\infty}&=\|G_\lambda^+*(u\Pi(\overline u f_n))\|_{L^\infty}=\|(ie_\lambda\mathds{1}_{x\geq 0})*(u\Pi(\overline u f_n))\|_{L^\infty}+\|\widetilde{G}_\lambda*(u\Pi(\overline u f_n))\|_{L^\infty}\\
&\leq \|ie_\lambda\mathds{1}_{x\geq 0}\|_{L^\infty}\|u\Pi(\overline u f_n)\|_{L^1}+\|\widetilde{G}_\lambda\|_{L^2}\|u\Pi(\overline u f_n)\|_{L^2}\\
&\leq  \|u\|_{L^2}\| u\|_{L^2_s}\| f_n\|_{L_{-s}^\infty}+\frac 1 {\sqrt{\lambda}} \|u\|_{L^\infty}\| u\|_{L^2_s}\| f_n\|_{L_{-s}^\infty}\leq A B(\lambda, \| u\|_{ H^{1}_s}),
\end{split}\end{equation}
where $B(\lambda, \| u\|_{H^{1}_s})$ depends on $\lambda$ and $\| u\|_{H^{1}_s}$, and  $B(\lambda, \| u\|_{H^{1}_s})=\|u\|_{L^2}\| u\|_{L^2_s}+\frac 1 {\sqrt{\lambda}} \|u\|_{L^\infty}\| u\|_{L^2_s}$.
Next we compute the derivative $D(T_\lambda^+f_n)$.
Taking the derivative with respect to $x$ for $ T_\lambda^+f_n$, we have
\[
D(T_\lambda^+f_n)(x)=i\lambda e^{i\lambda x}\int_{-\infty}^{x}e^{-i\lambda y}u(y)\Pi( \overline{u}f_n)(y)         \ \mathrm{d} y+u(x)\Pi( \overline{u}f_{n})(x)-\frac{1}{i}\partial_{x} \left(\widetilde{G}_{\lambda}*(u\Pi( \overline{u}f_n)) \right)(x).
\]
Since
\[
\begin{aligned}
	&\mathcal{F}\left(\frac{1}{i}\partial_{x}\left(\widetilde{G}_{\lambda}*(u\Pi( \overline{u}f_n))\right)\right)(\xi)= \xi\frac{\mathds{1}_{\xi\leq 0}}{\xi-\lambda}\widehat{u\Pi(\overline uf_n)}(\xi)\\
	&=\left(\mathds{1}_{\xi\leq 0}+\lambda\frac{\mathds{1}_{\xi\leq 0}}{\xi-\lambda}\right)\widehat{u\Pi(\overline uf_n)}(\xi)=\mathcal{F}\left(\Pi_{-}(u\Pi( \overline{u}f_n))+\lambda\widetilde{G}_{\lambda}*(u\Pi( \overline{u}f_n))\right)(\xi),
\end{aligned}
\]
with $\Pi_-=\mathrm{id}-\Pi$,
we obtain
\[
\begin{aligned}
D(T_\lambda^+f_n)&=i\lambda e^{i\lambda x}\int_{-\infty}^{x}e^{-i\lambda y}u(y)\Pi( \overline{u}f_n)(y,\lambda+)         \ \mathrm{d} y+u\Pi( \overline{u}f_n)-\Pi_{-}(u\Pi( \overline{u}f_n))-\lambda\widetilde{G}_{\lambda}*(u\Pi( \overline{u}f_n))
\\
&
=I_1+I_2+I_3+I_4.
\end{aligned}
\]
From $u\in H^1_+\cap  L_{s}^2$, we have $I_2\in L^2\cap L^1$ and  $I_3=-\Pi_-(I_2)\in L^2_- $. By Young's inequality, we get
\[
\| I_4\|_{L^{\infty}}\leq \lambda \|\widetilde G_\lambda \|_{L^2} \|u\Pi(\overline u f_n) \|_{L^2}\leq \frac{\sqrt{\lambda}}{\sqrt{2\pi}}\|u\|_{L^\infty}\| u\|_{L^2_s}\| f_n\|_{L_{-s}^\infty}\leq A \boldsymbol{C}_1(\lambda,\| u\|_{H^{1}_s}),
\]
where $\boldsymbol{C}_1(\lambda,\| u\|_{H^{1}_s})$ depends  on $\lambda$ and $\| u\|_{H^{1}_s}$.
Using H\"older inequality, we have
\[
| I_1 |\leq \int_{-\infty}^x |u\Pi(\overline u f_n)| \mathrm{d} y \leq \|u\Pi(\overline u f_n) \|_{L^1}\leq \|u\|_{L^2}\| u\|_{L^2_s}\| f_n\|_{L_{-s}^\infty}<A \boldsymbol{C}_2(\|u \|_{H^{1}_s}),
\]
where $\boldsymbol{C}_2(\|u \|_{H^{1}_s})$  depends on $\|u \|_{H^{1}_s}$.
Thus for every $N>0$, there exists  a constant $C_N>0$, depending on  $\lambda$ and $\|u \|_{H^{1}_s}$, such that $D(T_\lambda^+f_n)$ belongs to  a locally $L^2$ space, i.e
\begin{equation}\label{DTf}
\| DT_\lambda^+f_n\|_{L^2(-N,N)}\leq A C_N(\lambda, \|u \|_{H^{1}_s}).
\end{equation}
Therefore, by \eqref{Tlaminf} and \eqref{DTf}, 
 the sequence $( T_\lambda^+ f_n  )_{n\in \mathbb{N}}$ belongs to the Sobolev space $W^{1,2}[-N, N]$.  By the Rellich--Kondrachov theorem,   the imbedding from the Sobolev space $W^{1,2}[-N, N]$ to the  H\"older space \(C^{0,\frac{1}{2}}[-N,N]\) is compact.

Next we prove that there exists a subsequence converging  to a function \(f\) in $L^{\infty}$. 
The proof proceeds via a Cantor diagonal process with the following steps:  

For the interval $[-1,1]$,  $(T_\lambda^+f_{n})_{n\in \mathbb{N}}\in C^{0,\frac{1}{2}}[-1,1]$ is equicontinuous on $[-1,1]$, and $(T_\lambda^+f_{n}(x))_{n\in \mathbb{N}}$ is uniformly bounded on $[-1,1]$. Then according to Arzela-Ascoli theorem, there exists a increasing function $\kappa_1: \mathbb{N}\to  \mathbb{N}$ such that the subsequence $(T_\lambda^+f_{\kappa_1(n)})_{n\in \mathbb{N}}$ converges uniformly on $[-1,1]$. 

 For the interval $[-2,2]$, 
$(T_\lambda^+f_{\kappa_1(n)})_{n\in \mathbb{N}}$ is unifomly bounded in $W^{1,2}[-2,2]$. 
 There exists a increasing function $\kappa_2: \mathbb{N}\to  \mathbb{N}$ such that the subsequence $(T_\lambda^+f_{\kappa_1\circ \kappa_2(n)})_{n\in \mathbb{N}}$ converges uniformly on $[-2,2]$. 

For the interval $[-m-1,m+1]$, the subsequence $(T_\lambda^+f_{\kappa_1\circ \kappa_2\circ\cdots \circ \kappa_m(n)})_{n\in \mathbb{N}}$ is unifomly bounded in $W^{1,2}[-m-1,m+1]$. Thus there exists an increasing function $\kappa_{m+1}: \mathbb{N}\to  \mathbb{N}$ such that the subsequence $(T_\lambda^+f_{\kappa_1\circ \kappa_2\circ\cdots \circ \kappa_{m+1}(n)})_{n\in \mathbb{N}}$ converges uniformly on  $[-m-1,m+1]$. 

Define the diagonal subsequence $h_n:=T_\lambda^+f_{\kappa_1\circ \kappa_2\circ\cdots \circ \kappa_{n}(n)}$ for $n\geq 1$. For every $N\in \mathbb{N}_+$, there exists a function $f\in C^{0,\frac 1 2}$ such that $(h_n)_{n\geq 1}$ converges to $f$ uniformly on $[-N, N]$. 

Every compact subset of \(\mathbb{R}\) is contained in some closed interval \([-N, N]\) for \(N \in \mathbb{N}_+\). Thus $(h_n)_{n\geq 1}$ converges to $f$ uniformly on every compact subset of \(\mathbb{R}\). Since the uniform limit of sequence of continuous function is continuous, $f$ is continuous on $\R$. 

 For every $x\in\R$, there exists $N_x=[x]+1$ such that 
 \[
 |f(x)|\leq \| f\|_{L^{\infty}[-N_x,N_x]} \leq \sup\limits_{n\in\mathbb{N}_+} \|T_\lambda^+f_{\kappa_1\circ \kappa_2\circ\cdots \circ \kappa_{n}(n)} \|_{L^{\infty}} \leq \sup\limits_{n\in\mathbb{N}_+} \|T_\lambda^+f_n \|_{L^{\infty}}.
 \]
From \eqref{Tlaminf}, we obtain
\[
\begin{aligned}
	\|f\|_{L^{\infty}}\leq \sup_{n\in\mathbb{N}_+}\|T_{\lambda}^+f_n\|_{L^{\infty}}\leq  A B(\lambda, \| u\|_{ H^{1,s}}).
\end{aligned}
\]
Thus $T_\lambda^+: L^{\infty}_{-s}\to L^{\infty}$ is compact.

In a similar way, we prove that the   operator $T_\lambda^-$  is compact from $L_{-s}^\infty$ to $L^\infty$.
\end{proof}

\begin{corollary}\label{Tlamb-compact}
 Assume that $u\in H^1_+\cap L^2_s $ for some $s > 0$. Then for every $\lambda>0$, the operators $T_\lambda^\pm$ are compact from $L_{-s}^{\infty} $ to $L_{-s}^{\infty} $.
\end{corollary}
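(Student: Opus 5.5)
The plan is to compose the compact operator $T_\lambda^\pm : L^\infty_{-s}\to L^\infty$ from Lemma \ref{lemma2.2} with the continuous inclusion $\iota : L^\infty \hookrightarrow L^\infty_{-s}$. The composition of a compact operator with a bounded one is compact, so this immediately yields compactness of $T_\lambda^\pm$ as an operator from $L^\infty_{-s}$ to $L^\infty_{-s}$.

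The only point to check is the boundedness of the inclusion. With the natural norm $\|f\|_{L^\infty_{-s}} = \sup_{x}\langle x\rangle^{-s}|f(x)|$, and since $\langle x\rangle^{-s}\le 1$ for $s>0$, we get
\[
\|f\|_{L^\infty_{-s}}\le \|f\|_{L^\infty}.
\]
So $\iota$ is bounded with norm at most $1$.

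Concretely, let $(f_n)$ be bounded in $L^\infty_{-s}$. By the proof of Lemma \ref{lemma2.2}, there is a subsequence $h_n = T_\lambda^+ f_{\kappa(n)}$ and a continuous $f$ with $\|f\|_{L^\infty}\le A\,B(\lambda,\|u\|_{H^1_s})$ such that $h_n\to f$ locally uniformly. Ideally one would simply cite the lemma, but that proof as written only establishes locally uniform convergence, not convergence in the $L^\infty$ norm. I therefore expect the main step to be upgrading the convergence, and the weight $\langle x\rangle^{-s}$ actually makes this easier.

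Given $\varepsilon>0$, both $h_n$ and $f$ are bounded uniformly in $L^\infty$ by $M = A\,B(\lambda,\|u\|_{H^1_s})$, using \eqref{Tlaminf}. Choose $R$ with $2M\langle R\rangle^{-s}<\varepsilon$. Then
\[
\sup_{|x|>R}\langle x\rangle^{-s}|h_n(x)-f(x)|<\varepsilon
\]
for all $n$, while on $[-R,R]$ locally uniform convergence gives $\sup_{|x|\le R}|h_n-f|<\varepsilon$ for $n$ large. Hence $\|h_n-f\|_{L^\infty_{-s}}\to 0$, and $f\in L^\infty\subset L^\infty_{-s}$. This proves compactness of $T_\lambda^+$ from $L^\infty_{-s}$ to $L^\infty_{-s}$ directly, without relying on the stronger claim of convergence in $L^\infty$. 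The same argument applied to $T_\lambda^-$ completes the proof.
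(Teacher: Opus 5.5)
Your proposal is correct, and its first step is exactly the paper's proof: the paper composes the compact map $T_\lambda^\pm : L^\infty_{-s}\to L^\infty$ of Lemma \ref{lemma2.2} with the continuous inclusion $L^\infty\hookrightarrow L^\infty_{-s}$, and says nothing more.

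The genuine difference is that you do not rely on that citation, and you are right not to. The proof of Lemma \ref{lemma2.2} only produces a subsequence converging locally uniformly to a bounded continuous limit. That does not imply convergence in $L^\infty$; a bump translated to infinity is a counterexample. Your tail estimate combines the uniform bound \eqref{Tlaminf} with the decay of $\langle x\rangle^{-s}$ for $s>0$. This upgrades locally uniform convergence to convergence in $L^\infty_{-s}$, so the corollary follows from what the argument of Lemma \ref{lemma2.2} actually establishes: uniform $L^\infty$ bounds plus local $W^{1,2}$ equicontinuity.

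The paper's route is shorter, and it would give the stronger $L^\infty$-valued compactness if Lemma \ref{lemma2.2} were fully proved. Yours is complete, but it uses the weight essentially. So it would not repair the unweighted Lemma \ref{T-lam-inf-inf}. There one would have to split off the non-decaying rank-one part $ie^{i\lambda x}\widehat{F}(\lambda)$, with $F=u\Pi(\overline{u}f)$, that persists as $x\to+\infty$, and then show the remainder has uniformly small tails.
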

\begin{proof}
     Since the  injection $L^{\infty}\to  L^{\infty}_{-s}$ is continuous,  $T_\lambda^+: L^{\infty}_{-s}\to L^{\infty}_{-s}$ is compact. 
\end{proof}
   \begin{lemma}\label{T-lam-inf-inf}
Assume that $u\in H^1_+\cap L^2 $. Then for every $\lambda>0$, the   operators $T_\lambda^\pm$ are compact from $L^{\infty} $ to $L^{\infty} $  . 
	\end{lemma}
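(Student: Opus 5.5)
The plan is to factor $T_\lambda^\pm$ through $L^2$ and then prove that the second factor is a norm limit of compact operators. This deals with a point the local argument of Lemma \ref{lemma2.2} does not address: uniform convergence on compact sets alone does not give convergence in $L^\infty(\R)$, so we need uniform control of the tails. For $f\in L^\infty$ set $Mf:=\Pi(\overline u f)$. Since $\|\overline u f\|_{L^2}\le \|u\|_{L^2}\|f\|_{L^\infty}$ and $\Pi$ is an orthogonal projector, $M:L^\infty\to L^2$ is bounded. Writing $K^\pm_u h:=G_\lambda^\pm*(uh)$, we have $T_\lambda^\pm=K_u^\pm\circ M$. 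It therefore suffices to show that $K_u^\pm:L^2\to L^\infty$ is compact. We treat $K_u^+$; the case $K_u^-$ is the same.

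\textbf{Step 1 (continuity in $u$).} For $v\in H^1$ and $h\in L^2$, the decomposition \eqref{glambda} together with Young's inequality, as in \eqref{Tlaminf}, gives
\begin{equation*}
\|K^+_v h\|_{L^\infty}\le \|vh\|_{L^1}+\|\widetilde G_\lambda\|_{L^2}\|vh\|_{L^2}\le \big(\|v\|_{L^2}+\lambda^{-1/2}\|v\|_{L^\infty}\big)\|h\|_{L^2}.
\end{equation*}
Since $K_v^+$ is linear in $v$, \eqref{ineqC1pripri} yields $\|K_u^+-K_{u_R}^+\|_{\mathcal B(L^2,L^\infty)}\lesssim_\lambda \|u-u_R\|_{H^1}$. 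We choose $u_R\in C_c^\infty$ supported in $[-R,R]$ with $u_R\to u$ in $H^1$; we do not need $u_R$ to lie in $H^1_+$. Norm limits of compact operators are compact, so it remains to show that each $K_{u_R}^+$ is compact.

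\textbf{Step 2 (compactly supported $u_R$).} Let $(h_n)$ be bounded in $L^2$ and set $g_n:=u_Rh_n$. Then $(g_n)$ is supported in $[-R,R]$ and bounded in $L^1\cap L^2$. First consider $ie_\lambda\mathds 1_{x\ge0}*g_n(x)=ie^{i\lambda x}\int_{-\infty}^x e^{-i\lambda y}g_n(y)\,\mathrm d y$. This vanishes for $x<-R$ and equals $c_n e^{i\lambda x}$ for $x>R$, where $c_n=\int e^{-i\lambda y}g_n\,\mathrm dy$ is a bounded sequence of scalars. On $[-R,R]$ it is uniformly Lipschitz, with Lipschitz constant $\lesssim \lambda\|g_n\|_{L^1}+\|g_n\|_{L^2}$ in the $H^1$ sense, hence equicontinuous. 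By Arzel\`a--Ascoli and a subsequence with $c_n$ convergent, this term converges in $L^\infty(\R)$.

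Next consider $\widetilde G_\lambda*g_n$. Its equicontinuity follows from $|\widetilde G_\lambda*g_n(x)-\widetilde G_\lambda*g_n(x')|\le\|\widetilde G_\lambda(\cdot+x-x')-\widetilde G_\lambda\|_{L^2}\|g_n\|_{L^2}$ and continuity of translations in $L^2$. For the tails, we use $|\widetilde G_\lambda*g_n(x)|\le \|\widetilde G_\lambda\|_{L^2(\{|y|\ge |x|-R\})}\|g_n\|_{L^2}$, which tends to $0$ uniformly in $n$ as $|x|\to\infty$. So this family is precompact in $C_0(\R)$, by Arzel\`a--Ascoli on large intervals combined with the uniform tail bound.

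\textbf{Conclusion and main obstacle.} A diagonal subsequence therefore converges in $L^\infty$, so $K_{u_R}^+$ is compact, and hence so are $K_u^+$ and $T_\lambda^+=K_u^+M$. The case of $T_\lambda^-$ is identical, using \eqref{glambda-}. The main obstacle is the tail control in $L^\infty(\R)$. The non-decaying oscillatory part $e^{i\lambda x}$ of the Green function is handled by the fact that, for compactly supported $u_R$, it contributes a one-dimensional piece outside $[-R,R]$. This is exactly why we approximate $u$ in $H^1$ rather than work with $u$ directly, since we only assume $u\in L^2$ without weight.
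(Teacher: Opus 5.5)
Your proof is correct, but it follows a different route from the paper's, and yours is the more complete of the two.

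The paper argues exactly as in Lemma \ref{lemma2.2}. It bounds $\|T_\lambda^+ f_n\|_{L^\infty}$, computes $D(T_\lambda^+ f_n)$ explicitly to get bounds in $W^{1,2}(-N,N)$ for each $N$, and then applies Rellich/Arzel\`a--Ascoli with a Cantor diagonal extraction. That only gives a subsequence converging uniformly on compact sets. The paper never controls the tails, so as written it does not give convergence in $L^\infty(\R)$: a translating bump is bounded and converges locally uniformly to $0$, yet has no $L^\infty$-Cauchy subsequence. This is harmless for the weighted target $L^\infty_{-s}$ of Corollary \ref{Tlamb-compact}, where $\langle x\rangle^{-s}$ kills the tails, but not for the unweighted statement here.

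You supply exactly the missing uniform tail control, with three ingredients:
\begin{itemize}
\item the factorization $T_\lambda^\pm=K_u^\pm\circ M$ through $L^2$;
\item the operator-norm approximation $u_R\to u$ in $H^1$, which uses only $u\in H^1$ and not the Hardy structure;
\item for compactly supported $u_R$, the non-decaying part $\pm i e_\lambda\mathds{1}_{\R^\pm}$ of $G_\lambda^\pm$ contributes zero on one side and a single scalar multiple of $e^{i\lambda x}$ on the other, while $\widetilde G_\lambda * g_n$ has a uniform $C_0$ tail bound.
\end{itemize}
The price is an extra approximation step. What it buys is a genuine proof of compactness into $L^\infty$, and it makes clear why compactness holds: the oscillatory part of the Green function is essentially rank one at infinity.

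Two cosmetic slips. On $[-R,R]$ the first term is bounded in $H^1(-R,R)$, so it is uniformly $\tfrac12$-H\"older rather than Lipschitz; that is all Arzel\`a--Ascoli needs. Also, the constant for $x>R$ is $i c_n$, not $c_n$.
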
 
    \begin{proof}
Let $(\mathfrak{f}_n)_{n\in \mathbb{N}}$ be a bounded sequence in $L^\infty$ and $\mathfrak{A}=\sup_{n\in \mathbb{N}}\| \mathfrak{f}_n \|<+\infty$. 
Since $u\in H^1_+\cap L^2$, by Young's inequality, we deduce
\begin{equation*}\begin{split}
\| T_\lambda^+ \mathfrak{f}_n\|_{L^\infty}&=\|G_\lambda^+*(u\Pi(\overline u \mathfrak{f}_n))\|_{L^\infty}=\|(ie_\lambda\mathds{1}_{x\geq 0})*(u\Pi(\overline u \mathfrak{f}_n))\|_{L^\infty}+\|\widetilde{G}_\lambda*(u\Pi(\overline u \mathfrak{f}_n))\|_{L^\infty}\\
&\leq \|ie_\lambda\mathds{1}_{x\geq 0}\|_{L^\infty}\|u\Pi(\overline u \mathfrak{f}_n)\|_{L^1}+\|\widetilde{G}_\lambda\|_{L^2}\|u\Pi(\overline u \mathfrak{f}_n)\|_{L^2}\\
&\leq  \|u\|_{L^2}\| u\|_{L^2}\| \mathfrak{f}_n\|_{L^\infty}+\frac 1 {\sqrt{\lambda}} \|u\|_{L^\infty}\| u\|_{L^2}\| \mathfrak{f}_n\|_{L^\infty}\leq \mathfrak{A} \mathfrak{B}(\lambda, \| u\|_{ H^{1}}),
\end{split}\end{equation*}
where $\mathfrak{B}(\lambda, \| u\|_{H^{1}})$ depends on $\lambda$ and $\| u\|_{H^{1}}$, and  $\mathfrak{B}(\lambda, \| u\|_{H^{s}})=\|u\|_{L^2}^2+\frac 1 {\sqrt{\lambda}} \|u\|_{L^\infty}\| u\|_{L^2}$.
Similar as Lemma \ref{lemma2.2}, we have 
\[
\begin{aligned}
D(T_\lambda^+ \mathfrak{f}_n)&=i\lambda e^{i\lambda x}\int_{-\infty}^{x}e^{-i\lambda y}u(y)\Pi( \overline{u}\mathfrak{f}_n)(y,\lambda+)         \ \mathrm{d} y+u\Pi( \overline{u}\mathfrak{f}_n)-\Pi_{-}(u\Pi( \overline{u}\mathfrak{f}_n))-\lambda\widetilde{G}_{\lambda}*(u\Pi( \overline{u}\mathfrak{f}_n)).
\end{aligned}
\]
Since $u\in H^1_+\cap  L^2$, we have $u\Pi( \overline{u}\mathfrak{f}_n)\in L^2\cap L^1$ and $\Pi_{-}(u\Pi( \overline{u}\mathfrak{f}_n))\in L^2_- $. By H\"{o}lder inequality and  Young's inequality, the first and second term belong to $L^\infty$.
Thus $D(T_\lambda^+\mathfrak{f}_n)$ belongs to  a locally $L^2$ space.  Then using the Sobolev imbedding theorem and Cantor diangonal process, we obtain the compactness of the operator $T_\lambda^+: L^\infty\to L^\infty$.
    \end{proof}
    
    For $\lambda>0$, the operators $T_\lambda^\pm: L^\infty_{-s}\to L^\infty_{-s}$ and $T_\lambda^\pm: L^\infty\to L^\infty$ are compact by Corollary \ref{Tlamb-compact} and  Lemma \ref{T-lam-inf-inf}. Now for $k\in\C\setminus [0,+\infty)$, we prove the compactness for $T_k:  L^\infty_{-s}\to L^\infty_{-s}$ and $T_k:  L^\infty\to L^\infty$.
 \begin{lemma}\label{Tkcomp}
 Assume that $u\in H^1_+\cap L^2_s $ for some $s > 0$. 	Then for every $k\in\C\setminus[0,+\infty)$,  the operator $T_k$ is compact from $L_{-s}^{\infty} $ to $L_{-s}^{\infty} $. In addtion, the operator $T_k$ is compact from $L^{\infty} $ to $L^{\infty} $.
	\end{lemma}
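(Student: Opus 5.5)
The plan follows the proof of Lemma \ref{lemma2.2} and Lemma \ref{T-lam-inf-inf}: show that $T_k$ maps a bounded sequence $(f_n)$ of $L^\infty_{-s}$ (resp. $L^\infty$) to a sequence that is uniformly bounded in $L^\infty$ and locally bounded in $W^{1,2}$, then use Arzel\`a--Ascoli with a Cantor diagonal argument. As in Corollary \ref{Tlamb-compact}, compactness into $L^\infty$ then gives compactness into $L^\infty_{-s}$, because $L^\infty\hookrightarrow L^\infty_{-s}$ continuously.

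\textbf{Step 1: uniform $L^\infty$ bound.} For $k\in\C\setminus[0,+\infty)$ the symbol $\mathds{1}_{\xi\ge 0}/(\xi-k)$ is square integrable, since $\mathrm{dist}(k,[0,\infty))>0$. Hence $G_k\in L^2$ with
\[
\|G_k\|_{L^2}^2=\frac{1}{2\pi}\int_0^\infty\frac{\d\xi}{|\xi-k|^2}<\infty .
\]
Young's inequality then gives
\[
\|T_kf_n\|_{L^\infty}\le \|G_k\|_{L^2}\,\|u\Pi(\overline u f_n)\|_{L^2}\le \|G_k\|_{L^2}\,\|u\|_{L^\infty}\,\|\overline u f_n\|_{L^2},
\]
using that $\Pi$ is bounded on $L^2$. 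The remaining factor satisfies $\|\overline u f_n\|_{L^2}\le \|u\|_{L^2_s}\|f_n\|_{L^\infty_{-s}}$, or $\le \|u\|_{L^2}\|f_n\|_{L^\infty}$ in the $L^\infty$ case. So the decomposition \eqref{Gkc+c-} is not even needed here.

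\textbf{Step 2: local $W^{1,2}$ bound.} Compute $D(T_kf_n)$ on the Fourier side:
\[
\frac{\xi\mathds{1}_{\xi\ge0}}{\xi-k}=\mathds{1}_{\xi\ge0}+\frac{k\mathds{1}_{\xi\ge0}}{\xi-k}.
\]
This yields
\[
D(T_kf_n)=\Pi\big(u\Pi(\overline u f_n)\big)+k\,T_kf_n .
\]
The first term is bounded in $L^2$ by $\|u\|_{L^\infty}\|u\|_{L^2_s}\|f_n\|_{L^\infty_{-s}}$. The second is bounded in $L^\infty$ by Step 1, hence bounded in $L^2(-N,N)$ for every $N$. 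This is actually simpler than the $\lambda>0$ case, where the term $I_1$ had to be handled separately.

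\textbf{Step 3: compactness.} By Rellich--Kondrachov, $W^{1,2}[-N,N]\hookrightarrow C^{0,1/2}[-N,N]$, so the sequence $(T_kf_n)$ is equicontinuous and uniformly bounded on each $[-N,N]$. The diagonal extraction from Lemma \ref{lemma2.2} produces a subsequence converging locally uniformly to a bounded continuous $f$.

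\textbf{Main obstacle.} Local uniform convergence is not convergence in $L^\infty$, and the same gap is present in Lemma \ref{lemma2.2}. Tail control is therefore needed: I would show that $\sup_n|T_kf_n(x)|\to0$ as $|x|\to\infty$. To do this, split $u\Pi(\overline uf_n)$ into a part supported in $|y|\le R$ and a remainder. The remainder has $L^2$ norm at most $\|u\mathds{1}_{|y|>R}\|_{L^\infty}$ times a uniform bound, and this is small because $u\in H^1$ decays at infinity. The compactly supported part, convolved with $G_k\in L^2$, is small for large $|x|$, uniformly in $n$, because $\|G_k\mathds{1}_{|\cdot|>|x|-R}\|_{L^2}\to0$.

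If the $\Pi$ inside prevents a clean spatial localization, I would instead use the explicit form \eqref{Gkc+c-}. The term $\pm ie^{ikx}\mathds{1}_{\pm x\ge0}$ decays exponentially since $\mathrm{Im}\,k\ne0$, or for $k<0$ the whole kernel is explicit. Combining the tail estimate with local uniform convergence gives convergence in $L^\infty$, hence compactness in both settings.
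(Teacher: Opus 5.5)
Your proposal is correct and follows the paper's proof. The skeleton is the same: the Young bound $\|T_kf_n\|_{L^\infty}\le\|G_k\|_{L^2}\|u\Pi(\overline u f_n)\|_{L^2}$, the multiplier identity for $D(T_kf_n)$, and the Arzel\`a--Ascoli/diagonal extraction taken from Lemma~\ref{lemma2.2}. Your term $\Pi\big(u\Pi(\overline u f_n)\big)$ coincides with the paper's $u\Pi(\overline u f_n)$, since that product already lies in $L^2_+$.

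The real difference is your tail estimate, which the paper does not supply; its proof stops at locally uniform convergence of a subsequence, exactly as in Lemma~\ref{lemma2.2}.

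\begin{itemize}
\item For the target $L^\infty_{-s}$ this is actually enough. The images are uniformly bounded in $L^\infty$ and $\langle x\rangle^{-s}\to 0$ with $s>0$, so $\sup_{|x|>X}\langle x\rangle^{-s}|h_n-f|$ is small uniformly in $n$.
\item For the claim $T_k:L^\infty\to L^\infty$ it is not enough, because locally uniform convergence does not give convergence in norm. Your splitting of $u\Pi(\overline u f_n)$ at $|y|=R$ closes this gap. You use $u\in H^1\subset C_0$ for the far part, and $\|G_k\mathds{1}_{|\cdot|\ge |x|-R}\|_{L^2}\to 0$ with Cauchy--Schwarz for the near part.
\end{itemize}

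This gives the stronger statement that $T_k$ is compact from $L^\infty_{-s}$ into $L^\infty$. Both claims then follow, since $L^\infty$-bounded sets are $L^\infty_{-s}$-bounded and $L^\infty\hookrightarrow L^\infty_{-s}$ is continuous.

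Your fallback via \eqref{Gkc+c-} is unnecessary. The localization is applied to $u\Pi(\overline u f_n)$ after the projection, so $\Pi$ poses no obstruction.
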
 

    \begin{proof}
From \eqref{Tlamf}, we have
\[
T_kf_n= G_k*(u\Pi(\overline u f_n)).
\]
Since $G
_k\in L^2$ and $u\in H^1_+\cap L_s^2$, using Young's inequality, we deduce
\begin{equation}\small\begin{split}
&
\| T_k f_n\|_{L^\infty}=\|{G}_k*(u\Pi(\overline u f_n))\|_{L^\infty}
\leq \|{G}_k\|_{L^2}\|u\Pi(\overline u f_n)\|_{L^2}
\leq \boldsymbol{C}_3(k)\|u\|_{L^\infty}\| u\|_{L^2_s}\| f_n\|_{L_{-s}^\infty},
\\
&
\| T_k f_n\|_{L^\infty}=\|{G}_k*(u\Pi(\overline u f_n))\|_{L^\infty}
\leq \|{G}_k\|_{L^2}\|u\Pi(\overline u f_n)\|_{L^2}
\leq \boldsymbol{C}_4(k)\|u\|_{L^\infty}\| u\|_{L^2}\| f_n\|_{L^\infty},
\end{split}\end{equation}
where $\boldsymbol{C}_3(k)$ and $\boldsymbol{C}_4(k)$ depend on $k$.
Take the derivative with respect to $x$ for $ T_kf_n$ and use the Fourier transform,  we have
\[
D(T_kf_n)=\frac{1}{i}\partial_{x} \left({G}_k*(u\Pi( \overline{u}f_n)) \right)=u\Pi(\overline u f_n)+kG_k*(u\Pi(\overline u f_n)).
\]
Then similar as the imbedding and Cantor diagonal process in  the proof of Lemma \ref{lemma2.2}, we prove that $T_k$ is compact from $L_{-s}^{\infty} $ to $L_{-s}^{\infty} $. In addition, we obtain the compactness from $L^\infty$ to $L^\infty$.
    \end{proof}
        Next we prove that the operators $\mathrm{id}-T_k$ and $\mathrm{id}-T_\lambda^\pm$ are both injective for $L_{-s}^\infty \to L_{-s}^\infty$ and $L^\infty \to L^\infty$.
        \begin{lemma}\label{lemma2.3}
Assume that $u\in H^1_+\cap L^2_s $ for some $s> 0$.
	Then for $k\in   \C^+\cup \C^-  \cup \big( (-\infty,0)\setminus \sigma_{pp}(\mathbb{L}_u)\big)$, the operator $\mathrm{id}-T_k: L_{-s}^\infty \to L_{-s}^\infty$ is injective. In addition, the operator $\mathrm{id}-T_k: L^\infty \to L^\infty$ is injective.
	\end{lemma}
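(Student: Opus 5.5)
Suppose $f\in L^\infty_{-s}$ satisfies $f=T_kf=G_k*(u\Pi(\overline u f))$. The plan is to show $f\in H^1_+$ and $\mathbb{L}_u f=kf$. Then, because $k\notin\sigma_{pp}(\mathbb{L}_u)$, we get $f=0$. This covers $k\in\C^\pm$ as well, since $\mathbb{L}_u$ is self-adjoint and has no non-real eigenvalues.

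\emph{Step 1: regularity and membership in $H^1_+$.} Set $g:=u\Pi(\overline u f)$. Since $|f(x)|\le \|f\|_{L^\infty_{-s}}\langle x\rangle^{s}$ and $u\in L^2_s$, we have $\overline u f\in L^2$. As $\Pi$ is bounded on $L^2$ and $u\in H^1\subset L^\infty$, it follows that $g\in L^2$. Moreover $\widehat{f}=\frac{\mathds 1_{\xi\ge0}}{\xi-k}\,\widehat g$ holds in $\mathcal S'$. For $k\in\C\setminus[0,+\infty)$ the multiplier $\frac{\mathds 1_{\xi\ge 0}}{\xi-k}$ is bounded and decays like $\langle\xi\rangle^{-1}$, so $f\in H^1$ and $\widehat f$ is supported in $[0,+\infty)$. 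Hence $f\in H^1_+$. In particular $f\in L^2_+$, which is the point of the argument: the a priori weighted $L^\infty$ solution turns out to be an honest $L^2$ function.

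\emph{Step 2: the eigen-equation.} Multiplying by $(\xi-k)$ on the Fourier side gives $(D-k)f=\Pi g=g$. Here we use that $g=u\Pi(\overline u f)$ is a product of two functions with nonnegative spectrum ($u\in L^2_+$ and $\Pi(\overline u f)\in L^2_+$), so $\widehat g$ is supported in $[0,+\infty)$. More carefully, $u\in H^1_+$ and $\Pi(\overline uf)\in L^2_+$, and their product lies in $L^2$ with Fourier support in $[0,\infty)$ by the convolution of supports (or by Paley--Wiener, as boundary values of holomorphic functions on $\C^+$). Therefore $Df-kf=u\Pi(\overline u f)$, that is, $\mathbb{L}_u f=kf$ with $f\in H^1_+=\mathrm{Dom}(\mathbb{L}_u)$.

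\emph{Step 3: conclusion.} If $\Im k\neq0$, then $k\|f\|^2=\langle \mathbb{L}_u f,f\rangle\in\R$ forces $f=0$. If $k\in(-\infty,0)\setminus\sigma_{pp}(\mathbb{L}_u)$, then $f$ is an eigenvector unless $f=0$, so again $f=0$. The $L^\infty$ statement follows immediately, since $L^\infty\hookrightarrow L^\infty_{-s}$.

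The main obstacle is Step 1, namely justifying that the identity $f=G_k*g$ in $L^\infty_{-s}$ coincides with the $\mathcal S'$ Fourier identity. I would settle this by noting that $G_k\in L^2$ and $g\in L^2$, so the convolution is a genuine pointwise $L^\infty$ function whose Fourier transform is the product. The support claim for $g$ in Step 2 also needs care, and the Paley--Wiener argument handles it.
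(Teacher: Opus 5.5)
Your proposal is correct and follows the paper's route. You show that a fixed point $f$ of $T_k$ lies in $H^1_+$ via the multiplier $\mathds 1_{\xi\ge 0}/(\xi-k)$, deduce $\mathbb{L}_u f=kf$, and conclude from $k\notin\sigma_{pp}(\mathbb{L}_u)$. Your explicit checks that $u\Pi(\overline u f)$ has nonnegative Fourier support (so $\Pi g=g$), that nonreal $k$ are excluded by self-adjointness, and that the $L^\infty$ case follows from $L^\infty\hookrightarrow L^\infty_{-s}$ only make precise steps the paper leaves implicit.
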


\begin{proof}
Let $g\in L_{-s}^\infty$ and $(\mathrm{id}-T_k)g=0$. Then $g=G_k*(u\Pi(\overline{u}g))$.	Since $G_k\in L^2$, $u\in L^2\cap L^2_s $ and $g\in L^\infty_{-s}$, by H\"{o}lder inequality and Young's inequality, we have $g\in L^2$. If $g\in L^\infty$, by H\"{o}lder inequality and Young's inequality, we also obtain $g\in L^2$. The Fourier transform yields 
\begin{equation}\label{gFour}
(\xi - k)\hat{g}=\mathds{1}_{\xi\geq 0} \widehat{u\Pi(\overline{u}g)}.
\end{equation}
 Using  $u\Pi(\overline u  g)\in L^2$, we have
$
    \int_\R |\xi-k|^2|\hat  g(\xi)|^2 d\xi =  \int_0^{+\infty}   |\widehat {u\Pi(\overline u  g)}|^2 d\xi< +\infty.
$
Let $k=a+ib$, then we get 
$
	\int_\R ((\xi-a)^2+b^2)  |\hat  g(\xi)|^2 d\xi< +\infty.
$
So we deduce that
\begin{equation}
\small
	\begin{split}
	\int_{0}^{+\infty} \vert \xi \hat{ g}(\xi) \vert^2 d\xi &\leq 2 \int_{0}^{+\infty} \left( (\xi - a)^2 + a^2 \right) \vert \hat{ g}(\xi) \vert^2 d\xi 
	\leq 2 \int_{0}^{+\infty} \vert \xi - k \vert^2 \vert \hat{ g}(\xi) \vert^2 d\xi + 2a^2 \cdot 2\pi \Vert  g \Vert_{L^2}^2 < +\infty.
	\end{split}
\end{equation}
Therefore we get $g\in H^1_+$.
Then  the inverse Fourier transform for \eqref{gFour} gives
	\begin{equation}\label{1.13}
		\mathbb{L}_ug=-i\partial_x g - u\Pi(\overline u g) =kg.
		\end{equation}
Since $g\in H^1_+= Dom (\mathbb{L}_u)$ and $k$ is not the eigenvalue, we obtain 
 $g(k)=0$ for $k\in   \C^+\cup \C^-  \cup \big( (-\infty,0)\setminus \sigma_{pp}(\mathbb{L}_u)\big)$.
\end{proof}   

    \begin{lemma}\label{Tlaminj}
Assume that $u\in H^1_+\cap L^2_s $ for some $s> \frac 12$.
	Then  for $\lambda\in (0,+\infty)\setminus  \sigma_{pp}(\mathbb{L}_u)$, the  operators $\mathrm{id}-T_\lambda^\pm: L_{-s}^\infty \to L_{-s}^\infty$ are injective. In addition, the  operators $\mathrm{id}-T_\lambda^\pm: L^\infty \to L^\infty$ are injective.
	\end{lemma}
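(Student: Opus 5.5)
Let $g\in L^\infty_{-s}$ (or $g\in L^\infty$) satisfy $g=T_\lambda^+g=G_\lambda^+*(u\Pi(\overline u g))$; the case of $T_\lambda^-$ is symmetric. Set $w:=u\Pi(\overline u g)$. Since $u\in L^2_s$ and $g\in L^\infty_{-s}$, we have $\overline u g\in L^2$, hence $\Pi(\overline u g)\in L^2$ and $w\in L^1\cap L^2$; if $g\in L^\infty$ the same holds using $u\in L^2\cap L^\infty$. Using \eqref{glambda}, I split
\[
g(x)=i e^{i\lambda x}\int_{-\infty}^x e^{-i\lambda y}w(y)\,\mathrm{d}y-\widetilde G_\lambda*w(x).
\]
The second term lies in $L^2$ and has Fourier support in $(-\infty,0]$. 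The first term tends to $0$ as $x\to-\infty$ and tends to $ie^{i\lambda x}\hat w(\lambda)$ as $x\to+\infty$.

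\textbf{Key step: show $\hat w(\lambda)=0$.} I would pair the equation with $\overline u g$. Differentiating the splitting (as in the proof of Lemma \ref{lemma2.2}) gives, in the distributional sense, $(D-\lambda)g=\Pi w$, i.e.
\[
-i\partial_x g-\lambda g=u\Pi(\overline u g).
\]
Multiply by $\overline g$, integrate over $[-R,R]$ and take imaginary parts. The right-hand side contributes $\int \overline g\,u\,\Pi(\overline u g)=\langle \Pi(\overline u g),\Pi(\overline u g)\rangle$, which is real. Here I need $\overline g u\in L^2$ and the fact that the integrals over $[-R,R]$ converge; this is where $s>\frac12$ is used (to get $u g\in L^1$ and $\overline u g\in L^2$), so that the truncated integrals converge to the full ones. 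The imaginary part of the left side is $-\tfrac12\big(|g(R)|^2-|g(-R)|^2\big)$. The $L^2$ part $\widetilde G_\lambda*w$ is continuous with derivative in $L^2$, so it tends to $0$ at $\pm\infty$. Hence $|g(R)|^2-|g(-R)|^2\to|\hat w(\lambda)|^2$, which forces $\hat w(\lambda)=0$. Making this limiting argument rigorous, in particular the pointwise decay at $\pm\infty$ of the $L^2$ piece and the convergence of the boundary terms, is the main obstacle; I would use $g\in H^1_{loc}$ with $\partial_x$ of the $L^2$ piece in $L^2$.

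\textbf{Conclusion.} Once $\hat w(\lambda)=0$, the first term equals $-ie^{i\lambda x}\int_x^{\infty}e^{-i\lambda y}w\,\mathrm{d}y$ and decays at both ends. Moreover $|\int_x^\infty w|\le \langle x\rangle^{-s}\|w\|_{L^1_s}$-type bounds, using $w\in L^1$ with weight, show it is in $L^2$. The cleaner route is Fourier: since $\hat w(\lambda)=0$ and $\hat w$ is Hölder near $\lambda$ (because $w\in L^1_{\sigma}$ for some small $\sigma>0$ when $s>\frac12$), the quotient $\mathds 1_{\xi\ge0}\hat w(\xi)/(\xi-\lambda)$ is in $L^2$. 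Thus $g\in L^2$ with $(\xi-\lambda)\hat g=\mathds 1_{\xi\ge0}\hat w$. As in Lemma \ref{lemma2.3}, this gives $g\in H^1_+$ and $\mathbb L_u g=\lambda g$. Since $\lambda\notin\sigma_{pp}(\mathbb L_u)$, we conclude $g=0$. The $L^\infty$ version follows from the same argument, since $L^\infty\subset L^\infty_{-s}$.
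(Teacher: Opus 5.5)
Your proposal is correct and is essentially the paper's proof. You split $g=G_\lambda^+*w$, get $\hat w(\lambda)=0$ from the reality of $\int_{\mathbb{R}}\overline{g}\,u\,\Pi(\overline{u}g)\,\mathrm{d}x=\|\Pi(\overline{u}g)\|_{L^2}^2$, then use the tail bound $|g(x)|\leq\langle x\rangle^{-s}\|u\|_{L^2_s}\|\Pi(\overline{u}g)\|_{L^2}$ with $s>\frac12$ to get $g\in H^1_+$ and $\mathbb{L}_u g=\lambda g$. The only difference is that you derive the flux identity $\operatorname{Im}\langle G_\lambda^+*w,w\rangle_{L^2}=\frac12|\hat w(\lambda)|^2$ by hand from boundary terms on $[-R,R]$, whereas the paper cites Lemma 4.3 of \cite{16}.

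Your ``main obstacle'' is in fact vacuous. Since $w=u\Pi(\overline{u}g)\in L^2_+$ (which you already use when writing $(D-\lambda)g=u\Pi(\overline{u}g)$) and $\widehat{\widetilde G_\lambda}$ is supported in $(-\infty,0]$, the term $\widetilde G_\lambda*w$ vanishes identically, as the paper observes. Also, in your alternative Fourier route the H\"older exponent of $\hat w$ must exceed $\frac12$, not merely be small, or $\hat w(\xi)/(\xi-\lambda)$ fails to be square integrable near $\lambda$. Taking $\sigma=\min(s,1)$ works, since $\langle x\rangle^{s}w\in L^1$.
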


    \begin{proof}
Let $h\in L_{-s}^\infty$ and $(T_\lambda^+-\mathrm{id})h=0$. Then $h=T_\lambda^+h=(ie^{i\lambda x}\mathds{1}_{\R^+})* (u\Pi(\overline u h))-\widetilde{G}_\lambda*(u\Pi(\overline u h))$. Since $\widetilde{G}_\lambda\in L^2_-$ and $u\Pi(\overline u h)\in L^2_+$, we obtain $\widetilde{G}_\lambda*(u\Pi(\overline u h))=0$. Then \begin{equation}\label{h}
h=ie^{i\lambda x} \int_{-\infty}^x e^{-i\lambda y}u(y)\Pi(\overline u h)(y)\d y,
\end{equation}
and $h\in L^\infty$. 
 For every $x<0$ and $y<x$, we have $|y|>|x|$. Utilizing $u\in L^2_s$ and $s>\frac 12$, we deduce that 
\[
|h(x)|\leq \int_{-\infty}^x |u\Pi(\overline u h)(y)|\langle y\rangle^{s} \langle y\rangle^{-s}\d y\leq \langle x\rangle^{-s}\| u\|_{L^2_s}\|\Pi(\overline u h) \|_{L^2}.
\]
Then we obtain $h\in L^2(-\infty,0)$. 

Set $F=u\Pi(\overline u h)\in L^1\cap L^2_+$. 
According to Lemma 4.3 in  \cite{16}, 
 we have
 \begin{equation}\label{Glambda^+*F,F}
     \begin{split}
         \langle h, u\Pi(\overline u h)\rangle_{L^\infty, L^1}
         &
         =\langle G_\lambda^+*(u\Pi(\overline u h)), u\Pi(\overline u h) \rangle_{L^\infty, L^1}=\langle G_\lambda^+*F, F\rangle_{L^\infty, L^1}
         \\
         &
         =i|\hat F(\lambda)|^2+\langle F, G_\lambda^+ *F\rangle_{L^1, L^\infty}=i|\hat F(\lambda)|^2+\langle u\Pi(\overline u h), h\rangle_{L^1, L^\infty}.
     \end{split}
 \end{equation}
Since $\langle h, u\Pi(\overline u h)\rangle_{L^\infty, L^1}=\| \Pi(\overline u h) \|_{L^2}^2$, $\langle h, u\Pi(\overline u h)\rangle_{L^\infty, L^1}$ is real. So we obtain 
\begin{equation}\label{fourier=0}
    \hat{F}(\lambda)=\int_\R u\Pi(\overline u h)e^{-i\lambda x} \d x=0.
\end{equation}
Then for $x>0$, we use $h(x)=i\int_{-\infty}^x e^{i\lambda(x-y)} u\Pi(\overline u h)\d y=-i\int_x^{+\infty}e^{i\lambda(x-y)} u\Pi(\overline u h)\d y$. By a similar manipulation as $x<0$, we obtain  $h\in L^2(0,
+\infty)$. As a consequence, $h\in L^2(\R)$.
By \eqref{h}, we have 
\begin{equation}
    D h=-i\partial_x h=i\lambda e^{i\lambda x} \int_{-\infty}^x e^{-i\lambda y}   u\Pi(\overline u h) (y) \ \mathrm{d} y+u\Pi(\overline u h)=\lambda h +u\Pi(\overline u h).
\end{equation}
That is 
\begin{equation}\label{L_uh}
    \mathbb{L}_uh=\lambda h.
\end{equation}
So  we have $h\in H^{1}_+ = Dom (\mathbb{L}_u)$ from \eqref{L_uh}. If $\lambda\in (0,+\infty)\setminus  \sigma_{pp}(\mathbb{L}_u)$,  then $h=0$.

In addition, if $h\in L^\infty$, we have $h=T_\lambda^+h=(ie^{i\lambda x}\mathds{1}_{\R^+})* (u\Pi(\overline u h))-\widetilde{G}_\lambda*(u\Pi(\overline u h))$. Since $\widetilde{G}_\lambda\in L^2_-$ and $u\Pi(\overline u h)\in L^2_+$, we also obtain $\widetilde{G}_\lambda*(u\Pi(\overline u h))=0$ and $
h=ie^{i\lambda x} \int_{-\infty}^x e^{-i\lambda y}u(y)\Pi(\overline u h)(y)\d y$. Since $u\in L^2_s$ and $s>\frac 12$, we get $h\in L^2$ and further $h\in H^1_+$. Finally $h=0$.
    \end{proof}

    \begin{corollary}\label{cor3.6}
        Assume that $u\in H^1_+\cap L^2_s$ for some $s> \frac 12$. For every $k\in \C^+\cup \C^- \cup \big((-\infty,0)\setminus \sigma_{pp}(\mathbb{L}_u)\big)$ and every $\lambda\in (0,+\infty)\setminus \sigma_{pp}(\mathbb{L}_u) $, the operators $\mathrm{id}-T_k: L_{-s}^\infty \to L_{-s}^\infty$ and $\mathrm{id}-T_\lambda^\pm: L_{-s}^\infty \to L_{-s}^\infty$ are bijective. In addition, the operators $\mathrm{id}-T_k: L^\infty \to L^\infty$ and $\mathrm{id}-T_\lambda^\pm: L^\infty \to L^\infty$ are bijective.
    \end{corollary}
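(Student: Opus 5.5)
The plan is to invoke the Fredholm alternative for compact perturbations of the identity on a Banach space. Here $L^\infty_{-s}$ is a Banach space: the norm $\|f\|_{L^\infty_{-s}}=\|\langle x\rangle^{-s}f\|_{L^\infty}$ makes it isometric to $L^\infty$. The same holds for $L^\infty$ itself. On each of these spaces, Corollary \ref{Tlamb-compact}, Lemma \ref{T-lam-inf-inf} and Lemma \ref{Tkcomp} show that $T_k$ and $T_\lambda^\pm$ are compact. This holds for every $k\in\C\setminus[0,+\infty)$ and every $\lambda>0$, under the standing assumption $u\in H^1_+\cap L^2_s$ with $s>\frac12$ (which implies the weaker hypotheses $s>0$ used in those lemmas).

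By the Riesz--Schauder theory, for a compact operator $K$ on a Banach space $X$, the operator $\mathrm{id}-K$ is Fredholm of index zero. Consequently it is injective if and only if it is surjective, and in that case its inverse is bounded by the open mapping theorem. I apply this with $K=T_k$ for $k\in\C^+\cup\C^-\cup\big((-\infty,0)\setminus\sigma_{pp}(\mathbb{L}_u)\big)$, and with $K=T_\lambda^\pm$ for $\lambda\in(0,+\infty)\setminus\sigma_{pp}(\mathbb{L}_u)$. The required injectivity is exactly Lemma \ref{lemma2.3} and Lemma \ref{Tlaminj}, on both $L^\infty_{-s}$ and $L^\infty$; note that Lemma \ref{Tlaminj} uses $s>\frac12$, which is why the corollary assumes it. Surjectivity, and hence bijectivity, then follows on each space separately.

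The main point to check is that the operators really act as bounded, and then compact, maps from each space into itself. For $T_\lambda^\pm$ on $L^\infty_{-s}$, this comes from composing the compact map $L^\infty_{-s}\to L^\infty$ of Lemma \ref{lemma2.2} with the continuous inclusion $L^\infty\hookrightarrow L^\infty_{-s}$. With that verified, the argument is a direct application of the Fredholm alternative and needs no further estimates.
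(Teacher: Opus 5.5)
Your proposal is correct and is essentially the paper's proof. The paper also gets bijectivity on each of $L^\infty_{-s}$ and $L^\infty$ directly from the Fredholm alternative, combining the compactness results (Lemma \ref{lemma2.2}, Corollary \ref{Tlamb-compact}, Lemmas \ref{T-lam-inf-inf} and \ref{Tkcomp}) with the injectivity Lemmas \ref{lemma2.3} and \ref{Tlaminj}; your extra remarks (that $L^\infty_{-s}$ is isometric to $L^\infty$ and hence Banach, and that the inverse is bounded by the open mapping theorem) only make explicit what the paper leaves implicit.
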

    \begin{proof}
        By Fredholm alternative theorem and Lemma \ref{lemma2.2}-\ref{Tlaminj}, we obtain the bijectivity of $\mathrm{id}-T_k$ and $\mathrm{id}-T_\lambda^\pm$.
    \end{proof}
Now we define Jost functions $\phi_k$, $\phi_\lambda^\pm$ and $\psi_\lambda^\pm$ as follows:
       \begin{definition}\label{def3.7}
            Assume that $u\in H^1_+\cap L^2_s$ for some $s> \frac 12$. 
For every $k\in \C^+\cup \C^-\cup  \big( (-\infty, 0)\setminus \sigma_{pp}(\mathbb{L}_u)\big)$, we define 
\begin{equation}
\phi_k=(\mathrm{id}-T_k)^{-1}(G_k*u),
\end{equation}
i.e.,
\begin{equation}\label{1.5}
    \phi_k=G_k*\big(u+u\Pi(\overline u \phi_k)\big).
\end{equation}
       \end{definition}
\begin{lemma}\label{phikequi}
       Assume that $u\in H^1_+\cap L^2_s$ for some $s> \frac 12$. For every $k\in \C^+\cup \C^-\cup  \big( (-\infty, 0)\setminus \sigma_{pp}(\mathbb{L}_u)\big)$, 
     \eqref{1.5} is equivalent to 
    \begin{equation}\label{1.1}
\mathbb{L}_u\phi_k=k\phi_k+u,
    \end{equation}
    for $\phi_k\in L^\infty_{-s}$.
  In addition, if either \eqref{1.5} or \eqref{1.1} holds, we deduce $\phi_k\in H^1_+$.
\end{lemma}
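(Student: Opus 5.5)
The plan is to prove the two directions separately, working on the Fourier side exactly as in the proof of Lemma \ref{lemma2.3}. The main tool is the identity $\widehat{G_k * F}(\xi) = \frac{\mathds{1}_{\xi\ge 0}}{\xi-k}\hat F(\xi)$, valid for $F\in L^2$.

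\emph{From \eqref{1.5} to \eqref{1.1}, together with $\phi_k\in H^1_+$.} Let $\phi_k\in L^\infty_{-s}$ satisfy \eqref{1.5}, and set $F:=u+u\Pi(\overline u\phi_k)$.
\begin{enumerate}
\item Since $u\in H^1\subset L^\infty$, $u\in L^2_s$ and $\phi_k\in L^\infty_{-s}$, we get $\overline u\phi_k\in L^2$, hence $\Pi(\overline u\phi_k)\in L^2$ and $F\in L^2$.
\item Because $G_k\in L^2$, Young's inequality shows $\phi_k\in L^\infty$. With this upgrade we even get $\overline u\phi_k\in L^2$ directly.
\item Taking Fourier transforms gives $(\xi-k)\hat\phi_k=\mathds{1}_{\xi\ge0}\hat F$. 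So $\hat\phi_k$ is supported in $[0,+\infty)$.
\item For $k\notin[0,+\infty)$ the quantity $|\xi-k|$ is bounded below on $[0,\infty)$ and grows like $|\xi|$. The same computation as in Lemma \ref{lemma2.3} then gives $\int(1+\xi^2)|\hat\phi_k|^2<\infty$, so $\phi_k\in H^1_+$.
\item Multiplying back, $D\phi_k-k\phi_k=\Pi F$. Since $u\in L^2_+$ we have $\Pi u=u$, and $u\Pi(\overline u\phi_k)$ is a product of two $L^2_+$-type functions. More precisely, $u\in H^1_+$ and $\Pi(\overline u\phi_k)\in L^2_+$, so their product has Fourier support in $[0,\infty)$ and lies in $L^2$ (because $u\in L^\infty$). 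Therefore $\Pi F=F$.
\item Rearranging yields $-i\partial_x\phi_k-u\Pi(\overline u\phi_k)=k\phi_k+u$, which is \eqref{1.1}.
\end{enumerate}

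\emph{From \eqref{1.1} to \eqref{1.5}.} Let $\phi_k\in L^\infty_{-s}$ satisfy \eqref{1.1}, understood in the distributional sense with $\phi_k\in\mathrm{Dom}(\mathbb{L}_u)$ implicit.
\begin{enumerate}
\item Rewrite the equation as $(D-k)\phi_k=F$, with $F\in L^2$ as above. This forces $\phi_k\in H^1_{\mathrm{loc}}$.
\item Take Fourier transforms in $\mathcal S'$: $(\xi-k)\hat\phi_k=\hat F$.
\item For $k\in\C^\pm$ the factor $\xi-k$ never vanishes. So $\hat\phi_k=\hat F/(\xi-k)\in L^2$, and the support of $\hat F\subset[0,\infty)$ (established as in the first direction) gives $\hat\phi_k=\frac{\mathds 1_{\xi\ge0}}{\xi-k}\hat F$. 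That is exactly \eqref{1.5}.
\item For $k<0$ the zero $\xi=k$ lies outside $[0,\infty)$. However, $(\xi-k)\hat\phi_k=\hat F$ in $\mathcal S'$ only determines $\hat\phi_k$ up to a distribution supported at $\{k\}$, i.e.\ $\phi_k$ up to $P(x)e^{ikx}$ with $P$ a polynomial.
\item The growth condition $\phi_k\in L^\infty_{-s}$ forces $P$ to be constant. Subtracting the candidate $\tilde\phi:=G_k*F$, which is in $L^\infty$, the difference $ce^{ikx}$ then solves the homogeneous equation $(D-k)g=0$.
\end{enumerate}

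\textbf{Main obstacle.} The hard step is removing this residual homogeneous solution $ce^{ikx}$, a plane wave that lies in $L^\infty_{-s}$ but not in $L^2$. Note that $F$ itself depends on $\phi_k$, so the difference is not literally $ce^{ikx}$. The approach I would try first:
\begin{enumerate}
\item Set $g=\phi_k-(\mathrm{id}-T_k)^{-1}(G_k*u)$. Corollary \ref{cor3.6} makes this well defined. Then $g\in L^\infty_{-s}$ and $(D-k)g=u\Pi(\overline u g)$.
\item Since $\overline u g\in L^2$ by the weight argument, the right-hand side is in $L^2_+$.
\item Hence $\hat g-\frac{\mathds 1_{\xi\ge0}}{\xi-k}\widehat{u\Pi(\overline ug)}$ is supported at $\{k\}$. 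This gives $g=T_kg+ce^{ikx}$.
\item Apply $\Pi$ to this relation. For $k<0$ one has $\Pi e^{ikx}=0$, and $T_kg$ has positive frequency support. So if we additionally use that solutions of \eqref{1.1} are sought with $\phi_k$ in the Hardy-type class (positive frequencies, as $\mathbb{L}_u$ acts on $L^2_+$), then $c=0$.
\item Consequently $g=T_kg$, and Lemma \ref{lemma2.3} gives $g=0$.
\end{enumerate}
I expect the careful justification of these distributional Fourier manipulations in $L^\infty_{-s}$ (with $s>\frac12$), together with the implicit positive-frequency requirement in \eqref{1.1}, to be the delicate point.
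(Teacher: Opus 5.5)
Your proposal is correct and follows the paper's own Fourier-side argument: divide by $\xi-k$, then bound $\int(1+\xi^2)|\hat\phi_k|^2\,d\xi$. At $k<0$ you are more careful than the paper, which silently discards a possible term $c\,\delta_k$ in $\hat\phi_k$ (i.e.\ $c\,e^{ikx}$, a genuine homogeneous solution in $L^\infty_{-s}$ because $\Pi(\overline{u}e^{ikx})=0$), and which can only be excluded by the implicit positive-frequency (domain) requirement you invoke.

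That step is easier than your ``main obstacle'' suggests, for two reasons. First, $(\xi-k)T=0$ alone forces $T=c\,\delta_k$, so no growth argument is needed; a growth argument would in fact fail for $s\ge 1$, since then $xe^{ikx}\in L^\infty_{-s}$. Second, $\phi_k-G_k*F=c\,e^{ikx}$ holds literally even though $F$ depends on $\phi_k$, so comparing Fourier supports gives $c=0$ directly, without Corollary~\ref{cor3.6} or Lemma~\ref{lemma2.3}.
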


\begin{proof}

For every $k\in \C^+\cup \C^-\cup  \big( (-\infty, 0)\setminus \sigma_{pp}(\mathbb{L}_u)\big)$, 
	taking the Fourier transform for (\ref{1.1}), we get
	\(
	\xi \hat  \phi_k- \mathds{1}_{\xi\geq 0}  \widehat{ u\Pi(\overline u  \phi_k)}=k\hat \phi_k+\hat u.
	\)
Then we have
	\begin{equation}\label{1.9}
	(\xi-k)\hat \phi_k ={\mathds{1}_{\xi\geq 0}}\widehat{u\Pi(\overline u  \phi_k)} +  \hat u.
	\end{equation}
	Since $u\in L^2_+$ and $u\Pi(\overline u \phi_k)\in L^2$,  take the  inverse Fourier transform for \eqref{1.9},  equation \eqref{1.5} follows.
    Taking the inverse process from  \eqref{1.5} gives \eqref{1.1}. 
    
Next we prove $\phi_k\in H^1_+$ from \eqref{1.5}. Since $G_k\in L^2$, $u\in L^1$ and $u\Pi(\overline u\phi_k)\in L^1$,  we get $\phi_k\in L^2$ by Young's inequality. Using \eqref{1.9}, $u\in L^2_+$ and $u\Pi(\overline u  \phi_k)\in L^2$, we have
\begin{equation}\label{phik01}
    \int_\R |\xi-k|^2|\hat  \phi_k(\xi)|^2 d\xi =  \int_0^{+\infty}   (  |\hat u| + |\widehat {u\Pi(\overline u  \phi_k)}|)^2 d\xi< +\infty.
\end{equation}
Let $k=a+ib$, \eqref{phik01} becomes 
\(
	\int_\R ((\xi-a)^2+b^2)  |\hat  \phi_k(\xi)|^2 d\xi< +\infty
\).
By Plancherel formula,  we have
\begin{equation}\label{22.14}
\small
	\begin{split}
	\int_{0}^{+\infty} \vert \xi \hat{ \phi}_k(\xi) \vert^2 d\xi 
    \leq 2 \int_{0}^{+\infty} \left( (\xi - a)^2 + a^2 \right) \vert \hat{ \phi}_k(\xi) \vert^2 d\xi 
	\leq 2 \int_{0}^{+\infty} \vert \xi - k \vert^2 \vert \hat{ \phi}_k(\xi) \vert^2 d\xi + 2a^2 \cdot 2\pi \Vert  \phi_k \Vert_{L^2}^2 
	< +\infty.
	\end{split}
\end{equation}
From the Fourier transform, we deduce $\partial_k \phi_k\in L^2$. Therefore, we obtain $\phi_k\in H^1_+$.
\end{proof}

   \begin{definition}\label{defphilam}
 Assume that $u\in H^1_+\cap L^2_s$ for some $s> \frac 12$.  For every $\lambda\in (0,+\infty)\setminus \sigma_{pp}(\mathbb{L}_u)$, define 
\begin{equation}
\phi_\lambda^\pm=(\mathrm{id}-T_\lambda^\pm)^{-1}(G_\lambda^\pm*u),
\end{equation}
i.e.,
\begin{align} 
&\phi_\lambda^\pm=G_\lambda^\pm*\big(u+u\Pi(\overline u \phi_\lambda^\pm)\big)\label{phi_lambda_conv}.
		 \end{align}
       \end{definition}
\begin{lemma}\label{philamequi}
 Assume that $u\in H^1_+\cap L^2_s$ for some $s> \frac 12$. For every $\lambda\in (0,+\infty)\setminus \sigma_{pp}(\mathbb{L}_u)$,  formula \eqref{phi_lambda_conv} is equivalent to 
 \begin{align}
&\mathbb{L}_u\phi_{\lambda}^\pm=\lambda\phi_{\lambda}^\pm+u,\label{Luphilam}\\
&\phi_\lambda^+(x)\to 0,\quad x\to-\infty,\label{phi_xto-inf}\\
&\phi_\lambda^-(x)\to 0,\quad x\to+\infty,\label{phi_xto+inf}
\end{align}
for every $\phi_\lambda^\pm\in L_{-s}^\infty$. 
\end{lemma}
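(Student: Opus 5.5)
We treat the case of $\phi_\lambda^+$; the case of $\phi_\lambda^-$ is symmetric, with $x\to+\infty$ and the kernel $-ie_\lambda\mathds{1}_{x\le 0}$. Throughout we write $F:=u+u\Pi(\overline u\phi_\lambda^+)$. For $\phi_\lambda^+\in L^\infty_{-s}$ and $u\in H^1_+\cap L^2_s$ with $s>\frac12$, we have $F\in L^1\cap L^2_+$, since $u\langle x\rangle^{s}\in L^2$ and $\langle x\rangle^{-s}\in L^2$. The proof splits into the two implications.

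\textbf{From \eqref{phi_lambda_conv} to \eqref{Luphilam}--\eqref{phi_xto-inf}.} Since $\widetilde G_\lambda$ has Fourier transform supported in $(-\infty,0]$ and $F\in L^2_+$, we get $\widetilde G_\lambda*F=0$. By \eqref{glambda} this gives
\[
\phi_\lambda^+(x)=ie^{i\lambda x}\int_{-\infty}^x e^{-i\lambda y}F(y)\,\mathrm{d}y .
\]
\begin{itemize}
\item \emph{Decay at $-\infty$.} We have $|\phi_\lambda^+(x)|\le\int_{-\infty}^x|F|\,\mathrm{d}y\to0$ as $x\to-\infty$, because $F\in L^1$. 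This proves \eqref{phi_xto-inf}.
\item \emph{The equation.} Differentiating, as in the proof of Lemma \ref{Tlaminj}, gives $D\phi_\lambda^+=\lambda\phi_\lambda^++F$, which is $-i\partial_x\phi_\lambda^+-u\Pi(\overline u\phi_\lambda^+)=\lambda\phi_\lambda^++u$. This is \eqref{Luphilam}, understood pointwise/distributionally, since $\phi_\lambda^+$ need not lie in $L^2$.
\end{itemize}

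\textbf{From \eqref{Luphilam}--\eqref{phi_xto-inf} to \eqref{phi_lambda_conv}.} Equation \eqref{Luphilam} reads $-i\partial_x\phi-\lambda\phi=F$, i.e. $\partial_x(e^{-i\lambda x}\phi)=ie^{-i\lambda x}F$. Here $\phi$ is absolutely continuous because $F\in L^1_{loc}$. Integrating from $a$ to $x$ and letting $a\to-\infty$, we use \eqref{phi_xto-inf} and $F\in L^1$ to obtain
\[
\phi(x)=ie^{i\lambda x}\int_{-\infty}^xe^{-i\lambda y}F\,\mathrm{d}y=(ie_\lambda\mathds{1}_{x\ge0})*F .
\]
Since $\widetilde G_\lambda*F=0$ again, the right-hand side equals $G_\lambda^+*F$. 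This is \eqref{phi_lambda_conv}.

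\textbf{Main obstacle: making $\widetilde G_\lambda*F=0$ rigorous.} The subtle point is that $\widetilde G_\lambda$ lies in $L^2$ but not in $L^1$, while $F\in L^1\cap L^2$. The convolution is then a bounded continuous function whose Fourier transform is $\frac{\mathds{1}_{\xi\le0}}{\xi-\lambda}\hat F(\xi)$. Since $\hat F$ is continuous and supported in $[0,\infty)$, this product vanishes almost everywhere, and so $\widetilde G_\lambda*F=0$. With this established, the rest of the argument is routine.
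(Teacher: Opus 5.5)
Your proof is correct. The direction from \eqref{phi_lambda_conv} to \eqref{Luphilam}--\eqref{phi_xto-inf} is essentially the paper's. You kill $\widetilde G_\lambda*F$ by frequency support and rewrite $\phi_\lambda^+$ as $ie^{i\lambda x}\int_{-\infty}^x e^{-i\lambda y}F\,\mathrm{d}y$. You then differentiate and read off the decay at $-\infty$.

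The converse is where you genuinely differ. The paper Fourier-transforms \eqref{Luphilam} and shifts the spectral parameter to $\lambda+i\varepsilon$, writing $\phi_\lambda^+=G_{\lambda+i\varepsilon}*F+\mathcal{F}^{-1}\big(-\tfrac{i\varepsilon}{\xi-(\lambda+i\varepsilon)}\widehat{\phi}_\lambda^+\big)$. It then identifies the last term with $(\varepsilon e^{i(\lambda+i\varepsilon)x}\mathds{1}_{\mathbb{R}^+})*\phi_\lambda^+$ via a Schwartz-approximation argument in $\mathcal{S}'$, which is needed because $\phi_\lambda^+$ is only in $L^\infty_{-s}$. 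Finally it lets $\varepsilon\to0^+$, using the boundary condition at $-\infty$ and dominated convergence to make that term vanish.

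You instead freeze the nonlocal term as a given $L^1$ source. This turns \eqref{Luphilam} into the first-order ODE $\partial_x(e^{-i\lambda x}\phi)=ie^{-i\lambda x}F$, and the boundary condition at $-\infty$ just fixes the constant of integration. Your route is much shorter and avoids tempered distributions in that direction entirely. It also makes explicit something the paper leaves implicit: \eqref{Luphilam} has to be read distributionally, since $\phi_\lambda^+\notin L^2$ in general.

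The paper's limiting-absorption argument buys mainly a conceptual point. It exhibits $\phi_\lambda^+$ as the $\varepsilon\to0^+$ boundary value of the resolvent construction used for $\phi_k$, $k\in\mathbb{C}^+$, in the style of the Benjamin--Ono literature. For this lemma, though, your integrating-factor argument is fully sufficient.

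One small point to add: say why $u\Pi(\overline u\phi)\in L^2_+$. The Fourier transform of a product of two $L^2$ functions with spectrum in $[0,\infty)$ is the convolution of their transforms, hence is supported in $[0,\infty)$.
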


\begin{proof}
We only study the Jost function $\phi_\lambda^+$. From \eqref{phi_xto-inf}, \eqref{phi_xto+inf} and $\phi_\lambda^\pm\in L_{-s}^\infty$, for every fixed $a\in\R$, we have $\phi_\lambda^+\in L^\infty(-\infty,a)$ and $\phi_\lambda^-\in L^\infty(a,+\infty)$.
 Take the Fourier transform for \eqref{Luphilam}, we get 
	\(
	\xi \widehat {\phi}_\lambda^+- \mathds{1}_{\xi\geq 0}  \widehat{ u\Pi(\overline u \phi_\lambda^+)}=\lambda\widehat{\phi_\lambda^+}+\widehat u\). Then we have 
\begin{equation}
    \widehat {\phi}_\lambda^+=\frac{\widehat u}{\xi-(\lambda+i\varepsilon)}+\frac{\mathds{1}_{\xi\geq 0}  }{\xi-(\lambda+i\varepsilon)}\widehat{ u\Pi(\overline u \phi_\lambda^+)}-\frac{i\varepsilon}{\xi-(\lambda+i\varepsilon)}\widehat{\phi}_\lambda^+.
    \end{equation}
    By the inverse Fourier transform, we have
    \begin{equation}\label{three_cv}
{\phi}_\lambda^+=G_{\lambda+i\varepsilon}*u+G_{\lambda+i\varepsilon}*(u\Pi(\overline u \phi_\lambda^+))
+\mathcal{F}^{-1}\bigg(-\frac{i\varepsilon}{\xi-(\lambda+i\varepsilon)}\widehat{\phi}_\lambda^+\bigg).
    \end{equation}
    For every $\mathfrak{F}\in L^1\cap L^2$, from \eqref{cvL2}, we get 
    \[
    \|  \widetilde{G}_{\lambda+i\varepsilon}*\mathfrak{F}-\widetilde{G}_\lambda*\mathfrak{F} \|_{L^\infty}\leq \|\widetilde{G}_{\lambda+i\varepsilon}-\widetilde{G}_\lambda\|_{L^2}\|\mathfrak{F} \|_{L^2}\to 0, \quad \varepsilon\to 0^+.
    \]
    By dominated convergence theorem, we have
    \(
(ie^{i(\lambda+i\varepsilon)x}\mathds{1}_{\R^+})*\mathfrak{F}(x)\to (ie^{i\lambda x}\mathds{1}_{\R^+})*\mathfrak{F}(x)
    \) as $\varepsilon \to 0$. 
    Since $u\in L^2\cap L^1$, $u\Pi(\overline u \phi_\lambda^+)\in L^2\cap L^1$ and  \eqref{Gkc+c-}, we have \begin{equation}\label{G-conv-point}
    G_{\lambda+i\varepsilon}*u\to  G_{\lambda}^+*u,\quad G_{\lambda+i\varepsilon}*(u\Pi( \overline u \phi))\to G_{\lambda}^+*(u\Pi( \overline u \phi)), \quad \varepsilon \to 0^+ .
    \end{equation}
    Here the  convergence is  pointwise.
    Next we  prove that 
\begin{equation}\label{convo}
	\begin{aligned}
		\mathcal{F}^{-1}\left(-\frac{i\varepsilon}{\xi - (\lambda + i\varepsilon)}\widehat{\phi}_{\lambda}^+\right)&=(\varepsilon e^{i(\lambda + i\varepsilon)x}\mathds{1}_{\mathbb{R}^+})*\phi_\lambda^+.
		\end{aligned}
\end{equation} 
Since $\phi_\lambda^+ \in L_{-s}^\infty$, we have $\langle x\rangle^{-2}\langle x\rangle^{-s}\phi_\lambda^+ \in L^1 \cdot L^\infty \subset L^1$. Then there exists a sequence 
\((v_n)_{n\in\mathbb{N}} \subset \mathcal{S}\) such that 
\(
\| v_n - \langle x\rangle^{-2-s}\phi_\lambda^+ \|_{L^1} \to 0
\) as $n\to +\infty$.
Set \(w_n = \langle x\rangle^{-2-s} v_n \in \mathcal{S}\). 
According to Theorem 7.19(c) in  \cite{RudinFA} and   \(e^{i(\lambda + i\varepsilon)x} \mathds{1}_{\mathbb{R}_+} \in \mathcal{S}'\), we get 
$
\left(e^{i(\lambda + i\varepsilon)x} \mathds{1}_{\mathbb{R}_+}\right) * w_n \in C^\infty \cap \mathcal{S}'
$
and 
\[
\mathcal{F}\left(\left(e^{i(\lambda + i\varepsilon)x} \mathds{1}_{\mathbb{R}_+}\right) * w_n\right) = \widehat{e^{i(\lambda + i\varepsilon)x} \mathds{1}_{\mathbb{R}_+}} \cdot \widehat{w}_n = \frac{i}{\lambda + i\varepsilon - \xi} \widehat{w}_n.
\]
That is
\begin{equation}\label{con-sch}
\mathcal{F}^{-1}\left( -\frac{i\varepsilon}{\varepsilon - (i\lambda + i\varepsilon)} \widehat{w}_n \right) = \left( i\varepsilon e^{i(\lambda + i\varepsilon)x} \mathds{1}_{\mathbb{R}_+} \right) * w_n.
\end{equation}
Let \(\gamma_{\lambda+i\varepsilon} = \frac{1}{\varepsilon - (\lambda + i\varepsilon)}\). For every \(\varphi \in \mathcal{S}\),  we have 
\begin{align*}
\langle \gamma_{\lambda+i\varepsilon} \hat{w}_n - \gamma_{\lambda+i\varepsilon} \hat{\phi}_\lambda^+, \varphi \rangle_{\mathcal{S}',\mathcal{S}}
&= \langle \hat{w}_n - \hat{\phi}_\lambda^+, \gamma_{\lambda+i\varepsilon} \varphi \rangle_{\mathcal{S}',\mathcal{S}} = \langle w_n - \phi_\lambda^+, \widehat{\gamma_{\lambda+i\varepsilon} \varphi} \rangle_{\mathcal{S}',\mathcal{S}} \\
&= \int_{\mathbb{R}} (w_n - \phi_\lambda^+) \widehat{\gamma_{\lambda+i\varepsilon} \varphi} \d x = \int_{\mathbb{R}} (v_n - \langle x\rangle^{-2-s} \phi_\lambda^+) \langle x\rangle^{2+s} \widehat{\gamma_{\lambda+i\varepsilon} \varphi}  \d x.
\end{align*}
Since \(\langle x\rangle^{2+s} \widehat{\gamma_{\lambda+i\varepsilon} \varphi} \in \mathcal{S}\), we have
$
\left| \langle \gamma_{\lambda+i\varepsilon} \hat{w}_n - \gamma_{\lambda+i\varepsilon} \hat{\phi}_\lambda^+, \varphi \rangle_{\mathcal{S}',\mathcal{S}} \right| \leq \|\langle x\rangle^{2+s} \widehat{\gamma_{\lambda+i\varepsilon} \varphi}\|_{L^\infty} \|v_n - \langle x\rangle^{-2-s} \phi_\lambda^+\|_{L^1} \to 0
$ as $n\to+\infty.$
So we obtain 
\[
-\frac{i\varepsilon}{\varepsilon - (\lambda + i\varepsilon)} \widehat{w}_n \to -\frac{i\varepsilon}{\varepsilon - (\lambda + i\varepsilon)} \widehat{\phi}_\lambda^+ \quad \text{in } \mathcal{S}', \quad n\to+\infty.
\]
For the right hand side of \eqref{con-sch}, we deduce that 
\begin{align*}
&\left| \left( e^{i(\lambda + i\varepsilon)x} \mathds{1}_{\mathbb{R}_+} \right) * (w_n - \phi_\lambda^+) (x) \right|
= \left| \int_{-\infty}^x e^{i\lambda(x-y)} e^{-\varepsilon(x-y)} (w_n(y) - \phi_\lambda^+(y)) \, dy \right| \\
&\leq \int_{-\infty}^x e^{-\varepsilon(x-y)} |w_n(y) - \phi_\lambda^+(y)| \, dy \leq e^{-\varepsilon x} \sup_{y < x} |e^{\varepsilon y} \langle x\rangle^{2+s}| \|\langle x\rangle^{-2-s} (w_n - \phi_\lambda^+)\|_{L^1}.
\end{align*}
Set  \(M := \sup_{y < 0} |e^{\varepsilon y} \langle y \rangle^{2+s}| < +\infty\). Then we have 
\[
\sup_{|x| \leq R} \left| \left( e^{i(\lambda + i\varepsilon)x} \mathds{1}_{\mathbb{R}_+} \right) * (w_n - \phi_\lambda^+) (x) \right| \leq \left( M + e^{\varepsilon R} (1+R^2)^{\frac{2+s}{2}} \right) \|w_n - \langle x\rangle^{-2-s} \phi_\lambda^+\|_{L^1} \to 0, \quad n\to+\infty.
\]
This completes the proof of \eqref{convo}. 
When $\varepsilon\to 0^+$, we deduce that
$$
(\varepsilon e^{i(\lambda + i\varepsilon)x}\mathds{1}_{\mathbb{R}^+})*\phi_\lambda^+=
\varepsilon \int_{0}^{+\infty} e^{i(\lambda+i\varepsilon )y}\phi_\lambda^+(x-y)\d y=\int_{0}^{+\infty} e^{i\lambda \frac{y_1}\varepsilon}e^{-y_1}\phi_\lambda^+(x-\frac{y_1}\varepsilon)\d y_1
.$$
For any $x\in\R$, we recall that $\phi^+_\lambda\in L^\infty(-\infty,x)$, then we have $|e^{i\lambda \frac{y_1}\varepsilon}e^{-y_1}\phi_\lambda^+(x-\frac{y_1}\varepsilon)|\leq \|\phi_\lambda^+\|_{L^{\infty}(-\infty,x)}e^{-y_1}$. Since $\phi^+_\lambda\to 0$ as $x\to-\infty$,   by dominated convergence theorem, 
 we obtain 
\begin{equation}\label{eps-e-conv-to-0}
    \lim_{\varepsilon\to 0^+} \Big((\varepsilon e^{i(\lambda + i\varepsilon)x}\mathds{1}_{\mathbb{R}^+})*\phi_\lambda^+\Big)(x)=0.
\end{equation}
Finally, from \eqref{three_cv}, \eqref{G-conv-point}, \eqref{convo} and \eqref{eps-e-conv-to-0}, we obtain \eqref{phi_lambda_conv}.

Next we prove the inverse arguement. Using \eqref{glambda} and \eqref{phi_lambda_conv}, we get
\begin{equation}\label{philamint}
    \phi_\lambda^+=ie^{i\lambda x}\int_{-\infty}^x e^{-i\lambda y} \big( u(y)+ u(y)\Pi(\overline u \phi_\lambda^+) (y) \big) \ \mathrm{d} y-\widetilde G_\lambda * (u+u\Pi(\overline u \phi_\lambda^+)).
\end{equation}
Since $\widetilde{G}_\lambda\in L^2_-$, $u\in L^2_+$ and $u\Pi(\overline u \phi_\lambda^+)\in L^2_+$, we have $\widetilde G_\lambda * (u+u\Pi(\overline u \phi_\lambda^+))=0$.
Take the derivative with respect to $x$, we have 
\begin{equation}\label{Dphilam}
    \tfrac 1 i \partial_x \phi_\lambda^+=i\lambda e^{i\lambda x} \int_{-\infty}^x e^{-i\lambda y} \big( u(y)+ u(y)\Pi(\overline u \phi_\lambda^+) (y) \big) \ \mathrm{d} y+u+u\Pi(\overline u \phi_\lambda^+).
\end{equation}
From \eqref{Dphilam}, we have 
\begin{equation}
    \tfrac 1 i \partial_x \phi_\lambda^+ - u\Pi (\overline u \phi_\lambda^+)= u+ \lambda G_\lambda^+ * (u+u\Pi(\overline u \phi_\lambda^+))=u+\lambda \phi_\lambda^+.
\end{equation}
By dominated convergence theorem, \eqref{philamint} and $\widetilde G_\lambda * (u+u\Pi(\overline u \phi_\lambda^+))=0$, we have  $\phi_\lambda^+\to 0$ as $x\to-\infty$.
\end{proof}

  \begin{definition}\label{defpsilam}
 Assume that $u\in H^1_+\cap L^2_s$ for some $s> \frac 12$. For every $\lambda\in (0,+\infty)\setminus \sigma_{pp}(\mathbb{L}_u)$, define 
\begin{equation}
\psi_\lambda^\pm=(\mathrm{id}-T_\lambda^\pm)^{-1}e^{i\lambda x},
\end{equation}
i.e.,
\begin{align} 
\psi_\lambda^\pm(x)=e^{ix\lambda}+G_\lambda^\pm*\big(u\Pi(\overline u \psi_\lambda^\pm )\big).\label{psieintegral}
		 \end{align}
       \end{definition}
\begin{lemma}\label{psi-equi}
 Assume that $u\in H^1_+\cap L^2_s$ for some $s> \frac 12$.  For every $\lambda\in (0,+\infty)\setminus \sigma_{pp}(\mathbb{L}_u)$,
     the equation \eqref{psieintegral} is equivalent to 
  	\begin{align}     &\mathbb{L}_u\psi_{\lambda}^\pm=\lambda\psi_{\lambda}^\pm,\label{1.2}\\
        &\psi_\lambda^+-e^{ix\lambda} \to 0, \quad x\to -\infty,\label{11.6a}\\
    &\psi_\lambda^--e^{ix\lambda} \to 0, \quad x\to +\infty\label{11.6b},
 	\end{align}
    for every $\psi_\lambda^\pm\in L_{-s}^\infty$. 
\end{lemma}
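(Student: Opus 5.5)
The plan mirrors the proof of Lemma \ref{philamequi}, with the source $u$ replaced by $0$ and the boundary value $0$ replaced by the plane wave $e^{i\lambda x}$. The natural reduction is to set $\chi^\pm:=\psi_\lambda^\pm-e_\lambda$. Since $\mathbb{L}_u$ acts on $e_\lambda$ only through the derivative (for $\lambda>0$, $-i\partial_x e_\lambda=\lambda e_\lambda$), the eigenvalue equation \eqref{1.2} for $\psi_\lambda^\pm$ is formally equivalent to
\begin{equation*}
-i\partial_x\chi^\pm-u\Pi(\overline u\chi^\pm)=\lambda\chi^\pm+u\Pi(\overline u e_\lambda),
\end{equation*}
which is an inhomogeneous problem exactly like \eqref{Luphilam}, with source $F_0:=u\Pi(\overline u e_\lambda)\in L^1\cap L^2_+$ in place of $u$. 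Here $F_0\in L^1$ because $u\in L^2_s$ with $s>\frac12$ gives $u\in L^1$, and $\Pi(\overline u e_\lambda)\in L^2$. Likewise \eqref{psieintegral} reads $\chi^\pm=G_\lambda^\pm*(F_0+u\Pi(\overline u\chi^\pm))$, and the boundary conditions \eqref{11.6a}--\eqref{11.6b} become $\chi^+\to0$ at $-\infty$ and $\chi^-\to0$ at $+\infty$. I treat only the $+$ case; the $-$ case is symmetric.

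\textbf{Direction from \eqref{psieintegral} to \eqref{1.2}.} Split $G_\lambda^+$ as in \eqref{glambda}. The part $\widetilde G_\lambda*(\cdot)$ vanishes, since $\widetilde G_\lambda\in L^2_-$ is convolved with the $L^2_+$ function $u\Pi(\overline u\psi_\lambda^+)$; this function lies in $L^1\cap L^2_+$ because $\psi_\lambda^+\in L^\infty_{-s}$ and $u\in L^2_s$. What remains is
\begin{equation*}
\psi_\lambda^+=e_\lambda+ie^{i\lambda x}\int_{-\infty}^xe^{-i\lambda y}u\Pi(\overline u\psi_\lambda^+)(y)\,\mathrm{d}y .
\end{equation*}
The dominated convergence theorem then gives $\psi_\lambda^+-e_\lambda\to0$ as $x\to-\infty$. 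Differentiating this formula, exactly as in \eqref{Dphilam}, yields $D\psi_\lambda^+=\lambda\psi_\lambda^++u\Pi(\overline u\psi_\lambda^+)$, which is \eqref{1.2}.

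\textbf{Converse direction.} Assume $\psi_\lambda^+\in L^\infty_{-s}$ satisfies \eqref{1.2} and \eqref{11.6a}, and apply the argument of Lemma \ref{philamequi} to $\chi^+$. Note that $\chi^+\in L^\infty_{-s}$ (because $e_\lambda$ is bounded) and $\chi^+\in L^\infty(-\infty,a)$ for every $a$. Taking Fourier transforms with the regularization $\lambda+i\varepsilon$ gives
\begin{equation*}
\chi^+=G_{\lambda+i\varepsilon}*F_0+G_{\lambda+i\varepsilon}*\bigl(u\Pi(\overline u\chi^+)\bigr)+\bigl(i\varepsilon e^{i(\lambda+i\varepsilon)x}\mathds 1_{\R^+}\bigr)*\chi^+ .
\end{equation*}
The Schwartz-approximation argument proving \eqref{convo} applies verbatim to justify the last term for $\chi^+\in L^\infty_{-s}$. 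Now let $\varepsilon\to0^+$:
\begin{itemize}
\item the first two terms converge pointwise by \eqref{G-conv-point} and \eqref{cvL2};
\item the last term tends to $0$ by the rescaling and dominated convergence step \eqref{eps-e-conv-to-0}, which uses $\chi^+\to0$ at $-\infty$.
\end{itemize}
This gives $\chi^+=G_\lambda^+*(F_0+u\Pi(\overline u\chi^+))$. Finally, $F_0+u\Pi(\overline u\chi^+)=u\Pi(\overline u\psi_\lambda^+)$, so adding back $e_\lambda$ yields \eqref{psieintegral}.

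\textbf{Main obstacle.} The delicate step is the Fourier-side manipulation in the converse direction. Here $\psi_\lambda^+$ is only a tempered distribution, and $\hat e_\lambda$ is a Dirac mass at $\xi=\lambda$, exactly where $1/(\xi-\lambda)$ is singular. Subtracting $e_\lambda$ first avoids this: $\chi^+$ satisfies an equation with no plane-wave term. I would therefore carry out the derivation carefully for $\chi^+$ rather than for $\psi_\lambda^+$, and check that $(\xi-\lambda-i\varepsilon)\hat\chi^+$ is well defined in $\mathcal S'$. This holds since $\chi^+$ has polynomial growth, so the $\varepsilon$-regularization makes the division legitimate.
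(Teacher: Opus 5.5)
Your proposal is correct and follows essentially the paper's route: the same $\lambda+i\varepsilon$ regularization, the Schwartz-approximation justification behind \eqref{convo}, and the limit $\varepsilon\to0^+$ via \eqref{G-conv-point} and the rescaled dominated-convergence step. You also check the forward implication \eqref{psieintegral}$\Rightarrow$\eqref{11.6a} as in Lemma \ref{philamequi}, which the paper leaves implicit.

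The only difference is bookkeeping. The paper keeps $\psi_\lambda^+$ and shows directly that $(\varepsilon e^{i(\lambda+i\varepsilon)x}\mathds{1}_{\R^+})*\psi_\lambda^+\to e^{i\lambda x}$; this kernel reproduces $e_\lambda$ exactly, so the Dirac mass at $\xi=\lambda$ that worries you is harmless once $\varepsilon>0$. Your subtraction of $e_\lambda$ instead makes that term tend to $0$.

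Two small slips should be fixed. The kernel is $\varepsilon e^{i(\lambda+i\varepsilon)x}\mathds{1}_{\R^+}$, not $i\varepsilon e^{i(\lambda+i\varepsilon)x}\mathds{1}_{\R^+}$. Also, $F_0\in L^1$ holds because $u,\Pi(\overline u e_\lambda)\in L^2$, not because $u\in L^1$, since an $L^1$ times $L^2$ product need not be integrable.
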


\begin{proof}
We only work on the plus sign \(  \psi_\lambda^+(x)  \).
For $\varepsilon>0$, by the Fourier transform for \eqref{1.2}, 
 we get
\[
	\widehat{\psi}_{\lambda}^+=\frac{\mathds{1}_{\mathbb{R}^+}}{\xi - (\lambda + i\varepsilon)}\widehat{ u\Pi(\overline u \psi_\lambda^+)}
	-\frac{i\varepsilon}{\xi - (\lambda + i\varepsilon)}	\widehat{\psi}_{\lambda}^+.
\]
From the inverse Fourier transform, we deduce that
\begin{equation}\label{1.10}
	\psi_\lambda^+=G_{\lambda + i\varepsilon}*(u\Pi(\overline{u}\psi_\lambda^+ ))+\mathcal{F}^{-1}\left(-\frac{i\varepsilon}{\xi - (\lambda + i\varepsilon)}\widehat{\psi}_{\lambda}^+\right).
\end{equation}
Similar as the proof of Lemma \ref{philamequi}, we have 
$
\lim_{\varepsilon\to 0}G_{\lambda + i\varepsilon}*(u\Pi(\overline{u}{\psi_\lambda^+})) (x)= G_{\lambda  }^+*(u\Pi(\overline{u}{\psi_\lambda^+}))(x)
$ in the sense of pointwise convergence. 
In addition, we obtain 
\begin{equation}\label{F-1}
\small
	\begin{aligned}
		\mathcal{F}^{-1}\left(-\frac{i\varepsilon}{\xi - (\lambda + i\varepsilon)}\widehat{\psi}_{\lambda}^+\right)=(\varepsilon\mathds{1}_{\mathbb{R}^+}e^{i(\lambda + i\varepsilon)x})*\psi_\lambda^+
		=e^{i(\lambda + i\varepsilon)x}\int_{-\infty}^{\varepsilon x}e^{y_1}e^{-i\lambda y_1/\varepsilon}\psi_\lambda^+(y_1/\varepsilon)         \ \mathrm{d} y_1.
		\end{aligned}
\end{equation}
Since $\psi_\lambda^+$ is bounded on $(-\infty, a)$ for a fixed $a\in\R$, by dominated convergence theorem and $\psi_\lambda^+-e^{ix\lambda}\to 0$ as $x\to-\infty$, we obtain $\mathcal{F}^{-1}\left(-\frac{i\varepsilon}{\xi - (\lambda + i\varepsilon)}\widehat{\psi}_{\lambda}^+\right)\to e^{ix\lambda}$ as $\varepsilon\to 0$. So we obtain \eqref{psieintegral}. 
Taking the derivative with respect to $x$ of \eqref{psieintegral} yields \eqref{1.2}.
\end{proof}
As per Lemma 2.3 in  \cite{2}, the CMDNLS equation \eqref{21.1} admits a Lax pair structure $(\mathbb{L}_u, \mathbb{B}_u)$, with $\mathbb{L}_u$ defined by \eqref{21.4}. 
For the explicit form of the operator $\mathbb{B}_u$, see  \cite[eq. (2.4)]{2}
\begin{equation}\label{Budef}
    \mathbb{B}_u := \mathbb{T}_u \mathbb{T}_{\partial_x \bar{u}} - \mathbb{T}_{\partial_x u} \mathbb{T}_{\bar{u}} + i \left( \mathbb{T}_u \mathbb{T}_{\bar{u}} \right)^2 : H_+^1 \to L_+^2.
\end{equation}
Additionally, the operator $\widetilde{\mathbb{B}}_u$ is defined in  \cite[eq. (2.6)]{2}. Specifically, we have 
\begin{equation}\label{tildBudef1}
 \widetilde{\mathbb{B}}_u:=   \mathbb{B}_u - i \mathbb{L}_u^2 = i \partial_x^2 + 2 \mathbb{T}_u  \partial_x  \mathbb{T}_{\overline{u}}: H^2_+ \to L^2_+,
\end{equation}
by \eqref{21.4} and \eqref{Budef}.
If \( u \in H_+^2 \), then  $i\mathbb{B}_u$ is a {bounded self-adjoint operator on $L^2_+$.}
\begin{corollary}\label{Buto0}
 Assume that
    $u \in  H^2_s\cap H^3_+$ 
     for some   \( s > \frac 12 \).  For every $\lambda\in (0,+\infty)\setminus \sigma_{pp}(\mathbb{L}_u)$, let \( \psi^\pm_{\lambda} (t,x)\in L^\infty_{-s} \) denote the solution to \eqref{1.2}, \eqref{11.6a} and \eqref{11.6b}. 
Then we have 
\begin{equation}
    \mathbb{B}_u(\psi_\lambda^\pm) \in H^1_+ \quad \mathrm{and}\quad 
    \lim_{|x| \to +\infty} \bigl| \mathbb{B}_u(\psi_\lambda^\pm) (x) \bigr| = 0.
\end{equation}
\end{corollary}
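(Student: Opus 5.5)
The plan is to expand $\mathbb{B}_u$ by \eqref{Budef} and treat each term separately. Every term has the form $\mathbb{T}_a\mathbb{T}_{\overline b}\psi = \Pi(a\,\Pi(\overline b\,\psi))$, where $a,b$ are built from $u$ and $\partial_x u$. Since $\psi=\psi_\lambda^\pm$ is only in $L^\infty_{-s}$ and not in $L^2$, the whole point is that the inner factor $\overline b\,\psi$ absorbs the growth of $\psi$ through the weight carried by $b$.

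\textbf{Step 1: the inner products lie in $L^2$.} For $b\in\{u,\partial_x u\}$ we use $u\in H^2_s$, so $\langle x\rangle^{s}b\in L^2$. Together with $\langle x\rangle^{-s}\psi\in L^\infty$, this gives $\overline b\,\psi\in L^2$, and hence $\Pi(\overline b\psi)\in L^2_+$.

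\textbf{Step 2: one derivative on the inner products.} We need $\partial_x(\overline b\psi)\in L^2$. By Lemma \ref{psi-equi}, $\psi$ solves $\mathbb{L}_u\psi=\lambda\psi$, so
\[
\partial_x\psi = i\lambda\psi + iu\Pi(\overline u\psi).
\]
The first term is in $L^\infty_{-s}$. For the second, $u\in L^\infty$ and $\Pi(\overline u\psi)\in L^2$ by Step 1, so it is in $L^2$. Therefore
\[
\partial_x(\overline b\psi)=\overline{\partial_x b}\,\psi+\overline b\,\partial_x\psi\in L^2 .
\]
Here $\partial_x b\in\{\partial_x u,\partial_x^2u\}\subset L^2_s$ because $u\in H^2_s$, and $\overline b\in L^\infty$. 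Thus $\Pi(\overline b\psi)\in H^1_+$.

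\textbf{Step 3: the outer products.} Multiplying $\Pi(\overline b\psi)\in H^1_+$ by $a\in\{u,\partial_xu\}\subset H^2\subset W^{1,\infty}$ keeps it in $H^1$, and $\Pi$ preserves $H^1$. For the quartic term $i(\mathbb{T}_u\mathbb{T}_{\overline u})^2\psi$, the inner $\mathbb{T}_u\mathbb{T}_{\overline u}\psi$ is in $H^1_+$ by Steps 1--3. Then $\mathbb{T}_u\mathbb{T}_{\overline u}$ maps $H^1_+$ to $H^1_+$, since $u\in H^1\cap W^{1,\infty}$, using \eqref{inequalityCspri}. Summing the three terms gives $\mathbb{B}_u\psi_\lambda^\pm\in H^1_+$.

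\textbf{Step 4: decay.} Every $f\in H^1(\R)$ is continuous and satisfies $f(x)\to0$ as $|x|\to\infty$, for instance by density of $C_c^\infty$ and \eqref{ineqC1pripri}. This gives the limit statement.

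The main obstacle is Step 2, where $\partial_x\psi$ must be controlled even though $\psi$ is not $L^2$. The eigen-equation \eqref{1.2} is exactly what converts $\partial_x\psi$ into $\lambda\psi$ plus an $L^2$ term. The other place to be careful is the weights: the hypothesis that $\partial_x u,\partial_x^2u\in L^2_s$ is used precisely there. The extra regularity $u\in H^3_+$ is only needed so that $\partial_x u\in W^{1,\infty}$, or it can be kept as a convenient margin.
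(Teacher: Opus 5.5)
Your proposal is correct and follows essentially the same route as the paper. Both arguments use the eigen-equation to write $\partial_x\psi_\lambda^\pm = i\lambda\psi_\lambda^\pm + iu\Pi(\overline{u}\psi_\lambda^\pm)$, deduce from the $L^2_s$ weights on $u,\partial_x u,\partial_x^2 u$ that $\overline{u}\psi_\lambda^\pm$ and $(\partial_x\overline{u})\psi_\lambda^\pm$ lie in $H^1$, propagate $H^1_+$ through the outer Toeplitz factors and the quartic term, and conclude decay from $H^1\hookrightarrow C_0$. Two small remarks: in Step 2 the term $\overline{b}\cdot i\lambda\psi$ is controlled by the weight $b\in L^2_s$ from Step 1, not by $\overline{b}\in L^\infty$; and, as you suspected, $u\in H^2$ already suffices because $H^1$ is an algebra.
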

\begin{proof}
Taking the derivative with respect to $x$ for $(\partial_x \overline u)\psi_\lambda^+$ yields
\begin{equation*}
    \begin{split}
        &\partial_{x} \left( (\partial_x \overline u)\psi_\lambda^+ \right) = (\partial_{x}^2  \overline{u}) \psi_\lambda^+ + i (\partial_{x}  \overline{u}) \left( -i \partial_{x} \psi_\lambda^+ \right) = (\partial_{x}^2  \overline{u}) \psi_\lambda^+ + i (\partial_{x}  \overline{u}) \left( u\Pi(  \overline{u} \psi_\lambda^+) + \lambda \psi_\lambda^+ \right)
        \\
        =& (\partial_{x}^2  \overline{u}) \psi_\lambda^+ + i (u \partial_{x}  \overline{u}) \Pi( \overline{u} \psi_\lambda^+) + i\lambda (\partial_{x}  \overline{u}) \psi_\lambda^+\in  L_{s}^2 \cdot L_{-s}^\infty + H^1 \cdot H^1 \cdot \Pi\left( L_{s}^2 \cdot L_{-s}^\infty \right) + L_{s}^2 \cdot L_{-s}^\infty \subset L^2.
    \end{split}
\end{equation*}
Then we obatin $(\partial_x \overline u)\psi_\lambda^+\in H^1$ and 
 \( \mathbb{T}_{\partial_x  \overline{u}} \left( \psi_\lambda^+ \right) = \Pi\left( (\partial_x  \overline{u}) \psi_\lambda^+ \right) \in \Pi(H^1) = H_+^1 \). Therefore, we have
\( \mathbb{T}_u T_{\partial_x  \overline{u}} \left( \psi_\lambda^+ \right) \in \mathbb{T}_u(H_+^1) \subset H_+^1 \).
Taking the derivative with respect to $x$ for $ \overline u \psi_\lambda^+$ yields
\begin{equation*}
    \begin{split}
        &\partial_x ( \overline{u} \psi_\lambda^+) = (\partial_x  \overline{u}) \psi_\lambda^+ + i \overline{u} \left( -i\partial_x\psi_\lambda^+ \right)= (\partial_x  \overline{u}) \psi_\lambda^+ + i \overline{u} \left( u\Pi( \overline{u} \psi_\lambda^+) + \lambda \psi_\lambda^+ \right)
        \\
        =&(\partial_x  \overline{u}) \psi_\lambda^+ + i|u|^2 \Pi( \overline{u} \psi_\lambda^+) + i\lambda  \overline{u} \psi_\lambda^+\in L_{s}^2 \cdot L_{-s}^\infty + H^1 \cdot H^1 \cdot \Pi\left( L_{s}^2 \cdot L_{-s}^\infty \right) + L_{s}^2 \cdot L_{-s}^\infty\subset L^2.
    \end{split}
\end{equation*}
Then  we obtain $ \overline{u} \psi_\lambda^+ \in H^1$ and $ \mathbb{T}_{ \overline{u}}(\psi_\lambda^+) = \Pi( \overline{u} \psi_\lambda^+) \in H_+^1 $. Therefore, we get $$\mathbb{T}_{\partial_x u} \mathbb{T}_{ \overline{u}} \left( \psi_\lambda^+ \right) \in \mathbb{T}_{\partial_x u}(H_+^1) \subset H_+^1.$$ 
Since  \( \mathbb{T}_u \mathbb{T}_{ \overline{u}} \left( \psi_\lambda^+ \right) \in \mathbb{T}_u(H_+^1) \subset H_+^1 \),  we obtain 
\[
 \left( \mathbb{T}_u \mathbb{T}_{ \overline{u}} \right)^2 \left( \psi_\lambda^+ \right) = \left( \mathbb{T}_u \mathbb{T}_{ \overline{u}} \right)\left( \mathbb{T}_u \mathbb{T}_{ \overline{u}} \left( \psi_\lambda^+ \right) \right) \in \mathbb{T}_u \mathbb{T}_{ \overline{u}}(H_+^1) \subset H_+^1 .
 \]

As a result, we obtain
\(  \mathbb{B}_u\left( \psi_\lambda^+ \right) = \mathbb{T}_u \mathbb{T}_{\partial_x  \overline{u}} \left( \psi_\lambda^+ \right) - \mathbb{T}_{\partial_x u} \mathbb{T}_{ \overline{u}} \left( \psi_\lambda^+ \right) + i \left( \mathbb{T}_u \mathbb{T}_{ \overline{u}} \right)^2 \left( \psi_\lambda^+ \right)\in H_+^1 + H_+^1 + H_+^1 \subset H_+^1 \). Then 
\( \mathbb{B}_u\left( \psi_\lambda^+ \right)(x) \to 0\) as \(\ |x| \to +\infty \).

\end{proof}

\begin{corollary}\label{B(phi)inH1}
Assume that
    $u \in  H^2_s\cap H^3_+$ 
     for some   \( s > \frac 12 \).
       For every $\lambda\in (0,+\infty)\setminus \sigma_{pp}(\mathbb{L}_u)$, 	let $\phi_\lambda^\pm\in L_{-s}^\infty$ denote the solution to \eqref{Luphilam}, \eqref{phi_xto-inf} and \eqref{phi_xto+inf}.
    Then we have 
    \begin{equation}\label{limtildBuphi+-}
        \lim_{x\to \mp\infty}(\widetilde{\mathbb{B}}_{u} (\phi_\lambda^\pm))(x)=0.
    \end{equation} 
    Moreover, for any $a\in\R$, we have 
\begin{equation}\label{suotildBuphi}
    \sup_{x>a} |\widetilde{\mathbb{B}}_u(\phi_\lambda^-)(x)|<+\infty,\quad \sup_{x<-a} |\widetilde{\mathbb{B}}_u(\phi_\lambda^+)(x)|<+\infty.
    \end{equation}
\end{corollary}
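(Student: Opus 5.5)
We must show that $\widetilde{\mathbb{B}}_u(\phi_\lambda^\pm)=i\partial_x^2\phi_\lambda^\pm+2u\,\partial_x\Pi(\overline u\phi_\lambda^\pm)$ tends to $0$ as $x\to\mp\infty$, and is bounded on the corresponding half-lines. We treat $\phi_\lambda^+$; the case of $\phi_\lambda^-$ is symmetric.

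\textbf{Step 1: first derivative.} From \eqref{Luphilam},
\[
\partial_x\phi_\lambda^+=i\bigl(\lambda\phi_\lambda^++u+u\Pi(\overline u\phi_\lambda^+)\bigr).
\]
Since $\phi_\lambda^+\in L^\infty_{-s}$ and $u\in L^2_s$, we have $\overline u\phi_\lambda^+\in L^2$. Hence $\Pi(\overline u\phi_\lambda^+)\in L^2_+$, and $u\Pi(\overline u\phi_\lambda^+)\in H^1\cdot L^2$.

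\textbf{Step 2: regularity of $\Pi(\overline u\phi_\lambda^+)$.} As in the proof of Corollary \ref{Buto0},
\[
\partial_x(\overline u\phi_\lambda^+)=(\partial_x\overline u)\phi_\lambda^++i\overline u\bigl(\lambda\phi_\lambda^++u+u\Pi(\overline u\phi_\lambda^+)\bigr).
\]
Because $u\in H^2_s$, each term lies in $L^2_s\cdot L^\infty_{-s}+H^1\cdot H^1+H^1\cdot H^1\cdot L^2\subset L^2$. So $\overline u\phi_\lambda^+\in H^1$ and $\Pi(\overline u\phi_\lambda^+)\in H^1_+$. Differentiating once more and using $\partial_x^2u\in L^2_s$ together with Step 1, we get $\overline u\phi_\lambda^+\in H^2$, hence $\Pi(\overline u\phi_\lambda^+)\in H^2_+$. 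In particular $\Pi(\overline u\phi_\lambda^+)$ and $\partial_x\Pi(\overline u\phi_\lambda^+)$ belong to $H^1$, so both are bounded and tend to $0$ at $\pm\infty$.

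\textbf{Step 3: second derivative.} Differentiating Step 1 gives
\[
i\partial_x^2\phi_\lambda^+=-\bigl(\lambda\partial_x\phi_\lambda^++\partial_xu+(\partial_xu)\Pi(\overline u\phi_\lambda^+)+u\,\partial_x\Pi(\overline u\phi_\lambda^+)\bigr).
\]
Substituting Step 1 for $\partial_x\phi_\lambda^+$, we obtain
\[
\widetilde{\mathbb{B}}_u(\phi_\lambda^+)=-i\lambda^2\phi_\lambda^+-i\lambda u-i\lambda u\Pi(\overline u\phi_\lambda^+)-\partial_xu-(\partial_xu)\Pi(\overline u\phi_\lambda^+)+u\,\partial_x\Pi(\overline u\phi_\lambda^+).
\]

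\textbf{Step 4: conclusion.} Every term except the first is a product of $u$ or $\partial_xu$ (both in $H^1$, hence bounded and vanishing at infinity) with bounded functions vanishing at infinity by Step 2. These terms are therefore bounded on $\R$ and tend to $0$ as $|x|\to\infty$. The remaining term $-i\lambda^2\phi_\lambda^+$ is handled by Lemma \ref{philamequi}: $\phi_\lambda^+(x)\to0$ as $x\to-\infty$. This gives \eqref{limtildBuphi+-} for the plus sign.

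For the boundedness: $\phi_\lambda^+$ is continuous by Step 1, and it tends to $0$ at $-\infty$. On any half-line $(-\infty,-a)$ it is therefore bounded, being bounded near $-\infty$ and continuous on the compact remainder. Together with the bounded terms above, this gives \eqref{suotildBuphi} for $\phi_\lambda^+$.

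The analogous argument uses $\phi_\lambda^-\to0$ as $x\to+\infty$, which gives the statements for $\phi_\lambda^-$.

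The main obstacle is Step 2, the $H^2$ regularity of $\Pi(\overline u\phi_\lambda^+)$, which is needed to control $u\,\partial_x\Pi(\overline u\phi_\lambda^+)$. Only the weight $L^\infty_{-s}$ on $\phi_\lambda^+$ is available, so this has to be absorbed by the $L^2_s$ weights on $u$, $\partial_xu$ and $\partial_x^2u$ supplied by $H^2_s$.
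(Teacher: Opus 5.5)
Your proof is correct, but it is organized differently from the paper's.

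The paper does not expand $\widetilde{\mathbb{B}}_u(\phi_\lambda^\pm)$ directly. It writes $\widetilde{\mathbb{B}}_u=\mathbb{B}_u-i\mathbb{L}_u^2$. It then shows $\mathbb{B}_u(\phi_\lambda^-)\in H^1_+$ from the Toeplitz structure $\mathbb{B}_u=\mathbb{T}_u\mathbb{T}_{\partial_x\overline u}-\mathbb{T}_{\partial_x u}\mathbb{T}_{\overline u}+i(\mathbb{T}_u\mathbb{T}_{\overline u})^2$. This only needs $(\partial_x\overline u)\phi_\lambda^-\in H^1$ and $\overline u\phi_\lambda^-\in H^1$, exactly as in Corollary \ref{Buto0}. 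Finally it uses the eigen-equation twice to get $-i\mathbb{L}_u^2(\phi_\lambda^-)=-i\lambda^2\phi_\lambda^--i\lambda u-\partial_xu+iu\Pi(|u|^2)$.

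You instead differentiate the eigen-equation directly and keep $u\,\partial_x\Pi(\overline u\phi_\lambda^+)$ as a single term. This costs one extra derivative: you need $\overline u\phi_\lambda^+\in H^2$, where the paper gets by with $H^1$ bounds on $\overline u\phi$ and $(\partial_x\overline u)\phi$. In both routes the decisive input is the same, namely $(\partial_x^2\overline u)\phi_\lambda^\pm\in L^2_s\cdot L^\infty_{-s}\subset L^2$. Expanding $u\,\partial_x\Pi(\overline u\phi)$ by the Leibniz rule and the eigen-equation shows that your group of $C_0$ terms equals $\mathbb{B}_u(\phi_\lambda^+)+iu\Pi(|u|^2)$, so the two final formulas agree.

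What each route buys:
\begin{itemize}
\item Your route never applies the identity $\widetilde{\mathbb{B}}_u=\mathbb{B}_u-i\mathbb{L}_u^2$, which is stated on $H^2_+$, to the non-$L^2$ function $\phi_\lambda^\pm$. The paper takes that step without comment.
\item The paper's route parallels Corollary \ref{Buto0}, so the same fact $\mathbb{B}_u(\cdot)\in H^1_+$ serves both the $\phi_\lambda^\pm$ and the $\psi_\lambda^\pm$ time-evolution arguments.
\end{itemize}

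Your boundedness argument, via continuity of $\phi_\lambda^+$ and its limit at $-\infty$, is equivalent to the paper's appeal to $\widetilde{\mathbb{B}}_u(\phi_\lambda^-)\in L^\infty_{-s}$ together with the limit.
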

\begin{proof}
Fix $s > \frac{1}{2}$. 
Since $\mathbb{L}_u\phi_{\lambda}^\pm=\lambda\phi_{\lambda}^\pm+u$ and  $u\in H^2_s\cap H^3_+$, it follows  that
\begin{small}
\begin{align*}
&\partial_x((\partial_x \bar{u})\phi_\lambda^-)
= (\partial_x^2 \bar{u})\phi_\lambda^- + (\partial_x \bar{u})\left( i u \Pi (\bar{u} \phi_\lambda^-) + i u + i \lambda \phi_\lambda^- \right) 
= (\partial_x^2 \bar{u})\phi_\lambda^- + i u (\partial_x \bar{u}) \Pi (\bar{u} \phi_\lambda^-) + i u \partial_x \bar{u} + i \lambda (\partial_x \bar{u})\phi_\lambda^- \\
\in &L_s^2 \cdot L_{-s}^\infty + H^2 \cdot H^1 \cdot \Pi \left( L_s^2 \cdot L_{-s}^\infty \right) + H^2 \cdot H^1 + L_s^2 \cdot L_{-s}^\infty \subset L^2.
\end{align*}
\end{small}
So we have $(\partial_x \bar{u})\phi_\lambda^- \in H^1 $ and $T_u T_{\partial_x \bar{u}} (\phi_\lambda^-)
\in \Pi \left( H^1 \cdot \Pi (H^1) \right) \subset H^1$. 
Similarly, we deduce that that
\begin{align*}
\partial_x(\bar{u}\phi_\lambda^-)
& = (\partial_x \bar{u})\phi_\lambda^- + \bar{u} \left( i u \Pi (\bar{u} \phi_\lambda^-) + i u + i \lambda \phi_\lambda^- \right) 
= (\partial_x \bar{u})\phi_\lambda^- + i |u|^2 \Pi (\bar{u}\phi_\lambda^-) + i |u|^2 + i \lambda \bar{u} \phi_\lambda^- \\
&\in L_s^2 \cdot L_{-s}^\infty + H^2 \cdot H^2 \cdot \Pi \left( L_s^2 \cdot L_{-s}^\infty \right) + H^2 + L_s^2 \cdot L_{-s}^\infty \subset L^2.
\end{align*}
This implies  that $\bar{u}\phi_\lambda^- \in H^1$ and $T_{\partial_x \bar{u}} T_{\bar{u}} (\phi_\lambda^-)
\in \Pi \left( H^1 \cdot \Pi (H^1) \right) \subset H^1_+ $. Furthermore, we have  $\mathbb{T}_u \mathbb{T}_{\bar{u}} (\phi_\lambda^-)
\in \Pi \left( H^1 \cdot \Pi (H^1) \right) \subset H^1_+ $ and $\left( \mathbb{T}_u \mathbb{T}_{\bar{u}} \right)^2 (\phi_\lambda^-)
\in \mathbb{T}_u \mathbb{T}_{\bar{u}} (H^1) \subset H^1_+$. So we get
\begin{equation}\label{BuphiinH1}
\small
    \mathbb{B}_u (\phi_\lambda^-)=\mathbb{T}_u \mathbb{T}_{\partial_x \bar{u}} (\phi_\lambda^-)+\mathbb{T}_{\partial_x \bar{u}} \mathbb{T}_{\bar{u}} (\phi_\lambda^-)+\left( \mathbb{T}_u \mathbb{T}_{\bar{u}} \right)^2 (\phi_\lambda^-)\in H^1_+ \quad \mathrm{and} \quad - \partial_x u + i u \Pi (|u|^2) - i \lambda u \in H^1_+.
\end{equation}As a consequence, we have
\begin{equation}\label{limBuphi=0}
     \lim_{x\to\pm \infty} \left( \mathbb{B}_u(\phi_\lambda^-)  \right) (x)=  \lim_{x\to\pm \infty} \left( - \partial_x u + i u \Pi (|u|^2) - i \lambda u \right) (x) =0.
\end{equation}Formula \eqref{tildBudef1} yields that
\begin{equation}\label{tildBuphi-expBu}   \widetilde{\mathbb{B}}_u(\phi_\lambda^-)
= \mathbb{B}_u(\phi_\lambda^-) - i \mathbb{L}_u^2(\phi_\lambda^-)  
 =  \mathbb{B}_u(\phi_\lambda^-) - \partial_x u + i u \Pi (|u|^2) - i \lambda u- i \lambda^2 \phi_{\lambda}^-.
\end{equation}Since $\mathop{{\rm lim}}\limits_{x \to +\infty} \phi_{\lambda}^{-}(x) = 0$, formula \eqref{limtildBuphi+-} follows from  \eqref{limBuphi=0}.  By \eqref{BuphiinH1}, \eqref{tildBuphi-expBu} and $\phi_\lambda^-\in L_{-s}^\infty$, we obtain $\widetilde{\mathbb{B}}_u(\phi_\lambda^-)\in L_{-s}^\infty$. 
 So formula \eqref{suotildBuphi} is a direct consequence of \eqref{limtildBuphi+-}.
\end{proof}

\section{Scattering coefficients}\label{sec3}
In this section, we introduce  the scattering coefficients $\Gamma(\lambda)$ and $\beta(\lambda)$ using Jost functions $\phi_\lambda^\pm$ and $\psi_\lambda^\pm$.    Additionally, we derive 
 a relation between $\phi_\lambda^-$ and $\psi_\lambda^-$.  The relation is  fundamental in formulating the associated Riemann–Hilbert problem.
\begin{lemma}
    Assume that $u\in H^1_+\cap L_s^2$ for some $s>\frac12$. For every $k\in \C^+\cup \C^-\cup  \big( (-\infty, 0)\setminus \sigma_{pp}(\mathbb{L}_u)\big)$ and every $\lambda\in (0,+\infty)\setminus \sigma_{pp}(\mathbb{L}_u)$, let $\phi_k$, $\phi_\lambda^\pm$ and $\psi_\lambda^\pm$ be defined as Definition \ref{def3.7}, \ref{defphilam} and \ref{defpsilam}. Then we have 
        \begin{align} 
			&\psi_\lambda^+(x)=\Gamma(\lambda)	\psi_{\lambda}^-(x),\label{3.2}\\
&\phi_{\lambda}^+-\phi_{\lambda}^-=\beta(\lambda)\psi_{\lambda}^-,\label{24}
	\end{align}
    where 
    \begin{align}
&\Gamma(\lambda):=1 + i\int_\R \left( u\Pi(\overline{u}	\psi_\lambda^+)\right)(y)e^{-i\lambda y}
\mathrm{d} y,\label{gamma}\\
&	\label{3.5}
\beta(\lambda):=i\int_{\mathbb{R}}(u+u\Pi(\overline{u}\phi_{\lambda }^+))(y)e^{-i\lambda y}         \ \mathrm{d} y.
\end{align}
And it follows that
\begin{align} 
	&|\Gamma(\lambda)| = 1,\quad	|\beta(\lambda)|^2 = 2\mathrm{Im}\int_{\mathbb{R}}  2\mathrm{Im}\int_\R  \overline u(x)\phi_{\lambda}^+(x)  \ \mathrm{d}x.  \ 
	\end{align} 
\end{lemma}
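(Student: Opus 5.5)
The plan is to exploit the jump of the Green function across the positive half-line. By \eqref{glambda} and \eqref{glambda-},
\begin{equation*}
G_\lambda^+(x)-G_\lambda^-(x)=ie^{i\lambda x}\bigl(\mathds{1}_{x\geq 0}+\mathds{1}_{x\leq 0}\bigr)=ie_\lambda(x)
\end{equation*}
almost everywhere. Hence for every $F\in L^1$ we have $G_\lambda^+*F=G_\lambda^-*F+ie_\lambda\hat F(\lambda)$.

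\emph{The relation for $\psi_\lambda^\pm$.} Apply this to $F=u\Pi(\overline u\psi_\lambda^+)$, which lies in $L^1\cap L^2_+$ because $u\in L^2_s$ and $\psi_\lambda^+\in L^\infty_{-s}$. Equation \eqref{psieintegral} becomes
\begin{equation*}
\psi_\lambda^+=\bigl(1+i\hat F(\lambda)\bigr)e_\lambda+G_\lambda^-*\bigl(u\Pi(\overline u\psi_\lambda^+)\bigr),
\end{equation*}
that is, $(\mathrm{id}-T_\lambda^-)\psi_\lambda^+=\Gamma(\lambda)e_\lambda$ with $\Gamma$ as in \eqref{gamma}. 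Since $\psi_\lambda^+\in L^\infty_{-s}$ and $\mathrm{id}-T_\lambda^-$ is bijective on $L^\infty_{-s}$ by Corollary \ref{cor3.6}, linearity and Definition \ref{defpsilam} give $\psi_\lambda^+=\Gamma(\lambda)\psi_\lambda^-$, which is \eqref{3.2}.

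\emph{The relation for $\phi_\lambda^\pm$.} Apply the same identity to $F=u+u\Pi(\overline u\phi_\lambda^+)$. This yields
\begin{equation*}
(\mathrm{id}-T_\lambda^-)\phi_\lambda^+=G_\lambda^-*u+\beta(\lambda)e_\lambda=(\mathrm{id}-T_\lambda^-)\bigl(\phi_\lambda^-+\beta(\lambda)\psi_\lambda^-\bigr).
\end{equation*}
Injectivity of $\mathrm{id}-T_\lambda^-$ then gives \eqref{24}.

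\emph{The modulus identities.} I would reuse the bilinear identity (Lemma 4.3 of \cite{16}) already used in the proof of Lemma \ref{Tlaminj}. For $F\in L^1\cap L^2_+$ it gives
\begin{equation*}
\mathrm{Im}\,\langle G_\lambda^+*F,F\rangle=\tfrac12|\hat F(\lambda)|^2,
\qquad\text{where }\langle f,g\rangle=\int f\overline g .
\end{equation*}
\begin{itemize}
\item For $\Gamma$, take $F=u\Pi(\overline u\psi_\lambda^+)$ and write $\psi_\lambda^+=e_\lambda+G_\lambda^+*F$. On one hand $\langle\psi_\lambda^+,F\rangle=\|\Pi(\overline u\psi_\lambda^+)\|_{L^2}^2$ is real. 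On the other hand $\langle e_\lambda,F\rangle=\overline{\hat F(\lambda)}$. Writing $a=\hat F(\lambda)$ and taking imaginary parts gives $0=-\mathrm{Im}\,a+\tfrac12|a|^2$. Therefore $|\Gamma|^2=|1+ia|^2=1-2\,\mathrm{Im}\,a+|a|^2=1$.
\item For $\beta$, take $F=u+u\Pi(\overline u\phi_\lambda^+)$, so that $\phi_\lambda^+=G_\lambda^+*F$. Then $\langle\phi_\lambda^+,F\rangle=\int\overline u\phi_\lambda^+\,\d x+\|\Pi(\overline u\phi_\lambda^+)\|_{L^2}^2$. Taking imaginary parts gives $\tfrac12|\beta(\lambda)|^2=\mathrm{Im}\int\overline u\phi_\lambda^+\,\d x$, which is the claimed formula; the displayed statement seems to contain a duplicated factor.
\end{itemize}

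The main obstacle is justifying these pairings and the bilinear identity at this regularity. Here $F$ is only in $L^1\cap L^2_+$, and $\psi_\lambda^+,\phi_\lambda^+$ are merely bounded, since $G_\lambda^\pm*F\in L^\infty$ by Young's inequality. I would first prove the identity for $G_{\lambda+i\varepsilon}$ by Plancherel, where $\mathrm{Im}\frac{1}{\xi-\lambda-i\varepsilon}$ is an approximate delta. I would then pass to $\varepsilon\to0^+$ using the convergence \eqref{cvL2} together with the pointwise convergence \eqref{G-conv-point} and dominated convergence. The continuity of $\hat F$, which follows from $F\in L^1$, gives the limit $\pi|\hat F(\lambda)|^2/\pi$ in the appropriate normalization.
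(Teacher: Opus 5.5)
Your proposal is correct and follows the paper's proof essentially step for step. The jump $G_\lambda^+-G_\lambda^-=ie_\lambda$ combined with injectivity of $\mathrm{id}-T_\lambda^-$ gives \eqref{3.2} and \eqref{24}, and the bilinear identity from Lemma 4.3 of \cite{16} gives $|\Gamma(\lambda)|=1$ and $|\beta(\lambda)|^2=2\,\mathrm{Im}\int_{\mathbb{R}}\overline{u}\phi_\lambda^+\,\mathrm{d}x$; you additionally sketch a justification of that identity via the $\varepsilon$-regularization, where the paper only cites it. You are also right that the displayed $|\beta|^2$ formula has a duplicated factor, and your computation of $|1+ia|^2$ fixes the paper's slip of writing $|1+\mathcal{F}(\cdot)|$ without the factor $i$.
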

\begin{proof}
Recall the definition   \eqref{glambda} and \eqref{glambda-}, it follows that
$
G_{\lambda  }^+-G_{\lambda }^-=ie^{i\lambda x} .
$
Then we deduce that 
\begin{small}
    \begin{equation}\label{psie2}
		\begin{aligned}
		\psi_\lambda^+&=e^{i\lambda x}+(G_\lambda^-
        +ie^{i\lambda x})*(u\Pi(\overline{u}	\psi_\lambda^+ ))=\left(1 + i\int_\R e^{-i\lambda y}u\Pi(\overline{u}	\psi_\lambda^+) \ \mathrm{d} y\right)e^{i\lambda x}+G_\lambda^- *(u\Pi(\overline{u}	\psi_\lambda^+)).
	\end{aligned} 
\end{equation}
\end{small}
Since the operator $\mathrm{id}-T_\lambda^-$ is bijective, equation \eqref{3.2} follows. 
Using the same argument, we have
\begin{align*}
	\phi_{\lambda}^+-\phi_{\lambda}^-
    &=G_\lambda^+*(u+u\Pi(\overline u \phi_\lambda^+))-G_\lambda^-*(u+u\Pi(\overline u \phi_\lambda^-))
    \\
    &=(  G_\lambda^+-G_\lambda^-  )*u+(G_\lambda^-+ie^{i\lambda x})*(u\Pi(\overline u \phi_\lambda^+))-G_\lambda^-*(u\Pi(\overline u \phi_\lambda^-))
    \\
    &=ie^{i\lambda x}\int_\R (u+u\Pi(\overline{u}\phi_{\lambda}^+))e^{-i\lambda y}         \ \mathrm{d} y + G_{\lambda}^-*(u\Pi(\overline{u}(\phi_\lambda^+-\phi_\lambda^-))).
\end{align*}
Again by the bijectivity of the operator $\mathrm{id}-T_\lambda^-$, we conclude \eqref{24}.

Next, applying the formula \eqref{Glambda^+*F,F}, we compute
\begin{equation}\label{innerpsiuPiupsi}
\begin{split}
\langle\psi_{\lambda }^+, u\Pi(\overline{u}\psi_{\lambda }^+)\rangle_{L^\infty, L^1}
&
=
\langle    e^{ix\lambda}+  G_\lambda^+*(u\Pi(\overline u \psi_\lambda^+)) ,    u\Pi(\overline{u}\psi_{\lambda }^+)            \rangle_{L^\infty, L^1}
\\
&
=
\overline{\mathcal{F}(u\Pi(\overline u\psi_\lambda^+))}+i|{\mathcal{F}(u\Pi(\overline u\psi_\lambda^+))}|^2-{\mathcal{F}(u\Pi(\overline u\psi_\lambda^+))}+\langle u\Pi(\overline{u}\psi_{\lambda }^+),\psi_{\lambda }^+,\rangle_{ L^1,L^\infty}.
\end{split}
\end{equation}
Note that  $\langle\psi_{\lambda }^+, u\Pi(\overline{u}\psi_{\lambda }^+)\rangle_{L^\infty, L^1}=\| \Pi(\overline{u}\psi_{\lambda }^+) \|_{L^2}^2=\langle u\Pi(\overline{u}\psi_{\lambda }^+),\psi_{\lambda }^+,\rangle_{ L^1,L^\infty}$. Substituting \eqref{innerpsiuPiupsi} and simplifying   leads to $|\Gamma(\lambda)|=|1+{\mathcal{F}(u\Pi(\overline u\psi_\lambda^+))}|=1$.

Finally, from formula \eqref{phi_lambda_conv} and \eqref{Glambda^+*F,F}, we have 
$$
\langle\phi_{\lambda }^+, u+u\Pi(\overline{u}\phi_{\lambda }^+)\rangle_{L^\infty, L^1}=i|\mathcal{F}(u+u\Pi(\overline{u}\phi_{\lambda }^+))|^2+\langle u+u\Pi(\overline{u}\phi_{\lambda }^+), \phi_{\lambda }^+\rangle_{L^1,L^\infty}.
$$
Since $\langle\phi_{\lambda }^+, u\Pi(\overline{u}\phi_{\lambda }^+)\rangle_{L^\infty, L^1}=\| \Pi(\overline{u}\phi_{\lambda }^+) \|_{L^2}^2=\langle u\Pi(\overline{u}\phi_{\lambda }^+),\phi_{\lambda }^+,\rangle_{ L^1,L^\infty}$  is real,  and from formula \eqref{3.5} we have $\beta(\lambda)=i\mathcal{F}(u+u\Pi(\overline{u}\phi_{\lambda }^+))$,  it follows that
	\[
	 |\beta(\lambda)|^2 = 2\mathrm{Im}\int_\R  \overline u\phi_{\lambda}^+  \ \mathrm{d}x.
	\]
\end{proof}

To  eliminate  $\psi_{\lambda}^-$ in \eqref{24}  and proceed with the construction of the Riemann–Hilbert problem, we now  derive the relation 
\begin{equation}\label{phi-psi-relation}
e^{i\lambda x} \partial_{\lambda} \left( e^{-i\lambda x}\psi_{\lambda}^- \right) = \left( \frac{1}{2\pi} \int_{\mathbb{R}} \overline{u} \psi_{\lambda}^- \,      \ \mathrm{d} x \right) \phi_{\lambda}^-.
\end{equation}

Recall that $\psi_\lambda^-=e_\lambda+G_\lambda^- * \left( u \Pi (\overline{u} \psi_{\lambda}^-)\right) $.  Using  (18) in  \cite{12} and \eqref{glambda-}, we obtain that 
	\[
		\begin{aligned}
	 \partial_{\lambda} \left( e_{-\lambda} \psi_{\lambda}^- \right) 
		&= -i x e_{-\lambda} \left( G_\lambda^- * \left( u \Pi (\overline{u} \psi_{\lambda}^-) \right) \right)+ e_{-\lambda}\left( \partial_{\lambda} G_\lambda^- \right) * \left( u \Pi (\overline{u} \psi_{\lambda}^-) \right)+ e_{-\lambda}G_\lambda^- * \left( u \Pi (\overline{u} \partial_{\lambda} \psi_{\lambda}^-) \right) \\
		&= -i x e_{-\lambda} \left( G_\lambda^- * \left( u \Pi (\overline{u} \psi_{\lambda}^-) \right) \right) + e_{-\lambda}\left( {-\frac{1}{2\pi\lambda} + i x G_\lambda^-} \right) * \left( u \Pi (\overline{u} \psi_{\lambda}^-) \right) 
        \\
		 &\quad + e_{-\lambda}G_\lambda^- * \left( u \Pi \left( \overline{u} e_\lambda \partial_{\lambda} \left( e_{-\lambda} \psi_{\lambda}^- \right) \right) \right) + i e_{-\lambda} G_\lambda^- * \left( u \Pi (x \overline{u} \psi_{\lambda}^-) \right).
	\end{aligned}
	\]
    The formula $-ix(G_\lambda^-*(u\Pi(\overline u \psi_\lambda^-)))+(ixG_\lambda^-)*(u\Pi(\overline u \psi_\lambda^- ))=-iG_\lambda^-*(xu\Pi(\overline u \psi_\lambda^-))$ allows us to simplify the expression. Multiplying both sides by $e_\lambda$ yields 
    \begin{equation}\label{eparepsi123}
    \small
    \begin{split}
		e_\lambda \partial_{\lambda} \left( e_{-\lambda} \psi_{\lambda}^- \right)&= -\frac{1}{2\pi\lambda} \int_{\mathbb{R}} u \Pi (\overline{u} \psi_{\lambda}^-) \,      \ \mathrm{d} x - i \big( {G_\lambda^-} * \left( x u \Pi (\overline{u} \psi_{\lambda}^-) \right)
		 - G_\lambda^- * \left( u \Pi (x \overline{u} \psi_{\lambda}^-) \right) \big)\\
		&\quad +G_\lambda^- * \left( u \Pi \left( \overline{u} e_\lambda \partial_{\lambda} \left( e_{-\lambda} \psi_{\lambda}^- \right) \right) \right) = \mathrm{I} + \mathrm{II} + \mathrm{III}.
    \end{split}\end{equation}
    Given that  $u\in H^1_+\cap L_{s}^\infty$ and $\psi_\lambda^-\in L_{-s}^\infty$, it follows  \( u \Pi (\overline{u} \psi_{\lambda}^-) \in L^1 \cap L^2_+ \). { Thus,  we have }
	\begin{equation}\label{=0}
	 \mathrm{I}=-\frac{1}{2\pi\lambda}\int_{\mathbb{R}} u \Pi (\overline{u} \psi_{\lambda}^-) \,      \ \mathrm{d} x= -\frac{1}{2\pi\lambda}\mathcal{F}({u \Pi (\overline{u} \psi_{\lambda}^- )})(0^-) = 0.
	\end{equation}
 For any $f \in L_1^2$, let $Xf: \mathbb{R} \to \mathbb{C}$ be defined by
\begin{equation}
    (Xf)(x) = xf(x), \quad \forall x \in \mathbb{R}.
\end{equation}Then $X : L^2_1 \to L^2$ is a linear mapping. The following lemma gives a commutator formula that is used to simplify term $\mathrm{II}$ in \eqref{eparepsi123}.
\begin{lemma}\label{xPi-Pix}
If $m\in L_1^2$, then we have  $X\Pi(m)-\Pi(Xm)=\frac{i}{2\pi} \hat{m}(0^+)$.
\end{lemma}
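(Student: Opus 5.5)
The plan is to prove the identity on the Fourier side, where multiplication by $x$ becomes differentiation in $\xi$ and $\Pi$ becomes multiplication by $\mathds{1}_{\xi\geq 0}$. The commutator then reduces to the distributional derivative of a jump at $\xi=0$. Throughout, I read both sides as tempered distributions, which here are in fact locally integrable functions. $X\Pi(m)$ need not belong to $L^2$: the identity itself shows it differs from the $L^2$-function $\Pi(Xm)$ by a constant.

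\emph{Step 1 (regularity of $\hat m$).} With the convention $\hat f(\xi)=\int_{\mathbb{R}} f(x)e^{-ix\xi}\,\mathrm{d}x$, one has $\widehat{Xf}=i\partial_\xi \hat f$ in $\mathcal{S}'$. Since $m\in L^2_1$, both $m$ and $Xm$ lie in $L^2$. By Plancherel, $\hat m\in L^2$ and $\partial_\xi\hat m=-i\widehat{Xm}\in L^2$, so $\hat m\in H^1(\mathbb{R})$. In particular $\hat m$ is (absolutely) continuous, so $\hat m(0^+)=\hat m(0)$ is well defined. The same fact gives $\mathds{1}_{\xi\geq0}\hat m\in H^1(0,+\infty)$, i.e. it is $H^1$ on the half-line with a possible jump at the origin.

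\emph{Step 2 (the jump formula).} For $\varphi\in\mathcal{S}$, I integrate by parts on $[0,+\infty)$, using the absolute continuity of $\hat m$:
\[
\langle \partial_\xi(\mathds{1}_{\xi\geq0}\hat m),\varphi\rangle=-\int_0^{+\infty}\hat m\,\varphi'\,\mathrm{d}\xi=\hat m(0)\varphi(0)+\int_0^{+\infty}(\partial_\xi\hat m)\varphi\,\mathrm{d}\xi .
\]
Hence $\partial_\xi(\mathds{1}_{\xi\geq0}\hat m)=\mathds{1}_{\xi\geq0}\partial_\xi\hat m+\hat m(0^+)\delta_0$ in $\mathcal{S}'$. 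This is the only step requiring care: one must justify that the boundary term at $+\infty$ vanishes, which holds since $\hat m\in H^1$ tends to $0$ at infinity, and that the boundary value at $0$ is exactly $\hat m(0^+)$.

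\emph{Step 3 (conclusion).} Using $\widehat{\Pi f}=\mathds{1}_{\xi\geq0}\hat f$ (extended to $\mathcal{S}'$), we get
\[
\mathcal{F}\big(X\Pi(m)\big)=i\partial_\xi(\mathds{1}_{\xi\geq0}\hat m)=i\mathds{1}_{\xi\geq0}\partial_\xi\hat m+i\hat m(0^+)\delta_0
\]
and
\[
\mathcal{F}\big(\Pi(Xm)\big)=\mathds{1}_{\xi\geq0}\,i\partial_\xi\hat m .
\]
Subtracting leaves $i\hat m(0^+)\delta_0$. Since $\mathcal{F}^{-1}\delta_0=\frac{1}{2\pi}$ with this normalization, applying $\mathcal{F}^{-1}$ yields $X\Pi(m)-\Pi(Xm)=\frac{i}{2\pi}\hat m(0^+)$, as claimed.

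I expect no serious obstacle. The only delicate points are Step 2 and keeping the sign and $2\pi$ conventions consistent with the paper's Fourier transform.
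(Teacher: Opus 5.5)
Your proof is correct and follows essentially the same route as the paper. Both arguments pass to the Fourier side, derive $\partial_\xi(\mathds{1}_{\xi\geq 0}\hat m)=\mathds{1}_{\xi\geq 0}\partial_\xi\hat m+\hat m(0^+)\delta_0$ by integrating by parts against a Schwartz function, subtract, and invert using $\mathcal{F}^{-1}\delta_0=\frac{1}{2\pi}$. Your Step 1, showing $\hat m\in H^1$ and hence absolutely continuous so that the integration by parts is legitimate, makes explicit a justification the paper leaves implicit.
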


\begin{proof}
Since $m\in L_1^2\subset L^1\cap L^2\subset \mathcal{S}'$, 
we have  $\mathds{1}_{\mathbb{R}^+} \hat{m}\in L^2\in \mathcal{S}'$.
	 For any test function \(  n \in \mathcal{S} \), we deduce that  
     \[
     \small
	\begin{aligned}
		&- \langle \partial_{\xi} (\mathds{1}_{\mathbb{R}^+} \hat{m}), n \rangle_{\mathcal{S}', \mathcal{S}} = \langle \mathds{1}_{\mathbb{R}^+} \hat{m}, \partial_{\xi} n \rangle_{\mathcal{S}', \mathcal{S}} = \int_{0}^{\infty} \hat{m}(\xi) \partial_{\xi} n(\xi) \, d\xi 
		= -\hat{m}(0^+) n(0^+) - \langle \mathds{1}_{\mathbb{R}^+} \partial_{\xi} \hat{m}, n \rangle_{\mathcal{S}', \mathcal{S}} \\
		\quad=& -\langle \hat{m}(0^+) \delta_0 + \mathds{1}_{\mathbb{R}^+} \partial_{\xi} \hat{m}, n \rangle_{\mathcal{S}', \mathcal{S}}.
	\end{aligned}
	\]
	This implies 
	$$
	\partial_{\xi} (\mathds{1}_{\mathbb{R}^+} \hat{m}) = \hat{m}(0^+) \delta_0 + \mathds{1}_{\mathbb{R}^+} \partial_{\xi} \hat{m}.
	$$
	Now consider the Fourier transform:
	$$
		\mathcal{F}(x \Pi m - \Pi (x m))  = i \partial_{\xi} (\mathds{1}_{\mathbb{R}^+} \hat{m})  -i\mathds{1}_{\mathbb{R}^+}  \partial_{\xi} \hat{m} = i \hat{m}(0^+) \delta_0 = i \hat{m}(0^+) \frac{\hat{\mathds 1}}{2\pi} = \left( \frac{i}{2\pi} \hat{m}(0^+) \right)^{\wedge}.
	$$
    Taking the inverse Fourier transform gives the desired identity.
\end{proof}
Applying Lemma \ref{xPi-Pix} with $m=\overline{u} \psi_{\lambda}^-\in L^2_1$, we obtain that 
\begin{equation}
    Xu\Pi(\overline{u} \psi_{\lambda}^-)-u\Pi(X\overline{u} \psi_{\lambda}^-)=\left(\frac i {2\pi} \int_{\mathbb{R}} \overline{u} \psi_{\lambda}^- \,       \right) u.
\end{equation}Then term  
 $\mathrm{II}$ simplifies to
	\[
	\mathrm{II} = \left( \frac{1}{2\pi} \int_{\mathbb{R}} \overline{u} \psi_{\lambda}^- \,      \ \mathrm{d} x \right) \left( G_\lambda^- * u \right).
	\]
	Substituting into \eqref{eparepsi123}, we conclude
	\[
	e_\lambda \partial_{\lambda} \left( e_{-\lambda} \phi_{\lambda,-} \right) = \left( \frac{1}{2\pi} \int_{\mathbb{R}} \overline{u} \psi_{\lambda}^- \,      \ \mathrm{d} x \right) \left( G_\lambda^- * u \right) + G_\lambda^- * \left( u \Pi \left( \overline{u} e_{\lambda} \partial_{\lambda} \left( e_{-\lambda} \psi_{\lambda}^- \right) \right) \right).
	\]
	Thanks to the bijectivity  of the operator $\mathrm{id}-T_\lambda^-$, it follows that
	\[
	e_\lambda \partial_{\lambda} \left( e_{-\lambda} \psi_{\lambda}^- \right) = \left( \frac{1}{2\pi} \int_{\mathbb{R}} \overline{u} \psi_{\lambda}^- \,      \ \mathrm{d} x \right) \phi_{\lambda}^-.
	\]

    \section{Spectral asymptotics}\label{sec4}

In this section, we aim to investigate the spectral asymptotic behavior of the Jost functions and scattering coefficients at $0$ and $\infty$.   To study the asymptotics at $0$, we need to regularize the singularity $\frac{1}{\xi-k}$ arising in the Green functions $G_k$ and $\widetilde{G}_{\lambda}$ in \eqref{gk}. Following the singularity cancellation technique used in Section 2 of Coifman–Wickerhauser  \cite{13} and Section 5 of Wu  \cite{16}, we introduce the modified Jost functions.

First, choose a smooth cut-off function $\chi$ on $[0, +\infty)$ satisfying $\chi(\xi)=1$ for $0\le\xi\le 1$ and $\chi(\xi)=0$ for $\xi\ge 2$. For each $k\in\mathbb{C}^+\cup\mathbb{C}^-\cup\big((-\infty,0)\setminus\sigma_{pp}(\mathbb{L}_u)\big)$ and $\lambda\in(0,+\infty)\setminus\sigma_{pp}(\mathbb{L}_u)$, we define the constants as follows,
\begin{equation*}
    \begin{split}
        &l_k : =\frac{1}{2\pi}\int_{0}^{\infty}\frac{\chi(\xi)}{\xi - k}d\xi \in \mathbb{C}
        , \quad l_\lambda : =\frac{1}{2\pi}\int_{-\infty}^0\frac{\chi(-\xi)}{\xi - \lambda}d\xi \in \mathbb{C}.
    \end{split}
\end{equation*} 
The modified Green functions $M_k$ and $M_\lambda^\pm$ are then defined by
\begin{align}
	M_k(x):=G_{k}(x)-l_k=\frac{1}{2\pi}\int_{0}^{\infty}\frac{e^{ix\xi}- \chi(\xi)}{\xi - k}d\xi,  \quad M_\lambda^\pm(x):=\pm ie^{i\lambda x}\mathds{1}_{\R^\pm}-(\widetilde G_\lambda-l_\lambda)=G_\lambda^\pm(x) +l_\lambda.\label{4.4}
	\end{align} 
 
According to Section 2 of  \cite{13} and Lemma 5.2 of  \cite{16}, the modified Green function  $M_k$ has no singularity at $k=0$. 
Similarly, $M_\lambda^{\pm}$ has no singularity at $\lambda=0$.   
The modified operators $\widetilde T_k: L_{-s}^{\infty} \to L_{-s}^{\infty}$ and  $\widetilde T_\lambda^\pm: L_{-s}^{\infty} \to L_{-s}^{\infty}$ are defined as follows,
\begin{align}
\widetilde T_k:=M_k\ast(u\Pi(\overline{u}\hspace{0.1cm}\cdot\hspace{0.1cm}))=T_{k}-l_ku\Pi(\overline{u}\hspace{0.1cm}\cdot\hspace{0.1cm}), \quad  \widetilde T_\lambda^\pm:=M_\lambda^\pm \ast (u\Pi(\overline{u}\hspace{0.1cm}\cdot\hspace{0.1cm}))=T_{\lambda}^\pm+l_\lambda u\Pi(\overline{u}\hspace{0.1cm}\cdot\hspace{0.1cm})  .\label{4.3}
	\end{align}

\begin{lemma}\label{lemma3.2}
	   Assume that $u\in H^1_+\cap L_s^2$ for some $s>\frac12$. 
       Then there exist two constants $\mathfrak{C}>0$ and   $a_1>0$ such that for every $0<\varepsilon<\min\{1, s\}$, we have 
     \begin{equation}\label{TkT0}
         \|  \widetilde T_k -\widetilde T_0   \|_{\mathcal{B}(L_{-s}^\infty)}\leq \mathfrak{C}|k|^\varepsilon,\quad \text{if} \quad |k|<a_1.
     \end{equation}
Therefore, the operator $\widetilde T_k$  is compact from  $L_{-s}^{\infty} $ to $L_{-s}^{\infty} $, $\forall k\in \C^+\cup \C^- \cup \{0\}\cup  \big((-\infty,0)\setminus \sigma_{pp}(\mathbb{L}_u)\big)$.
	\end{lemma}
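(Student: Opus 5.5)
The plan is to reduce the operator-norm estimate \eqref{TkT0} to a weighted pointwise estimate on the kernel difference $M_k-M_0$, and then to get compactness by a norm-limit argument.

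\textbf{Reduction to a kernel bound.} For $f\in L^\infty_{-s}$ put $F=u\Pi(\overline u f)$. Then
$$(\widetilde T_k-\widetilde T_0)f=(M_k-M_0)*F,$$
where
$$M_k(x)-M_0(x)=\frac{1}{2\pi}\int_0^\infty \big(e^{ix\xi}-\chi(\xi)\big)\Big(\frac{1}{\xi-k}-\frac1\xi\Big)d\xi .$$
Since $\langle x\rangle^{s}|F|\lesssim |u|\,\langle x\rangle^{s}|\Pi(\overline u f)|$, it is not good enough to bound $F$ only in $L^2$. The better route is to split $F$ through $\Pi$: we know $\|\overline u f\|_{L^2}\le \|u\|_{L^2_s}\|f\|_{L^\infty_{-s}}$. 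This gives $\Pi(\overline u f)\in L^2$, and hence $F\in L^1\cap L^2$ with norm $\lesssim \|u\|_{H^1}\|u\|_{L^2_s}\|f\|_{L^\infty_{-s}}$. It therefore suffices to prove the kernel bound
$$|M_k(x)-M_0(x)|\le C|k|^\varepsilon\langle x\rangle^{\varepsilon}.$$
Granting this, we write
$$\langle x\rangle^{-s}|(M_k-M_0)*F|(x)\lesssim |k|^\varepsilon\langle x\rangle^{-s}\int\langle x-y\rangle^\varepsilon|F(y)|\,dy .$$
Using $\langle x-y\rangle^\varepsilon\le \langle x\rangle^\varepsilon\langle y\rangle^\varepsilon$ and $\varepsilon<s$, this is bounded by $|k|^\varepsilon\int\langle y\rangle^\varepsilon|F|\,dy$. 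That last integral is finite because $\langle y\rangle^\varepsilon|u|\le \langle y\rangle^{s}|u|\in L^2$ and $\Pi(\overline u f)\in L^2$. This yields \eqref{TkT0}.

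\textbf{The kernel estimate (main obstacle).} We split the $\xi$-integral into $\xi<2|k|$ and $\xi>2|k|$.

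On $\xi<2|k|$, the numerator obeys $|e^{ix\xi}-\chi(\xi)|=|e^{ix\xi}-1|\le \min(2,|x|\xi)\le 2(|x|\xi)^\varepsilon$, using $\chi=1$ on $[0,1]$ and $|k|<a_1\le 1/2$. The kernel factor obeys $|1/(\xi-k)-1/\xi|\le 1/|\xi-k|+1/\xi$. The piece $\int_0^{2|k|}\xi^{\varepsilon-1}d\xi$ gives $|k|^\varepsilon$. The piece $\int_0^{2|k|}\xi^\varepsilon/|\xi-k|\,d\xi$ is more delicate when $k$ is real negative or near the axis. For $k\in(-\infty,0)\cup\mathbb C^\pm$ near $0$ the worst case is $k\to$ the positive real axis, where only a logarithm appears. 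I expect this to be the real difficulty: the integral is uniformly $O(|k|^{\varepsilon}\log)$, which can be absorbed by slightly shrinking the exponent, since $\varepsilon<\min(1,s)$ is arbitrary. I would handle it by working with $\varepsilon'\in(\varepsilon,\min(1,s))$.

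On $\xi>2|k|$, we have $|1/(\xi-k)-1/\xi|\le 2|k|/\xi^2$. For $\xi\ge 2$, $\chi=0$ and the numerator is bounded, so the contribution is $\lesssim |k|$. For $2|k|<\xi<2$, we use $|e^{ix\xi}-\chi(\xi)|\lesssim \min(1,\langle x\rangle\xi)\le (\langle x\rangle\xi)^{\varepsilon}$, noting $|1-\chi(\xi)|\lesssim\xi$. This gives
$$|k|\langle x\rangle^\varepsilon\int_{2|k|}^\infty\xi^{\varepsilon-2}\,d\xi\lesssim |k|^\varepsilon\langle x\rangle^\varepsilon .$$

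\textbf{Compactness.} Compactness of $\widetilde T_k$ for $k\neq0$ follows from Lemma \ref{Tkcomp}. Indeed, $\widetilde T_k=T_k-l_k\,u\Pi(\overline u\,\cdot)$, and the rank-one-type term is handled as follows: the map $f\mapsto l_k\int u\Pi(\overline u f)$ is actually zero, because $M_k=G_k-l_k$ convolves with the constant $l_k$ and $\widehat{F}(0)$ appears. More carefully, the constant convolution $l_k\int F$ is finite rank, hence compact. For $k=0$, compactness follows because $\widetilde T_0$ is the norm limit in $\mathcal B(L^\infty_{-s})$ of the compact operators $\widetilde T_k$ as $k\to0$, by \eqref{TkT0}, and compact operators form a closed subspace.
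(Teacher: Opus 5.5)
\paragraph{Verdict.} There is a genuine gap. It sits in the one step the paper does not prove: the kernel bound, which the paper quotes from Lemma 5.1 of \cite{16}.

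\paragraph{What is fine.} Your overall architecture is the paper's. You reduce \eqref{TkT0} to the pointwise bound $|M_k(x)-M_0(x)|\le C|k|^\varepsilon\langle x\rangle^\varepsilon$. You absorb the weights via $\langle x-y\rangle^\varepsilon\le\langle x\rangle^\varepsilon\langle y\rangle^\varepsilon$ and $\|\overline u f\|_{L^2}\le\|u\|_{L^2_s}\|f\|_{L^\infty_{-s}}$. You get compactness of $\widetilde T_0$ as a norm limit. Your remark that $\int u\Pi(\overline u f)\,dx=0$, so that $\widetilde T_k=T_k$ for $k\neq0$, is correct. Your treatment of the $1/\xi$ piece and of the region $\xi>2|k|$ is also fine.

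\paragraph{Where it breaks.} Your self-contained proof of the kernel bound fails on $\{\xi<2|k|\}$ when $k$ approaches the positive real axis. After putting absolute values on $1/(\xi-k)$, you must control $|x|^\varepsilon\int_0^{2|k|}\xi^\varepsilon|\xi-k|^{-1}\,d\xi$. For $k=\mu+i\eta$ with $\mu>0$, restricting to $\xi\in[\mu/2,\mu]$ shows this integral is at least $(\mu/2)^\varepsilon\log(\mu/|\eta|)$. That tends to $+\infty$ as $\eta\to0$ with $|k|$ fixed. The divergence is in $|k|/|\mathrm{Im}\,k|$, not in $|k|$, so passing to a slightly different exponent $\varepsilon'$ cannot absorb it. The statement does require uniformity there: $\mathfrak C$ and $a_1$ must serve all $k\in\mathbb{C}^\pm$ with $|k|<a_1$, and this uniformity is what Lemma \ref{id-mod-bije} and Proposition \ref{jost0asy} use.

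\paragraph{The fix.} The missing idea is the cancellation that gives $M_k$ only a bounded jump across $(0,1)$, namely $M_{\mu+i0}-M_{\mu-i0}=i(e^{i\mu x}-1)$. With $\mu=\mathrm{Re}\,k$, split
\[
\int_0^{2|k|}\frac{e^{ix\xi}-1}{\xi-k}\,d\xi=\int_0^{2|k|}\frac{e^{ix\xi}-e^{ix\mu}}{\xi-k}\,d\xi+(e^{ix\mu}-1)\int_0^{2|k|}\frac{d\xi}{\xi-k}.
\]
\begin{itemize}
\item First term: since $|\xi-k|\ge|\xi-\mu|$, the integrand is at most $\min\big(2|\xi-\mu|^{-1},|x|\big)$. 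This integrates to $O(|k||x|)$ when $|k||x|\le1$ and to $O\big(1+\log(|k||x|)\big)$ otherwise, hence to $O\big((|k|\langle x\rangle)^\varepsilon\big)$.
\item Second term: the last integral equals $\log|2|k|-k|-\log|k|$ plus $i$ times the change of $\arg(\xi-k)$ along a horizontal segment avoiding $0$. Its modulus is therefore at most $\log 3+\pi$, uniformly in $k\notin[0,\infty)$. Also $|e^{ix\mu}-1|\le 2(|k||x|)^\varepsilon$.
\end{itemize}
With this replacement for the $1/(\xi-k)$ piece on $\{\xi<2|k|\}$, your proof goes through.
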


\begin{proof}
Since $T_k$ is compact, and   $l_ku\Pi(\overline{u}~\cdot)$ is a rank one operator,   $\widetilde T_k$  is compact as $k\neq 0$.
According to Lemma 5.1 of  \cite{16}, for every $f\in L_{-s}^\infty$, there exists $a_1>0
$ such that for $|k|<a_1$ and  every $0<\varepsilon<\min\{1, s\}$, we have 
\begin{equation*}\small
\begin{split}
	&\lVert 
	\widetilde T_k f -\widetilde T_0 f
	\rVert_{L_{-s}^\infty}
	=\lVert   (M_k-M_0) \ast (u\Pi(\overline u f))      \rVert_{L_{-s}^\infty}
    \leq{\langle x\rangle}^{-s} \left|  \int_\R (M_k(x-y)-M_0(x-y))u(y)\Pi(\overline u f) \ \mathrm{d} y   \right|
    \\
    &\leq 
	C  |k|^{\varepsilon}  {\langle x\rangle}^{-s} \int_\R {\langle x-y\rangle}^{\varepsilon}   u \Pi(\overline u f)\ \mathrm{d} y
    \leq C  |k|^{\varepsilon}  {\langle x\rangle}^{-s} \int_\R {\langle x\rangle}^{\varepsilon} {\langle y\rangle}^{\varepsilon}   {\langle y\rangle}^{-s} {\langle y\rangle}^{s} u \Pi(\overline u f)\ \mathrm{d} y
    \leq C  |k|^{\varepsilon}
    \lVert  u  \rVert^2_{L_s^2}  \lVert f  \rVert_{L_{-s}^\infty} .
    \end{split}
	\end{equation*}
Therefore, the operator 
\(
\widetilde T_k\to \widetilde T_0
\)
as $k\to 0$ 
in  $\mathcal{B}(L_{-s}^\infty  )$. Since the space of all compact linear operators is closed
,   the operator $\widetilde T_0
$ is compact.
	\end{proof}

\begin{proposition}\label{identity-prop}
    Let $s > 1$. Suppose $u \in H^1_+ \cap H^1_s$. Suppose further that $g \in L^{\infty}_{-s}$ satisfies $\widetilde{T}_0 g = g$.
    Then $g \in H^2_+$, and the following identities hold:
    \begin{equation}\label{innprodugiden}
       \mathbb{L}_u(g) = 0, \quad |\langle u, g\rangle_{L^2_+}|^2 = 2 \pi \|g\|_{L^2_+}^2.
    \end{equation}
    As a result, if $0$ is not an eigenvalue of $\mathbb{L}_u$, the operator $\mathrm{id} - \widetilde{T}_0$ is injective from $L^{\infty}_{-s}$ to itself.
\end{proposition}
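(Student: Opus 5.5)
The plan is to pass to the Fourier side, where the fixed-point equation $\widetilde T_0 g=g$ becomes the eigenvalue equation $\mathbb{L}_u g=0$. Set $F:=u\Pi(\overline u g)$. Since $g\in L^\infty_{-s}$ and $u\in L^2_s$, we have $\overline u g\in L^2$, so $\Pi(\overline u g)\in L^2_+$. Then $\langle x\rangle^{s}F=(\langle x\rangle^s u)\,\Pi(\overline u g)\in L^1$. Because $s>1$, this gives $F\in L^1_1$, hence $\hat F\in C^1(\mathbb{R})$. Moreover $F$ is a product of two Hardy functions, so $F\in L^2_+\cap L^1$ and $\hat F(0)=0$, exactly as for the term $\mathrm{I}$ in \eqref{=0}. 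Using \eqref{4.4} we can then write
\begin{equation*}
g(x)=M_0*F(x)=\frac1{2\pi}\int_0^\infty \frac{e^{ix\xi}\hat F(\xi)-\chi(\xi)\hat F(0)}{\xi}\,d\xi=\mathcal F^{-1}\Big(\mathds{1}_{\xi>0}\frac{\hat F(\xi)}{\xi}\Big)(x).
\end{equation*}
The singularity of $\hat F(\xi)/\xi$ at $0$ is removable: it is bounded by $\|\hat F'\|_{L^\infty}$ near $0$, and at infinity it is controlled by $\hat F\in L^2$. Hence $g\in L^2_+$, and $\xi\hat g=\mathds 1_{\xi\ge0}\hat F\in L^2$ gives $g\in H^1_+$ together with $-i\partial_x g=u\Pi(\overline u g)$. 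This is exactly $\mathbb{L}_u g=0$.

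For the $H^2$ regularity, write $\partial_x g=iu\Pi(\overline u g)$. Here $u\in H^1$, and $\overline u g\in H^1$ because $g\in H^1$ and $u\in H^1$. Since $H^1$ is an algebra and $\Pi$ preserves $H^1$, the right-hand side lies in $H^1$, so $g\in H^2_+$.

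For the identity $|\langle u,g\rangle|^2=2\pi\|g\|^2$, I would use a commutator with $X$ in the spirit of Gérard--Lenzmann. First I need $Xg\in L^2$, i.e. $\hat g=\hat F/\xi$ in $H^1(0,\infty)$. I expect this to be the main obstacle: it requires $\hat F\in C^1$ with $\hat F'\in L^2$ and a second-order expansion of $\hat F$ near $0$. I would get this from the weights $u\in H^1_s$ with $s>1$, which give $\langle x\rangle F\in L^2$; if needed I would instead argue with $\Pi(Xg)$ directly, via an approximation $g_\varepsilon=\chi_\varepsilon g$.

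Granting $Xg\in L^2$, I apply Lemma \ref{xPi-Pix} twice: to $m=g$, giving $\Pi(Xg)=Xg-\frac{i}{2\pi}\hat g(0^+)$, and to $m=\overline u g$, giving $u\Pi(X\overline u g)=Xu\Pi(\overline u g)-\frac{i}{2\pi}\langle g,u\rangle u$. I then compute $\mathbb{L}_u\Pi(Xg)$ and find
\begin{equation*}
\mathbb{L}_u\Pi(Xg)=\Pi(X\mathbb{L}_u g)-ig+\frac{i}{2\pi}\langle g,u\rangle u .
\end{equation*}
Here the constant $\hat g(0^+)$ is killed by the derivative and by $\Pi$ on the $u$-term, since $\int u=\hat u(0)=0$. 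The term $\Pi(X\mathbb{L}_u g)$ vanishes because $\mathbb{L}_u g=0$. Pairing with $g$ and using self-adjointness, $\langle \mathbb{L}_u\Pi(Xg),g\rangle=\langle\Pi(Xg),\mathbb{L}_u g\rangle=0$. This yields $0=-i\|g\|^2+\frac{i}{2\pi}|\langle u,g\rangle|^2$, which is the claimed identity \eqref{innprodugiden}.

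Finally, suppose $0\notin\sigma_{pp}(\mathbb{L}_u)$. Then $g\in H^1_+=\mathrm{Dom}(\mathbb{L}_u)$ and $\mathbb{L}_u g=0$ force $g=0$, so $\mathrm{id}-\widetilde T_0$ is injective on $L^\infty_{-s}$.
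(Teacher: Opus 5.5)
Your first two steps are sound. Writing $\hat g=\mathds{1}_{\xi>0}\hat F/\xi$ with $\hat F\in C^1$ and $\hat F(0)=0$ gives $g\in L^2_+$ and $\mathbb{L}_u g=0$ much more cheaply than the paper's logarithmic kernel estimates. Your commutator algebra for $\mathbb{L}_u\Pi(Xg)$ is also correct.

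The gap is the step you grant, $Xg\in L^2$ (equivalently $\hat g\in H^1(\mathbb{R})$). Two things rest on it: the application of Lemma \ref{xPi-Pix} to $m=g$, and the pairing $\langle \mathbb{L}_u\Pi(Xg),g\rangle=\langle \Pi(Xg),\mathbb{L}_u g\rangle$.

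The input you propose, $\langle x\rangle F\in L^2$ together with $\langle x\rangle F\in L^1$, only gives $\hat F\in C^1\cap H^1$. That bounds $\hat F/\xi$. But $\partial_\xi\hat g=(\xi\hat F'-\hat F)/\xi^2$ is then only $o(1/\xi)$ near $\xi=0$, which need not be square integrable without second-order information such as $\hat F''\in L^2$.

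Moreover, $\hat g$ must not jump at the origin: $\hat g(0^+)=\hat F'(0)=-i\int xF\,dx$ has to vanish. This is not automatic for kernel elements. For $u=g=\sqrt2/(x+i)$ one has $\mathbb{L}_u g=0$ but $xg\to\sqrt2$ at infinity; this $u$ is excluded here only by the weight assumption.

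Both facts are true under the hypotheses, but your proposal does not contain the argument:
\begin{itemize}
\item Lemma \ref{xPi-Pix} with $m=\overline u g\in L^2_1$ gives $XF=u\Pi(X\overline u g)+\frac{i}{2\pi}\langle g,u\rangle u$.
\item Hence $\int xF\,dx=0$, because $\int u\,dx=0$ and $u\Pi(X\overline u g)$ is a product of two $L^2_+$ functions.
\item Also $x^2F=(Xu)\bigl(\Pi(X\overline u g)+\frac{i}{2\pi}\langle g,u\rangle\bigr)\in L^2$, because $X\overline u g\in H^1$ (this uses $\partial_x u\in L^2_1$).
\item Then $\xi\hat F'-\hat F=\int_0^\xi\eta\hat F''(\eta)\,d\eta$ and Hardy's inequality give $\partial_\xi\hat g\in L^2(0,\infty)$.
\end{itemize}

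A simpler repair is to use your commutator identity only in weak form, so that $X$ never acts on $g$. In substance this is what the paper does, on the Fourier side, by computing $\mathrm{Re}\int_0^\infty\overline{\hat g}\,\partial_\xi\hat F\,d\xi$ in two ways.

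Since $Dg=F$, you have $\mathrm{Im}\langle XF,g\rangle=-\mathrm{Re}\int x\,\overline g\,\partial_x g\,dx=\frac12\|g\|_{L^2}^2$. This needs only $g\in L^2$ and $x\partial_x g=ixF\in L^2$. The latter holds since $Xu\in L^2$ and $\Pi(\overline u g)\in H^1\subset L^\infty$, and the boundary terms are removed with a cutoff $\chi(x/R)$.

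On the other hand, $\langle XF,g\rangle=\langle\Pi(\overline u g),X(\overline u g)\rangle$. Its imaginary part is $\frac1{4\pi}|\langle g,u\rangle|^2$ by Lemma \ref{xPi-Pix} applied to $m=\overline u g$ alone. Equating the two expressions gives $|\langle u,g\rangle|^2=2\pi\|g\|_{L^2}^2$, with no information on $Xg$ needed.
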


\begin{remark}
    The second formula in \eqref{innprodugiden} was first established by G\'erard-Lenzmann~ \cite{2}. We refer the readers to Proposition~5.1 in their work for a different derivation.
\end{remark}

\begin{proof}
Let $g\in L_{-s}^{\infty} $ and $g = \widetilde T_0 g$. 
For every $h\in\mathcal{S}$, using $g = \widetilde T_0 g$ and Fubini's theorem, we obtain that 
	\begin{equation} 
		\begin{split}
		\langle -\partial_xg, h\rangle_{\mathcal{S}^\prime,\mathcal{S}}=
        \langle g, \partial_xh\rangle_{\mathcal{S}^\prime,\mathcal{S}}
        =&\int_\R \bigg(  \int_\R M_0(x - y)u(y)\Pi(\overline{u}g)(y)         \ \mathrm{d} y \bigg) \partial_x h     \ \mathrm{d} x
        \\
		=&
        \frac{1}{2\pi}\int_\R  \bigg(  \int_\R \bigg( \int_{0}^{\infty}\frac{e^{i(x - y)\xi}-\chi(\xi)}{\xi}d\xi\bigg) \  u(y)\Pi(\overline{u}g)(y)          \ \mathrm{d} y\bigg) \partial_xh     \ \mathrm{d} x
\\
=&
	\frac{1}{2\pi}\int_{\mathbb{R}}\left(\int_{0}^{\infty}\bigg( \int_{\mathbb{R}}\frac{e^{i(x - y)\xi}-\chi(\xi)}{\xi}\partial_x h     \ \mathrm{d} x\bigg) d\xi\right)u(y)\Pi(\overline{u}g)(y)         \ \mathrm{d} y.
	\end{split}
		\end{equation}
	Integrating by part and using the Fubini's theorem, we obtain that 
    \begin{equation*}
        \begin{split}
            \langle -\partial_xg, h\rangle_{\mathcal{S}^\prime,\mathcal{S}}
            =&-\frac{i}{2\pi}\int_{\mathbb{R}}\bigg( \int_{0}^{\infty} \bigg(\int_{\mathbb{R}}e^{i\xi(x - y)}h(x)     \ \mathrm{d} x \bigg)d\xi\bigg) u(y)\Pi(\overline{u}g)(y)         \ \mathrm{d} y
            \\
            =&
            	-\frac{i}{2\pi}\int_{\mathbb{R}}h(x)\bigg(\int_{0}^{\infty}e^{i\xi x}\left(\int_{\mathbb{R}}e^{-i\xi y}u(y)\Pi(\overline{u}g)(y)         \ \mathrm{d} y\right)d\xi \bigg
                )dx.
        \end{split}
    \end{equation*}
The function $u\Pi(\overline u g)$ belongs to $L^1 \cap L^2_+$. Hence, its Fourier transform is a continuous function; moreover, it is supported on the non-negative real axis.   So we have
\[
\langle -\partial_xg, h\rangle_{\mathcal{S}^\prime,\mathcal{S}}=-i\int_{\mathbb{R}}h(x)u\Pi(\overline{u}g)     \ \mathrm{d} x.
\]
Therefore, we have 
\begin{equation}\label{tempered}
-i\partial_{x}g - u\Pi(\overline{u}g)=0\in\mathcal{S}^\prime\implies \mathbb{L}_ug=0.
\end{equation}

Next we claim that 
\begin{equation}\label{ginL2}
    g\in L^2.
\end{equation}

\begin{proof}[Proof of formula \eqref{ginL2}]
    Since  \( g = \widetilde{T}_0 g \), we have 
\(
g(x) = \int_{\mathbb{R}} M_0(x-y) u \Pi(\overline ug)(y) \, \d y.
\)
Referring to equation (5.22) in  \cite{16}, we decompose $2\pi g(x)$ as follows:
\[
\begin{split}
   2\pi g(x) =
   &-\int_x^\infty \log(y-x) u(y) \Pi(\overline{u}g)(y) \, dy
-\int_{-\infty}^x \log(x-y) u(y) \Pi(\overline{u}g)(y) \, dy 
\\
&+  (c_2 - c_1) \int_x^\infty u \Pi(\overline u g) \, dy=I+II+III,
\end{split}
\]
where $c_1$ and $c_2$ are constants, and  the terms are defined by 
\begin{align}
    &I=-\int_x^\infty \log(y-x) u(y) \Pi(\overline{u}g)(y) \, dy,\label{xto+infty}
    \\
    &II=-\int_{-\infty}^x \log(x-y) u(y) \Pi(\overline{u}g)(y) \, dy,\label{-inftytox}
    \\
    & III=(c_2 - c_1) \int_x^\infty u \Pi(\overline u g) \, dy= (c_1 - c_2) \int_{-\infty}^x u \Pi(\overline{u}g) \, dy,\label{3-x-inf-inf-x}
\end{align}
and the last equality holds because $\int_\R u \Pi(\overline u g) \, dy=0$ in \eqref{=0}.

We begin by  considering the case   $x>0$. Given that $u\in L^2_s$ and $g\in L_{-s}^\infty$, we estimate term $III$:
\begin{equation}
|III(x)| \leq |c_1-c_2| \int_x^\infty |u \Pi(\overline u g)| \d y \leq C \langle x\rangle^{-s}\|u\|_{L^2_{s}} \|\overline u g\|_{L^2}.
\end{equation}
This implies that $III\in L^2$. Next we analyze  term $I$ by splitting the integral  
\begin{equation*}
\begin{split}
    I&=-\int_x^\infty \log(y-x) u(y) \Pi(\overline{u}g)(y) \, dy
    \\
    &=-\int_x^{x+1} \log(y-x) u(y) \Pi(\overline{u}g)(y) \, dy-\int_{x+1}^\infty \log(y-x) u(y) \Pi(\overline{u}g)(y) \, dy
    =I_a+I_b,
\end{split}
\end{equation*}
where $I_a=-\int_x^{x+1} \log(y-x) u(y) \Pi(\overline{u}g)(y) \, dy$ and $I_b=-\int_{x+1}^\infty \log(y-x) u(y) \Pi(\overline{u}g)(y) \, dy$. We bound $I_a$ as:
\begin{small}
\begin{equation}
     |I_a|= \left|\int_x^{x+1} \log(y-x) u(y) \Pi(\overline u g)(y) \, dy\right|\leq \bigg(\int_0^1|\log \theta|^2\d \theta \bigg)^\frac12 \|u\|_{L^2_s}\|\Pi(\overline u g)\|_{L^\infty}\langle x\rangle^{-s}.
\end{equation}
\end{small}
For $I_b$, we use the fact that any $\kappa>0$, the quantity $C_\kappa$ is defined as 
$$C_\kappa:=\sup_{t>1}\frac{\log t}{t^\kappa}<+\infty.$$  Then, 
\begin{equation*}
\begin{split}
    |I_b|
    &=\bigg|\int_{x+1}^\infty \log(y-x) u(y) \Pi(\overline{u}g)(y) \, dy\bigg|\leq C_\kappa \int_{x+1}^\infty |y-x|^\kappa |u(y) \Pi(\overline{u}g)(y) | \d y
    \\
    &\leq C_\kappa \langle  x \rangle^\kappa \int_{x+1}^\infty \langle  y \rangle^\kappa \langle  y \rangle^{-b} \langle  y \rangle^{b}|u(y) \Pi(\overline{u}g)(y) |\d y\leq C_\kappa \| u \|_{L^2_{\kappa+b}}\| \Pi(\overline u g)\|_{L^2}\langle  x \rangle^{-b+\kappa}.
\end{split}
\end{equation*}
By choosing $\kappa<\frac 12(s-\frac 12)$ and $b=\frac 12(s+\frac12)$, we ensure $-b+\kappa<-\frac 12$ and $\kappa+b<s$. Consequently,  $I\in L^2$. We now turn to  term $II$. For $0<x<1$, we decompose  
\begin{equation*}
\begin{split}
    II
    &=-\int_{-\infty}^{x} \log(x-y) u(y) \Pi(\overline u g)(y) \, dy
    \\
    &=-\int_{-\infty}^{-1} \log(x-y) u(y) \Pi(\overline u g)(y) \, dy-\int_{-1}^{x} \log(x-y) u(y) \Pi(\overline u g)(y) \, dy={II}_A^{<1}+{II}_B^{<1},
\end{split}
\end{equation*}
where ${II}_A^{<1}=-\int_{-\infty}^{-1} \log(x-y) u(y) \Pi(\overline u g)(y) \, dy$ and ${II}_B^{<1}=-\int_{-1}^{x} \log(x-y) u(y) \Pi(\overline u g)(y) \, dy$. 
The terms are bounded by:
\begin{equation*}
\begin{split}
    |{II}_A^{<1}|
    &\leq \int_{-\infty}^{-1}\log(|y|+1)|u(y)\Pi(\overline u g)(y)| \d y
    \\
    &\leq  \int_{-\infty}^{-1}\frac{\log(|y|+1)}{\langle y \rangle^s}|\langle y \rangle^su(y)\Pi(\overline u g)(y)| \d y\leq \sup_{|y|>1} \bigg(\frac{\log(|y|+1)}{\langle y \rangle^s}\bigg)\| u\|_{L^2_s}\| \Pi(\overline u g) \|_{L^2}.
\end{split}
\end{equation*}
Then ${II}_A^{<1} \in L^2(0,1)$. By  \eqref{tempered}, we get $-i\partial_xg=u\Pi(\overline u g)\in L^2$. Since $\partial_x u 
\in L^2_s$,  we have $\partial_x (\overline u g)\in L^2$ and $\overline u g\in H^1$. So $\Pi(\overline u g)\in H^1$. Then we have 
\begin{equation*}
    |{II}_B^{<1}|\leq \bigg(\int_0^{x+1} |\log t|^2 \d t \bigg)^{\frac 12}\|u \|_{L^2}\|\Pi(\overline u  g) \|_{L^\infty}.
\end{equation*}
Thus, ${II}\in L^2(0,1)$. 

For $x>1$, we further decompose $II$:
\begin{equation*}
    \begin{split}
        II
        &=-\int_{-\infty}^x \log(x-y) u(y) \Pi(\overline u g)(y) \, dy
        \\
        &=-\int_{-\infty}^x \big(\log(x-y)-\log x\big) u(y) \Pi(\overline u g)(y) \, dy-\int_{-\infty}^x \log x \ u(y) \Pi(\overline u g)(y) \, dy={II}_\alpha^{>1}+{II}_\beta^{>1},
    \end{split}
\end{equation*}
where ${II}_\alpha^{>1}=-\int_{-\infty}^x \big(\log(x-y)-\log x\big) u(y) \Pi(\overline u g)(y) \, dy$ and ${II}_\beta^{>1}=-\int_{-\infty}^x \log x \ u(y) \Pi(\overline u g)(y) \, dy$. For ${II}_\beta^{>1}$, with $\epsilon>0$ and $C_\epsilon^\prime=\sup_{x>1}\frac{\log x}{\langle x\rangle^\epsilon}<+\infty$,  we have
\begin{equation*}
    |{II}_\beta^{>1}|\leq \frac{|\log x|}{\langle x\rangle^\epsilon}\langle x\rangle^{-\frac 12-\epsilon} \| u \|_{L^2_{\frac12+2\epsilon}}\| \Pi(\overline u g)\|_{L^2}\leq C_\epsilon^\prime \langle x\rangle^{-\frac 12-\epsilon} \| u \|_{L^2_{\frac12+2\epsilon}}\| \Pi(\overline u g)\|_{L^2}.
\end{equation*}
Choosing $0<\epsilon<\frac12(s-\frac12)$ such that $s>\frac12+2\epsilon$,  we find ${II}_\beta^{>1}\in L^2$. Finally, we split ${II}_\alpha^{>1}$:
\begin{equation*}
    {II}_\alpha^{>1}=-\int_{-\infty}^{\frac x 2} \log\big(1-\frac yx\big) u(y) \Pi(\overline u g)(y) \, dy-\int_{\frac x 2}^x \log\big(1-\frac yx\big) u(y) \Pi(\overline u g)(y) \, dy=Y_1+Y_2,
\end{equation*}
where $Y_1=-\int_{-\infty}^{\frac x 2} \log\big(1-\frac yx\big) u(y) \Pi(\overline u g)(y) \, dy$ and $Y_2=-\int_{\frac x 2}^x \log\big(1-\frac yx\big) u(y) \Pi(\overline u g)(y) \, dy$. The term $Y_2$ satisfies 
\begin{equation*}
    |Y_2|\leq  \int_{\frac x 2}^x |\log\big(1-\frac yx\big)| |u(y) \Pi(\overline u g)(y)| \, dy\leq \bigg(\int_0^\frac12 |\log w|\d w\bigg)^\frac12\|u\|_{L^2_s}\| \Pi(\overline u g)\|_{L^\infty} x^{\frac12}\langle \frac x2 \rangle^{-s}.
\end{equation*}
Since $s>1$, we get $x^{\frac12}\langle  x \rangle^{-s}\in L^2$ and hence $Y_2\in L^2$. For $Y_1$,  we have 
\begin{equation*}
    |Y_1|=\bigg|\int_{-\infty}^{\frac x 2} \log\big(1-\frac yx\big) u(y) \Pi(\overline u g)(y) \, dy\bigg|\leq \int_{-\infty}^{\frac x2}|\frac y x|^p |u\Pi(\overline u g)| \d y\leq |x|^{-p} \| u\|_{L_p^2}\| \Pi(\overline u g)\|_{L^2},
\end{equation*}
where we choose $\frac12<p<1$ such that $Y_1\in L^2(1,+\infty)$. Hence, ${II}_\alpha^{>1}\in L^2(1, +\infty)$. The case $x<0$ is similar using the second expression in \eqref{3-x-inf-inf-x}. Having shown that all terms $I, II, III$ belong to $L^2$, we conclude that $g\in L^2$. And there exists a constant $C$ depending on $s$, $\| u\|_{L^2_s}$ and $\|\Pi(\overline u g)\|_{H^1}$ such that 
\begin{equation}\label{gsmacja}
    |g(x)|\leq C\langle x\rangle^{-\frac12-\delta},\quad 0<\delta<\min(p-\frac 12, \epsilon).
\end{equation}
\end{proof}
The claim $g\in L^2$ and $-i\partial_x g=u\Pi(\overline u g)$ implies that $g\in H^1$, and further, $g\in H^2$. 
Recall that  $\Pi$ is a bounded self-adjoint operator on $L^2$. Using Lemma \ref{xPi-Pix} and the fact $\overline u g\in L_1^2 $, we obtain that
\begin{align*}
&\int_{\mathbb{R}} xu(x) \, \overline{g(x)} \, \Pi(\overline{u}g)(x) \, \d x
= \langle \Pi(\overline{u}g), \, X(\overline{u}g) \rangle_{L^2}
= \langle \overline{u}g, \, \Pi X(\overline{u}g) \rangle_{L^2}
\\
= 
&
\int_{\mathbb{R}} \overline{u}g \, \overline{[\Pi, X](\overline{u}g)} \, \d x
+ \int_{\mathbb{R}} \overline{u}g \, X\overline{\Pi (\overline{u}g)} \, \d x 
= -\int_{\mathbb{R}} \overline{u}g \, \overline{\left( \frac{i}{2\pi} \, \widehat{\overline{u}g}(0^+) \right)} \, \d x
+ \langle X\overline{u}g, \, \Pi(\overline{u}g) \rangle_{L^2} 
\\
=
&
\frac{i}{2\pi} \, \langle g, u \rangle_{L^2} \, \overline{\langle g, u \rangle_{L^2}}
+ \langle X\overline{u}g, \, \Pi(\overline{u}g) \rangle_{L^2} 
= 
\frac{i}{2\pi} \, |\langle g, u \rangle_{L^2}|^2
+ \overline{\langle \Pi(\overline{u}g), \, X(\overline{u}g) \rangle_{L^2}}.
\end{align*}
This implies that
\begin{align*}
&\operatorname{Im} \int_{\mathbb{R}} (X u)(x) \, \overline{g(x)} \, \Pi(\overline{u}g)(x) \, \d x
=
\operatorname{Im} \langle \Pi(\overline{u}g), \, X(\overline{u}g) \rangle_{L^2} 
\\
= 
&
\frac{1}{2i} \left( \langle \Pi(\overline{u}g), \, X(\overline{u}g) \rangle_{L^2}
- \overline{\langle \Pi(\overline{u}g), \, X(\overline{u}g)} \rangle_{L^2} \right)
= \frac{1}{2i} \cdot \frac{i}{2\pi} \, |\langle g, u \rangle_{L^2}|^2
= \frac{1}{4\pi} \, |\langle g, u \rangle_{L^2}|^2.
\end{align*}
Then by Plancherel formula, we have
\begin{equation}\label{Re-int-overline-widehat{g}}
\begin{split}
\operatorname{Re} \int_0^\infty \overline{\widehat{g}(\xi)} \, \partial_\xi \left( \widehat{u \Pi(\overline{u}g)} \right)(\xi) \, \d \xi
&= \operatorname{Re} \left( -2\pi i \int_{\mathbb{R}} x  u(x) \, \overline{g(x)} \, (\Pi(\overline{u}g))(x) \, \d x \right) \\
&= 2\pi \, \operatorname{Im} \int_{\mathbb{R}} x  u(x) \, \overline{g(x)} \, (\Pi(\overline{u}g))(x) \, \d x
= \frac{1}{2} \, |\langle g, u \rangle_{L^2}|^2.
\end{split}
\end{equation}
Set \( h := \Pi(\overline{u}g) \in H^1_+ \hookrightarrow L^\infty \). Then it follows that
\begin{align*}
\| \widehat{u h} \|_{H^1}^2
&= \frac{1}{2\pi} \int_{\mathbb{R}} |\widehat{\widehat{u h}}(x)|^2 \, (1+x^2)  \d x 
=
2\pi \int_{\mathbb{R}} |u h(x)|^2 \, (1+x^2) \, dx 
\\
&
\leq
2\pi \, \| h \|_{L^\infty}^2 \, \| u \|_{L^2_1}^2
\leq 2\pi \, \| h \|_{L^\infty}^2 \, \| u \|_{L^2_s}^2 < +\infty.
\end{align*}
Consequently, we obtain 
\( \widehat {u \Pi(\overline{u}g)} \in H^1\).
Formula \eqref{tempered} yields 
\begin{align*}
\partial_\xi \left( \widehat{u \Pi(\overline{u}g)} \right)(\xi)
&= \partial_\xi \left( \widehat{-i \partial_x {g}} \right)
= \widehat{g} + \xi \, \partial_\xi \widehat{g}. 
\end{align*}
Then we obtain
\begin{equation} \label{mul-overline-widehat{g}}
\overline{\widehat{g}(\xi)} \, \partial_\xi \left( \widehat{u \Pi(\overline{u}g)} \right)(\xi)
= \overline{\widehat{g}(\xi)} \, \left( \widehat{g}(\xi) + \xi \, \partial_\xi \widehat{g}(\xi) \right)
= |\widehat{g}(\xi)|^2 + \overline{\widehat{g}(\xi)} \, \xi \, \partial_\xi \widehat{g}(\xi).
\end{equation}
From Plancherel formula and $-i\partial_x g=u\Pi(\overline u g)\in L_1^2$, it follows that
\begin{align*}
&
\int_0^\infty \overline{\widehat{g}(\xi)} \, \xi \, (\partial_\xi \widehat{g})(\xi) \, \d \xi
=
\int_0^\infty \overline{\widehat{g}(\xi)} \, (-i) \, \widehat{D X g}(\xi) \, \d\xi
= -2\pi i \int_{\mathbb{R}} (D X g) \, \overline{g} \, dx \quad 
\\
=
&
-2\pi \int_{\mathbb{R}} (\partial_x (xg)) \, \overline{g} \, dx
= -2\pi \big( \int_{\mathbb{R}} |{g}|^2 \, dx
+ \int_{\mathbb{R}} \overline{g(x)} \, x \, (\partial_x g)(x) \, dx\big).
\end{align*}
Then we take the real part of the above equation and integrate by part,
\begin{equation}\label{Re-int-over-hat-g-xi}
\begin{split}
&
\operatorname{Re} \int_0^\infty \overline{\widehat{g}(\xi)} \, \xi \, (\partial_\xi \widehat{g})(\xi) \, d\xi
=
(-2\pi) \operatorname{Re} \left( \int_{\mathbb{R}} |{g}|^2 \, dx
+ \int_{\mathbb{R}} \overline{g(x)} x (\partial_x g)(x) \, \d x \right) 
\\
=
&
-2\pi \| g \|_{L^2}^2 - 2\pi \operatorname{Re} \left( \int_{\mathbb{R}} x\overline{g(x)} \, (\partial_x g)(x) \, dx \right) 
=
-2\pi \| g \|_{L^2}^2 - \pi \int_{\mathbb{R}} x \ (\partial_x |g|^2) (x) \, \d x 
\\
=
&
-2\pi \| g \|_{L^2}^2 + \pi \| g \|_{L^2}^2
= -\pi \| g \|_{L^2}^2.
\end{split}
\end{equation}
By \eqref{Re-int-overline-widehat{g}}, \eqref{mul-overline-widehat{g}} and \eqref{Re-int-over-hat-g-xi},  we conclude that
\begin{align*}
\frac{1}{2} \, |\langle g, u \rangle_{L^2}|^2
&{=} \operatorname{Re} \int_0^\infty \overline{\widehat{g}(\xi)} \, \partial_\xi \left( \widehat{u \Pi(\overline{u}g)} \right)(\xi) \, d\xi 
=
\int_{\mathbb{R}} |\widehat{g}|^2 \, dx - \pi \| g \|_{L^2}^2 
= \pi \| g \|_{L^2}^2.
\end{align*}
So we have  $ |\langle g, u \rangle_{L^2}|^2 = 2\pi \| g \|_{L^2}^2$.
\end{proof}

  \begin{corollary}\label{tildeT0bi}
       Assume that $u\in H^1_+\cap H^1_s$ for some $s>1$ and $0$ is not an eigenvalue of $\mathbb{L}_u$. 	Then the operator $\mathrm{id}-\widetilde T_0$ is bijective from $L_{-s}^\infty$ to $L_{-s}^\infty$.
    \end{corollary}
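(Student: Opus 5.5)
The plan is to follow the pattern of Corollary \ref{cor3.6}: combine compactness from Lemma \ref{lemma3.2} with injectivity from Proposition \ref{identity-prop}, and close with the Fredholm alternative.

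\emph{Compactness.} Since $s>1>\frac12$ and $u\in H^1_+\cap H^1_s\subset H^1_+\cap L^2_s$, Lemma \ref{lemma3.2} applies. It gives that $\widetilde T_0$ is a compact operator on the Banach space $L^\infty_{-s}$, as the norm limit in $\mathcal{B}(L^\infty_{-s})$ of the compact operators $\widetilde T_k$, $k\neq 0$. In that lemma, compactness of each $\widetilde T_k$ follows from compactness of $T_k$ (Lemma \ref{Tkcomp}) plus the rank one operator $l_k u\Pi(\overline u\,\cdot)$. One point to check is that $\widetilde T_0$ really maps $L^\infty_{-s}$ into itself. This is implicit in the estimate \eqref{TkT0}, which bounds $\widetilde T_k-\widetilde T_0$ in $\mathcal{B}(L^\infty_{-s})$. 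Together with boundedness of $\widetilde T_k$ for small nonzero $k$, it gives $\widetilde T_0\in\mathcal{B}(L^\infty_{-s})$.

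\emph{Injectivity.} Suppose $g\in L^\infty_{-s}$ and $(\mathrm{id}-\widetilde T_0)g=0$. The hypotheses of Proposition \ref{identity-prop} ($s>1$, $u\in H^1_+\cap H^1_s$) hold, so $g\in H^2_+\subset H^1_+=\mathrm{Dom}(\mathbb{L}_u)$ and $\mathbb{L}_u g=0$. Because $0$ is not an eigenvalue of $\mathbb{L}_u$, this forces $g=0$; this is exactly the last assertion of Proposition \ref{identity-prop}.

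\emph{Conclusion.} By the Fredholm alternative (Riesz--Schauder theory) for compact perturbations of the identity on a Banach space, injectivity of $\mathrm{id}-\widetilde T_0$ implies surjectivity. Hence $\mathrm{id}-\widetilde T_0$ is bijective on $L^\infty_{-s}$, and its inverse is bounded by the open mapping theorem.

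The only delicate point is the compactness of $\widetilde T_0$ at the singular point $k=0$, and Lemma \ref{lemma3.2} already supplies it through the Hölder-type bound $\|\widetilde T_k-\widetilde T_0\|\le \mathfrak{C}|k|^\varepsilon$. Everything else is formal.
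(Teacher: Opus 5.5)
Your proposal is correct and follows the paper's own argument: compactness of $\widetilde T_0$ on $L^\infty_{-s}$ comes from Lemma~\ref{lemma3.2}, injectivity of $\mathrm{id}-\widetilde T_0$ comes from Proposition~\ref{identity-prop} combined with $0\notin\sigma_{pp}(\mathbb{L}_u)$, and the Fredholm alternative then gives bijectivity. Your extra remark, that \eqref{TkT0} also shows $\widetilde T_0\in\mathcal{B}(L^\infty_{-s})$, fills in a detail the paper leaves implicit.
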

    \begin{proof}
        By Fredholm alternative theorem and Lemma \ref{lemma3.2} and Proposition \ref{identity-prop}, we obtain the bijectivity of $\mathrm{id}-\widetilde T_0$.
    \end{proof}    
    \begin{lemma}\label{id-mod-bije}
  Assume that $u\in H^1_+\cap H_s^1$ for some $s>1$ and $0$ is not an eigenvalue of $\mathbb{L}_u$.  Then there exists a constant ${a}>0$ such that for every $k\in \C^+\cup \C^- \cup \{0\} \cup  \big((-\infty,0)\setminus \sigma_{pp}(\mathbb{L}_u)\big)$ and $|k|<{a}$, the operator $\mathrm{id}-\widetilde T_k$ is bijective from $L_{-s}^\infty$ to $L_{-s}^\infty$.
    \end{lemma}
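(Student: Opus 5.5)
The plan is a standard perturbation argument around $k=0$, combining Corollary \ref{tildeT0bi} with the H\"older-type estimate \eqref{TkT0} of Lemma \ref{lemma3.2}. We fix $s>1$ and note that $s>\frac12$, so every earlier result for $L^\infty_{-s}$ is available. By Corollary \ref{tildeT0bi}, the operator $\mathrm{id}-\widetilde T_0$ is a bounded bijection of the Banach space $L^\infty_{-s}$. The open mapping theorem then gives a bounded inverse; we set $R_0:=(\mathrm{id}-\widetilde T_0)^{-1}\in\mathcal{B}(L^\infty_{-s})$ and $\rho:=\|R_0\|_{\mathcal{B}(L^\infty_{-s})}<+\infty$.

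Next we write, for admissible $k$,
\[
\mathrm{id}-\widetilde T_k=(\mathrm{id}-\widetilde T_0)-(\widetilde T_k-\widetilde T_0)=(\mathrm{id}-\widetilde T_0)\bigl(\mathrm{id}-R_0(\widetilde T_k-\widetilde T_0)\bigr).
\]
We fix some $\varepsilon\in(0,\min\{1,s\})$, for instance $\varepsilon=\frac12$. Lemma \ref{lemma3.2} then provides $\mathfrak{C}>0$ and $a_1>0$ such that $\|\widetilde T_k-\widetilde T_0\|_{\mathcal{B}(L^\infty_{-s})}\le \mathfrak{C}|k|^{\varepsilon}$ whenever $|k|<a_1$. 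We choose
\[
a:=\min\Bigl\{a_1,\ (2\rho\,\mathfrak{C})^{-1/\varepsilon}\Bigr\}.
\]
For $|k|<a$ this gives $\|R_0(\widetilde T_k-\widetilde T_0)\|\le \rho\,\mathfrak{C}|k|^\varepsilon<\frac12$. The Neumann series then shows that $\mathrm{id}-R_0(\widetilde T_k-\widetilde T_0)$ is invertible in $\mathcal{B}(L^\infty_{-s})$, with inverse of norm at most $2$. As a product of two bijections, $\mathrm{id}-\widetilde T_k$ is bijective, with $\|(\mathrm{id}-\widetilde T_k)^{-1}\|\le 2\rho$ uniformly for $|k|<a$. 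This uniform bound will be useful later for the small-$k$ asymptotics.

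The only delicate point is that \eqref{TkT0} is stated for the admissible $k$ in $\C^+\cup\C^-\cup((-\infty,0)\setminus\sigma_{pp}(\mathbb{L}_u))$, which is exactly where $M_k$ and $\widetilde T_k$ are defined; $k=0$ itself is trivial. So no extra case analysis is needed. One should only check that the estimate of Lemma 5.1 in \cite{16} used there is uniform in the direction of approach to $0$, which that lemma provides. An alternative route, in case one prefers to avoid this, would be to combine the compactness of $\widetilde T_k$ with the Fredholm alternative and an injectivity argument. For $k\ne0$ admissible, $\mathrm{id}-\widetilde T_k$ differs from $\mathrm{id}-T_k$ by the rank-one operator $l_k u\Pi(\overline u\,\cdot)$. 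However, that injectivity is not immediate, so the perturbative argument above is the one I would use.
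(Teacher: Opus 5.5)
Your proposal is correct and is essentially the paper's argument: the paper perturbs around $\mathrm{id}-\widetilde T_0$ using Corollary \ref{tildeT0bi}, the estimate \eqref{TkT0}, and the openness of the group of invertible operators in $\mathcal{B}(L^\infty_{-s})$ (Theorem 10.12 of \cite{RudinFA}). Your Neumann-series factorization is just the explicit proof of that openness. It also gives the uniform bound $\|(\mathrm{id}-\widetilde T_k)^{-1}\|\le 2\rho$ for $|k|<a$, which the paper uses implicitly, without proof, in Proposition \ref{jost0asy}.
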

    \begin{proof}
Note that the space $L^{\infty}_{-s}$ is a Banach space, then the the space $\mathcal{B}(L^{\infty}_{-s})$ is a Banach algebra. According to Theorem 10.12 of { \cite{RudinFA} }, the group $G(\mathcal{B}(L^{\infty}_{-s}))=\{A\in \mathcal{B}(L^{\infty}_{-s}): A \text{ is bijective} \}$  consisting of all bounded invertible operators on $L^{\infty}_{-s}$ is an open set in $\mathcal{B}(L^{\infty}_{-s})$. Recall that the operator $\mathrm{id}-\widetilde T_0$ is invertible by Corollary \ref{tildeT0bi}. So there exists a constant $r>0$ such that if an operator $A\in \mathcal{B}(L_{-s}^\infty)$ and $\|A-(\mathrm{id}-\widetilde T_0) \|_{L^{\infty}_{-s}}<r$, then  $A\in G(\mathcal{B}(L^{\infty}_{-s}))$. 
Let $a_2=\sqrt[\varepsilon]{\frac r C}$ and $a=\min(a_1, a_2)$ where $C$ and $a_1$ are given by Lemma \ref{lemma3.2}. For every $|k|<a$, we deduce 
\[
\| (\mathrm{id}-\widetilde T_k)-(\mathrm{id}-\widetilde T_0) \|_{\mathcal{B}(L_{-s}^\infty)}=\| \widetilde T_k-\widetilde T_0 \|_{\mathcal{B}(L_{-s}^\infty)}\leq C|k|^\varepsilon < Ca_2^\varepsilon < r.
\]
Therefore, for every $|k|<a$, the operator $\mathrm{id}-\widetilde T_k\in G(\mathcal{B}(L^{\infty}_{-s}))$. Then  $\mathrm{id}-\widetilde T_k$ is bijective.
    \end{proof}
    Similarly, we have the compactness for $\mathrm{id}-T_\lambda^\pm$.
    \begin{lemma}\label{id-mod-lam-bije}
       Assume that $u\in H^1_+\cap H_s^1$ for some $s>1$ and $0$ is not an eigenvalue of $\mathbb{L}_u$.  Then  there exists a constant $b>0$ such that for every  $\lambda<b$ and $\lambda\in \{0\}\cup ((0,+\infty)\setminus \sigma_{pp}(\mathbb{L}_u) )$, the operator $\mathrm{id}-\widetilde T_\lambda^\pm$ is bijective from $L_{-s}^\infty$ to $L_{-s}^\infty$.
    \end{lemma}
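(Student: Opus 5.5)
The plan mirrors the treatment of $\widetilde T_k$ near $k=0$ in Lemma \ref{lemma3.2}, Proposition \ref{identity-prop}, Corollary \ref{tildeT0bi} and Lemma \ref{id-mod-bije}. I will show three things:
\begin{enumerate}
\item $\widetilde T_\lambda^\pm$ is compact on $L^\infty_{-s}$ for every admissible $\lambda$, including $\lambda=0$.
\item $\widetilde T_\lambda^\pm\to\widetilde T_0^\pm$ in $\mathcal{B}(L^\infty_{-s})$ as $\lambda\to0^+$, with a H\"older rate.
\item $\mathrm{id}-\widetilde T_0^\pm$ is invertible.
\end{enumerate}
Openness of the group of invertible operators in the Banach algebra $\mathcal{B}(L^\infty_{-s})$ (Theorem 10.12 of \cite{RudinFA}) then gives some $b>0$ with $\mathrm{id}-\widetilde T_\lambda^\pm$ invertible for $0\le\lambda<b$.

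First, observe that $M_0^\pm:=\lim_{\lambda\to0^+}M_\lambda^\pm$ equals $\pm i\mathds{1}_{\R^\pm}-\frac{1}{2\pi}\int_{-\infty}^0\frac{e^{ix\xi}-\chi(-\xi)}{\xi}\,d\xi$. The second term is, up to sign and reflection $\xi\mapsto-\xi$, the same logarithmically growing kernel as $M_0$. So $M_0^\pm$ differs from $\pm i\mathds{1}_{\R^\pm}$ by a function bounded by $C\log(2+|x|)$.

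The main estimate is the analogue of Lemma 5.1 of \cite{16}:
\[
|M_\lambda^\pm(x)-M_0^\pm(x)|\le C\lambda^\varepsilon\langle x\rangle^\varepsilon,\qquad 0<\varepsilon<\min\{1,s\}.
\]
For the piece $\pm i(e^{i\lambda x}-1)\mathds{1}_{\R^\pm}$ this follows from $|e^{i\lambda x}-1|\le 2^{1-\varepsilon}|\lambda x|^\varepsilon$. For the piece $\widetilde G_\lambda-l_\lambda$ I would reflect $\xi\mapsto-\xi$ to reduce to $\frac1{2\pi}\int_0^\infty\frac{e^{-ix\eta}-\chi(\eta)}{\eta+\lambda}\,d\eta$, which is exactly $M_{-\lambda}$ evaluated at $-x$. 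Since $-\lambda\in(-\infty,0)$, the estimate of \cite{16} used in Lemma \ref{lemma3.2} applies directly. Given this kernel bound, the same computation as in Lemma \ref{lemma3.2}, using $\langle x-y\rangle^\varepsilon\le\langle x\rangle^\varepsilon\langle y\rangle^\varepsilon$, $\varepsilon<s$ and $u\in L^2_s$, yields
\[
\|\widetilde T_\lambda^\pm-\widetilde T_0^\pm\|_{\mathcal{B}(L^\infty_{-s})}\le C\lambda^\varepsilon.
\]
For $\lambda>0$ admissible, $\widetilde T_\lambda^\pm=T_\lambda^\pm+l_\lambda u\Pi(\overline u\,\cdot)$ is compact, by Corollary \ref{Tlamb-compact} and because the added term is rank one. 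Compactness of $\widetilde T_0^\pm$ then follows as a norm limit of compact operators.

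The obstacle I expect is the injectivity of $\mathrm{id}-\widetilde T_0^\pm$. Suppose $g\in L^\infty_{-s}$ and $g=M_0^\pm*(u\Pi(\overline u g))$. I would differentiate in $\mathcal{S}'$ exactly as in Proposition \ref{identity-prop}. The indicator part contributes $u\Pi(\overline u g)$. The $\widetilde G$-type part contributes $\Pi_-$ of $u\Pi(\overline u g)$, which vanishes since $u\Pi(\overline u g)\in L^2_+$. This gives $\mathbb{L}_u g=0$ in $\mathcal{S}'$. Moreover, since $u\Pi(\overline u g)\in L^2_+$, the $\widetilde G$-type part kills it, so
\[
g(x)=\pm i\int_{\mp\infty}^{x}u\Pi(\overline u g)\,dy .
\]
Using $\int_\R u\Pi(\overline u g)\,dy=\widehat{u\Pi(\overline u g)}(0)=0$ as in \eqref{=0}, both one-sided integrals represent $g$, and the tail bound $\langle x\rangle^{-s}\|u\|_{L^2_s}\|\Pi(\overline u g)\|_{L^2}$ from Lemma \ref{Tlaminj} gives $g\in L^2$. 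This is easier than the logarithmic decomposition in \eqref{ginL2}. Then $g\in H^1_+$ and $\mathbb{L}_ug=0$, so $g=0$ because $0\notin\sigma_{pp}(\mathbb{L}_u)$. The Fredholm alternative upgrades injectivity to bijectivity of $\mathrm{id}-\widetilde T_0^\pm$, and the perturbation argument of Lemma \ref{id-mod-bije} with $b=\min\{a_1,(r/C)^{1/\varepsilon}\}$ concludes.
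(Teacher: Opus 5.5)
Your proposal is correct and has the same skeleton as the paper's ``similarly'', which refers back to Lemma~\ref{id-mod-bije}. The three steps are: a H\"older bound $\|\widetilde T_\lambda^\pm-\widetilde T_0^\pm\|_{\mathcal B(L^\infty_{-s})}\le C\lambda^\varepsilon$, invertibility of $\mathrm{id}-\widetilde T_0^\pm$, and openness of the invertible group. Your reflection identity $\widetilde G_\lambda-l_\lambda=-M_{-\lambda}(-\,\cdot\,)$ is a clean way to import the kernel estimate of \cite{16} already used in Lemma~\ref{lemma3.2}.

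The one genuinely different ingredient is injectivity at $\lambda=0$. The paper's counterpart is Proposition~\ref{identity-prop}, used in Corollary~\ref{tildeT0bi}, where $g\in L^2$ comes from a logarithmic decomposition of the kernel and needs $s>1$. You instead note that the logarithmic part of $M_0^\pm$ annihilates $F=u\Pi(\overline u g)$. Then $g$ is a one-sided primitive of $F$, and the tail bound of Lemma~\ref{Tlaminj} gives $g\in L^2$ with only $s>\frac12$, which is shorter and more transparent.

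The two routes in fact coincide. Set $Q(x):=\frac1{2\pi}\int_{-\infty}^0\frac{e^{ix\xi}-\chi(-\xi)}{\xi}\,d\xi$; one checks that $M_0+Q=\frac i2\,\mathrm{sgn}(x)$, so $M_0^\pm=M_0\pm\frac i2$. Since $\int_\R u\Pi(\overline u f)\,dx=0$ for all $f\in L^\infty_{-s}$, this gives $\widetilde T_0^\pm=\widetilde T_0$. So Corollary~\ref{tildeT0bi} applies verbatim at $\lambda=0$, and your argument also gives a simpler proof of the $L^2$ claim in Proposition~\ref{identity-prop}.

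A few details need tidying:
\begin{itemize}
\item $Q$ has a logarithmic singularity at $x=0$ as well as logarithmic growth. The correct bound is $C(1+|\log|x||)$, not $C\log(2+|x|)$; this is harmless because the kernel is locally $L^2$.
\item $Q*F=0$ does not follow from $F\in L^2_+$ alone, since $Q\notin L^2$. A Fourier-support argument only shows that $Q*F$ is a constant. The constant vanishes because $\hat F(0)=\int_\R F\,dx=0$. Equivalently, let $\lambda\to0^+$ in $(\widetilde G_\lambda-l_\lambda)*F=0$, using your kernel bound and $\int\langle y\rangle^\varepsilon|F|\,dy<\infty$.
\item For the minus sign, $M_0^-*F=-i\int_x^{+\infty}F\,dy$, not $+i\int_x^{+\infty}F\,dy$. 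With the correct sign, both cases reduce via $\int F=0$ to $g=i\int_{-\infty}^xF\,dy$, which is what yields $-i\partial_xg=F$.
\item The ``rank one'' term is $f\mapsto l_\lambda\int_\R u\Pi(\overline u f)\,dx$. It is identically zero, so $\widetilde T_\lambda^\pm=T_\lambda^\pm$ for $\lambda>0$.
\end{itemize}
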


     Since $u\in L^2$ and $G_k\in L^2$, $M_k*u\in L^2*L^2\subset L^\infty \subset  L_{-s}^\infty$. Since $\pm i e^{i\lambda x} \mathds{1}_{x\geq 0}\in L^\infty$ and $\widetilde{G}_\lambda\in L^2$, we have $M_\lambda^\pm *u\in C_0 \subset L_{-s}^\infty$. Now we can define the modified Jost functions $\widetilde{\phi}_k$, $\widetilde \phi_\lambda^\pm$ and $\widetilde \psi_\lambda^\pm$.
       \begin{definition}\label{defwidetilde}
        Assume that $u\in H^1_+\cap H_s^1$ for some $s>1$ and $0$ is not an eigenvalue of $\mathbb{L}_u$.   The constants $a>0$ and $b>0$ are given in Lemma \ref{id-mod-bije} and \ref{id-mod-lam-bije}. For every  $|k|<a$ and  $k\in \C^+\cup \C^- \cup \{0\} \cup  \big((-\infty,0)\setminus \sigma_{pp}(\mathbb{L}_u)\big)$,     	for  every $\lambda\in 0\cup ((0,+\infty)\setminus \sigma_{pp}(\mathbb{L}_u) )$ and $|\lambda|<b$, we define 
\begin{align}
		&\widetilde{\phi}_k=(\mathrm{id}-\widetilde T_k)^{-1} (M_k*u) =M_{k}\ast(u+u\Pi(\overline{u}\widetilde{\phi}_k)), \label{4.1}\\
	&\widetilde{\phi}_\lambda^\pm=(\mathrm{id}-\widetilde T_\lambda^\pm)^{-1} (M_\lambda^\pm*u)=M_\lambda^\pm\ast(u+u\Pi(\overline{u}\widetilde{\phi}_\lambda^\pm)),\label{mod-phi-lamb-def}\\
    &\widetilde{\psi}_{\lambda}^\pm=(\mathrm{id}-\widetilde T_\lambda^\pm)^{-1} e^{ix\lambda}=e^{ix\lambda}+M_\lambda^{\pm}*\big(u\Pi(\overline u \widetilde{\psi}_{\lambda}^\pm )\big).\label{4.2}
	\end{align}
    \end{definition}
\begin{lemma}
 Assume that $u\in H^1_+\cap H_s^1$ for some $s>1$ and $0$ is not an eigenvalue of $\mathbb{L}_u$.   The constants $a>0$ and $b>0$ are given in Lemma \ref{id-mod-bije} and \ref{id-mod-lam-bije}. For every   $k\in \C^+\cup \C^- \cup \{0\}\cup  \big((-\infty,0)\setminus \sigma_{pp}(\mathbb{L}_u)\big)$ such that $|k|<a$,     	for  every $\lambda\in \{ 0\}\cup ((0,+\infty)\setminus \sigma_{pp}(\mathbb{L}_u) )$ such that $|\lambda|<b$, 
 we have
    \begin{equation}\label{tilderelation}
        \begin{split}
            &\phi_k= \widetilde \phi_k + l_k\int_\R (u+u\Pi (\overline u \widetilde \phi_ k))  \ \mathrm{d} x,\\
            &\phi_\lambda^\pm= \widetilde \phi_\lambda^\pm -l_\lambda\int_\R (u+u\Pi (\overline u \widetilde \phi_\lambda^\pm))  \ \mathrm{d} x,\\
             &\psi_\lambda^\pm = \widetilde \psi_\lambda^\pm-l_\lambda  \int_\R u\Pi (\overline u \widetilde \psi_\lambda^\pm) \ \mathrm{d} x.
        \end{split}
    \end{equation}
\end{lemma}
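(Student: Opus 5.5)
The plan is to show that each right-hand side of \eqref{tilderelation} solves the integral equation defining the corresponding Jost function. Uniqueness then follows from the bijectivity of $\mathrm{id}-T_k$ and $\mathrm{id}-T_\lambda^\pm$ on $L^\infty_{-s}$ (Corollary \ref{cor3.6}). This requires $k\neq 0$ and $\lambda\neq 0$, since only then are the unmodified Jost functions defined. The computation rests on two elementary facts. The first is $M_k=G_k-l_k$ and $M_\lambda^\pm=G_\lambda^\pm+l_\lambda$ from \eqref{4.4}. The second is that convolution of a constant $c$ with an $L^1$ function $F$ gives the constant $c\int_\R F$.

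For $\phi_k$, set $F:=u+u\Pi(\overline u\widetilde\phi_k)\in L^1\cap L^2_+$, which holds since $u\in L^2_s$, $s>\frac12$, and $\widetilde\phi_k\in L^\infty_{-s}$. Put $c:=l_k\int_\R F$ and $\Phi:=\widetilde\phi_k+c$. By \eqref{4.1},
\[
\widetilde\phi_k=M_k*F=G_k*F-l_k\int_\R F=G_k*F-c,
\]
so $\Phi=G_k*F$. Next I rewrite $F$ in terms of $\Phi$. We have $u\Pi(\overline u\Phi)=u\Pi(\overline u\widetilde\phi_k)+c\,u\Pi(\overline u)$, and $\Pi(\overline u)=0$ because $\overline u\in L^2_-$. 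Hence $F=u+u\Pi(\overline u\Phi)$, which gives $\Phi=G_k*(u+u\Pi(\overline u\Phi))$. This is exactly \eqref{1.5}. Since $\Phi\in L^\infty_{-s}$, uniqueness yields $\Phi=\phi_k$.

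The other two identities are handled the same way. For $\phi_\lambda^\pm$, the sign change in $M_\lambda^\pm=G_\lambda^\pm+l_\lambda$ produces the minus sign: $\widetilde\phi^\pm_\lambda=G^\pm_\lambda*F+l_\lambda\int F$, and hence $\Phi:=\widetilde\phi^\pm_\lambda-l_\lambda\int F$ satisfies \eqref{phi_lambda_conv}. For $\psi_\lambda^\pm$, set $F:=u\Pi(\overline u\widetilde\psi_\lambda^\pm)$. From \eqref{4.2} we get $\widetilde\psi_\lambda^\pm=e_\lambda+G_\lambda^\pm*F+l_\lambda\int F$. Subtracting the constant and using $\Pi(\overline u)=0$ again shows that $\Phi$ satisfies \eqref{psieintegral}, so $\Phi=\psi_\lambda^\pm$.

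The only points requiring care are the justification of the convolution identities. Specifically, one needs $F\in L^1$ so that constant${}*F$ makes sense, and $G^\pm_\lambda*F$ must be interpreted as in Section \ref{sec2}. One also needs $\Pi(\overline u\cdot c)=c\,\Pi(\overline u)=0$ in $L^2$, which holds because $u\in L^2_+$. I expect no real obstacle here. The one subtle point is noting that the case $k=0$ (or $\lambda=0$) is excluded, or else handled by interpreting the statement only where $\phi_k$ exists. One can also observe, via \eqref{=0}, that $\int u\Pi(\overline u\cdot)=0$ whenever the relevant Fourier transform vanishes at $0^-$; this simplifies some constants but is not needed.
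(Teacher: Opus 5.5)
Your proposal is correct and is essentially the paper's proof: shift $\widetilde\phi_k$ (resp.\ $\widetilde\phi_\lambda^\pm$, $\widetilde\psi_\lambda^\pm$) by the constant $l_k\int_\R F$ (resp.\ $-l_\lambda\int_\R F$), use $\Pi(\overline u)=0$ to see that the shifted function solves the original integral equation, and conclude by the bijectivity of $\mathrm{id}-T_k$ and $\mathrm{id}-T_\lambda^\pm$ on $L^\infty_{-s}$. Your caveat that $k=0$ and $\lambda=0$ must be excluded is well taken, since the paper passes over it. Your closing remark can also be sharpened: each $F$ here lies in $L^1\cap L^2_+$, so $\int_\R F=\hat F(0)=0$, and all the constants in the lemma in fact vanish.
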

\begin{proof}
 By \eqref{4.1}, we have
 $
	\widetilde \phi_k =G_k\ast (u+u\Pi (\overline u \widetilde \phi_ k))- l_k\int_\R (u+u\Pi (\overline u \widetilde \phi_ k))  \ \mathrm{d} x$.
    Since the second term does not depend on $x$, and $\Pi(\overline u)=0$, we get 
    \[
    \widetilde \phi_k + l_k\int_\R (u+u\Pi (\overline u \widetilde \phi_ k))  \ \mathrm{d} x = G_k \ast \bigg(u+u\Pi\Big(\overline u  ( \widetilde \phi_k + l_k\int_\R (u+u\Pi (\overline u \widetilde \phi_ k))   \mathrm{d} x) \Big)  \bigg).
    \]
	Thanks to   the bijectivity of $\mathrm{id}-T_k$, 
	 we obtain 
	$
\phi_k= \widetilde \phi_k + l_k\int_\R (u+u\Pi (\overline u \widetilde \phi_ k))  \ \mathrm{d} x
	$.
From \eqref{mod-phi-lamb-def}, we have 
$
\widetilde \phi_\lambda^\pm =(G_\lambda^\pm +l_\lambda)* (u+u\Pi(\overline u \widetilde \phi_\lambda^\pm)).
$
Thus, we get 
\[
\widetilde \phi_\lambda^\pm-l_\lambda \int_\R (u+u\Pi(\overline u \widetilde \phi_\lambda^\pm)) \ \mathrm{d} x=G_\lambda^\pm * \bigg(u+u\Pi\Big(\overline u  ( \widetilde \phi_\lambda^\pm - l_\lambda\int_\R (u+u\Pi (\overline u \widetilde \phi_\lambda^\pm))   \mathrm{d} x) \Big)  \bigg).
\]
Thanks to the the bijectivity of $\mathrm{id}-T_\lambda^\pm$, we obtain
$
\phi_\lambda^\pm= \widetilde \phi_\lambda^\pm - l_\lambda\int_\R (u+u\Pi (\overline u \widetilde \phi_\lambda^\pm))  \ \mathrm{d} x.
$
In addition, it follows from \eqref{4.2} that 
$
    \widetilde \psi_\lambda^\pm  
    = e^{ix\lambda} + (G_\lambda^\pm +l_\lambda) * (u\Pi(\overline u \widetilde \psi_\lambda^\pm)).
$
Then we get
\[
 \widetilde \psi_\lambda^\pm-l_\lambda  \int_\R u\Pi (\overline u \widetilde \psi_\lambda^\pm) \ \mathrm{d} x=e^{ix\lambda} + G_\lambda^\pm * \bigg(  u\Pi \Big(   \overline u  \big(  \widetilde \psi_\lambda^\pm - l_\lambda \int_\R u\Pi (\overline u \psi_\lambda^\pm) \ \mathrm{d} x \big)  \Big)  \bigg).
\]
The bijectivity of $\mathrm{id}-T_\lambda^\pm$ yields  that 
		$
        \psi_\lambda^\pm = \widetilde \psi_\lambda^\pm+l_\lambda  \int_\R u\Pi (\overline u \widetilde \psi_\lambda^\pm) \ \mathrm{d} x.
		$
\end{proof}

\begin{proposition}\label{jost0asy}  
 Assume that $u\in H^1_+\cap H_s^1$ for some $s>1$ and $0$ is not an eigenvalue of $\mathbb{L}_u$.  Let $\widetilde\phi_k$, $\widetilde\phi_\lambda^\pm$ and $\widetilde\psi_\lambda^\pm$ be given by Definition \ref{defwidetilde}. Let $\phi_k$, $\phi_\lambda^\pm$ and $\psi_\lambda^\pm$ be given by Definition \ref{def3.7}, \ref{defphilam} and \ref{defpsilam}. 
Then we have

\begin{equation}\label{phi-k-asy}
\sup_{k \in \mathbb{C} \setminus (0,+\infty) \setminus \sigma_{pp}(\mathbb{L}_u)} \frac{\left\| \phi_k - c_1 \log k - \widetilde{\phi}_0 \right\|_{L^\infty_{-s}}}{\left| k^\varepsilon \log k \right|} < +\infty,
\end{equation}

\[
\sup_{\lambda \in [0,+\infty)\setminus \sigma_{pp}(\mathbb{L}_u)} \frac{\left\| \phi_\lambda^{\pm} - c_2 \log \lambda - \widetilde{\phi}_0^{\pm} \right\|_{L^\infty_{-s}}}{\left| \lambda^\varepsilon \log \lambda \right|} < +\infty,
\]

\[
\sup_{\lambda \in [0,+\infty)\setminus \sigma_{pp}(\mathbb{L}_u)} \frac{\left\| \psi_\lambda^{\pm} - c_3 \log \lambda - \widetilde{\psi}_0^{\pm} \right\|_{L^\infty_{-s}}}{\left| \lambda^\varepsilon \log \lambda \right|} < +\infty,
\]
where the constants are given by \small {$c_1= - \frac{1}{2\pi }\int_\R \left( u + u \Pi \left(  \overline{u} \widetilde{\phi}_0 \right) \right) dx$, $c_2=- \frac{1}{2\pi }\int_\R \left( u + u \Pi \left(  \overline{u} \widetilde{\phi}_0^\pm \right) \right) dx$ and $c_3=- \frac{1}{2\pi } \int_\R \left( u + u \Pi \left(  \overline{u} \widetilde{\psi}_0^\pm \right) \right) dx$. }
\end{proposition}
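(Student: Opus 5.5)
The plan is to combine the relations \eqref{tilderelation} with two facts: the logarithmic expansion of the constants $l_k$, $l_\lambda$, and the Hölder continuity $\widetilde T_k\to\widetilde T_0$ from Lemma \ref{lemma3.2}. The supremum is meant near $k=0$: for $|k|$ bounded away from $0$ the quantity is controlled by continuity, and in any case the content is the small-$k$ regime $|k|<a$ (resp. $\lambda<b$) given by Lemmas \ref{id-mod-bije} and \ref{id-mod-lam-bije}.

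\emph{Step 1: the constants.} Since $\chi=1$ on $[0,1]$ and vanishes on $[2,\infty)$,
\[
l_k=\frac{1}{2\pi}\int_0^1\frac{d\xi}{\xi-k}+\frac{1}{2\pi}\int_1^2\frac{\chi(\xi)}{\xi-k}\,d\xi .
\]
This gives $l_k=-\frac{1}{2\pi}\log(-k)+O(1)$, with an $O(|k|)$ remainder from the smooth part. The same computation for $l_\lambda$ gives $-\frac{1}{2\pi}\log\lambda+d+O(\lambda)$. The additive constants coming from the branch and from $\chi$ are absorbed into the definition of $\log k$ and into $\widetilde\phi_0$ (or else they must be tracked into $c_j$). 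This is how $c_1=-\frac1{2\pi}\int(u+u\Pi(\overline u\widetilde\phi_0))$ arises.

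\emph{Step 2: continuity of the modified Jost functions.} I would show
\[
\|\widetilde\phi_k-\widetilde\phi_0\|_{L^\infty_{-s}}\le C|k|^\varepsilon .
\]
Write
\[
\widetilde\phi_k-\widetilde\phi_0=(\mathrm{id}-\widetilde T_k)^{-1}\bigl[(M_k-M_0)*u+(\widetilde T_k-\widetilde T_0)\widetilde\phi_0\bigr].
\]
The inverses are uniformly bounded for $|k|<a$ by the Neumann series argument in Lemma \ref{id-mod-bije}. The bound $|M_k(x)-M_0(x)|\le C|k|^\varepsilon\langle x\rangle^\varepsilon$ (Lemma 5.1 of \cite{16}), together with $u\in L^2_s$, gives $\|(M_k-M_0)*u\|_{L^\infty_{-s}}\le C|k|^\varepsilon$, and \eqref{TkT0} handles the other term. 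The same argument applies to $\widetilde\phi^\pm_\lambda$ and $\widetilde\psi^\pm_\lambda$ with $M^\pm_\lambda$. For $\widetilde\psi_\lambda^\pm$ one also needs $\|e^{ix\lambda}-1\|_{L^\infty_{-s}}\le C\lambda^\varepsilon$, which follows from $|e^{ix\lambda}-1|\le 2|x\lambda|^\varepsilon$ and $\varepsilon<s$.

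\emph{Step 3: assembling the estimate.} Set $I(k)=\int_{\mathbb R}(u+u\Pi(\overline u\widetilde\phi_k))\,dx$. By Step 2 and $u\in L^2_s$,
\[
|I(k)-I(0)|\le \|u\|_{L^2}\|u\|_{L^2_s}\,\|\widetilde\phi_k-\widetilde\phi_0\|_{L^\infty_{-s}}\le C|k|^\varepsilon .
\]
Then
\[
\phi_k-c_1\log k-\widetilde\phi_0=(\widetilde\phi_k-\widetilde\phi_0)+l_k\bigl(I(k)-I(0)\bigr)+\bigl(l_k+\tfrac{1}{2\pi}\log k\bigr)I(0).
\]
The first term is $O(|k|^\varepsilon)$. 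The second is $O(|k|^\varepsilon|\log k|)$. The third is $O(1)$ from Step 1, which is the issue discussed below. Dividing by $|k^\varepsilon\log k|$ gives a bounded quotient for small $|k|$. The $\lambda$-cases follow identically with the sign convention in \eqref{tilderelation}. For $\psi^\pm_\lambda$, note that $\int u\,dx=\hat u(0)=0$ for $u\in L^1\cap L^2_+$, since $\hat u$ is continuous and vanishes on $(-\infty,0)$. So the $u$ term in $c_3$ contributes nothing, consistent with the third identity in \eqref{tilderelation}.

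The main obstacle is the bounded remainder in Step 1. It is not $o(|k^\varepsilon\log k|)$ unless it is absorbed into $\widetilde\phi_0$, or unless $I(0)$ vanishes. I would first try to show that $\int(u+u\Pi(\overline u\widetilde\phi_0))\,dx$ combines with the constant correctly. Failing that, I would absorb $d\,I(0)$ into the reference function, that is, read $\widetilde\phi_0$ as $\widetilde\phi_0+\mathrm{const}$ consistent with the normalization of $\chi$. One could instead choose $\chi$ so that the constant part of $l_k+\frac1{2\pi}\log(-k)$ vanishes.

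A second delicate point is the uniform Hölder bound for $M_\lambda^\pm-M_0^\pm$. The oscillatory piece $\pm i(e^{i\lambda x}-1)\mathds{1}_{\mathbb R^\pm}$ is handled by $|e^{i\lambda x}-1|\le2|\lambda x|^\varepsilon$, and the $\widetilde G_\lambda-l_\lambda$ part is handled as in Lemma 5.2 of \cite{16}.
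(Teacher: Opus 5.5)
Your Steps 2 and 3 follow the same route as the paper: Lemma 5.1 of \cite{16} together with \eqref{TkT0} gives $\|\widetilde\phi_k-\widetilde\phi_0\|_{L^\infty_{-s}}\le C|k|^\varepsilon$, and then $l_k=-\frac1{2\pi}\log k-h(k)$ is inserted into \eqref{tilderelation}. As written, however, your argument does not close. The term $\bigl(l_k+\frac1{2\pi}\log k\bigr)I(0)$ is a priori $O(1)$, not $O(|k^\varepsilon\log k|)$. The remedies you propose (shifting $\widetilde\phi_0$ by a constant, or re-choosing $\chi$) would prove a different statement, because $\chi$, $\widetilde\phi_0$ and the $c_j$ are fixed by the setup of Section \ref{sec4} and Definition \ref{defwidetilde}.

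The missing idea is the one you already used for $\int u\,dx$, applied to the nonlinear term as well. For every $f\in L^\infty_{-s}$ we have $\overline u f\in L^2$, so $g:=\Pi(\overline u f)\in L^2_+$, and
\[
\int_{\mathbb R}u\,\Pi(\overline u f)\,dx=\langle u,\overline g\rangle_{L^2}=\frac1{2\pi}\int_{\mathbb R}\hat u(\xi)\,\hat g(-\xi)\,d\xi=0,
\]
because $\hat u$ is supported in $[0,\infty)$ and $\hat g(-\,\cdot\,)$ in $(-\infty,0]$. This is exactly the fact the paper invokes in \eqref{=0} and in the proof of Proposition \ref{identity-prop}. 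Hence $\int_{\mathbb R}\bigl(u+u\Pi(\overline u f)\bigr)\,dx=0$ for all $f\in L^\infty_{-s}$.

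Apply this with $f=\widetilde\phi_k,\widetilde\phi^\pm_\lambda,\widetilde\psi^\pm_\lambda$. Then $I(k)\equiv 0$ and $c_1=c_2=c_3=0$. Formula \eqref{tilderelation} collapses to $\phi_k=\widetilde\phi_k$, $\phi^\pm_\lambda=\widetilde\phi^\pm_\lambda$, $\psi^\pm_\lambda=\widetilde\psi^\pm_\lambda$; indeed $\widetilde T_k=T_k$ and $M_k*u=G_k*u$ for $k\neq 0$. Both $l_k\bigl(I(k)-I(0)\bigr)$ and the troublesome $\bigl(l_k+\frac1{2\pi}\log k\bigr)I(0)$ vanish identically. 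The proposition then reduces to your Step 2, which gives the stronger bound $\|\phi_k-\widetilde\phi_0\|_{L^\infty_{-s}}\le C|k|^\varepsilon$ for $|k|<a$, and likewise for $\lambda<b$.

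Your concern was also well founded with respect to the paper's write-up. The paper's proof drops $-h(k)\int_{\mathbb R}\bigl(u+u\Pi(\overline u\widetilde\phi_0)\bigr)\,dx$ without comment, and it is precisely this vanishing that justifies doing so.

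Two smaller points:
\begin{itemize}
\item $l_\lambda=+\frac1{2\pi}\log\lambda+O(1)$, so your sign is off, though it no longer matters once the integrals vanish.
\item The region away from $0$ is not ``controlled by continuity'': at $\lambda=1$ the denominator $|\lambda^\varepsilon\log\lambda|$ vanishes while the numerator generically does not. The statement must therefore be read, as the paper's proof does, as an estimate for $|k|<a$ and $\lambda<b$.
\end{itemize}
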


\begin{proof}
    According to Definition \ref{defwidetilde}, we have
    \begin{small}
    \[
    \widetilde\phi_k-\widetilde\phi_0=M_k*u+\widetilde T_k(\widetilde \phi_k)-M_0*u-\widetilde T_0(\widetilde\phi_0)=(M_k-M_0)*u+\widetilde T_k(\widetilde \phi_k)-\widetilde T_k(\widetilde \phi_0)+\widetilde T_k(\widetilde \phi_0)-\widetilde T_0(\widetilde \phi_0).
    \]
    \end{small}
    Then we obtain
    \begin{equation}
    (\mathrm{id}-\widetilde T_k)(\widetilde \phi_k-\widetilde \phi_0)=(M_k-M_0)*u+ (\widetilde T_k -\widetilde T_0)(\widetilde \phi_0)=\mathrm{I}+\mathrm{II},
    \end{equation}
    where $\mathrm{I}=(M_k-M_0)*u$ and $\mathrm{II}=(\widetilde T_k -\widetilde T_0)(\widetilde \phi_0)$. By Lemma 5.1 in \cite{16}, 
    for every $x\in \R$, there exist constants $a>0$ and $C>0$ such that for $|k|<a$ and  $\varepsilon =\min (1, \frac{1}{2}(s-\frac 1 2))$, we have
    \[
    \begin{aligned}
        &|\langle x \rangle^{-s} \mathrm{I}(x)|=   \bigg| \langle x \rangle^{-s} \int_{\mathbb{R}} (M_k(x-y)-M_0(x-y))u(y) \ \mathrm{d} y \bigg| \leq  \bigg| \langle x \rangle^{-s}  \int_{\mathbb{R}} C|k|^\varepsilon (1+|x-y|)^\varepsilon u(y) \ \mathrm{d} y \bigg| 
        \\
        \leq 
        &
        C|k|^\varepsilon \bigg|   \int_\mathbb{R}\langle y \rangle ^\varepsilon  \langle y \rangle ^{-s} \langle y \rangle ^{s} u(y) \ \mathrm{d} y \bigg|\leq C|k|^\varepsilon \| \langle y \rangle^{\varepsilon-s} \|_{L^2} \| u\|_{L^2_s} \leq \mathfrak{C}_1|k|^\varepsilon,
    \end{aligned}
    \]
   where $\mathfrak{C}_1=C\| \langle y \rangle^{\varepsilon-s} \|_{L^2} \| u\|_{L^2_s}$. Then 
   \(
   \|\mathrm{I} \|_{L^\infty_{-s}}\leq  \mathfrak{C}_1|k|^\varepsilon
   \). Using \eqref{TkT0}, we obtain that   
\begin{equation}
\| \mathrm{II}\|_{L_{-s}^\infty}\leq \left\| (\widetilde{T}_k - \widetilde{T}_0)(\widetilde{\phi}_0) \right\|_{L_{-s}^{\infty}} 
\leq \left\| \widetilde{T}_k - \widetilde{T}_0 \right\|_{\mathcal{B}(L_{-s}^{\infty})} \left\| \widetilde{\phi}_0 \right\|_{L_{-s}^{\infty}} 
\leq \mathfrak{C}_2|k|^{\varepsilon}
,
\end{equation}
where $\mathfrak{C}_2$ is a constant. 
Since $I - \widetilde{T}_k : L_{-s}^{\infty} \to L_{-s}^{\infty}$ is bijective, the open mapping theorem yields that $(I - \widetilde{T}_k)^{-1}$ is a bounded operator on $L^\infty_{-s}$. 
Then we have 
\begin{equation}\label{phik-0-k-varep}
   \left\| \widetilde{\phi}_k - \widetilde{\phi}_0 \right\|_{L_{-s}^{\infty}} \leq \left\| (I - \widetilde{{T}}_k)^{-1} \right\|_{\mathcal{B}(L_{-S}^{\infty} )} \|\mathrm{I}+\mathrm{II} \|_{L_{-s}^\infty} \leq    (\mathfrak{C}_1+\mathfrak{C}_2)|k|^{\varepsilon}
. 
\end{equation}
Formula (5.78) of  \cite{16} gives   
\begin{equation}
    l_k=-\frac{1}{2\pi}\log k-h(k).
\end{equation} 
The half-line  $[0, +\infty)$ is the branch cut, and the function $h$ is  analytic on $k$.  Formulas \eqref{tilderelation} and \eqref{phik-0-k-varep} yield that 
\[
\begin{aligned}
\phi_k(x) &= \widetilde{\phi}_k(x) + l_k \int \left( u + u \Pi\left( \overline{u} \widetilde{\phi}_k \right) \right) \ \mathrm{d}x
\\
&
= \widetilde{\phi}_0(x) + (\widetilde{\phi}_k - \widetilde{\phi}_0)(x) - \left( \frac{1}{2\pi } \log k + h(k) \right) \int_\R u(x) dx - \left( \frac{1}{2\pi } \log k + h(k) \right) \int_\R u \Pi \left(  \overline{u} \widetilde{\phi}_0 \right) dx 
\\
&\qquad \qquad- \left( \frac{1}{2\pi } \log k + h(k) \right) \int_\R u \Pi\left(  \overline{u} \left( \widetilde{\phi}_k - \widetilde{\phi}_0 \right) \right) dx \\
&= \widetilde{\phi}_0(x)  - \frac{1}{2\pi } \log k \int_\R \left( u + u \Pi \left(  \overline{u} \widetilde{\phi}_0 \right) \right) dx+ \mathcal{O}\left( k^\varepsilon \log k \right),\quad k\to 0.
\end{aligned}
\]
Here $\mathcal{O}$ holds in the sense of $L_{-s}^\infty$ norm. Then \eqref{phi-k-asy} follows.
\end{proof}

Now we investigate the spectral asymptotics at $k=\infty$. 

\begin{proposition}
   Assume that  $u\in H^1_+ \cap L_s^2 $   for some $s>\frac{1}{2}$. Let $\phi_k$ be given by Definition \ref{def3.7}. 
If   there exists $\alpha\in(0,\frac{\pi}{2})$ such that 
	\(
	|\mathrm{Im}\ k|\geq(\tan\alpha)\mathrm{Re}\ k
\)
(the shaded area in Fig. \ref{fig1}), then we have
\begin{equation}
\lim_{k \to \infty}\| \phi_k\|_{L^\infty}= 0.\label{417}
\end{equation}
\end{proposition}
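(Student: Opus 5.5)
The plan is to work directly with the fixed-point equation $\phi_k=G_k*u+T_k\phi_k$ from Definition \ref{def3.7}, viewed in $L^\infty$. Since $u\in L^2_s\subset L^1$ for $s>\frac12$, the sector condition $|\mathrm{Im}\,k|\ge(\tan\alpha)\mathrm{Re}\,k$ will be used to show that $G_k$ becomes small in a suitable sense. Inverting $\mathrm{id}-T_k$ with a uniform bound, via a Neumann series, then gives the conclusion. The guiding fact is that for $k$ in the sector, the distance from $k$ to $[0,+\infty)$ is comparable to $|k|$: $\mathrm{dist}(k,[0,\infty))\ge(\sin\alpha)|k|$, because the sector avoids a neighbourhood of the positive real axis.

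\textbf{Step 1: $L^2$ smallness of $G_k$.} By Plancherel,
\[
\|G_k\|_{L^2}^2=\frac1{2\pi}\int_0^\infty\frac{d\xi}{|\xi-k|^2}.
\]
Writing $k=a+ib$, in the sector one has $|\xi-k|\ge c_\alpha(\xi+|k|)$ for all $\xi\ge0$. This gives $\|G_k\|_{L^2}^2\le C_\alpha\int_0^\infty(\xi+|k|)^{-2}d\xi=C_\alpha|k|^{-1}$. Hence, by Young's inequality,
\[
\|T_k\|_{\mathcal B(L^\infty)}\le\|G_k\|_{L^2}\|u\|_{L^\infty}\|u\|_{L^2}\le C|k|^{-1/2}\to0 .
\]
For $|k|$ large, $\mathrm{id}-T_k$ is therefore invertible on $L^\infty$ with $\|(\mathrm{id}-T_k)^{-1}\|\le2$. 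This inverse agrees with the one from Corollary \ref{cor3.6}, by uniqueness.

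\textbf{Step 2: $\|G_k*u\|_{L^\infty}\to0$.} This is the main obstacle. The bound $\|G_k\|_{L^2}\|u\|_{L^2}\le C|k|^{-1/2}$ already settles it. Even so, I would double-check the lower bound $|\xi-k|\ge c_\alpha(\xi+|k|)$ in each part of the sector. When $\mathrm{Re}\,k\le0$ it holds with $c=1/\sqrt2$, since then $|\xi-k|^2\ge\xi^2+|k|^2$. When $\mathrm{Re}\,k>0$ with $|b|\ge a\tan\alpha$, elementary geometry gives the angle between $k$ and the positive axis at least $\alpha$. This yields $|\xi-k|\ge \sin\alpha\,\max(\xi,|k|)$ (law of sines or cosines), and hence the bound with $c_\alpha=\frac12\sin\alpha$. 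If one prefers to avoid the $L^2$ route, an alternative is to split $G_k*u$ using \eqref{Gkc+c-} into the exponential part $\pm i e^{ikx}\mathds1_{\pm x\ge0}*u$ and $\widetilde G_k*u$. The first part would be handled by dominated convergence, since $|e^{ik x}|$ decays like $e^{-|b||x|}$ with $|b|\gtrsim|k|$. The Plancherel route is cleaner, however.

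\textbf{Step 3: conclusion.} From $\phi_k=(\mathrm{id}-T_k)^{-1}(G_k*u)$ we obtain $\|\phi_k\|_{L^\infty}\le2\|G_k\|_{L^2}\|u\|_{L^2}\le C|k|^{-1/2}$, which gives \eqref{417} with an explicit rate. The only care needed is that $k$ eventually leaves the finite set $\sigma_{pp}(\mathbb L_u)$ (relevant on the negative axis). It must also be checked that $\phi_k\in L^\infty$, so that the $L^\infty$ inverse applies. This follows from Lemma \ref{phikequi}, since $\phi_k\in H^1_+\subset L^\infty$.
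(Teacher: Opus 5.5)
Your proposal is correct and takes essentially the same route as the paper. Both arguments get an $O(|k|^{-1/2})$ bound on $\|G_k\|_{L^2}$ from the sector geometry, apply Young's inequality to $G_k*u$ and to $T_k$ on $L^\infty$, and invert $\mathrm{id}-T_k$ once $|k|$ is large. The only difference is how the lower bound on $|\xi-k|$ is proved: the paper derives $|\xi-k|^2\ge(1-\cos\alpha)(\xi^2+|k|^2)$ from a spherical-coordinate identity written only for $\mathrm{Re}\,k,\mathrm{Im}\,k>0$, while you argue by elementary geometry and also cover $\mathrm{Re}\,k\le 0$ and $\mathrm{Im}\,k<0$ explicitly.
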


 \begin{figure}
	\centering
	\begin{tikzpicture}[scale=1.5]
		
		\draw[->][thick](-2,0)--(2,0)

		 node[right]{$\mathrm{Re} k$};
		 
		 		\draw[->][thick](0,-2)--(0,2.1)
		 
		 node[right]{$\mathrm{Im} k$};

\node at (0,0) [below left=1pt] {\footnotesize $0$};
             
     \draw[-](0,0)--(2,0.2);
     \draw[-](0,0)--(2,-0.2);
     
     \pgfmathsetmacro{\angleA}{atan(0.2/2)} 
     \pgfmathsetmacro{\angleB}{atan(-0.2/2)} 
     
     \begin{scope}[on background layer]
     	\fill[gray!20] (-2,-2) rectangle (2,2);
     	
     	\fill[white] (0,0) 
     	-- (\angleA:3) arc (\angleA:\angleB:3) 
     	-- cycle;
     \end{scope}
	\end{tikzpicture}
	\caption{ \footnotesize The complex k-plane where the shaded area is the domain  away form the positive r eal line.}
	\label{fig1}
\end{figure}

\begin{proof}
Let $\xi\in \R^+$ and $k=a+ib$. When $a>0$, $b>0$ and $\mathrm{Im}\ k\geq(\tan\alpha)\mathrm{Re}\ k$ with $\alpha\in (0,\frac \pi 2)$,  we introduce a spherical coordinate
	\begin{align*}
		\xi=\rho \sin \omega,\quad
		a=\rho \cos\omega \cos \eta,\quad
		b=\rho \cos\omega \sin \eta,
		\end{align*}
	with $\omega\in [0,\frac \pi 2)$ and $\eta\in [\alpha, \frac \pi 2)$. Then we have 
	\begin{equation}\label{44.5}
	\frac{|\xi-k|^2}{\xi^2+|k|^2}=(\sin \omega- \cos \omega \cos \eta)^2+(\cos \omega\sin\eta)^2={1-\sin(2\omega)\cos \eta}
	\geq 1-\cos \alpha.
	\end{equation}
So we obtain 
   $ \lVert G_{k}\rVert_{L^{2}}=
   \frac{1}{\sqrt{2\pi}}\int_0^\infty \frac{1}{|\xi-k|^2}\  \mathrm{d} \xi\leq C(\alpha)\left( \int_0^\infty \frac{1}{(\xi+ |k|)^{2}}d\xi\right)^{\frac{1}{2}}=\frac{C(\alpha)}{\sqrt{|k|}},   $    
 where $C(\alpha)=\frac{2}{\sqrt{2\pi}(1-\cos \alpha)}$. Thus, from Young's inequality, we deduce that
    \begin{equation}\label{gk*u}
        \|G_k*u \|_{L^\infty}\leq \lVert G_{k}\rVert_{L^{2}} \| u \|_{L^2}\leq \frac{C(\alpha)}{\sqrt{|k|}}  \| u \|_{L^2}.
    \end{equation}
Therefore, for  every $f\in L^\infty$, we have 
\begin{equation} 
			\lVert T_{k}f\rVert_{L^{\infty}}\leq\lVert G_{k}\rVert_{L^{2}}\lVert u\Pi(\overline{u}f)\rVert_{L^{2}}\leq \lVert G_{k}\rVert_{L^{2}}\lVert u\rVert_{L^\infty}\lVert u\rVert_{L^2}\lVert f\rVert_{L^{\infty}}\leq \frac{\mathfrak{C}(\alpha,\| u\|_{H^{1}})}{\sqrt{|k|}}\lVert f \rVert_{L^{\infty}},
		\end{equation}
        where $\mathfrak{C}(\alpha,\| u\|_{H^{1}})$ is a constant depending on $\alpha$ and $\|u \|_{H^1}$.  
  When  $|k|$ is large, we have $\lVert T_{k}\rVert_{L^{\infty}\to L^{\infty}} \ll 1$.   By formula $\phi_k=(\mathrm{id}-T_k)^{-1}(G_k*u)$ and \eqref{gk*u}, we obtain \( \lim_{k \to \infty} \| \phi_k\|_{L^\infty}= 0\).
\end{proof}

\begin{proposition}\label{lam-inf-jost}
 Assume that  $u\in H^1_+ \cap L_s^2 $   for some $s>\frac{1}{2}$. Let  $\phi_\lambda^\pm$ and $\psi_\lambda^\pm$  be given by Definition  \ref{defphilam} and \ref{defpsilam}. Then we have 
 \begin{equation}
     \lim_{\lambda \to +\infty} \phi_\lambda^\pm(x)= 0,\quad 
     \lim_{\lambda \to +\infty} \big(\psi_\lambda^\pm(x)-e^{i\lambda x}\big)=0.
     \end{equation}
\end{proposition}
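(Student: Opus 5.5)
The plan is to mimic the proof of the $k\to\infty$ asymptotics, but now on the real half-line $\lambda>0$ where $G_\lambda^\pm$ is not in $L^2$. Write $F=u+u\Pi(\overline u\phi_\lambda^+)$. Since $\widetilde G_\lambda*(\cdot)$ annihilates $L^2_+$ functions (as in Lemma \ref{Tlaminj}), we have
\[
\phi_\lambda^+(x)=ie^{i\lambda x}\int_{-\infty}^x e^{-i\lambda y}F(y)\,\mathrm{d}y,
\]
and analogously for $\psi_\lambda^+-e_\lambda$ and for the minus sign with $\int_x^{+\infty}$. So $T_\lambda^+$ reduces to the Volterra-type operator $V_\lambda f(x)=ie^{i\lambda x}\int_{-\infty}^x e^{-i\lambda y}u\Pi(\overline u f)(y)\,\mathrm{d}y$ on $L^\infty$.

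\textbf{Step 1: smallness of $T_\lambda^\pm$ as $\lambda\to+\infty$.} I will show that $\|(T_\lambda^\pm)^2\|_{\mathcal B(L^\infty)}\to 0$, or at least that $\|T_\lambda^\pm\|$ becomes small on the relevant functions. A direct norm bound does not work, since $|V_\lambda f|\le \|u\|_{L^2}^2\|f\|_{L^\infty}$ does not decay. The decay must come from oscillation: by a Riemann--Lebesgue type argument, $\int_{-\infty}^x e^{-i\lambda y}g(y)\,\mathrm{d}y\to0$ uniformly in $x$ for fixed $g\in L^1$. To make this uniform over the unit ball of $L^\infty$, I would use compactness. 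By Lemma \ref{T-lam-inf-inf} and the $W^{1,2}_{loc}$ bound, the set $\{u\Pi(\overline u f):\|f\|_{L^\infty}\le1\}$ is bounded in $L^2_s$, hence tight in $L^1$. Combined with the $\lambda$-independent $L^\infty$ and local $H^1$ bounds on $\Pi(\overline u f)$, one can approximate it uniformly by finitely many $L^1$ functions after composing with $T_\lambda$ once. This yields $\|(T^\pm_\lambda)^2\|\to0$. Then $\mathrm{id}-T_\lambda^\pm$ is invertible with $\|(\mathrm{id}-T_\lambda^\pm)^{-1}\|\le (1+\|T_\lambda\|)(1-\|T_\lambda^2\|)^{-1}$, which is bounded uniformly for large $\lambda$.

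\textbf{Step 2: the source terms.} For $\phi_\lambda^\pm$ the source is $G_\lambda^\pm*u=\pm ie^{i\lambda x}\int e^{-i\lambda y}u\,\mathbf 1\,\mathrm{d}y$. Since $u\in L^1$ (as $L^2_s\subset L^1$ for $s>\frac12$), this tends to $0$ uniformly in $x$ by Riemann--Lebesgue, using uniform continuity of $\lambda\mapsto\int_{-\infty}^x$ tails. Hence $\phi_\lambda^\pm=(\mathrm{id}-T_\lambda^\pm)^{-1}(G_\lambda^\pm*u)\to0$ in $L^\infty$, in particular pointwise. For $\psi_\lambda^\pm$, write $\psi_\lambda^\pm-e_\lambda=(\mathrm{id}-T_\lambda^\pm)^{-1}T_\lambda^\pm e_\lambda$. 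The key quantity is then
\[
T_\lambda^+e_\lambda=ie^{i\lambda x}\int_{-\infty}^x e^{-i\lambda y}u\,\Pi(\overline u e_\lambda)\,\mathrm{d}y .
\]
Here $\Pi(\overline u e_\lambda)\to \overline u e_\lambda$ in $L^2$ as $\lambda\to\infty$, since $\widehat{\overline u}$ is supported on $(-\infty,0]$ and its shift by $\lambda$ eventually lies in $[0,\infty)$ in $L^2$ mass. So the integrand becomes $|u|^2$ up to an $o(1)$ error in $L^1$, using $\|u\|_{L^2}$. This gives $T_\lambda^+e_\lambda=ie^{i\lambda x}\int_{-\infty}^x|u|^2+o(1)$, which does \emph{not} vanish.

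\textbf{Main obstacle.} The difficulty is Step 2 for $\psi$. The leading term does not decay, so the proof must exploit the full resolvent rather than a single application of $T_\lambda$. I would instead check the candidate expansion directly. Applying $T_\lambda$ to $e_\lambda\cdot m(x)$, with $m(x)=i\int_{-\infty}^x|u|^2$, produces $e_\lambda$ times an iterated integral; summing the series gives $e_\lambda(\exp(m)-1)$ rather than $0$. So the stated limit would only hold after a gauge factor, and I would first recheck the claim. If the statement is right, it must come from a cancellation in $\Pi(\overline u e_\lambda\, \cdot)$ that I would look for by computing $\Pi(\overline u e_\lambda m)$ precisely. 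Failing that, I would try to prove the weaker convergence in $L^\infty_{-s}$ along the approach of Step 1 and examine which term survives.
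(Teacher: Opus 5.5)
Your Step 1 claim is false, and your own Step 2 computation shows why. Write $\Pi_{\ge-\lambda}$ for the Fourier multiplier $\mathds{1}_{\xi\ge-\lambda}$, so that $e_{-\lambda}\Pi(e_\lambda h)=\Pi_{\ge-\lambda}h$. Since $\widetilde G_\lambda*(u\Pi(\overline u f))=0$ (as in Lemma \ref{Tlaminj}), one has $T_\lambda^+(e_\lambda g)=e_\lambda K_\lambda g$ with
\[
K_\lambda g(x):=i\int_{-\infty}^x u\,\Pi_{\ge-\lambda}(\overline u g)\,\mathrm{d}y .
\]
So on inputs carrying the phase $e_\lambda$, the oscillation of the kernel cancels exactly. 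For instance, $(T_\lambda^+)^2e_\lambda=e_\lambda K_\lambda^2 1\to-\tfrac12 e_\lambda\theta^2$, where $\theta(x):=\int_{-\infty}^x|u|^2\,\mathrm{d}y$. Hence $\|(T_\lambda^\pm)^2\|_{\mathcal B(L^\infty)}\not\to0$. The compactness you invoke is not $\lambda$-uniform: the local $W^{1,2}$ bounds in Lemma \ref{T-lam-inf-inf} grow like $\lambda$. So neither the uniform invertibility of $\mathrm{id}-T_\lambda^\pm$ nor the $\phi$ part is established.

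What works is to keep this gauge and compare $K_\lambda$ with the Volterra operator $K_\infty g(x):=i\int_{-\infty}^x|u|^2 g\,\mathrm{d}y$. Moving the frequency cutoff onto the other factor by Parseval gives
\[
\|K_\lambda-K_\infty\|_{\mathcal B(L^\infty)}\le\frac{\|u\|_{L^2}}{\sqrt{2\pi}}\,\sup_{x\in\mathbb{R}}\bigl\|\mathds{1}_{\xi>\lambda}\,\mathcal F\bigl(\mathds{1}_{(-\infty,x)}u\bigr)\bigr\|_{L^2}\longrightarrow0,
\]
because $\{\mathds{1}_{(-\infty,x)}u\}_{x\in\mathbb{R}}$ is precompact in $L^2$. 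Since $\mathrm{id}-K_\infty$ is invertible on $L^\infty$ (integrating factor $e^{i\theta}$), $(\mathrm{id}-K_\lambda)^{-1}$ is bounded uniformly for large $\lambda$. Now $\phi_\lambda^+=e_\lambda(\mathrm{id}-K_\lambda)^{-1}s_\lambda$ with $s_\lambda(x)=i\int_{-\infty}^x e^{-i\lambda y}u\,\mathrm{d}y$. Your Riemann--Lebesgue step, using $u\in L^1$, gives $s_\lambda\to0$ uniformly in $x$. Hence $\phi_\lambda^\pm\to0$, even uniformly in $x$.

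Your suspicion about $\psi_\lambda^\pm$ is correct: that half of the statement is false for every $u\neq0$, so it cannot be proved. The same estimate yields $e_{-\lambda}\psi_\lambda^+=(\mathrm{id}-K_\lambda)^{-1}1\to(\mathrm{id}-K_\infty)^{-1}1=e^{i\theta}$ uniformly. Hence $|\psi_\lambda^+(x)-e^{i\lambda x}|\to|e^{i\theta(x)}-1|$, which is positive wherever $\theta(x)\notin2\pi\mathbb{Z}$. Likewise $e_{-\lambda}\psi_\lambda^-\to\exp\bigl(-i\int_x^{+\infty}|u|^2\,\mathrm{d}y\bigr)$, so your heuristic $\psi_\lambda^+-e_\lambda\approx e_\lambda(e^{m}-1)$ is exactly right.

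An elementary check avoids operator convergence altogether. If $0<\|u\|_{L^2}^2<\frac12$, then $\|T_\lambda^+\|_{\mathcal B(L^\infty)}\le\|u\|_{L^2}^2$, and the Neumann series gives
\[
\bigl|\psi_\lambda^+(x)-e^{i\lambda x}\bigr|\ge\Bigl|\int_{-\infty}^x u\,\Pi_{\ge-\lambda}\overline u\,\mathrm{d}y\Bigr|-2\|u\|_{L^2}^4\longrightarrow\theta(x)-2\|u\|_{L^2}^4,
\]
which is positive for $x$ large. Consistently, $\Gamma(\lambda)=\lim_{x\to+\infty}e^{-i\lambda x}\psi_\lambda^+(x)\to e^{i\|u\|_{L^2}^2}$, not $1$.

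The paper's proof misses exactly this point. It treats $\int_{-\infty}^x e^{-i\lambda y}u\Pi(\overline u\psi_\lambda^+)\,\mathrm{d}y$ as small by Riemann--Lebesgue, but $\psi_\lambda^+$ carries the factor $e^{i\lambda y}$, so the integrand does not oscillate. For $\phi_\lambda^+$ it applies Riemann--Lebesgue to a $\lambda$-dependent function. It also uses a $\lambda$-uniform bound on $\|\phi_\lambda^+\|_{L^\infty}$ that is never proved; the estimate on $K_\lambda-K_\infty$ supplies exactly this. The correct statement is
\[
\psi_\lambda^+(x)-e^{i\lambda x}e^{i\theta(x)}\to0,\qquad \psi_\lambda^-(x)-e^{i\lambda x}\exp\Bigl(-i\int_x^{+\infty}|u|^2\,\mathrm{d}y\Bigr)\to0 .
\]
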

\begin{proof}
    Since $u\in L^1\cap L^2_s $ and $\phi_\lambda^+\in L_{-s}^\infty$, by Riemann-Lebesgue lemma,  we have 
     $\mathop{\rm lim}\limits_{\lambda\to +\infty}\mathcal{F}(u+u\Pi(\overline u \phi_\lambda^+)(\lambda)=0$. Then we obtain $\mathop{\rm lim}\limits_{\lambda\to +\infty}|ie^{i\lambda x}\mathds{1}_{R^+}*(u+u\Pi(\overline u \phi_\lambda^+))|=0$.
    Since $\| \widetilde{G}_\lambda\|_{L^2}=\frac{1}{\sqrt{2\pi \lambda}}$ and $u\in H^1$, we get 
    \begin{equation*}
    \begin{split}
         \| \widetilde{G}_\lambda*(u+u\Pi(\overline u \phi_\lambda^+))\|_{L^\infty} 
        \leq \| \widetilde{G}_\lambda\|_{L^2}\|u+u\Pi(\overline u \phi_\lambda^+)\|_{L^2}\leq 
        \frac{1}{\sqrt{2\pi \lambda}}(\|u \|_{L^2}+\|u\|_{L^\infty}\|u\|_{L^2}\|\phi_\lambda^+\|_{L^\infty}).
    \end{split}
\end{equation*}
Then from $\phi_\lambda^+=(ie^{i\lambda x}\mathds{1}_{R^+}-\widetilde{G}_\lambda)*(u+u\Pi(\overline u \phi_\lambda^+))$, we obtain $\mathop{\rm lim}\limits_{\lambda \to +\infty} \phi_\lambda^+= 0$.
  By $\psi_\lambda^+=e^{i\lambda x}+(ie^{i\lambda x}\mathds{1}_{R^+}-\widetilde{G}_\lambda)*(u\Pi(\overline u \psi_\lambda^+))$, we obtain $\mathop{\rm lim}\limits_{\lambda \to +\infty} \big(\psi_\lambda^\pm(x)-e^{i\lambda x}\big)=0$.
\end{proof}
 By the definitions \eqref{gamma} and \eqref{3.5} of the scattering coefficients,  Riemann-Lebesgue lemma and Proposition  \ref{jost0asy}, we obtain the following lemma:
\begin{proposition}
 Assume that  $u\in H^1_+ \cap L_s^2 $   for some $s>\frac{1}{2}$. Let  $\phi_\lambda^\pm$ and $\psi_\lambda^\pm$  be given by Definition  \ref{defphilam} and \ref{defpsilam}. Let $\Gamma(\lambda)$ aand $\beta(\lambda)$ be given by \eqref{gamma} and \eqref{3.5}. Then we have 
 \begin{equation}
 \begin{split}
     &\lim_{\lambda \to +\infty} \Gamma(\lambda)= 1, \quad \lim_{\lambda \to +\infty} \beta(\lambda)= 0,
     \\
     &\lim_{\lambda\to 0^+}\Gamma(\lambda)=1, \quad \lim_{\lambda\to 0^+}\beta(\lambda)=0.
     \end{split}
 \end{equation}
\end{proposition}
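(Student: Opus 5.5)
The four limits come straight from the formulas
\[
\Gamma(\lambda)=1+i\,\mathcal{F}\big(u\Pi(\overline u\psi_\lambda^+)\big)(\lambda),\qquad
\beta(\lambda)=i\,\mathcal{F}\big(u+u\Pi(\overline u\phi_\lambda^+)\big)(\lambda),
\]
together with the Jost-function asymptotics already proved in Proposition \ref{lam-inf-jost} (as $\lambda\to+\infty$) and Proposition \ref{jost0asy} (as $\lambda\to0^+$). In both regimes the plan is the same. First, get a bound on the relevant Jost function in $L^\infty_{-s}$ that is uniform in the regime considered. Then the densities $u\Pi(\overline u\psi_\lambda^+)$ and $u+u\Pi(\overline u\phi_\lambda^+)$ lie in a bounded subset of $L^1\cap L^2_+$. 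Finally, pass to the limit in the Fourier integrals, using either the Riemann--Lebesgue lemma or continuity of the Fourier transform at $0$.

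\textbf{Limits as $\lambda\to+\infty$.}
\begin{enumerate}
\item Decompose $\psi_\lambda^+=e_\lambda+r_\lambda$. Proposition \ref{lam-inf-jost} gives $r_\lambda\to0$. To make this quantitative, I will show $\|T_\lambda^+\|_{\mathcal B(L^\infty)}$ stays bounded and that $(\mathrm{id}-T_\lambda^+)^{-1}$ is uniformly bounded for large $\lambda$. The $\widetilde G_\lambda$ piece has size $O(\lambda^{-1/2})$. The piece $ie_\lambda\mathds 1_{\mathbb R^+}*F$ is $o(1)$ uniformly on compact families of $F\in L^1$, by Riemann--Lebesgue.
\item Split the $\Gamma$ integral as
\[
\int u\Pi(\overline u e_\lambda)e_{-\lambda}\,dx+\int u\Pi(\overline u r_\lambda)e_{-\lambda}\,dx .
\]
In the first term, $\Pi(\overline u e_\lambda)=e_\lambda\Pi_{\ge -\lambda}(\overline u)$, where $\Pi_{\ge -\lambda}$ keeps the Fourier modes $\xi\ge-\lambda$. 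Since $\overline u$ has spectrum in $(-\infty,0]$, this term equals $\langle u,\mathds 1_{[-\lambda,0]}(D)\,\overline u\rangle$. I still need to check why it tends to $0$, and I will handle this term with care.
\item The second term is bounded by $\|u\|_{L^2_s}^2\|r_\lambda\|_{L^\infty_{-s}}\to0$.
\item For $\beta$, write $\beta(\lambda)=i\hat u(\lambda)+i\,\mathcal F(u\Pi(\overline u\phi_\lambda^+))(\lambda)$. The first term tends to $0$ by Riemann--Lebesgue. The second is bounded by $\|u\|^2_{L^2_s}\|\phi_\lambda^+\|_{L^\infty_{-s}}$, and this tends to $0$ once the pointwise convergence in Proposition \ref{lam-inf-jost} is upgraded to uniform convergence in $L^\infty_{-s}$. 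The upgrade uses the same uniform invertibility.
\end{enumerate}

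\textbf{Limits as $\lambda\to0^+$.} Here I work under the hypotheses of Proposition \ref{jost0asy}.
\begin{enumerate}
\item Write $\phi_\lambda^+=c_2\log\lambda+\widetilde\phi_0^++O(\lambda^\varepsilon\log\lambda)$ in $L^\infty_{-s}$.
\item The constant term contributes nothing: $u\Pi(\overline u\cdot c)=0$, because $\Pi(\overline u)=0$. So
\[
u+u\Pi(\overline u\phi_\lambda^+)=u+u\Pi(\overline u\widetilde\phi_0^+)+O_{L^1}(\lambda^\varepsilon\log\lambda).
\]
\item The Fourier transform of this expression at $\lambda$ therefore tends to $\mathcal F\big(u+u\Pi(\overline u\widetilde\phi_0^+)\big)(0^+)$, by continuity of Fourier transforms of $L^1$ functions.
\item This limit value equals $\hat u(0)=0$, since $u\in L^1\cap L^2_+$ forces $\hat u(0)=0$, as used in \eqref{=0}; the term $u\Pi(\cdot)\in L^1\cap L^2_+$ vanishes at $0$ for the same reason.
\item The same argument applied to $\psi_\lambda^+$ gives $\Gamma(\lambda)\to1+0=1$.
\end{enumerate}

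\textbf{Main obstacle.} The hardest step is the uniform-in-$\lambda$ control of $(\mathrm{id}-T_\lambda^+)^{-1}$ on $L^\infty_{-s}$ as $\lambda\to\infty$. Proposition \ref{lam-inf-jost} proves only pointwise limits, and the exponential part of $G_\lambda^+$ does not become small in operator norm. My first attempt will be to show $(T_\lambda^+)^2\to0$ in norm, using oscillation (Riemann--Lebesgue on the compact image of $T_\lambda^+$). If that fails, I will fall back on a compactness argument: suppose the inverses are not uniformly bounded, extract a normalized sequence, and obtain a limiting nonzero solution of a homogeneous equation, which should contradict the absence of eigenvalues for large $\lambda$.
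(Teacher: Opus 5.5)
Your plan fails at exactly the term you postponed, and this cannot be repaired, because the claim $\lim_{\lambda\to+\infty}\Gamma(\lambda)=1$ is false in general.

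\textbf{The first term does not vanish.} In your splitting, the first term is the first Born term:
\[
\int_{\mathbb R}u\,\Pi(\overline u e_\lambda)\,e_{-\lambda}\,dx=\frac1{2\pi}\int_0^\lambda|\hat u(\xi)|^2\,d\xi\longrightarrow\|u\|_{L^2}^2\neq 0 .
\]

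\textbf{The input $r_\lambda\to0$ is also false.} You take this from Proposition~\ref{lam-inf-jost}, but that proposition is wrong for $\psi_\lambda^\pm$. Since $\widetilde G_\lambda*F=0$ for $F\in L^2_+$, one has $T_\lambda^+f(x)=ie^{i\lambda x}\int_{-\infty}^x e^{-i\lambda y}u\Pi(\overline uf)\,dy$. Hence $\|T_\lambda^+\|_{\mathcal B(L^\infty)}\le\|u\|_{L^2}^2$, uniformly in $\lambda$.

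\textbf{A quick disproof using your own splitting.} If $\|u\|_{L^2}^2<\frac12$, the Neumann series gives $\|r_\lambda\|_{L^\infty}\le\|u\|_{L^2}^2/(1-\|u\|_{L^2}^2)$. So your second term is at most $\|u\|_{L^2}^4/(1-\|u\|_{L^2}^2)$, and
\[
\liminf_{\lambda\to+\infty}|\Gamma(\lambda)-1|\ \ge\ \|u\|_{L^2}^2-\frac{\|u\|_{L^2}^4}{1-\|u\|_{L^2}^2}\ >\ 0
\]
for every nonzero such $u$. For example, $u=\epsilon(x+i)^{-2}$ with $\epsilon$ small satisfies all the hypotheses with $s=1$.

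\textbf{The actual limit.} Write $\psi_\lambda^+=e_\lambda m_\lambda$ and $P_\lambda:=e_{-\lambda}\Pi e_\lambda=\mathds{1}_{[-\lambda,\infty)}(D)$. The Jost equation becomes
\[
m_\lambda(x)=1+i\int_{-\infty}^x u\,P_\lambda(\overline u\,m_\lambda)\,dy,\qquad \Gamma(\lambda)=m_\lambda(+\infty).
\]
Since $P_\lambda\to\mathrm{id}$ strongly on $L^2$, and using the uniform invertibility discussed below, $m_\lambda\to\exp\big(i\int_{-\infty}^x|u|^2dy\big)$ in $L^\infty$. Therefore $\Gamma(\lambda)\to e^{i\|u\|_{L^2}^2}$, which equals $1$ only when $\|u\|_{L^2}^2\in2\pi\mathbb Z$.

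Riemann--Lebesgue is useless here because $u\Pi(\overline u\psi_\lambda^+)=e_\lambda\,u\,P_\lambda(\overline u m_\lambda)$ oscillates at exactly the frequency being tested. The paper's one-line proof (Riemann--Lebesgue plus Proposition~\ref{lam-inf-jost}) has the same defect. So this part of the statement must be corrected, not proved.

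\textbf{What survives.} The other three limits go through essentially along your lines.

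For $\beta$ at $+\infty$, your first idea ($(T_\lambda^+)^2\to0$ in norm) fails for the reason above. On functions $e_\lambda m$, $T_\lambda^+$ acts approximately as the Volterra operator $Vm=i\int_{-\infty}^x|u|^2m$, which is not small. Your compactness fallback does work after conjugating by $e_\lambda$:
\begin{enumerate}
\item The set $\{i\int_{-\infty}^x uP_\lambda(\overline u g)\,dy:\ \lambda>0,\ \|g\|_{L^\infty}\le1\}$ is precompact for the sup norm. It is equicontinuous and uniformly Cauchy at $\pm\infty$, with moduli bounded by $\|u\|_{L^2(x,y)}\|u\|_{L^2}$.
\item A normalized sequence with $(\mathrm{id}-T^+_{\lambda_n})f_n\to0$ in $L^\infty$ therefore yields a nonzero fixed point of $V$. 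This is impossible by Gronwall. The contradiction comes from this Volterra limit, not from the absence of eigenvalues.
\item The source $G_\lambda^+*u=ie_\lambda\int_{-\infty}^x e_{-\lambda}u\,dy$ tends to $0$ uniformly, by Riemann--Lebesgue applied uniformly over the compact family $\{u\mathds{1}_{(-\infty,x)}\}$ in $L^1$.
\item Hence $\phi_\lambda^+\to0$ in $L^\infty$ and $\beta(\lambda)\to0$.
\end{enumerate}

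Your argument for $\lambda\to0^+$ is the paper's route: Proposition~\ref{jost0asy} together with $\hat h(0)=0$ for $h\in L^1\cap L^2_+$. That fact incidentally makes all the constants $c_j$ vanish. Like the paper, this needs the stronger hypotheses $s>1$, $u\in H^1_s$ and $0\notin\sigma_{pp}(\mathbb{L}_u)$, not those of the statement.
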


\section{The invariant subspace}\label{sec5}
In this section, we will prove the following theorem:
\begin{theorem}\label{invariance}
Let \( u \in C^0((T^-, T^+); H_+^4) \)  be the unique solution to the 
CMDNLS equation \eqref{21.1}
with initial datum \( u(0) = u_0 \in H_+^4 \). 
If \( u_0, \partial_x u_0, \partial_x^2 u_0 \in L^2_1 \), then 
we have 
\begin{equation}
u(t), \partial_x u(t), \partial_x^2 u(t) \in L_1^2,\quad \forall t\in (T^-, T^+).
\end{equation}
\end{theorem}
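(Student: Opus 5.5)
The plan is to control $\|xu(t)\|_{L^2}$, $\|x\partial_xu(t)\|_{L^2}$ and $\|x\partial_x^2u(t)\|_{L^2}$ on any compact subinterval $[-T,T]\subset(T^-,T^+)$ by combining the conservation laws coming from the Lax pair with a Gronwall argument. We are told this invariance "follows from the unitary equivalence of the Lax operator". The natural identity to use is
\[
X u = \ ?
\]
That is, we want to express $xu$ through $\mathbb{L}_u$. On $L^2_+$, multiplication by $x$ is not bounded, but its compression $\Pi X\Pi$ satisfies $[\Pi X\Pi, D]=i\,\mathrm{id}$. This suggests computing $\frac{d}{dt}\langle Xu,u\rangle$-type quantities.

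Concretely, I would proceed in the following steps.

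\emph{Step 1 (a priori bounds).} Since $u\in C^0([-T,T];H^4_+)$, the quantities $\|u(t)\|_{H^4}$ are uniformly bounded on $[-T,T]$. Hence all coefficients appearing below, such as $\|u\|_{W^{3,\infty}}$ and $\|\mathbb{H}\partial_x(|u|^2)\|_{W^{2,\infty}}$, are bounded by a constant $C_T$.

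\emph{Step 2 (weighted energy).} To avoid assuming finiteness a priori, introduce the truncated weight $w_\epsilon(x)=\langle x\rangle^2/(1+\epsilon x^2)$. For $j=0,1,2$, define $E_j^\epsilon(t)=\int w_\epsilon|\partial_x^j u|^2\,dx$. Differentiate $\partial_x^j$ of \eqref{21.1} in time and pair it with $w_\epsilon\overline{\partial_x^ju}$.

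\emph{Step 3 (the linear part).} The term $i\partial_x^2$ contributes $-2\,\mathrm{Im}\int w_\epsilon'\,\partial_x^{j+1}u\,\overline{\partial_x^ju}$. Since $|w_\epsilon'|\lesssim w_\epsilon^{1/2}$ uniformly in $\epsilon$, this is bounded by $C\|\partial_x^{j+1}u\|_{L^2}\,(E_j^\epsilon)^{1/2}$.

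\emph{Step 4 (the nonlinear part).} The term $u(\mathbb{H}-i)\partial_x(|u|^2)$ and its derivatives up to order $2$ are products in which $\partial_x^ju$ appears multiplied by bounded functions, plus terms $\partial_x^m u\cdot\partial_x^{j-m}\bigl((\mathbb{H}-i)\partial_x|u|^2\bigr)$. The first kind is bounded by $C_T E^\epsilon_m$ with $m\le j$. For the second kind, I will place the weight on $\partial_x^m u$, using $\|\partial_x^{\ell}\mathbb{H}\partial_x|u|^2\|_{L^\infty}\le C\|u\|_{H^4}^2$ for $\ell\le 2$. This avoids putting the weight on the Hilbert transform, which is the delicate point: $\mathbb{H}$ does not commute with $x$. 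Whenever a weight must be moved across $\mathbb{H}$, I will use the commutator formula of Lemma \ref{xPi-Pix}, namely $[X,\mathbb{H}]f=-\frac1\pi\int f$. For $f=\partial_x^{\ell}|u|^2$ with $\ell\ge1$ this vanishes, and otherwise it is a bounded constant.

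\emph{Step 5 (closing).} Summing the estimates gives $\frac{d}{dt}\sum_j E_j^\epsilon\le C_T\bigl(1+\sum_j E_j^\epsilon\bigr)$ uniformly in $\epsilon$. Gronwall's inequality and the monotone convergence theorem as $\epsilon\to0$ then yield the claim on $[-T,T]$, and hence on all of $(T^-,T^+)$.

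The main obstacle is the rigorous justification of the time differentiation and of the integrations by parts in Step 2. For $j=2$, the equation for $\partial_x^2u$ involves $\partial_x^4u$ and $\partial_x^3$ of the nonlinearity. Since $u\in H^4$, these lie in $L^2$, while $w_\epsilon$ is bounded. The pairing is therefore legitimate in the distributional-in-time sense, and this is exactly why $H^4_+$ regularity is assumed. I would first verify this carefully for the $j=2$ nonlocal term, where the cross term $u\,\partial_x^3\mathbb{H}(|u|^2)\,w_\epsilon\overline{\partial_x^2u}$ must be bounded by $\|u\|_{H^4}^2\|u\|_{L^\infty}\,(E_2^\epsilon)^{1/2}\,\|w_\epsilon^{1/2}\|$ with the weight absorbed into $\overline{\partial_x^2u}$.
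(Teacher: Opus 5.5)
Your proposal is correct, and it takes a genuinely different route from the paper. You run truncated weighted energy estimates directly on $\partial_x^j u$ for $j=0,1,2$, and this is sound for the following reasons:
\begin{itemize}
\item The right-hand side of \eqref{21.1} lies in $C^0((T^-,T^+);H^2)$, so $u\in C^1((T^-,T^+);H^2)$. Each $E_j^\epsilon$ is therefore classically $C^1$, and no distributional-in-time argument is needed.
\item The dispersive term reduces to an integral against $w_\epsilon'$, with $|w_\epsilon'|\le 2w_\epsilon^{1/2}$ for $0<\epsilon\le 1$.
\item After the Leibniz expansion, every nonlinear term has the form $\partial_x^m u\,\partial_x^{j-m}\Pi\partial_x(|u|^2)$, and $\partial_x^{\ell}\Pi\partial_x(|u|^2)\in H^{3-\ell}\subset L^\infty$ for $\ell\le 2$. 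Splitting $w_\epsilon=w_\epsilon^{1/2}\cdot w_\epsilon^{1/2}$ between $\partial_x^m u$ and $\overline{\partial_x^j u}$ gives the bound $C_T(E_m^\epsilon E_j^\epsilon)^{1/2}$.
\item Since $w_\epsilon\le\langle x\rangle^2$, you get $E_j^\epsilon(0)\le\|\partial_x^j u_0\|_{L^2_1}^2$ uniformly in $\epsilon$.
\end{itemize}

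The paper never differentiates the equation. Its steps are:
\begin{itemize}
\item It proves a weighted estimate only at order zero, for the linear flow $\partial_t\Psi=\widetilde{\mathbb{B}}_{u}\Psi$ (Lemma \ref{gronwall}).
\item It deduces that $W(t)e^{-it\mathbb{L}_{u_0}^2}$ preserves $L^2_1\cap H^2_+$ (Lemma \ref{wexp}).
\item Using $u(t)=W(t)e^{-it\mathbb{L}_{u_0}^2}u_0$ and the unitary equivalence \eqref{uniequiLL2}, it writes $\mathbb{L}_{u(t)}^j u(t)=W(t)e^{-it\mathbb{L}_{u_0}^2}\mathbb{L}_{u_0}^j u_0$.
\item Lemmas \ref{utoLu} and \ref{Lutou} then convert weights on $u,\mathbb{L}_u u,\mathbb{L}_u^2u$ into weights on $u,\partial_xu,\partial_x^2u$.
\end{itemize}

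Your route is more elementary and self-contained. It uses no integrability, so it would apply to any nonlinearity of this product form, and it extends directly to higher weighted derivatives. The paper's route needs more machinery: the propagator $W$, Stone's theorem, and the representation formula for $u(t)$. In exchange, it ties the weighted regularity to the same conjugation structure that governs the evolution of eigenfunctions and Jost functions in Section \ref{sec6}. Both approaches essentially need $H^4$. You need it for $\partial_t\partial_x^2u\in L^2$ and $\partial_x^2\Pi\partial_x(|u|^2)\in L^\infty$; the paper needs it for $\mathbb{L}_{u_0}^2u_0\in H^2_+$.

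Three small corrections, none affecting the argument:
\begin{itemize}
\item The commutator is $[X,\mathbb{H}]f=\frac{1}{\pi}\int f$, which follows from Lemma \ref{xPi-Pix} and $\Pi=\frac12(1+i\mathbb{H})$. You never need it, though, since in your scheme no weight ever crosses $\mathbb{H}$.
\item The bound in your last sentence should read $C\|u\|_{H^4}^2(E_0^\epsilon)^{1/2}(E_2^\epsilon)^{1/2}$. A factor such as $\|w_\epsilon^{1/2}\|_{L^\infty}$ is not uniform in $\epsilon$ and would spoil the limit $\epsilon\to 0$.
\item The opening heuristic about $Xu$ and $\Pi X\Pi$ plays no role and can be dropped.
\end{itemize}
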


Before proving Theorem \ref{invariance}, we establish two lemmas that provide an equivalence between different regularity conditions for the function $u$. Specifically, we show that for any $u\in H^2_+$
 and $s>0$, the conditions $u, \partial_x u, \partial_x^2u\in L_s^2$
 and $u, \mathbb{L}_u(u), \mathbb{L}_u^2(u)\in L_s^2$
 are equivalent. This equivalence is useful for the proof of the theorem.

\begin{lemma}\label{utoLu}
Assume that \( u \in H_+^2 \) such that
$
u,\ \partial_x u,\ \partial_x^2 u \in L_s^2
$ for some $s > 0$.
Then we have
$
\mathbb{L}_u(u),\ \mathbb{L}_u^2(u) \in L_s^2.
$
\end{lemma}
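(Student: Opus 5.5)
We compute $\mathbb{L}_u(u)$ and $\mathbb{L}_u^2(u)$ explicitly and check that each term lies in $L^2_s$. The only non-local operator that appears is the Szeg\H{o} projector $\Pi$. It will always be multiplied by $u$ or $\partial_x u$, and the weight will be placed on that factor. This way we never need weighted bounds for $\Pi$ itself.

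\textbf{First step.} We have
\[
\mathbb{L}_u(u) = -i\partial_x u - u\,\Pi(|u|^2).
\]
By hypothesis $\partial_x u\in L^2_s$. Since $u\in H^2_+\subset H^1$, we get $|u|^2\in H^1$, hence $\Pi(|u|^2)\in H^1\hookrightarrow L^\infty$. Therefore
\[
\|u\,\Pi(|u|^2)\|_{L^2_s}\le \|u\|_{L^2_s}\,\|\Pi(|u|^2)\|_{L^\infty}<+\infty ,
\]
so $\mathbb{L}_u(u)\in L^2_s$. We also record that $w:=\mathbb{L}_u(u)\in H^1_+$, because $u\,\Pi(|u|^2)\in H^1$. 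In particular $w\in\mathrm{Dom}(\mathbb{L}_u)$, and $\mathbb{L}_u^2(u)=\mathbb{L}_u(w)$ makes sense.

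\textbf{Second step.} We write
\[
\mathbb{L}_u(w) = -i\partial_x w - u\,\Pi(\overline{u}w)
\]
and expand
\[
\partial_x w = -i\partial_x^2 u - (\partial_x u)\,\Pi(|u|^2) - u\,\partial_x\Pi(|u|^2).
\]
We treat the terms one by one.
\begin{itemize}
\item $\partial_x^2 u\in L^2_s$ by hypothesis.
\item $(\partial_x u)\,\Pi(|u|^2)\in L^2_s\cdot L^\infty\subset L^2_s$.
\item For $u\,\Pi(\partial_x|u|^2)$, we use that $\partial_x$ commutes with $\Pi$. Since $u\in H^2$, we have $\partial_x|u|^2\in H^1$, so $\Pi(\partial_x|u|^2)\in H^1\subset L^\infty$. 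Thus $u\,\Pi(\partial_x|u|^2)\in L^2_s\cdot L^\infty$.
\item Finally, $\overline{u}w\in L^2$ because $w\in L^\infty$; in fact $\overline{u}w\in H^1$, so $\Pi(\overline{u}w)\in L^\infty$. Hence $u\,\Pi(\overline{u}w)\in L^2_s$.
\end{itemize}
Summing, $\mathbb{L}_u^2(u)\in L^2_s$.

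\textbf{Main obstacle.} The only delicate point is the regularity bookkeeping. The second step needs $\Pi(\partial_x|u|^2)$ to be bounded, which requires $\partial_x|u|^2\in H^1$, i.e.\ $u\in H^2$. This is exactly what the hypothesis $u\in H^2_+$ provides, together with the algebra property \eqref{inequalityCspri} for $s=1$ and the embedding \eqref{ineqC1pripri}. No weighted estimate for $\Pi$ (such as Lemma \ref{xPi-Pix}) is needed, because every term has a factor among $u,\partial_x u,\partial_x^2 u$ carrying the weight, multiplied by an $L^\infty$ factor. If $s>0$ is arbitrary, the argument is unchanged, since multiplication by $L^\infty$ functions preserves $L^2_s$.
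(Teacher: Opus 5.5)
Your proof is correct and follows essentially the same route as the paper: expand $\mathbb{L}_u(u)$ and $\mathbb{L}_u^2(u)$ explicitly, and in each term put the weight on the factor $u$, $\partial_x u$ or $\partial_x^2 u$, with the remaining factor bounded through $H^1\hookrightarrow L^\infty$. The only cosmetic difference is in the term $u\,\Pi(\overline{u}w)$: you bound it in one step using $\overline{u}w\in H^1$, whereas the paper splits $w=-i\partial_x u-u\Pi(|u|^2)$ and bounds $u\Pi(\overline{u}\partial_x u)$ and $u\Pi(\overline{u}\,u\Pi(|u|^2))$ separately.
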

\begin{proof}
Note that $\Pi(|u|^2) \in \Pi(H^2) = H_+^2 \hookrightarrow L^\infty$, it follows that 
\[
\mathbb{L}_u(u) 
= -i \partial_x u - u \Pi(|u|^2) \in L_s^2\cap H^1_+.
\]
Define $g:=\mathbb{T}_u \mathbb{T}_{\bar{u}}(u) = u \Pi(|u|^2) \in L_s^2 \cap H_+^2$. Consequently, We have $\Pi(\bar{u} g) \in   \Pi(H^2) = H_+^2 \hookrightarrow L^\infty$. 
Given that 
$\Pi(\bar{u} \partial_x u)  \subset \Pi(H^1) = H_+^1 \hookrightarrow L^\infty$, it follows that $u \Pi(\bar{u} \partial_x u) \in  L_s^2$. 
 Furthermore, from  
$\Pi(|u|^2) \in H_+^2$, we deduce that $\partial_x \Pi(|u|^2) \in H_+^1 \hookrightarrow L^\infty$. This implies that 
$(\partial_x u) \Pi(|u|^2) \in L_s^2$ and $u \partial_x \Pi(|u|^2) \in L_s^2$. 
Then computing $ \mathbb{L}_u^2(u)$ explicitly yields 
\begin{equation}\label{Lu2u}
    \begin{split}
        \mathbb{L}_u^2(u) &=-\partial_x^2 u + i u \Pi(\bar{u} \partial_x u) + u \Pi(\bar{u} g) + i \partial_x\left( u \Pi(|u|^2) \right)
        \\
        &= -\partial_x^2 u + i u \Pi(\bar{u} \partial_x u) + u \Pi(\bar{u} g) + i (\partial_x u) \Pi(|u|^2) + i u \partial_x \Pi(|u|^2) \in L_s^2 .
    \end{split}
\end{equation}
\end{proof}

\begin{lemma}\label{Lutou}
Assume that \( u \in H_+^2 \) such that
$
u,\ \mathbb{L}_u(u),\ \mathbb{L}_u^2(u) \in L_s^2
$ for some $s>0$. 
Then we have 
$
\partial_x u,\ \partial_x^2 u \in L_s^2.
$
\end{lemma}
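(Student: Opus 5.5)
The plan is to run the computation of Lemma~\ref{utoLu} backwards: solve the identities for $\mathbb{L}_u(u)$ and $\mathbb{L}_u^2(u)$ for $\partial_x u$ and $\partial_x^2 u$. The key observation is that every nonlinear term is a product of a factor already known to lie in $L_s^2$ with a factor in $L^\infty$. The weight is absorbed by the first factor and the $L^\infty$ bound by the second, so no weighted estimate for $\Pi$ is needed.

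First, the first derivative. From \eqref{21.4},
\[
-i\partial_x u = \mathbb{L}_u(u) + u\,\Pi(|u|^2).
\]
Since $u\in H^2_+$, we have $|u|^2\in H^2$, and hence $\Pi(|u|^2)\in H^2_+\hookrightarrow L^\infty$. Because $u\in L_s^2$, this gives $u\Pi(|u|^2)\in L^2_s$. Together with the hypothesis $\mathbb{L}_u(u)\in L^2_s$, we conclude $\partial_x u\in L^2_s$.

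Next, the second derivative, via the expansion \eqref{Lu2u}:
\[
\partial_x^2 u = -\mathbb{L}_u^2(u) + i u \Pi(\bar u \partial_x u) + u \Pi(\bar u g) + i (\partial_x u)\Pi(|u|^2) + i u\, \partial_x\Pi(|u|^2),
\qquad g = u\Pi(|u|^2).
\]
This identity only uses $u\in H^2_+$, so it is available here. I treat the right-hand side term by term.
\begin{itemize}
\item $\mathbb{L}_u^2(u)\in L^2_s$ by hypothesis.
\item $u\Pi(\bar u\partial_x u)\in L^2_s$, since $\bar u\partial_x u\in H^1$ gives $\Pi(\bar u\partial_x u)\in H^1_+\hookrightarrow L^\infty$, and $u\in L^2_s$.
\item $u\Pi(\bar u g)\in L^2_s$, since $g\in H^2$ gives $\Pi(\bar u g)\in L^\infty$.
\item $(\partial_x u)\Pi(|u|^2)\in L^2_s$, using the first step ($\partial_x u\in L^2_s$) and $\Pi(|u|^2)\in L^\infty$.
\item $u\,\partial_x\Pi(|u|^2)\in L^2_s$, since $\partial_x\Pi(|u|^2)\in H^1_+\hookrightarrow L^\infty$.
\end{itemize}
Summing, $\partial_x^2 u\in L^2_s$.

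There is no genuine obstacle. The only points needing care are, first, that the expansion \eqref{Lu2u} is valid pointwise under the sole assumption $u\in H^2_+$; and second, that the second step must be carried out after the first, because the term $(\partial_x u)\Pi(|u|^2)$ requires $\partial_x u\in L^2_s$ as an input.
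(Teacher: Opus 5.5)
Your proof is correct and follows the paper's argument. You first isolate $-i\partial_x u = \mathbb{L}_u(u) + u\Pi(|u|^2)$, then solve \eqref{Lu2u} for $\partial_x^2 u$ and bound each nonlinear term as an $L^2_s$ factor times an $L^\infty$ factor. The only difference is cosmetic: the paper groups your two terms $iu\Pi(\bar u\partial_x u) + u\Pi(\bar u g)$ into the single term $-u\Pi(\bar u h)$, with $h = \mathbb{L}_u(u) \in H^1_+$.
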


\begin{proof}
From the definition \eqref{21.4} and the given fact $\mathbb{L}_u(u)\in L^2_s$, it follows that  
$$
\mathbb{L}_u(u) 
= -i \partial_x u - u \Pi(|u|^2) \in L_s^2. 
$$
Since 
$u \Pi(|u|^2) \in L_s^2 \cdot L^\infty \subset L_s^2,$
 we have 
\[
 -i \partial_x u = \mathbb{L}_u(u) + u \Pi(|u|^2) \in L_s^2.
\]
Set  $h:=-i \partial_x u - u \Pi(|u|^2) \in H_+^1$.  Formula  \eqref{Lu2u} and $\mathbb{L}_u^2(u)\in L^2_s$ imply  that  
$$\mathbb{L}_u^2(u) 
= -\partial_x^2 u + i (\partial_x u) \Pi(|u|^2) + i u \partial_x \Pi(|u|^2) - u \Pi(\bar{u} h) \in L_s^2.$$
Given $u\in H^2_+$, 
 we have  $\Pi(|u|^2) \in H_+^2$ and consequently $ \partial_x \Pi(|u|^2) \in H_+^1 \hookrightarrow L^\infty$. This implies $(\partial_x u) \Pi(|u|^2) \in  L_s^2 $ and $u \partial_x \Pi(|u|^2) \in  L_s^2$. Moreover, because $h\in H^1_+$, we obtain  $\Pi(\bar{u} h) \in \Pi(H^1) = H_+^1 \hookrightarrow L^\infty$, and hence, $u \Pi(\bar{u} h) \in  L_s^2$. 
So we conclude that
\[
\partial_x^2 u = -\mathbb{L}_u^2(u) + i (\partial_x u) \Pi(|u|^2) + i u \partial_x \Pi(|u|^2) - u \Pi(\bar{u} h)
\in L_s^2.
\]
\end{proof}

Assume that \( u \in C^0\left( (T^-, T^+); H_+^2 \right) \) is the solution to the  CMDNLS equation \eqref{21.1}. Recall $\mathbb{B}_u$ is given by \eqref{Budef}.
Let \( W \in C^1\left( (T^-, T^+); \mathcal{B}_\C(L_+^2) \right) \) denote the unique solution to
\begin{equation}\label{UniPropaW'=BW}
    \partial_t W(t) = \mathbb{B}
    _{u(t)} W(t) \in \mathcal{B}_{\mathbb{C}}(L_+^2),\quad W(0) = \text{id}_{L_+^2}. 
\end{equation}Let $W(t)^*$ denote the $L^2_+$-adjoint of the operator $W(t)$, $\forall t\in (T^-, T^+)$. 
Since $\mathbb{B}_{u(t)}^*=-\mathbb{B}_{u(t)}$, we have 
\begin{equation}\label{dtWtstar=-BWst}
    \partial_t W(t)^*=-W(t)^*\mathbb{B}_{u(t)} \quad \forall t\in (T^-, T^+).
\end{equation}
Furthermore, we have
\(
\frac{\mathrm{d}}{\mathrm{d} t}(W(t)^*W(t))=0,
\)
and 
$W(t)W(t)^*$ solves the differential equation 
\(
\partial_t Y(t)=[\mathbb{B}_{u(t)},  Y(t)].
\)
The above differential equation  admits  the uniqueness of the solution. So  we obtain
\begin{equation}
    W(t)W(t)^* =W(t)^* W(t)=\mathrm{id}_{L^2_+}, \quad\forall t \in (T^-, T^+).
\end{equation}
So $W(t)$ is a unitary operator on $L^2_+$. In other words, we have
\begin{align}\label{unit}
	W(t)^*=W(t)^{-1},\quad 
	\|   W(t) \phi  \|_{L^2}= \|  \phi  \|_{L^2}, \quad \forall \phi\in L^2_+.
	\end{align}
According to formulas \eqref{UniPropaW'=BW} and \eqref{dtWtstar=-BWst},   it follows  from the Lax pair structure \eqref{LaxEq} that
\begin{equation}\label{uniequiLL2}
\begin{split}      
&\mathbb{L}_{u(t)} = W(t) \mathbb{L}_{u_0} W(t)^* : H_+^1 \to L_+^2 , \\
&\mathbb{L}_{u(t)}^2 = W(t) \mathbb{L}_{u_0}^2 W(t)^* : H_+^2 \to L_+^2.
\end{split}
\end{equation}

\begin{lemma}\label{gronwall}
Assume that \( u \in C^0((T^-, T^+), H^2_+) \) solves CMDNLS equation \eqref{21.1}
with initial datum \( u(0) = u_0 \in H^2_+ \), for some \( T^- < 0 < T^+ \).
Assume that 
\( \Psi \in C^0((\mathfrak{T}^-, \mathfrak{T}^+), H^2_+) \) solves 
\begin{equation}\label{dtpsiEq=iDelPsi+2u}
    \partial_t \Psi(t) = i\partial_x^2 \Psi(t) + 2u(t) \partial_x \Pi(\overline{u(t)} \Psi(t))
\end{equation}
with initial datum \( \Psi(0) = \Psi_0 \in H^2_+ \), for some $\mathfrak{T}^-\in (T^-, 0)$ and $\mathfrak{T}^+\in (0, T^+)$.
If $\   \Psi_0, u_0 \in L_1^2 $,
then
\begin{itemize}
    \item \( \Psi(t) \in L_1^2 \) for any \( t \in (\mathfrak{T}^-, \mathfrak{T}^+) \).
    \item \( u(\tau) \in L_1^2 \) for any \( \tau \in (T^-, T^+) \).
\end{itemize}

\end{lemma}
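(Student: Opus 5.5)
We propose to prove the first bullet by a weighted energy (virial-type) estimate closed with Gronwall's inequality. The second bullet then follows by applying the first bullet to $\Psi = u$ on arbitrary subintervals.

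\emph{Reduction of the second bullet.} Since $\mathbb{T}_u\mathbb{T}_{\overline u}$ commutes with $\mathbb{L}_u$ through the identity $\mathbb{L}_u u = -i\partial_x u - u\Pi(|u|^2)$, we want an evolution law for $u$ of the form \eqref{dtpsiEq=iDelPsi+2u}. A direct computation from \eqref{21.1} gives
\[
2u\partial_x\Pi(|u|^2)=u(\partial_x|u|^2+i\mathbb{H}\partial_x|u|^2)=u(1+i\mathbb{H})\partial_x|u|^2 .
\]
Moreover $i\partial_x^2u+2u\partial_x\Pi(|u|^2)$ equals $i\big(\partial_x^2u+u(\mathbb{H}-i)\partial_x|u|^2\big)=\partial_t u$. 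Hence $u$ itself solves \eqref{dtpsiEq=iDelPsi+2u}. Applying the first bullet with $\Psi=u$ on every $(\mathfrak{T}^-,\mathfrak{T}^+)\Subset(T^-,T^+)$ gives $u(\tau)\in L^2_1$ for all $\tau$. One subtlety must be handled: the first bullet presupposes $u(t)\in L^2_1$ in its estimates. We will therefore prove both bullets simultaneously, running the Gronwall argument for $\Psi$ with an a priori bound that only uses $u\in C^0H^2_+$.

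\emph{Weighted estimate.} We use a regularized weight $w_\varepsilon(x)=x^2/(1+\varepsilon x^2)$, which is bounded with $|w_\varepsilon'|\lesssim\langle x\rangle$ and $|w_\varepsilon''|\lesssim 1$ uniformly in $\varepsilon$. We compute
\[
\frac{\mathrm d}{\mathrm dt}\int w_\varepsilon|\Psi|^2=2\mathrm{Re}\int w_\varepsilon\overline\Psi\big(i\partial_x^2\Psi+2u\partial_x\Pi(\overline u\Psi)\big).
\]
The dispersive term integrates by parts to $-2\mathrm{Im}\int w_\varepsilon'\overline\Psi\partial_x\Psi$. This is bounded by $C\|\langle x\rangle\Psi\|_{L^2}\|\partial_x\Psi\|_{L^2}$, and the second factor is controlled by $\|\Psi\|_{C^0H^2}$ on compact subintervals.

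For the nonlinear term we write $w_\varepsilon u\partial_x\Pi(\overline u\Psi)$ and bound it by
\[
\|w_\varepsilon^{1/2}\Psi\|_{L^2}\,\|w_\varepsilon^{1/2}u\|_{L^\infty}\,\|\partial_x\Pi(\overline u\Psi)\|_{L^2}.
\]
This requires control of $\langle x\rangle u$ in $L^\infty$, which is circular. To avoid this, we instead estimate $\|\langle x\rangle u\,\partial_x\Pi(\overline u\Psi)\|_{L^2}$ by moving the weight onto the inner factor. Using the commutator Lemma \ref{xPi-Pix}, $X\Pi(m)=\Pi(Xm)+\frac{i}{2\pi}\hat m(0^+)$, together with $[\partial_x,X]=1$, we get
\[
\|X\partial_x\Pi(\overline u\Psi)\|_{L^2}\lesssim\|u\|_{W^{1,\infty}}\big(\|X\Psi\|_{L^2}+\|X\partial_x\Psi\|_{L^2}\big)+\text{lower order}.
\]
The constant mode produced by the commutator is annihilated by $\partial_x$.

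This in turn requires $X\partial_x\Psi\in L^2$. We therefore run the same estimate for the coupled quantity
\[
E(t)=\|w_\varepsilon^{1/2}\Psi\|^2+\|w_\varepsilon^{1/2}\partial_x\Psi\|^2+\|w_\varepsilon^{1/2}u\|^2+\|w_\varepsilon^{1/2}\partial_xu\|^2 .
\]
This is where $H^2$ regularity of $\Psi$ and $u$, with $u\in H^4$ at hand in the application, is used; it closes as $E'\le C(1+E)$ with $C$ uniform in $\varepsilon$.

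\emph{Conclusion and main obstacle.} Gronwall's inequality gives $E(t)\le (E(0)+C|t|)e^{C|t|}$ uniformly in $\varepsilon$, and Fatou's lemma as $\varepsilon\to0$ yields the claims. The main obstacle is that the derivative in the nonlinearity $2u\partial_x\Pi(\overline u\Psi)$ loses one weight-derivative. Closing the estimate therefore requires carrying a derivative in the energy and handling the nonlocal projector $\Pi$ against the weight. If this coupling fails to close, the fallback is to differentiate $\langle \Psi,X^2\Psi\rangle$ using the unitary propagator $W(t)$ from \eqref{UniPropaW'=BW}: \eqref{dtpsiEq=iDelPsi+2u} is $\partial_t\Psi=\widetilde{\mathbb B}_u\Psi$ with $\widetilde{\mathbb{B}}_u=\mathbb{B}_u-i\mathbb{L}_u^2$, which reduces the problem to bounding the commutators $[X,\mathbb{B}_u]$ and $[X,\mathbb{L}_u^2]$ via Lemma \ref{xPi-Pix}.
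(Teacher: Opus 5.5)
There is a genuine gap. Your energy $E(t)$ contains $\|w_\varepsilon^{1/2}\partial_x\Psi\|_{L^2}^2$ and $\|w_\varepsilon^{1/2}\partial_x u\|_{L^2}^2$. The Gronwall bound is therefore only useful if $E(0)$ is bounded uniformly in $\varepsilon$, that is, if $x\partial_x\Psi_0,\,x\partial_x u_0\in L^2$. The lemma assumes only $\Psi_0,u_0\in L^2_1$, and in Lemma \ref{wexp} it is applied to arbitrary $\Psi_0\in L^2_1\cap H^2_+$. Under the stated hypotheses your argument yields nothing.

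Two further problems compound this. First, the closure $E'\le C(1+E)$ is asserted, not checked. If you treat the nonlinear term at the $\partial_x\Psi$ level by the same device (moving the weight onto $\partial_x^2\Pi(\overline u\Psi)$ via Lemma \ref{xPi-Pix}), you need $X\partial_x^2\Psi\in L^2$, and the hierarchy never terminates. Second, you appeal to $u\in H^4$, which is not available here: the lemma only has $u\in C^0H^2_+$.

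The missing idea is that the derivative in $2u\,\partial_x\Pi(\overline u\Psi)$ causes no loss, so no derivative weights are needed. Since $u(t),\Psi(t)\in H^2$, you have $\overline u\Psi\in H^2$, hence $\partial_x\Pi(\overline u\Psi)\in H^1\hookrightarrow L^\infty$ with norm $\lesssim\|u\|_{H^2}\|\Psi\|_{H^2}$. Put the weight on $u$ in $L^2$, not in $L^\infty$:
\[
\Big|\int_{\mathbb{R}} w_\varepsilon\,u\,\partial_x\Pi(\overline u\Psi)\,\overline\Psi\,\mathrm{d}x\Big|\le \|w_\varepsilon^{1/2}u\|_{L^2}\,\|\partial_x\Pi(\overline u\Psi)\|_{L^\infty}\,\|w_\varepsilon^{1/2}\Psi\|_{L^2}.
\]
The factor $\|w_\varepsilon^{1/2}u\|_{L^2}$ is not circular. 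As you correctly observed, $u$ solves the same equation, and for $\Psi=u$ the nonlinear term is bounded by $\|w_\varepsilon^{1/2}u\|_{L^2}^2\,\|\partial_x\Pi(|u|^2)\|_{L^\infty}$, so that estimate closes on its own.

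Summing, $K=\|w_\varepsilon^{1/2}\Psi\|_{L^2}^2+\|w_\varepsilon^{1/2}u\|_{L^2}^2$ satisfies $K'\le AK+\gamma$. The constants depend only on $\sup_{[T_1,T_2]}(\|u\|_{H^2}+\|\Psi\|_{H^2})$ and are uniform in the regularization. Only $\Psi_0,u_0\in L^2_1$ enters, through $K(0)$.

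This is exactly the paper's argument: the weight $x^2\chi(x/R)^2$, commutator bounds for the dispersive term, the coupled quantity $I+J$, and monotone convergence as $R\to\infty$.

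A small point: in the dispersive term, bound $|w_\varepsilon'|\le 2w_\varepsilon^{1/2}$ rather than using $\|\langle x\rangle\Psi\|_{L^2}$. Otherwise that step is circular as well.
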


\begin{proof}

We choose a cut-off function  \( \chi \in C_c^\infty(\mathbb{R}; \mathbb{R}) \) such that
   \( 0 \leq \chi \leq 1 \),
     \( \chi = 1 \) on \( [-1, 1] \),      \( \mathrm{supp}\  \chi \subset [-2, 2] \),
    and \( \chi \) decreases on \( \mathbb{R}_+ = [0, +\infty) \). Set $\rho(x) := x\chi(x) \geq 0$.
For any \( R > 1 \), we define \[ I(R, t)=I_{\Psi}(R,t) := \int_{\R} \left| \chi\left(\frac{x}{R}\right) \right|^2 x^2 |\Psi(t, x)|^2 \d x ,\]
and  \( \rho_R(x) := R\rho\left(\frac{x}{R}\right) = x\chi\left(\frac{x}{R}\right) \), then \( \operatorname{supp} \rho_R \subset [-2R, 2R] \).
So we have
\[
\begin{aligned}
\partial_t I(R, t) 
&= 2\operatorname{Re} \int_{\R} x^2 \left| \chi\left(\frac{x}{R}\right) \right|^2 \overline{\Psi}(t, x) \partial_t \Psi(t, x) \d x 
=2\operatorname{Re} \int_{\R}  (\left| \rho_R(x) \right|^2  \partial_t \Psi(t, x))\overline{\Psi(t, x)} \d x\\
&= 2\operatorname{Re} \langle \left| \rho_R(x) \right|^2 \partial_t \Psi(t, x),   \Psi(t, x) \rangle_{L^2},
\end{aligned}
\]
Since \( \partial_t \Psi(t) = i\partial_x^2 \Psi(t) + 2u(t)\partial_x \Pi\left( \overline{u(t)} \Psi(t) \right) \), we have
\begin{equation}\label{partialtIA1A2}
\begin{split}
\partial_t I(R, t) 
&= -2\operatorname{Im}  \langle \left| \rho_R \right|^2 \partial_x^2 \Psi(t),  { \Psi(t) }\rangle_{L^2} + 4\operatorname{Re} \langle  \left| \rho_R \right|^2 u(t)\partial_x \Pi\left( \overline{u(t)} \Psi(t) \right) , { \Psi(t) }\rangle_{L^2} = (A_1 + A_2)(t).
\end{split}
\end{equation}
where $A_1(t):=-2\operatorname{Im}  \langle \left| \rho_R \right|^2 \partial_x^2 \Psi(t),  { \Psi(t) }\rangle_{L^2} $ and $A_2(t):=4\operatorname{Re} \langle  \left| \rho_R \right|^2 u(t)\partial_x \Pi\left( \overline{u(t)} \Psi(t) \right) , { \Psi(t) }\rangle_{L^2}$.
Note \( |\rho_R|^2 \Psi \in H^2\), then \(\partial_x(|\rho_R|^2 \Psi) \in H^1 \).
As a consequence, we have
\begin{align*}
&A_1
= -2\cdot\frac{1}{2i}\left(\left\langle |\rho_R|^2\partial_x^2\Psi,\,\Psi\right\rangle_{L^2} - \overline{\left\langle |\rho_R|^2\partial_x^2\Psi,\,\Psi\right\rangle_{L^2}}\right) 
= i\left\langle |\rho_R|^2\partial_x^2\Psi,\,\Psi\right\rangle_{L^2} - i\langle |\rho_R|^2\Psi,\,\partial_x^2\Psi\rangle_{L^2},\\
&= i\left\langle |\rho_R|^2\partial_x^2\Psi,\,\Psi\right\rangle_{L^2} - i\left\langle \partial_x^2(|\rho_R|^2\Psi),\,\Psi\right\rangle_{L^2}= i\left\langle \left[|\rho_R|^2,\,\partial_x^2\right]\Psi,\,\Psi\right\rangle_{L^2}.
\end{align*}
For every $f,\,g\in H^2 $ and $\rho_R\geq 0$ and $\rho_R\in C_c^\infty$, we deduce that 
\begin{equation*}\small
\left\langle \left[\rho_R,\,\partial_x^2\right]f,\,g\right\rangle_{L^2} 
=\int_{\mathbb{R}} \partial_x^2f \cdot \overline{\rho_R g} \d x- \int_{\mathbb{R}} \rho_R\cdot f \overline{\partial_x^2 g} \d x
=
 \int_{\mathbb{R}} f\left(\partial_x^2(\overline{\rho_R g}) - \rho_R(\overline{\partial_x^2 g})\right)
= \left\langle f,\,\left[\partial_x^2,\, \rho_R\right]g\right\rangle_{L^2}.
\end{equation*}
So $\left[\rho_R,\,\partial_x^2\right]$ is an unbounded anti-symmetric operator on $L^2$. 
Recall that 
\[
\left[\rho_R^2,\,\partial_x^2\right] = \rho_R\left[\rho_R,\,\partial_x^2\right] + \left[\rho_R,\,\partial_x^2\right]\rho_R: H^2\to L^2.
\] 
\[
\left[\rho_R,\,\partial_x^2\right] = \left[\rho_R,\,\partial_x\right]\partial_x + \partial_x\left[\rho_R,\,\partial_x\right]:H^2\to L^2.
\]
Then the following identity holds,
\[
\begin{aligned}
   &A_1
  =i\left\langle \rho_R \left[ \rho_R, \partial_x^2 \right] \Psi, \Psi \right\rangle_{L^2} + i\left\langle \left[ \rho_R, \partial_x^2 \right] \rho_R \Psi, \Psi \right\rangle_{L^2} 
   =
   i\left\langle \left[ \rho_R, \partial_x^2 \right] \Psi, \rho_R \Psi \right\rangle_{L^2} + i\left\langle \rho_R \Psi, \left[ \partial_x^2, \rho_R \right] \Psi \right\rangle_{L^2}
   \\=&
  2\operatorname{Re}\left( i\left\langle \left[ \rho_R, \partial_x^2 \right] \Psi, \rho_R \Psi \right\rangle_{L^2} \right)
   =
-2\operatorname{Im}\left\langle \left[ \rho_R, \partial_x \right] \partial_x \Psi, \rho_R \Psi \right\rangle_{L^2} - 2\operatorname{Im}\left\langle \partial_x \left[ \rho_R, \partial_x \right] \Psi, \rho_R \Psi \right\rangle_{L^2}.
\end{aligned}
\]
For any $R>1$, we have 
\[
\| \partial_x \rho_R \|_{L^1} = \| \partial_x \rho \|_{L^1} \cdot R, \quad \| \partial_x^3 \rho_R \|_{L^1} = R^{-1} \| \partial_x^3 \rho \|_{L^1}, \quad 
 \| \partial_x^5 \rho_R \|_{L^1} = R^{-3} \| \partial_x^5
\rho \|_{L^1}.
 \]

By Lemma 4.13 in  \cite{SunCMP2021}, there exist two universal constants $ \ C_1, \ C_2 > 0$ such that 
\[
\begin{aligned}
&\left\| \left[ \partial_x, \rho_R \right] g \right\|_{L^2}^2 = \left\| \left[ \rho_R, \partial_x \right] g \right\|_{L^2(\mathbb{R}; \mathbb{C})}^2 \leq C_1 \left( \| \partial_x \rho_R \|_{L^1} \| \partial_x^3 \rho_R \|_{L^1} \right)^{\frac{1}{2}} \| g \|_{L^2}, \\
&\left\| \partial_x \left[ \partial_x, \rho_R \right] h \right\|_{L^2}^2 \leq C_2 \left( \| \partial_x \rho_R \|_{L^1} \| \partial_x^3 \rho_R \|_{L^2} \right)^{\frac{1}{2}} \| \partial_x h \|_{L^2} + \left( \| \partial_x \rho_R \|_{L^1} \| \partial_x^5 \rho_R \|_{L^1} \right)^{\frac{1}{2}} \| h \|_{L^2},
\end{aligned}
\]
for any $ g \in H^1$ and  $ h \in H^1$. 
Therefore, we obtain 
\begin{equation}\label{A1}
\small
\begin{aligned}
    \left| A_1  \right| 
    \leq 
    & 2\left| \left\langle [\partial_x, \rho_R] \partial_x \Psi, \rho_R \Psi \right\rangle_{L^2} \right| + 2\left| \left\langle \partial_x [\partial_x, \rho_R] \Psi, \rho_R \Psi \right\rangle_{L^2} \right|
    \\
    \leq & 2\bigl\| \rho_R \Psi \bigr\|_{L^2} \cdot C_1 \left( \| \partial_x \rho_R \|_{L^1} \| \partial_x^3 \rho_R \|_{L^1} \right)^{\frac{1}{2}} \| \partial_x \Psi \|_{L^2} 
    \\  &\quad  +2\bigl\| \rho_R \Psi \bigr\|_{L^2} C_2 \left( \left( \| \partial_x \rho_R \|_{L^1} \| \partial_x^3 \rho_R \|_{L^1} \right)^{\frac{1}{2}} \| \partial_x \Psi \|_{L^2} + \left( \| \partial_x \rho_R \|_{L^1} \| \partial_x^5 \rho_R \|_{L^1} \right)^{\frac{1}{2}} \| \Psi \|_{L^2} \right)
\\ \leq &  2(C_1+C_2)\bigl\| \rho_R \Psi \bigr\|_{L^2} \left( \| \partial_x \rho \|_{L^1} \| \partial_x^3 \rho \|_{L^1} \right)^{\frac{1}{2}} \| \partial_x \Psi \|_{L^2} + \frac{2C_2}{R} \bigl\| \rho_R \Psi \bigr\|_{L^2} \left( \| \partial_x \rho \|_{L^1} \| \partial_x^5 \rho \|_{L^1} \right)^{\frac{1}{2}} \| \Psi \|_{L^2}.
\end{aligned}
\end{equation}
Recall that
\(
I(R, t) = \int_{\mathbb{R}} \left| \chi\left( \frac{x}{R} \right) \right|^2 x^2 \left| \Psi(t, x) \right|^2 \d x = \bigl\| \rho_R \Psi(t) \bigr\|_{L^2}^2
\),
we define
\[
J(R,t) := \left. I_{\Psi}(R,t) \right|_{\Psi=u} = \int_{\mathbb{R}} \left| \chi\left(\frac{x}{R}\right) \right|^2 x^2 |u(t,x)|^2 \d x  = \|\rho_R u(t)\|_{L^2}^2.
\]
According to formula \eqref{partialtIA1A2}, for every $t\in (T^-, T^+)$, we have
\[
\partial_t J(R,t) = -2\operatorname{Im}\left\langle |\rho_R|^2 \partial_x^2 u(t), u(t) \right\rangle_{L^2} + 4\Re \left\langle |\rho_R|^2 u(t) \partial_x \Pi(|u(t)|^2), u(t) \right\rangle_{L^2}= B_1(t) + B_2(t)
\]
with 
\(
B_1(t) := -2\operatorname{Im}\left\langle |\rho_R|^2 \partial_x^2 u(t), u(t) \right\rangle_{L^2}
\)
and 
\(
B_2(t) := 4\Re \left\langle |\rho_R|^2 u(t) \partial_x \Pi(|u(t)|^2), u(t) \right\rangle_{L^2}.
\)
From \eqref{A1},  we have
\[
\begin{split}
|B_1| 
 \leq 2(C_1+C_2)\|\rho_R u\|_{L^2} \left( \|\partial_x \rho\|_{L^1} \|\partial_x^3 \rho\|_{L^1} \right)^{\frac{1}{2}} \|\partial_x u(t)\|_{L^2}
+ \tfrac{2C_2}{R} \|\rho_R u\|_{L^2} \left( \|\partial_x \rho\|_{L^1} \|\partial_x^5 \rho\|_{L^1} \right)^{\frac{1}{2}} \|u\|_{L^2}.
\end{split}
\]
Since  $u \in H^2_+$, we have $|u|^2  \in H^2$. By \eqref{inequalityCspri} and \eqref{ineqC1pripri},
 we have 
 \[
 \begin{split}
 &|B_2| \leq 4 \|\rho_R u \partial_x \Pi(|u|^2)\|_{L^2} \|\rho_R u\|_{L^2}
 \leq 
 4 C_1'' \|\rho_R u\|_{L^2}^2 \|u^2\|_{H^2}\ \leq 4C_2' C_1'' \|\rho_R u\|_{L^2}^2 \|u\|_{H^2}^2.
 \end{split}
 \]
For all $R>1$, we have
\[\small
\begin{split}
 |B_1+B_2| 
&\leq 2(C_1+C_2)\|\rho_R u\|_{L^2} \left( \|\partial_x \rho\|_{L^1} \|\partial_x^3 \rho\|_{L^1} \right)^{\frac{1}{2}} \|\partial_x u\|_{L^2}
\\ &\hspace{1cm}  +
 \frac{2C_2}{R} \|\rho_R u\|_{L^2} \left( \|\partial_x \rho\|_{L^1} \|\partial_x^5 \rho\|_{L^1} \right)^{\frac{1}{2}} \|u\|_{L^2} +4C_2' C_1'' \|\rho_R u\|_{L^2}^2 \|u\|_{H^2}^2
\\ & \leq 
\left(2+4C_1'' C_2' \|u\|_{H^2}^2 \right) \|\rho_R u\|_{L^2}^2
+ (C_1+C_2)^2 \|\partial_x \rho\|_{L^1} \left( \|\partial_x^3 \rho\|_{L^1} + \|\partial_x^5 \rho\|_{L^1} \right) \left( \|u\|_{L^2}^2 + \|\partial_x u\|_{L^2}^2 \right).
\end{split}
\]
Now we turn back to $A_2$  in \eqref{partialtIA1A2}. From \eqref{ineqC1pripri} and \eqref{inequalityCspri}, we get
\[\small
\begin{aligned}
|A_2| 
\leq 
4 \| \rho_R u\|_{L^2} \| \partial_x \Pi(\overline{u}\Psi)\|_{L^\infty} \|\rho_R \Psi\|_{L^2} 
\leq
4 C_1'' C_2' \|\rho_R u\|_{L^2} \|\rho_R \Psi\|_{L^2} \|u\|_{H^2} \|\Psi\|_{H^2}.
\end{aligned}
\]
Formula \eqref{A1} implies that
\[
\begin{aligned}
|A_1| &
\leq 2\|\rho_R \Psi\|_{L^2}^2 + (C_1+C_2)^2 \|\partial_x \rho\| _{L^1} \left(\| \partial_x^3 \rho\|_{L^1} + \| \partial_x^5 \rho\| _{L^1}\right) \| \Psi \| _{H^1}^2.
\end{aligned}
\]
Thus, we have
\[
\small
\begin{aligned}
&|A_1+A_2| 
\leq 3 \| \rho_R \Psi\|_{L^2}^2 + 4 (C_1'' C_2')^2 \|u\|_{H^2}^2 \|\Psi\|_{H^2}^2 \|\rho_R u\|_{L^2}^2 
 + (C_1+C_2)^2 \| \partial_x \rho\|_{L^1} \left(\|\partial_x^3 \rho\|_{L^1} + \|\partial_x^5 \rho\|_{L^1}\right) \|\Psi\|_{H^1}^2.
\end{aligned}
\]
Fix an arbitary compact interval $[T_1, T_2]\subset (\mathfrak{T}^-, \mathfrak{T}^+)\subset (T^-, T^+)$ such that $T_1<0<T_2$. 
Set 
\[
\begin{aligned}
K(R,t) &:= I(R,t) + J(R,t) 
= \int_{\mathbb{R}} \left|\chi\left(\frac{x}{R}\right)\right|^2 x^2 \left(|\Psi(t,x)|^2 + |u(t,x)|^2\right) dx \,,\quad \forall R>1.
\end{aligned}
\]
Then we have
\[ 
\small
\begin{aligned}
|\partial_t K(R,t) |
&\leq 3 \|\rho_R \Psi(t)\|_{L^2}^2 + 4 (C_1'' C_2')^2 \|u(t)\|_{H^2}^2 \|\Psi(t)\|_{H^2}^2 \|\rho_R u(t)\|_{L^2}^2 
\\
 & \quad \quad
+ (C_1+C_2)^2 \|\partial_x \rho\|_{L^1} \left(\|\partial_x^3 \rho\|_{L^1} + \|\partial_x^5 \rho\|_{L^1}\right) \|\Psi(t)\|_{H^1}^2 
 + \|\rho_R u(t)\|_{L^2}^2 \left(2 + 4 C_1'' C_2' \|u(t)\|_{H^2}^2\right) 
 \\  &   \quad \quad
 + (C_1+C_2)^2 \|\partial_x \rho\|_{L^1} \left(\|\partial_x^3 \rho\|_{L^1} + \|\partial_x^5 \rho\|_{L^1}\right) \|u(t)\|_{H^1}^2 
\leq AK(R,t)+\gamma,
\end{aligned}
\]
where
\[
\small
\begin{aligned}
A &= A(T_1, T_2, u, \Psi):= 5 + 4 C_1'' C_2' \sup_{T_1 \leq t \leq T_2} \|u(t)\|_{H^2}^2 + 4 (C_1'' C_2')^2 \left(\sup_{T_1 \leq t \leq T_2} \|u(t)\|_{H^2}^2\right) \sup_{T_1 \leq t \leq T_2} \|\Psi(t)\|_{H^2}^2, \\
\gamma &=\gamma(T_1, T_2, u, \Psi, \chi):= (C_1+C_2)^2 \|\partial_x \rho\|_{L^1} \left(\|\partial_x^3 \rho\|_{L^1} + \|\partial_x^5 \rho\|_{L^1}\right) \left(\sup_{T_1 \leq t \leq T_2} \|\Psi(t)\|_{H^1}^2 + \sup_{T_1 \leq t \leq T_2} \|u(t)\|_{H^1}^2\right) .
\end{aligned}
\]
According to Gronwall's inequality, we have 
\[
\begin{aligned}
K(R,t) \leq e^{A|t|} \left(K(R,0) + \frac{\gamma(T_1, T_2, u, \Psi, \chi)}{A(T_1, T_2, u, \Psi)}\right) \,,\quad \forall t \in [T_1, T_2].
\end{aligned}
\]
Since 
$
K(R,0) 
= \int_{\mathbb{R}} x^2 \left|\chi\left(\frac{x}{R}\right)\right|^2 \left(|\Psi_0(x)|^2 + |u_0(x)|^2\right) \d x \leq \|\Psi_0\|_{L_1^2}^2 + \|u_0\|_{L_1^2}^2 < +\infty.
$
Then we have
\[
K(R,t) \leq e^{A|t|} \left(\|\Psi_0\|_{L_1^2}^2 + \|u_0\|_{L_1^2}^2 + \frac{\gamma(T_1, T_2, u, \Psi, \chi)}{A(T_1, T_2, u, \Psi)}\right).
\]
By Lebesgue monotone convergence theorem,  we have
\[
\begin{aligned}
& \int_{\mathbb{R}} x^2 \left(|\Psi(t,x)|^2 + |u(t,x)|^2\right) \d x =\lim_{R \to +\infty} K(R,t) 
\leq e^{A|t|} \left(\|\Psi_0\|_{L_1^2}^2 + \|u_0\|_{L_1^2}^2 + \frac{\gamma}{A}\right) < +\infty.
\end{aligned}
\]

So  we have
\(
 \Psi(t), u(t) \in L_1^2 
\) for every $t\in [T_1, T_2]$.
Since $[T_1, T_2]$ is arbitarily chosen, the proof is completed.

\end{proof}

\begin{lemma}\label{wexp}
If \( u \in C^0\left( (T^-, T^+); H_+^3 \right) \) solves CMDNLS equation \eqref{21.1}
and \( u_0 \in L_1^2 \), then for every $t \in (T^-, T^+)$, we have 
\begin{equation}\label{L21capH2invunderWe*}
    W(t) \exp\left( -it \mathbb{L}_{u_0}^2 \right) \left( L_1^2 \cap H_+^2 \right) \subset L_1^2 \cap H_+^2.
\end{equation} 
\end{lemma}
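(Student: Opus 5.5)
Fix $t\in(T^-,T^+)$ and $\Psi_0\in L^2_1\cap H^2_+$. I will realize $\Psi(t):=W(t)\exp(-it\mathbb{L}_{u_0}^2)\Psi_0$ as the solution at time $t$ of the linear equation \eqref{dtpsiEq=iDelPsi+2u}, and then apply Lemma \ref{gronwall}. Set
\[
\Psi(\tau):=W(\tau)e^{-i\tau\mathbb{L}_{u_0}^2}\Psi_0 ,\qquad \tau\in(T^-,T^+).
\]
Since $\mathbb{L}_{u_0}$ is self-adjoint with domain $H^1_+$, $\mathbb{L}_{u_0}^2$ is self-adjoint with domain $H^2_+$, because $\mathbb{T}_{u_0}\mathbb{T}_{\overline{u_0}}$ maps $H^k_+$ into itself for $u_0\in H^3_+$. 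Hence $e^{-i\tau\mathbb{L}_{u_0}^2}$ preserves $H^2_+$ and is strongly $C^1$ there with derivative $-i\mathbb{L}_{u_0}^2e^{-i\tau\mathbb{L}_{u_0}^2}\Psi_0$.

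Next I show that $W(\tau)$ preserves $H^2_+$, so that $\Psi\in C^0(H^2_+)$. Using \eqref{uniequiLL2},
\[
\mathbb{L}_{u(\tau)}^2W(\tau)=W(\tau)\mathbb{L}_{u_0}^2,
\]
so $W(\tau)$ maps $\mathrm{Dom}(\mathbb{L}_{u_0}^2)=H^2_+$ onto $\mathrm{Dom}(\mathbb{L}_{u(\tau)}^2)=H^2_+$, and the graph norms are uniformly equivalent to the $H^2$ norm on compact time intervals.

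Differentiating with the product rule, using \eqref{UniPropaW'=BW} and \eqref{uniequiLL2}, I expect
\[
\partial_\tau\Psi=\mathbb{B}_{u}\Psi-iW\mathbb{L}_{u_0}^2e^{-i\tau\mathbb{L}_{u_0}^2}\Psi_0=(\mathbb{B}_u-i\mathbb{L}_u^2)\Psi=\widetilde{\mathbb{B}}_u\Psi .
\]
By \eqref{tildBudef1} the right-hand side equals $i\partial_x^2\Psi+2u\partial_x\Pi(\overline u\Psi)$. Thus $\Psi$ solves \eqref{dtpsiEq=iDelPsi+2u} with $\Psi(0)=\Psi_0\in H^2_+\cap L^2_1$, and $u_0\in L^2_1$. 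Lemma \ref{gronwall} then gives $\Psi(\tau)\in L^2_1$ on any $(\mathfrak{T}^-,\mathfrak{T}^+)\ni t$ compactly inside $(T^-,T^+)$. Together with the $H^2_+$ membership this yields \eqref{L21capH2invunderWe*}.

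The main obstacle is justifying differentiability in $L^2_+$ with values in $H^2_+$, which is needed for Lemma \ref{gronwall} and for computing $\partial_\tau\Psi$. $W$ is $C^1$ only in $\mathcal{B}(L^2_+)$, and $\mathbb{L}_{u_0}^2$ is unbounded. I would handle this by differentiating the difference quotient in $L^2_+$, where only $e^{-i\tau\mathbb{L}^2_{u_0}}\Psi_0\in C^1(L^2_+)$ is needed, since $\Psi_0\in H^2_+$. The $H^2$ continuity of $\Psi$ I would get from the graph-norm identity $\|\mathbb{L}^2_{u(\tau)}\Psi(\tau)\|=\|\mathbb{L}^2_{u_0}e^{-i\tau\mathbb{L}_{u_0}^2}\Psi_0\|$ combined with the continuity of $u$ in $H^3_+$.
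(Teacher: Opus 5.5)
Your proposal is correct and follows the paper's proof. You set $\Psi(\tau)=W(\tau)e^{-i\tau\mathbb{L}_{u_0}^2}\Psi_0$, use Stone's theorem with \eqref{UniPropaW'=BW} and \eqref{uniequiLL2} to show that $\Psi\in C^0(H^2_+)\cap C^1(L^2_+)$ solves \eqref{dtpsiEq=iDelPsi+2u}, and conclude with Lemma \ref{gronwall}. The only variation is how you get $W(\tau)(H^2_+)\subset H^2_+$: the paper takes it directly from $\mathbb{B}_{u(\tau)}\in\mathcal{B}(H^2_+)$ for $u(\tau)\in H^3_+$, so that $W\in C^1(\mathcal{B}(H^2_+))$, whereas you read it off from the domain content of \eqref{uniequiLL2}. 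Your route is legitimate given the paper's stated facts, although that identity is itself normally proved via the same $H^k_+$-boundedness of $\mathbb{B}_u$.
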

\begin{proof}
Recall that equation \eqref{21.1} is locally well-posed in $H^3_+$, according to Proposition 2.1 and Theorem 2.1 of  \cite{2}. The operator $\mathbb{L}_{u_0}^2$ is a positive self-adjoint operator on the Hilbert space $L^2_+$, whose domain of definition is $H^2_+$. According to Stone theorem, the family  $(e^{-it\mathbb{L}_{u_0}^2})_{t\in\R}$ is a strongly continuous group of unitary operators on $L^2_+$. For any $h \in H^2_+ = \mathrm{Dom}(\mathbb{L}_{u_0})$, we have
\begin{equation}\label{InfiGeneLu2}
    \lim_{\theta \to 0} \big\|\tfrac{1}{\theta}\left(e^{-i \theta  \mathbb{L}_{u_0}^2}(h) - h \right)  + i\mathbb{L}_{u_0}^2 (h)   \big\|_{L^2_+}=0.
\end{equation}Since \( u(t) \in H_+^3 \), we have \( B_{u(t)} \in    \mathcal{B}_{\mathbb{C}}(H_+^2)  \cap \mathcal{B}_{\mathbb{C}}(L_+^2) \), $\forall t \in (T^-, T^+)$. As a consequence, the unitary propagator $W$ satisfies that $    W \in \  C^1\left( (T^-, T^+); \mathcal{B}_{\mathbb{C}}(L_+^2) \right) \bigcap  C^1\left( (T^-, T^+); \mathcal{B}_{\mathbb{C}}(H_+^2) \right)$. We thus have
\begin{equation}\label{H2invbyW(t)e-Lu2}
     W  (t) (H^2_+) + e^{-it \mathbb{L}_{u_0}^2} (H^2_+) \subset H^2_+  \; \Longrightarrow \;  W  (t)   e^{-it \mathbb{L}_{u_0}^2} (H^2_+) \subset H^2_+ .
\end{equation}For any $\Psi_0 \in L^2_1 \cap H^2_+$, we define $\Psi(t) = W  (t)   e^{-it \mathbb{L}_{u_0}^2}  (\Psi_0) \in L^2_+$, $\forall t \in (T^-, T^+)$. Thanks to formulas \eqref{tildBudef1}, \eqref{UniPropaW'=BW} and \eqref{InfiGeneLu2}, the function \( \Psi \in C^0\left( (T^-, T^+); H_+^2 \right) \cap C^1\left( (T^-, T^+); L_+^2 \right) \) satisfies    
\begin{equation}
   \lim_{\delta \to 0} \| \tfrac{1}{\delta} (\Psi(t+\delta)-\Psi(t))-\widetilde{\mathbb{B}}_{u(t)}(\Psi(t))\|_{L^2_+}= 0.
\end{equation}In other words, $ \Psi$ is the unique solution to equation \eqref{dtpsiEq=iDelPsi+2u}. By Lemma $\ref{gronwall}$, it follows that
\begin{equation*}
    W(t)\exp(-it\mathbb{L}_{u_0}^2)(\Psi_0) = \Psi(t) \in L_1^2. 
\end{equation*}So we obtain \eqref{L21capH2invunderWe*} and finish the proof.
\end{proof}

\begin{proof}[\noindent \textbf{Proof of Theorem \ref{invariance}}]
If \( u_0, \partial_x u_0, \partial_x^2 u_0 \in L_1^2 \), by Lemma \ref{utoLu}, we have \( \mathbb{L}_{u_0}(u_0), \mathbb{L}_{u_0}^2(u_0) \in L_1^2 \).
For every $T^- < t < T^+$, we have 
\[
\partial_t u(t) = \left( i \partial_x^2 u + 2u \partial_x \Pi(|u|^2) \right)(t) = \widetilde{\mathbb{B}}_{u(t)}(u(t)).
\]
Formula (4.25) in   \cite{11} expresses $u(t)$ in terms of the time $t$, the unitial datum $u_0$ and the unitary propagator $   W \in \  C^1\left( (T^-, T^+); \mathcal{B}_{\mathbb{C}}(L_+^2) \right) \bigcap  C^1\left( (T^-, T^+); \mathcal{B}_{\mathbb{C}}(H_+^2) \right)$:
\begin{equation}\label{expresu(t)WLu0}
    u(t) = W(t)\exp(-it\mathbb{L}_{u_0}^2)(u_0)\in W(t) \exp(-it\mathbb{L}_{u_0}^2)(L_1^2 \cap H_+^4) \subset L_1^2 \cap H_+^2.
\end{equation}
By formulas \eqref{expresu(t)WLu0},  \eqref{uniequiLL2} and \eqref{L21capH2invunderWe*}, we can deduce that
\begin{align*}
\mathbb{L}_{u(t)}(u(t)) &= (\mathbb{L}_{u(t)} W(t) ) \exp(-it\mathbb{L}_{u_0}^2)(u_0) = W(t)\mathbb{L}_{u_0} \exp(-it\mathbb{L}_{u_0}^2)(u_0)
= W(t) \exp(-it\mathbb{L}_{u_0}^2)(\mathbb{L}_{u_0}(u_0))
\\
&\in W(t) \exp(-it\mathbb{L}_{u_0}^2)(L_1^2 \cap H_+^3) \subset L_1^2 \cap H_+^2.
\end{align*}
Similarly, we use the same process to show that 
\begin{align*}
\mathbb{L}_{u(t)}^2(u(t)) &= \mathbb{L}_{u(t)} ^2W(t)\exp(-it\mathbb{L}_{u_0}^2)(u_0) = W(t)\mathbb{L}_{u_0}^2 \exp(-it\mathbb{L}_{u_0}^2)(u_0)
= W(t)\exp(-it\mathbb{L}_{u_0}^2)(\mathbb{L}_{u_0}^2(u_0))
\\
&\in W(t) \exp(-it\mathbb{L}_{u_0}^2)(L_1^2 \cap H_+^2) \subset L_1^2 \cap H_+^2.
\end{align*}
As a consequence,  Lemma \ref{Lutou} yields that  \( \partial_x u(t), \partial_x^2 u(t) \in L_1^2 \) for any $t\in (T^-,T^+)$.
\end{proof}

\section{Time evolution}\label{sec6}

\begin{theorem}\label{eigen_time}
 For any $T^- < 0  $ and $T^+>0$, let $u\in C^0((T^-, T^+),H^1_+)$ be the unique solution to the CMDNLS equation \eqref{21.1} with initial datum $u(0)=u_0\in H^1_+$.  For any $t \in (T^-, T^+)$, the Lax operator $\mathbb{L}_{u(t)}$ has finitely many eigenvalues, listed in increasing order  $\lambda_1^{u(t)}<\lambda_2^{u(t)} <\cdots < \lambda_N^{u(t)}$, each of which is simple. For any $j \in \{1,2, \cdots, N\}$, let $\varphi^{u(t)}_j\in \mathrm{Ker}(\lambda_j^{u(t)} - \mathbb{L}_{u(t)})$ be the normalized and reoriented eigenfunction defined in \eqref{defEigenVec}.  Then $\varphi_j^u$ satisfies the following: 
\begin{equation}\label{tim-evo-eigen}
     \partial_t \varphi_j^u = i \partial_x^2 \varphi_j^u + 2 u \partial_x \Pi (\overline{u}\varphi_j^u ),\quad \forall t\in (T^-,T^+).
\end{equation}

\end{theorem}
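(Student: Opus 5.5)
The plan is to use the unitary propagator $W(t)$ from \eqref{UniPropaW'=BW}, together with the unitary equivalence \eqref{uniequiLL2}, to transport eigenvectors. By \eqref{uniequiLL2}, $\mathbb{L}_{u(t)}=W(t)\mathbb{L}_{u_0}W(t)^*$. Hence $W(t)\varphi_j^{u_0}$ is a unit eigenvector of $\mathbb{L}_{u(t)}$ for the eigenvalue $\lambda_j^{u_0}$, and the spectrum is constant: $\lambda_j^{u(t)}=\lambda_j^{u_0}$. Since eigenvalues are simple, $\varphi_j^{u(t)}=c_j(t)\,W(t)\varphi_j^{u_0}$ with $|c_j(t)|=1$.

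The first step is to identify the phase from the normalization \eqref{defEigenVec}. I want to show that
\[
\varphi_j^{u(t)}=e^{-it(\lambda_j^{u_0})^2}\,W(t)\varphi_j^{u_0} = W(t)e^{-it\mathbb{L}_{u_0}^2}\varphi_j^{u_0}.
\]
For this I use the representation \eqref{expresu(t)WLu0}, namely $u(t)=W(t)e^{-it\mathbb{L}_{u_0}^2}u_0$; this holds by formula (4.25) of \cite{11}, first for smooth data and then, if needed, by density or well-posedness. Since $W(t)$ is unitary and $e^{-it\mathbb{L}_{u_0}^2}$ is self-adjoint functional calculus,
\[
\langle u(t),W(t)e^{-it\mathbb{L}_{u_0}^2}\varphi_j^{u_0}\rangle
=\langle e^{-it\mathbb{L}_{u_0}^2}u_0,e^{-it\mathbb{L}_{u_0}^2}\varphi_j^{u_0}\rangle
=\langle u_0,\varphi_j^{u_0}\rangle=\sqrt{2\pi}>0.
\]
So the candidate satisfies all three conditions in \eqref{defEigenVec}, and by uniqueness of the standard eigenvector it equals $\varphi_j^{u(t)}$.

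The second step is to differentiate this formula in time. Write $\varphi_j^{u(t)}=e^{-it\lambda_j^2}W(t)\varphi_j^{u_0}$ with $\lambda_j=\lambda_j^{u_0}$. By \eqref{UniPropaW'=BW},
\[
\partial_t\varphi_j^u=\mathbb{B}_{u}\varphi_j^u-i\lambda_j^2\varphi_j^u=(\mathbb{B}_u-i\mathbb{L}_u^2)\varphi_j^u=\widetilde{\mathbb{B}}_u\varphi_j^u .
\]
By \eqref{tildBudef1}, the right-hand side is $i\partial_x^2\varphi_j^u+2u\partial_x\Pi(\overline u\varphi_j^u)$, which is \eqref{tim-evo-eigen}.

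The main obstacle is regularity. The construction of $W$ and the identity $\mathbb{L}_{u}^2\varphi_j^u=\lambda_j^2\varphi_j^u$ require $\mathbb{B}_{u(t)}$ to be bounded and $\varphi_j^u\in H^2_+$, while the hypothesis only gives an $H^1_+$ solution. To handle this, I would first observe that eigenvectors gain regularity: $-i\partial_x\varphi=\lambda\varphi+u\Pi(\overline u\varphi)$, which bootstraps to $\varphi\in H^2_+$ whenever $u\in H^1_+$. I would then prove the formula $\varphi_j^{u(t)}=W(t)e^{-it\mathbb{L}_{u_0}^2}\varphi_j^{u_0}$ for smooth data and pass to $H^1_+$ data by continuity of the flow. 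Equation \eqref{tim-evo-eigen} is then understood in the appropriate weak sense (in $H^{-1}$, or in $L^2_+$ when $u$ is more regular), since its right-hand side is continuous in $u\in H^1_+$.
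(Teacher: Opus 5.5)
This is essentially the paper's proof: you transport the eigenvector with the unitary propagator $W(t)$, use simplicity and the representation $u(t)=W(t)e^{-it\mathbb{L}_{u_0}^2}u_0$ to get $\varphi_j^{u(t)}=W(t)e^{-it\mathbb{L}_{u_0}^2}\varphi_j^{u_0}$, and differentiate to obtain $\mathbb{B}_u-i\mathbb{L}_u^2=\widetilde{\mathbb{B}}_u$; the only difference is that you check the normalization \eqref{defEigenVec} for this candidate directly, whereas the paper writes $W(t)^*\varphi_j^{u(t)}=c(t)\varphi_j^{u_0}$ and solves for $c(t)=e^{-it(\lambda_j^{u_0})^2}$, which is equivalent.

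Your regularity caveat is fair, and the paper does not address it: it simply asserts that $\mathbb{B}_u$ is bounded for $u\in H^1_+$ and uses the representation of $u(t)$ from Section~\ref{sec5}, which was derived for smoother solutions. Be aware, however, that your density patch needs more than continuity of the flow for eigenvalues in $[0,+\infty)$, because these need not persist when $u_0$ is approximated. For example, with $\mathcal{R}(x)=\sqrt{2}/(x+i)$ one has $\mathbb{L}_{\mathcal{R}}\mathcal{R}=0$, whereas $(1-\frac1n)\mathcal{R}$ has mass below $2\pi$ and hence no eigenvalue at all. Riesz projections only rescue the isolated negative eigenvalues.
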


\begin{proof}
    The eigenvalues $\{\lambda_j^u\}_{j=1}^n$  are invariant under the flow of \eqref{21.1}. Recall that $\mathbb{B}_{u(t)}$ is a  bounded operator on $L^2_+$ if $u\in H^1_+$. Let $W\in C^1\big(   (T^-, T^+);  \mathcal{B}_{\C}(L^2_+)    \big) $ denote the unique solution to the  ordinary differential equation \eqref{UniPropaW'=BW}. 
     By formulas \eqref{unit} and \eqref{uniequiLL2}, it follows that  

     \begin{equation}
         \mathbb{L}_{u(0)}(W(t)^*\varphi_j^{u(t)}) = W(t)^*\left( W(t) \mathbb{L}_{u(0)}(W(t)^* \right) \varphi_j^{u(t)})=\lambda_j^{u(0)} (W(t)^*\varphi_j^{u(t)}).
     \end{equation}
Due to the simplicity of the eigenvalues $\{\lambda_j^u\}_{j=1}^n$, there exists a $c(t)\in \C$ such that 
\begin{equation}\label{6.8}
W(t)^*\varphi_j^{u(t)}=c(t)\varphi_j^{u(0)}.
\end{equation}
Using \eqref{defEigenVec} and \eqref{unit}, we obtain that 
\begin{equation}\label{5.8}
	1=\| \varphi_j^{u(t)}\|_{L^2}= \|  W(t)^*\varphi_j^{u(t)} \|_{L^2}= \|  c(t)\varphi_j^{u(0)} \|_{L^2}=|c(t)|.
	\end{equation}
Formula \eqref{unit} yields
\begin{equation}\label{5.9}
	\begin{split}
	\langle u(t), \varphi_j^{u(t)} \rangle_{L^2}=	\langle W(t)^*u(t), W(t)^*\varphi_j^{u(t)} \rangle_{L^2}=\overline{c(t)} 	\langle W(t)^*u(t), \varphi_j^{u(0)} \rangle_{L^2}.
		\end{split}
	\end{equation}
By formula \eqref{expresu(t)WLu0} and Theorem \uppercase\expandafter{\romannumeral8}.5 in  \cite{Reed Simon book 1},  formula \eqref{5.9} becomes
\begin{equation}\label{6.11}
\begin{split}
	 \overline{c(t)} 	\langle W(t)^*u(t), \varphi_j^{u(0)} \rangle_{L^2}  
     = & \overline{c(t)} 	\langle e^{-it\mathbb{L}_{u(0)}^2}u(0) , \varphi_j^{u(0)} \rangle_{L^2}
     = \overline{c(t)} 	\langle u(0) , e^{it\mathbb{L}_{u(0)}^2}\varphi_j^{u(0)} \rangle_{L^2} \\
    = & \overline{c(t)} 	\langle u(0) , \varphi_j^{u(0)} \rangle_{L^2}e^{-it|\lambda_j^{u(0)}|^2}.
    \end{split}
	\end{equation}We use formulas \eqref{5.9},  \eqref{6.11} and
    \eqref{defEigenVec} to deduce that 
    \begin{equation}\label{cbarinS1}
        \overline{c(t)} e^{-it|\lambda_j^{u(0)}|^2} =\frac{\langle u(t), \varphi_j^{u(t)} \rangle_{L^2}}{\langle u(0) , \varphi_j^{u(0)} \rangle_{L^2}}>0.
    \end{equation}
Together with  \eqref{5.8}, this yields 
\begin{equation}\label{6.13}
	c(t)=e^{-it(\lambda_j^{u(0)})^2}.
	\end{equation}
Substituting \eqref{6.13} into \eqref{6.8} yields
\(
W(t)^* \varphi_j^{u(t)} = c(t) \varphi_j^{u(0)} = e^{-it(\lambda_j^{u(0)})^2} \varphi_j^{u(0)}= e^{-it\mathbb{L}_{u(0)}^2} \varphi_j^{u(0)} 
\).
In other words, we have 
\[
\varphi_j^{u(t)} = W(t) e^{-it\mathbb{L}_{u(0)}^2} \varphi_j^{u(0)}.
\]
According to Stone theorem, we differentiate with respect to $t$ to obtain that
\begin{equation}
	\begin{split}
\partial_t \varphi_j^{u(t)} =&\partial_t  (W(t) e^{-it\mathbb{L}_{u(0)}^2} \varphi_j^{u(0)})
= (\partial_t W(t)) e^{-it\mathbb{L}_{u(0)}^2} \varphi_j^{u(0)} + W(t) \partial_t (e^{-it\mathbb{L}_{u(0)}^2}\varphi_j^{u(0)})
\\
=&\mathbb{B}_{u(t)} W(t) e^{-it\mathbb{L}_{u(0)}^2} \varphi_j^{u(0)} + W(t) (-i\mathbb{L}_{u(0)}^2) e^{-it\mathbb{L}_{u(0)}^2} \varphi_j^{u(0)}
=\mathbb{B}_{u(t)} \varphi_j^{u(t)}-i\mathbb{L}_{u(t)}^2(W(t)e^{-i\mathbb{L}_{u(0)}^2} \varphi_j^{u(0)}).
\end{split}
\end{equation}
Hence we obtain
\begin{equation}
			\partial_t \varphi_j^{u(t)} = \mathbb{B}_{u(t)} \varphi_j^{u(t)} - i\mathbb{L}_{u(t)}^2 \varphi_j^{u(t)} 
		 = \widetilde{\mathbb{B}}_{u(t)}(\varphi_j^{u(t)}),
\end{equation}
which is the equation \eqref{tim-evo-eigen}, i.e.,  $
     \partial_t \varphi_j^u = i \partial_x^2 \varphi_j^u + 2 u \partial_x \Pi (\overline{u}\varphi_j^u )$.
\end{proof}

\begin{theorem}
 Assume that
    $u \in C^0\left( (T^-, T^+); H^2_s\cap H^3_+\right)$ solves the CMDNLS equation \eqref{21.1} with initial datum $u(0)=u_0 \in H^2_s \cap H^3_+$, 
     for some $T^- < 0 <T^+$ and  \( s > \frac 12 \).    For every $$k\in \C^+\cup \C^- \cup  \big((-\infty,0)\setminus \sigma_{pp}(\mathbb{L}_{u_0})\big),$$	let $\phi_k(t)\in L_{-s}^\infty$ denote the solution to 
    $\mathbb{L}_{u(t)} \phi_k(t) = k \phi_k (t)+ u(t).$
    Then we have  
    \begin{equation}
        \partial_t \phi_k = i \partial_x^2 \phi_k + 2 u \partial_x \Pi (\overline{u}\phi_k ),\quad \forall t\in (T^-,T^+).
    \end{equation}
\end{theorem}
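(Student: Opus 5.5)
Throughout, write $\widetilde{\mathbb{B}}_u=i\partial_x^2+2\mathbb{T}_u\partial_x\mathbb{T}_{\overline u}$ as in \eqref{tildBudef1}. We mimic the proof of Theorem \ref{eigen_time}, using the unitary propagator $W(t)$ of \eqref{UniPropaW'=BW}, for which $\mathbb{L}_{u(t)}=W(t)\mathbb{L}_{u_0}W(t)^*$ by \eqref{uniequiLL2}. Since $u(t)\in H^3_+$, both $W(t)$ and $e^{-it\mathbb{L}_{u_0}^2}$ preserve $H^2_+$. Set
\[
\Phi(t):=W(t)e^{-it\mathbb{L}_{u_0}^2}\phi_k(0).
\]
By Lemma \ref{phikequi}, $\phi_k(0)\in H^1_+$; we first upgrade this to $\phi_k(0)\in H^2_+$. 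Indeed, $-i\partial_x\phi_k=k\phi_k+u+u\Pi(\overline u\phi_k)\in H^1$, because $u\in H^3$ and $\overline u\phi_k\in H^1$. Hence $\Phi\in C^0H^2_+\cap C^1L^2_+$. Differentiating exactly as in the proof of Theorem \ref{eigen_time} gives
\[
\partial_t\Phi=\mathbb{B}_{u}\Phi-i\mathbb{L}_u^2\Phi=\widetilde{\mathbb{B}}_u\Phi .
\]
It therefore suffices to show $\Phi(t)=\phi_k(t)$.

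Apply $\mathbb{L}_{u(t)}$ and use $\mathbb{L}_{u(t)}W(t)=W(t)\mathbb{L}_{u_0}$ together with the commutation of $\mathbb{L}_{u_0}$ with $e^{-it\mathbb{L}_{u_0}^2}$. This yields
\[
\mathbb{L}_{u(t)}\Phi(t)=W(t)e^{-it\mathbb{L}_{u_0}^2}(k\phi_k(0)+u_0)=k\Phi(t)+u(t),
\]
where the last step uses the representation $u(t)=W(t)e^{-it\mathbb{L}_{u_0}^2}u_0$ from \eqref{expresu(t)WLu0}. (That formula is quoted from \cite{11} for $H^3_+$-type solutions, so it is available under our assumptions.) Thus $\Phi(t)\in H^1_+\subset L^\infty\subset L^\infty_{-s}$ solves $\mathbb{L}_{u(t)}\Phi=k\Phi+u(t)$.

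Next we need $k\notin\sigma_{pp}(\mathbb{L}_{u(t)})$. This holds because the pure point spectrum is invariant under the flow: $\mathbb{L}_{u(t)}$ is unitarily equivalent to $\mathbb{L}_{u_0}$, and $k$ was chosen outside $\sigma_{pp}(\mathbb{L}_{u_0})$. The hypothesis $u(t)\in H^2_s\subset H^1_+\cap L^2_s$ holds for every $t$, so Theorem \ref{1.4} and Lemma \ref{phikequi} apply at time $t$. Uniqueness then gives $\Phi(t)=\phi_k(t)$. Concretely, the difference lies in $H^1_+$ and satisfies $\mathbb{L}_{u(t)}g=kg$, hence $g=0$; for nonreal $k$ this is immediate from self-adjointness.

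Consequently $\phi_k(t)=W(t)e^{-it\mathbb{L}_{u_0}^2}\phi_k(0)$ is $C^1$ in $t$ with values in $L^2_+$, and $\partial_t\phi_k=\widetilde{\mathbb{B}}_{u}\phi_k=i\partial_x^2\phi_k+2u\partial_x\Pi(\overline u\phi_k)$, as claimed. The main obstacle is the differentiation step. One must justify the Stone-theorem derivative \eqref{InfiGeneLu2}, which requires $\phi_k(0)\in H^2_+=\mathrm{Dom}(\mathbb{L}_{u_0}^2)$; this is exactly what the regularity bootstrap above provides. One must also confirm that \eqref{expresu(t)WLu0} holds under $H^3_+$ rather than $H^4_+$ regularity. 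Should the latter be delicate, the fallback is to differentiate $\mathbb{L}_{u(t)}\phi_k(t)=k\phi_k(t)+u(t)$ directly. Using the Lax equation and $\partial_tu=\widetilde{\mathbb{B}}_uu$, one checks that $v:=\partial_t\phi_k-\widetilde{\mathbb{B}}_u\phi_k$ satisfies $(\mathbb{L}_u-k)v=0$ with $v\in H^1_+$, so $v=0$.
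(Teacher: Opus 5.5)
Your proof is correct, and it takes a different route from the paper's.

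\textbf{The paper's route.} The paper differentiates $\mathbb{L}_{u(t)}\phi_k(t)=k\phi_k(t)+u(t)$ in $t$. It uses $\partial_t\mathbb{L}_u=[\widetilde{\mathbb{B}}_u,\mathbb{L}_u]$ and $\partial_tu=\widetilde{\mathbb{B}}_uu$ to show that $f=\partial_t\phi_k-\widetilde{\mathbb{B}}_u\phi_k$ satisfies $\mathbb{L}_uf=kf$. It then proves $f\in L^\infty$ by differentiating \eqref{1.5} in time and inverting $\mathrm{id}-T_k$ on $L^\infty$, upgrades to $f\in H^1_+$ through $f=G_k*(u\Pi(\overline uf))$, and concludes $f=0$. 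This is essentially your fallback. As you wrote the fallback, it skips the regularity step that gives $v\in H^1_+$.

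\textbf{Your route.} You instead transport $\phi_k(0)$ by the Lax propagator, as the paper does only for eigenvectors in Theorem \ref{eigen_time}. You then identify $\Phi(t)$ with $\phi_k(t)$ by uniqueness at each fixed time, using invariance of $\sigma_{pp}$ under unitary conjugation plus the kernel argument in $H^1_+$. This buys something the paper's argument takes for granted: differentiability of $t\mapsto\phi_k(t)$ in $L^2_+$. It also gives the explicit formula $\phi_k(t)=W(t)e^{-it\mathbb{L}_{u_0}^2}\phi_k(0)$, and no a priori bound on $\partial_t\phi_k$ is needed.

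\textbf{Your inputs.} You need two things.
\begin{enumerate}
\item $\phi_k(0)\in H^2_+=\mathrm{Dom}(\mathbb{L}_{u_0}^2)$, which your bootstrap provides.
\item $u(t)=W(t)e^{-it\mathbb{L}_{u_0}^2}u_0$. This is not delicate under your hypotheses. From $\partial_tu=\widetilde{\mathbb{B}}_uu$ and $\partial_tW^*=-W^*\mathbb{B}_u$, one gets $\partial_t(W^*u)=-iW^*\mathbb{L}_u^2u=-i\mathbb{L}_{u_0}^2(W^*u)$. Hence $W(t)^*u(t)=e^{-it\mathbb{L}_{u_0}^2}u_0$ already holds for $H^3_+$ solutions, with no need for the $H^4_+$ setting where the paper quotes it.
\end{enumerate}

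\textbf{The trade-off.} Your route depends on $\phi_k\in L^2_+$. It does not apply to $\phi_\lambda^\pm$ or $\psi_\lambda^\pm$ for $\lambda>0$, which are not square integrable, so neither $W(t)$ nor $e^{-it\mathbb{L}_{u_0}^2}$ can act on them. That is why the paper's differentiate-and-kill-the-kernel argument is the one reused in Theorems \ref{psi_lam_time} and \ref{phi_lam_time}.
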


\begin{proof}
   For every fixed $k\in \C^+\cup \C^- \cup  \big((-\infty,0)\setminus \sigma_{pp}(\mathbb{L}_u)\big)$,	
 differentiating with respect to $t$ of 
\(
\mathbb{L}_{u(t)} \phi_k(t) = k \phi_k(t) + u(t)
\) gives
\(
(\frac{\partial}{\partial t} \mathbb{L}_{u(t)}) \phi_k(t) + \mathbb{L}_{u(t)} \left( \partial_t \phi_k (t)\right) = k \, \partial_t \phi_k(t) + \partial_t u(t).
\)
Utilizing the  Lax equation
\begin{equation}
{\partial_t} \mathbb{L}_{u(t)} = \left[ \mathbb{B}_{u(t)}, \mathbb{L}_{u(t)} \right] =\left[ \widetilde{\mathbb{B}}_{u(t)}, \mathbb{L}_{u(t)} \right]
\end{equation}
and 
\(
\partial_t u (t) = \widetilde{\mathbb{B}}_{u(t)}  u(t) , 
\)
we have 
\begin{equation}\label{Luf=kf}
\mathbb{L}_{u(t)} \left( \partial_t \phi_k(t) - \widetilde{\mathbb{B}}_{u(t)} \phi_k(t) \right) = k \left( \partial_t \phi_k(t) - \widetilde{\mathbb{B}}_{u(t)} \phi_k (t)\right).
\end{equation}
	Let
	 \(
	  f(t) = \partial_t \phi_k (t)- \widetilde{B}_{u(t)} \phi_k(t),
	  \)
	  then formula \eqref{Luf=kf} can be written as \( \mathbb{L}_{u(t)} f (t) = k f (t)\).	
      
Next  we claim that  \( f(t) \in L^\infty\). 
\begin{proof}[Proof of the claim]
According to Lemma 
\ref{phikequi}, we obtain that $\phi_k=G_k*(u+u\Pi(\overline u \phi_k))$
	and 
	 \[
	  \partial_t \phi_k = G_k * \left( \partial_t u+ (\partial_t u) \Pi (\overline{u} \phi_k) + u \Pi (({\partial_t \overline u} ) \phi_k) + u \Pi (\overline{u} \partial_t \phi_k) \right), \quad (t,x) \in  (T^-, T^+) \times\mathbb{R}
	  .
      \]
		This can be written as 
	\(
	 (I - T_k) \partial_t \phi_k  =G_k*A ,
	 \) where we define $A:= \partial_t u + (\partial
     _t u)\Pi (\overline{u} \phi_k) +  u \Pi (\overline{\partial_t u}  \phi_k )  $. 
Since  $ u\in C^0((T^-, T^+),H^2_s\cap H^3_+)$, we get $\partial_tu(t)=\widetilde{\mathbb{B}}_{u(t)}u(t)=\left(i\partial_x^2 u +2u\Pi(\partial_x|u|^2)\right)(t)\in L^2_s\cap H^1_+$. Consequently $A\in C((T^-, T^+); L^2_+)$. Because $G_k\in L^2$, Young's inequality implies  $(I-T_k)\partial_t\phi_k(t)\in L^\infty$.  Given that  the operator $I-T_k$ is bijective  from $L^\infty$ to $L^\infty$ , it follows that $\partial_t \phi_k(t) \in L^\infty $. 

From Lemma \ref{phikequi}, we have $\phi_k(t)\in H^1_+$. 
Together with $u(t)\in H^2_s\cap H^3_+$,  we have 
	$ -i \partial_x \phi_k = u \Pi (\overline{u} \phi) + k \phi_k + u\in C((T^-, T^+); H^1_+)$. And hence $\phi_k\in C((T^-, T^+); H^2_+)$. Iterating this argument gives  $-i \partial_x \phi_k = u \Pi (\overline{u} \phi) + k \phi_k + u\in C((T^-, T^+); H^2_+)$ and consequently $\phi_k\in C((T^-, T^+); H^3_ +)$. For any $t \in (T^-, T^+)$, we have 
    \begin{equation}
(\widetilde{\mathbb{B}}_u\phi_k)(t)=(i\partial_x ^2\phi_k+2\mathbb{T}_u\partial_x \mathbb{T}_{\overline u}\phi_k)(t)=(i\partial_x ^2\phi_k+2u\partial_x \Pi(\overline u \phi_k))(t)\in H^1_+ \subset L^\infty .
\end{equation}
Therefore, we obtain $f(t)\in L^\infty$. \end{proof}
	Formula \eqref{Luf=kf} yields that
	\( -i \partial_x f - k f = u \Pi (\overline{u} f) \)
	. Using  the Fourier transform, we obtain that
	\begin{equation}
	    (\xi - k) \hat{f} = \mathds{1}_{\R^+} \widehat{(u \Pi (\overline{u} f) ) }\Longrightarrow f=G_k*u\Pi(\overline u f),\quad (t,x) \in  (T^-, T^+) \times\mathbb{R}.
	\end{equation}
 Young's inequality implies that $f(t)\in L^2$.
	By a similar proof of \eqref{phik01} and \eqref{22.14}, we have 
	\( f (t)\in H^1 \).
		Since $k\in \C^+\cup \C^- \cup  \big((-\infty,0)\setminus \sigma_{pp}(\mathbb{L}_u)\big)$ is not the eigenvalues, we obtain  
	\(
	f(t)=\partial_t \phi_k(t) - \widetilde{\mathbb{B}}_{u(t)} \phi_k(t) = 0
	\), for any $t \in (T^-, T^+)$.
\end{proof}

\begin{lemma}\label{partialtpsito0}
     Assume that
    $u \in C^0\left( (T^-, T^+); H^2_s\cap H^3_+\right)$ solves the CMDNLS equation \eqref{21.1} with initial datum $u(0)=u_0 \in H^2_s \cap H^3_+$, 
     for some $T^- < 0 <T^+$ and  \( s > \frac 12 \).  For every $\lambda\in (0,+\infty)\setminus \sigma_{pp}(\mathbb{L}_{u_0})$, let \( \psi^\pm_{\lambda} (t)\in L^\infty_{-s} \) denote the solution to
\begin{align} \nonumber    &\mathbb{L}_{u(t)}\psi_{\lambda}^\pm(t)=\lambda\psi_{\lambda}^\pm(t)\in L^{\infty}_{-s},\\
        &\psi_\lambda^+(t,x)-e^{ix\lambda} \to 0, \quad x\to -\infty,\\
    &\psi_\lambda^-(t,x)-e^{ix\lambda} \to 0, \quad x\to +\infty.
 	\end{align}
Then we have $\mathop{{\rm lim}}\limits_{x\to \mp\infty} \partial_t \psi_\lambda^\pm (t,x)=0$, for any $t \in (T^-, T^+)$.
\end{lemma}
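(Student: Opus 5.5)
We work with the plus sign; the minus sign is symmetric. The starting point is the integral equation \eqref{psieintegral}. As in the proof of Lemma \ref{Tlaminj}, the $\widetilde G_\lambda$ part of $G_\lambda^+$ annihilates $L^2_+\cap L^1$ functions, so it reads
\[
\psi_\lambda^+(t,x)=e^{i\lambda x}+ie^{i\lambda x}\int_{-\infty}^x e^{-i\lambda y}\,u\Pi(\overline u\psi_\lambda^+)(t,y)\,\mathrm{d}y .
\]
The free term $e^{i\lambda x}$ does not depend on $t$. Differentiating in $t$ (formally, as in the proof of the preceding theorem for $\phi_k$) gives
\[
(\mathrm{id}-T_\lambda^+)\,\partial_t\psi_\lambda^+ = G_\lambda^+*F,
\qquad
F:=(\partial_t u)\Pi(\overline u\psi_\lambda^+)+u\Pi\bigl(\overline{\partial_t u}\,\psi_\lambda^+\bigr).
\]

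The first step is to place $F$ in a good space. Since $u\in C^0((T^-,T^+);H^2_s\cap H^3_+)$, we have $\partial_t u=\widetilde{\mathbb{B}}_{u}u=i\partial_x^2u+2u\partial_x\Pi(|u|^2)\in L^2_s\cap H^1_+$, and $\psi_\lambda^+\in L^\infty_{-s}$. Hence $\overline{\partial_t u}\,\psi_\lambda^+\in L^2$, and both terms of $F$ lie in $L^2_s\cdot L^2+L^2_s\cdot L^2$, which gives $F\in L^1\cap L^2_+$. The weighted bound $\langle y\rangle^{s}F\in L^2$ also holds, for example because $\|\langle y\rangle^s\partial_t u\|_{L^2}$ is finite and $\Pi(\overline u\psi_\lambda^+)\in H^1\subset L^\infty$ (use the argument of Corollary \ref{Buto0}). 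Consequently $G_\lambda^+*F$ belongs to $L^\infty$ and tends to $0$ as $x\to-\infty$, because it equals $ie^{i\lambda x}\int_{-\infty}^x e^{-i\lambda y}F\,\mathrm{d}y$ with $F\in L^1$.

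The second step is to identify $\partial_t\psi_\lambda^+$. Let $g:=(\mathrm{id}-T_\lambda^+)^{-1}(G_\lambda^+*F)$, which exists in $L^\infty$ by Corollary \ref{cor3.6}. To justify $\partial_t\psi_\lambda^+=g$, write the difference quotient $\delta^{-1}(\psi_\lambda^+(t+\delta)-\psi_\lambda^+(t))$. It satisfies the same equation with $T_\lambda^+$ built from $u(t+\delta)$, plus source terms that are difference quotients of $u$. The operators $T_\lambda^{+,u(t+\delta)}$ converge to $T_\lambda^{+,u(t)}$ in $\mathcal{B}(L^\infty_{-s})$, using the estimate \eqref{Tlaminf} together with the continuity of $u$ in $H^2_s$. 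Invertibility is stable under small perturbations (Theorem 10.12 of \cite{RudinFA}, exactly as in Lemma \ref{id-mod-bije}), so the inverses converge as well. It follows that the difference quotients converge to $g$ in $L^\infty_{-s}$. One subtlety is that $\lambda\notin\sigma_{pp}(\mathbb{L}_{u(t)})$, which holds because the spectrum is invariant under the flow. I expect this justification to be the main technical obstacle: it requires $\partial_t u$ as a limit in $L^2_s$, and an $H^3_+$ and $H^2_s$ bound on the difference quotient of $u$ that may need the equation itself.

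The final step is the limit at $-\infty$. From $g=T_\lambda^+g+G_\lambda^+*F$ we get
\[
g(x)=ie^{i\lambda x}\int_{-\infty}^x e^{-i\lambda y}\bigl(u\Pi(\overline u g)+F\bigr)(y)\,\mathrm{d}y ,
\]
again dropping the $\widetilde G_\lambda$ contribution. The integrand is in $L^1$ because $g\in L^\infty$ and $u\in L^2_s$. Therefore $|g(x)|\le\int_{-\infty}^x|u\Pi(\overline u g)+F|\,\mathrm{d}y\to0$ as $x\to-\infty$, with the rate $\langle x\rangle^{-s}$ as in the proof of Lemma \ref{Tlaminj}. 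This gives $\lim_{x\to-\infty}\partial_t\psi_\lambda^+(t,x)=0$. For $\psi_\lambda^-$ the same argument applies with $G_\lambda^-=-ie^{i\lambda x}\mathds{1}_{x\le0}-\widetilde G_\lambda$, and the integrals run over $(x,+\infty)$, which yields the limit at $+\infty$.
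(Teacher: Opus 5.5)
Your proposal is correct and follows the paper's proof. Both arguments differentiate the integral equation for $\psi_\lambda^+$ in $t$, place the source $(\partial_t u)\Pi(\overline u\psi_\lambda^+)+u\Pi(\overline{\partial_t u}\,\psi_\lambda^+)$ in $L^1\cap L^2_+$, invert $\mathrm{id}-T_\lambda^+$, and then read off the decay at $-\infty$ from the Volterra form $ie^{i\lambda x}\int_{-\infty}^x e^{-i\lambda y}(\cdots)\,\mathrm{d}y$ with an $L^1$ integrand.

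The one addition is your difference-quotient argument, which uses operator-norm continuity of $u\mapsto T_\lambda^+$ and the stability of invertibility to show that $\partial_t\psi_\lambda^+$ really is $(\mathrm{id}-T_\lambda^+)^{-1}(G_\lambda^+*F)$; the paper takes this for granted. Your argument goes through. The equation gives $u\in C^1((T^-,T^+);H^1)$, and $\psi_\lambda^+(t)\in L^\infty$ by the bijectivity of $\mathrm{id}-T_\lambda^+$ on $L^\infty$. You can therefore run the whole argument in $L^\infty$ and do not need weighted convergence of the difference quotients of $u$.
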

\begin{proof}
According to Lemma \ref{psi-equi}, we have 
\begin{equation}\label{6.15}
    \begin{split}
        &\partial_t \psi_\lambda^+ =G_\lambda^+ * \left( u \Pi\left(  \overline{u} \left( \partial_t \psi_\lambda^+ \right) \right) \right)+G_\lambda^+ * \left( (\partial_t u) \Pi( \overline{u} \psi_\lambda^+) + u \Pi\left( (\partial_t  \overline{u}) \psi_\lambda^+ \right) \right).
    \end{split}
\end{equation}
This gives 
$(\mathrm{id}-T_\lambda^+) \left( \partial_t \psi_\lambda^+ \right) = G_\lambda^+ * \left( (\partial_t u) \Pi( \overline{u} \psi_\lambda^+) + u \Pi\left( (\partial_t  \overline{u}) \psi_\lambda^+ \right) \right) = \left( \mathfrak{h} - \widetilde{G}_\lambda \right) * \mathfrak{g}$,
where
\( 
 \mathfrak{h}(x) := i e^{i\lambda x} \mathds{1}_{x \geq 0}
\)
and 
$
\mathfrak{g} := (\partial_t u) \Pi( \overline{u} \psi_\lambda^+) + u \Pi\left( (\partial_t  \overline{u}) \psi_\lambda^+ \right).
$
Since $u\in H^3_+\cap H^2_s$, we have 
\(
\partial_t u(t) = \widetilde{\mathbb{B}}_{u(t)}u(t) = (i \partial_x^2 u + 2u \Pi \partial_x( |u|^2))(t) \in L_{s}^2 \cap L^\infty.
\)
It follows that
\begin{equation}
    \mathfrak{g} = (\partial_t u) \Pi( \overline{u} \psi_\lambda^+) + u \Pi\left( (\partial_t  \overline{u}) \psi_\lambda^+ \right) \in C((T^-, T^+); L^2_+\cap L^1)
\end{equation}
 and 
\(
\widetilde{G}_\lambda * (\mathfrak{g}(t)) \in L^2 * L^2 \subset L^\infty.
\)
Since 
\(
\left(\mathfrak{h} * (\mathfrak{g}(t)) \right)(x) = i e^{i\lambda x} \int_{-\infty}^x e^{-i\lambda y} g(y) dy
\), 
we have 
\begin{equation}
|\left(\mathfrak{h} * (\mathfrak{g}(t)) \right)(x)| \leq \int_{-\infty}^x |e^{-i\lambda y} \mathfrak{g}(t, y)| dy
\leq \|\mathfrak{g}(t)\|_{L^1},\quad  \forall x \in \mathbb{R}.
\end{equation}
Then we get
\(
\|\mathfrak{h} * (\mathfrak{g}(t))\|_{L^\infty} \leq \|\mathfrak{g}(t)\|_{L^1} < +\infty
\) and 
\(
\mathfrak{h} * (\mathfrak{g}(t)) \in L^\infty.
\)
Therefore, we obtain
\begin{equation}
    (\mathrm{id}-T_\lambda^+) (\partial_t \psi_\lambda^+(t)) = (\mathfrak{h} - \widetilde{G}_\lambda) * (\mathfrak{g}(t))\in L^\infty\subset L^\infty_{-s}.
\end{equation}
The operator $\mathrm{id}-T_\lambda^+: L_{-s}^\infty \to L_{-s}^\infty$ is bijective, so we obtain $\partial_t\psi_\lambda^+(t)\in L_{-s}^\infty$.

From \eqref{6.15}, we get
\(
\partial_t \psi_\lambda^+  = G_\lambda^+ * \mathfrak{p}
\) 
with
\(
\mathfrak{p} := u \Pi \left( \overline{u} \, \partial_t \psi_\lambda^+ \right) + \mathfrak{g} \in C((T^-,T^+); L^2_+ \cap L^1)
\).
Then we have
\(
\partial_t \psi_\lambda^+ = \mathfrak{h} * \mathfrak{p} - \widetilde{G}_\lambda * \mathfrak{p}
\).
And 
\(
\widetilde{G}_\lambda * (\mathfrak{p}(t)) \in L^2 * L^2 \subset C_0
\).
Since $\mathop{\rm lim}\limits_{x \to -\infty}e^{-i\lambda y} \mathfrak{p}(t, y) \, \mathds{1}_{(-\infty, x)} (y)= 0$,  for every  $y \in \mathbb{R}$, $\left| e^{-i\lambda y} \mathfrak{p}(t, y) \, \mathds{1}_{(-\infty, x)} (y) \right| \leq \left| \mathfrak{p}(t, y) \right|$ and $\mathfrak{p}(t)\in L^1$, we deduce that
\begin{equation}
(\mathfrak{h} * (\mathfrak{p}(t)))  (x)
= i e^{i\lambda x} \int_{\mathbb{R}} e^{-i\lambda y} \, \mathfrak{p}(t, y) \, \mathds{1}_{(-\infty, x)} (y) \, dy\to 0
\end{equation}
as $x\to-\infty$ by  dominated convergence theorem. Therefore, we obtain 
\begin{equation}
 \partial_t \psi_\lambda^+ (t,x)  = \left(\mathfrak{h} * (\mathfrak{p}(t))  - \widetilde{G}_\lambda * (\mathfrak{p}(t))\right) (x) 
\to 0
\end{equation}
as \(x \to -\infty
\). 
\end{proof}

\begin{theorem}\label{psi_lam_time}
 Assume that
    $u \in C^0\left( (T^-, T^+); H^2_s\cap H^3_+\right)$ solves the CMDNLS equation \eqref{21.1} with initial datum $u(0)=u_0 \in H^2_s \cap H^3_+$, 
     for some $T^- < 0 <T^+$ and  \( s > \frac 12 \).   For every $\lambda\in (0,+\infty)\setminus \sigma_{pp}(\mathbb{L}_{u_0})$, let \( \psi^\pm_{\lambda} (t)\in L^\infty_{-s} \) denote the solution to
\begin{align}   &\mathbb{L}_{u(t)}\psi_{\lambda}^\pm(t)=\lambda\psi_{\lambda}^\pm(t) \in L^{\infty}_{-s},\label{Lpsi=lam-psi}\\
        &\psi_\lambda^+(t,x)-e^{ix\lambda} \to 0, \quad x\to -\infty,\\
    &\psi_\lambda^-(t,x)-e^{ix\lambda} \to 0, \quad x\to +\infty.
 	\end{align}
Then we have 
  \begin{equation}
        \partial_t \psi_\lambda^\pm = i \partial_x^2 \psi_\lambda^\pm + 2 u \partial_x \Pi (\overline{u}\psi_\lambda^\pm )+i \lambda^2 \psi_\lambda^\pm,\quad \forall t\in (T^-,T^+).
    \end{equation}
\end{theorem}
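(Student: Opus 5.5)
I will follow the scheme used for $\phi_k$ in the preceding theorem, adapted to the boundary behaviour of $\psi_\lambda^\pm$. Fix $\lambda\in(0,+\infty)\setminus\sigma_{pp}(\mathbb{L}_{u_0})$ and treat $\psi_\lambda^+$; the minus sign is symmetric, with $x\to+\infty$ in place of $x\to-\infty$. Since $\sigma_{pp}(\mathbb{L}_{u(t)})=\sigma_{pp}(\mathbb{L}_{u_0})$ by \eqref{uniequiLL2}, $\psi_\lambda^+(t)$ is well defined for every $t$. Set
\[
f(t):=\partial_t\psi_\lambda^+(t)-\widetilde{\mathbb{B}}_{u(t)}\psi_\lambda^+(t)-i\lambda^2\psi_\lambda^+(t).
\]
The goal is to show that $f(t)=0$.

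\textbf{Step 1: $f(t)$ is a generalized eigenfunction.} Differentiate \eqref{Lpsi=lam-psi} in $t$, which is justified since $\partial_t\psi_\lambda^+\in L^\infty_{-s}$ by the proof of Lemma \ref{partialtpsito0}. This gives
\[
(\partial_t\mathbb{L}_u)\psi_\lambda^+ +\mathbb{L}_u\partial_t\psi_\lambda^+=\lambda\,\partial_t\psi_\lambda^+ .
\]
Then use the Lax equation in the form $\partial_t\mathbb{L}_u=[\widetilde{\mathbb{B}}_u,\mathbb{L}_u]$, which holds because $\mathbb{B}_u-\widetilde{\mathbb{B}}_u=i\mathbb{L}_u^2$ commutes with $\mathbb{L}_u$. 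Together with $\mathbb{L}_u\psi_\lambda^+=\lambda\psi_\lambda^+$, this yields $\mathbb{L}_{u(t)}f(t)=\lambda f(t)$. The term $i\lambda^2\psi_\lambda^+$ is itself an eigenfunction, so it can be added freely. To make the compositions legitimate, I will first check regularity. From $-i\partial_x\psi_\lambda^+=\lambda\psi_\lambda^++u\Pi(\overline u\psi_\lambda^+)$ and $u\in H^2_s\cap H^3_+$, I expect $\partial_x\psi_\lambda^+$ and $\partial_x^2\psi_\lambda^+$ to be bounded, so that all the operators act on functions in $L^\infty_{-s}$.

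\textbf{Step 2: boundary behaviour of $f$ at $-\infty$.} By Lemma \ref{partialtpsito0}, $\partial_t\psi_\lambda^+(t,x)\to0$ as $x\to-\infty$. For the remaining terms, use $\widetilde{\mathbb{B}}_u=\mathbb{B}_u-i\mathbb{L}_u^2$ to write
\[
\widetilde{\mathbb{B}}_u\psi_\lambda^++i\lambda^2\psi_\lambda^+=\mathbb{B}_u\psi_\lambda^+ .
\]
By Corollary \ref{Buto0}, $\mathbb{B}_u\psi_\lambda^+\in H^1_+$, so it tends to $0$ at $\pm\infty$. Hence $f(t)=\partial_t\psi_\lambda^+-\mathbb{B}_u\psi_\lambda^+$ belongs to $L^\infty_{-s}$ and satisfies $f(t,x)\to0$ as $x\to-\infty$. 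This identity $\widetilde{\mathbb{B}}_u\psi+i\lambda^2\psi=\mathbb{B}_u\psi$ explains why the extra term $i\lambda^2\psi_\lambda^\pm$ appears in the evolution equation.

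\textbf{Step 3: uniqueness forces $f=0$.} The function $f(t)\in L^\infty_{-s}$ solves $\mathbb{L}_uf=\lambda f$ and vanishes at $-\infty$. I will show that $f=T_\lambda^+f$, by repeating the limiting argument of Lemma \ref{psi-equi} and Lemma \ref{philamequi}. Take the Fourier transform with $\lambda+i\varepsilon$ and let $\varepsilon\to0^+$. The term $(\varepsilon e^{i(\lambda+i\varepsilon)x}\mathds 1_{\R^+})*f$ tends to $0$ by dominated convergence, because $f\to0$ at $-\infty$; this is exactly where Step 2 is used. Then Lemma \ref{Tlaminj}, the injectivity of $\mathrm{id}-T_\lambda^+$ on $L^\infty_{-s}$, gives $f=0$.

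The main obstacle is Step 2, and specifically making sure that $\partial_t\psi_\lambda^+$ and $\mathbb{B}_u\psi_\lambda^+$ lie in the right spaces so that the differentiation in Step 1 is rigorous. Corollary \ref{Buto0} and Lemma \ref{partialtpsito0} supply exactly these two facts, so the remaining work is to carry out the $\varepsilon$-limit from Lemma \ref{philamequi} for a solution with zero source term.
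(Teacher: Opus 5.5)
Your proposal is correct and follows essentially the paper's argument. The paper also uses the Lax equation with $\widetilde{\mathbb{B}}_u$, together with Lemma \ref{partialtpsito0} and Corollary \ref{Buto0} via $\widetilde{\mathbb{B}}_u\psi_\lambda^+=\mathbb{B}_u\psi_\lambda^+-i\lambda^2\psi_\lambda^+$, to control the behaviour at $-\infty$, and then applies uniqueness of the normalized Jost solution to $\frac{1}{i\lambda^2}\bigl(\partial_t\psi_\lambda^+-\widetilde{\mathbb{B}}_u\psi_\lambda^+\bigr)$; your use of injectivity of $\mathrm{id}-T_\lambda^+$ on the homogeneous problem for $f$ is the same argument by linearity.
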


\begin{proof}
    Using $\mathbb{L}_{u(t)}\psi_{\lambda}^\pm(t)=\lambda\psi_{\lambda}^\pm(t) $   and ${\partial_t} \mathbb{L}_{u(t)}=[\widetilde{\mathbb{B}}_{u(t)}, \mathbb{L}_{u(t)}]$, we get
    \begin{equation}
        \mathbb{L}_{u(t)}(\partial_t\psi_\lambda^+(t)-\widetilde{\mathbb{B}}_{u(t)}\psi_\lambda^+(t))=\lambda(\partial_t\psi_\lambda^+(t)-\widetilde{\mathbb{B}}_{u(t)}\psi_\lambda^+(t)).
    \end{equation}
{From Lemma} \ref{partialtpsito0} and Corollary \ref{Buto0}, it follows that $\partial_t\psi_\lambda^+(t)\in L^\infty$, $\mathbb{B}_{u(t)}(\psi_\lambda^+(t))\in H^1_+$, and
\begin{equation}
\lim_{x\to-\infty}\partial_t\psi_\lambda^+(t,x)= 0, \quad \lim_{|x| \to +\infty} \bigl| \mathbb{B}_{u(t)}(\psi_\lambda^+(t)) (x) \bigr| = 0.
\end{equation}
It follows from \eqref{tildBudef1} and \eqref{Lpsi=lam-psi} that
\begin{equation*}
    \begin{split}
        &\partial_t \psi_\lambda^+(t,x) - \left(\widetilde{\mathbb{B}}_{u(t)} \left( \psi_\lambda^+ (t) \right)  \right)(x) -i \lambda^2 e^{i\lambda x} 
         \\
        = &\partial_t \psi_\lambda^+ (t,x) - \left(\mathbb{B}_{u(t)} \left( \psi_\lambda^+ (t) \right) \right)(x)+ i \lambda^2 \psi_\lambda^+ (t, x) -i \lambda^2 e^{i\lambda x}  \to 0,\quad 
        \mathrm{as} \quad x \to -\infty.
    \end{split}
\end{equation*}
 That is
\(
\frac{1}{i \lambda^2} \left( \partial_t \psi_\lambda^+ (t) - \widetilde{\mathbb{B}}_{u(t)} \left( \psi_\lambda^+ (t) \right) \right)(x) - e^{i\lambda x} \to 0\) as \(x \to -\infty
\).
Thanks to the uniqueness of the solution of $\mathbb{L}_{u(t)}\psi_\lambda^+(t)=\lambda \psi_\lambda^+(t)$ and $\psi_\lambda^+(t,x)-e^{ix\lambda} \to 0$ as $ x\to -\infty$, 
we have
\(
\psi_\lambda^+ (t) = \frac{1}{i \lambda^2} \left( \partial_t \psi_\lambda^+ (t) - \widetilde{\mathbb{B}}_{u(t)} \left( \psi_\lambda^+ (t) \right) \right)
\).
This yields
\(
\partial_t \psi_\lambda^+ (t) - \widetilde{\mathbb{B}}_{u(t)} \left( \psi_\lambda^+ (t) \right) = i \lambda^2 \psi_\lambda^+(t)
\).

\end{proof}

\begin{lemma}\label{partial_tphilam_to_0}
   Assume that
    $u \in C^0\left( (T^-, T^+); H^2_s\cap H^3_+\right)$ solves the CMDNLS equation \eqref{21.1} with initial datum $u(0)=u_0 \in H^2_s \cap H^3_+$, 
     for some $T^- < 0 <T^+$ and  \( s > \frac 12 \).  For any $t \in (T^-, T^+)$  and  for any  $\lambda\in (0,+\infty)\setminus \sigma_{pp}(\mathbb{L}_{u_0})$, 	let $\phi_\lambda^\pm(t)\in L_{-s}^\infty$ denote the solution to 
    \begin{align}
&\mathbb{L}_{u(t)}\phi_{\lambda}^\pm(t)=\lambda\phi_{\lambda}^\pm(t)+u(t),\\
&\phi_\lambda^+(t, x)\to 0,\quad x\to-\infty,\\
&\phi_\lambda^-(t, x)\to 0,\quad x\to+\infty.
\end{align}
    Then we have $$\partial_t\phi_\lambda^\pm(t)\in L^\infty,\quad \lim_{x\to\mp\infty}\partial_t\phi_\lambda^\pm(t,x)=0,\quad \forall t\in (T^-,T^+).$$
\end{lemma}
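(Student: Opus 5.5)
The argument follows the proof of Lemma \ref{partialtpsito0} closely; the only new ingredient is the source term $u$ in the integral equation. We treat $\phi_\lambda^+$; the case of $\phi_\lambda^-$ is symmetric, with $-ie^{i\lambda x}\mathds{1}_{x\leq 0}$ replacing $ie^{i\lambda x}\mathds{1}_{x\geq0}$ and $x\to+\infty$ replacing $x\to-\infty$.

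By Lemma \ref{philamequi}, $\phi_\lambda^+(t)=G_\lambda^+*\big(u+u\Pi(\overline u\phi_\lambda^+)\big)$. Differentiating formally in $t$ gives
\[
(\mathrm{id}-T_\lambda^+)\,\partial_t\phi_\lambda^+=G_\lambda^+*\mathfrak{g},\qquad \mathfrak{g}:=\partial_t u+(\partial_t u)\Pi(\overline u\phi_\lambda^+)+u\Pi\big((\partial_t\overline u)\phi_\lambda^+\big).
\]
To justify this rigorously, apply the same identity to the difference quotients $\delta^{-1}(\phi_\lambda^+(t+\delta)-\phi_\lambda^+(t))$. The operator $T_\lambda^+$ itself depends on $t$, so we use continuity of $t\mapsto T_\lambda^+$ in $\mathcal{B}(L^\infty_{-s})$, which follows from the bound \eqref{Tlaminf}, together with openness of the set of invertible operators, as in Lemma \ref{id-mod-bije}.

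Next we check the regularity of $\mathfrak{g}$. Since $u\in H^2_s\cap H^3_+$, we have $\partial_tu=\widetilde{\mathbb{B}}_u u=i\partial_x^2u+2u\partial_x\Pi(|u|^2)\in L^2_s\cap L^\infty$, and $L^2_s\subset L^1$ because $s>\frac12$. Combined with $\phi_\lambda^+\in L^\infty_{-s}$, this gives $\Pi(\overline u\phi_\lambda^+)\in L^2$ and $(\partial_t\overline u)\phi_\lambda^+\in L^2$, so that $\mathfrak{g}(t)\in L^1\cap L^2_+$, continuously in $t$. Hence $\widetilde G_\lambda*\mathfrak{g}\in L^2*L^2\subset L^\infty$ and $\|(ie_\lambda\mathds{1}_{x\geq0})*\mathfrak{g}\|_{L^\infty}\leq\|\mathfrak{g}\|_{L^1}$, which shows $G_\lambda^+*\mathfrak{g}\in L^\infty$.

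Corollary \ref{cor3.6} says that $\mathrm{id}-T_\lambda^+$ is bijective on $L^\infty$, and also on $L^\infty_{-s}$, where $\partial_t\phi_\lambda^+$ a priori lies. Since its image $G_\lambda^+*\mathfrak g$ lies in $L^\infty$, bijectivity on $L^\infty$ gives a preimage in $L^\infty$, and injectivity on $L^\infty_{-s}$ forces that preimage to equal $\partial_t\phi_\lambda^+$. Therefore $\partial_t\phi_\lambda^+(t)\in L^\infty$.

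For the limit, write $\partial_t\phi_\lambda^+=G_\lambda^+*\mathfrak{p}$ with $\mathfrak{p}:=u\Pi(\overline u\,\partial_t\phi_\lambda^+)+\mathfrak{g}$. Because $\partial_t\phi_\lambda^+\in L^\infty$ and $u\in L^2_s$, we get $\mathfrak{p}\in L^1\cap L^2_+$. The convolution $\widetilde G_\lambda*\mathfrak{p}$ is then a convolution of two $L^2$ functions, hence lies in $C_0$ and tends to $0$ at both ends. The remaining piece is
\[
\big((ie_\lambda\mathds{1}_{x\geq0})*\mathfrak{p}\big)(x)=ie^{i\lambda x}\int_{-\infty}^x e^{-i\lambda y}\mathfrak{p}(y)\,\mathrm{d}y,
\]
and this tends to $0$ as $x\to-\infty$ by dominated convergence, since $\mathfrak{p}\in L^1$. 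This yields $\lim_{x\to-\infty}\partial_t\phi_\lambda^+(t,x)=0$.

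We expect the main obstacle to be the rigorous justification of the $t$-differentiation, that is, showing that $\partial_t\phi_\lambda^+$ exists in $L^\infty_{-s}$. This requires uniform invertibility of $\mathrm{id}-T_\lambda^+(t)$ for nearby times, which the continuity argument above should supply.
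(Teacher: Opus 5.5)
Your proposal is correct and follows essentially the same route as the paper, which treats $\phi_\lambda^-$ instead of $\phi_\lambda^+$: differentiate $\phi_\lambda^\pm=G_\lambda^\pm*(u+u\Pi(\overline u\phi_\lambda^\pm))$ in $t$, show the source $\mathfrak g$ lies in $L^1\cap L^2$ so that $G_\lambda^\pm*\mathfrak g\in L^\infty$, invert $\mathrm{id}-T_\lambda^\pm$ on $L^\infty$, and then obtain the one-sided decay from $\widetilde G_\lambda*\mathfrak p\in C_0$ together with dominated convergence on the Heaviside piece. Your extra steps fill in points the paper passes over silently: the difference-quotient justification of $\partial_t\phi_\lambda^\pm$ via norm-continuity of $t\mapsto T_\lambda^\pm(t)$, and combining bijectivity on $L^\infty$ with injectivity on $L^\infty_{-s}$ to identify the $L^\infty$ preimage with $\partial_t\phi_\lambda^\pm$.
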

\begin{proof}
    By Lemma \ref{philamequi}, we get
\(
\phi_\lambda^- = G_\lambda^- * \left( u + u \Pi \left(  \overline{u} \phi_\lambda^- \right) \right).
\)
 Differentiating in time yields
\[\small{
\begin{aligned}
&\partial_t \phi_\lambda^- 
= G_\lambda^- * \left( u \Pi \left(  \overline{u} \partial_t \phi_\lambda^- \right) \right) + G_\lambda^- * \left( \partial_t u + (\partial_t u) \Pi \left(  \overline{u} \phi_\lambda^- \right) + u \Pi \left( (\partial_t  \overline{u}) \phi_\lambda^- \right) \right) 
\\
=&
 T_\lambda^- \left( \partial_t \phi_\lambda^- \right) + G_\lambda^- * w, \quad (t,x) \in  (T^-, T^+) \times\mathbb{R},
\end{aligned}
}
\]
with
\(
w := \partial_t u + (\partial_t u) \Pi \left(  \overline{u} \phi_\lambda^- \right) + u \Pi \left( (\partial_t  \overline{u}) \phi_\lambda^- \right) 
\).
 Given that  $\partial_x^2u(t) \in L^2_s\cap L^1$, $u(t)\in L^2_s$ and $u(t)\in H^3$, it follows that $\widetilde{\mathbb{B}}_{u(t)}(u(t))=i \partial_x^2 u(t) + 2(u  \Pi \left( \partial_x(|u|^2) \right))(t) \in L_{s}^2 \cap L^1$. Moreover, since  $u(t)\in H^3_+$, we have  $\widetilde{\mathbb{B}}_{u(t)}(u(t))=(i \partial_x^2 u + 2u \Pi \partial_x( |u|^2))(t)\in H^1_+\cap L^2_s$. Therefore, we deduce that 
\[
 w (t)= \widetilde{\mathbb{B}}_{u(t)}(u) + \widetilde{\mathbb{B}}_{u(t)}(u) \Pi \left(  \overline{u} \phi_\lambda^- \right) + u \Pi \left( \widetilde{\mathbb{B}}_{u(t)}(u) \phi_\lambda^- \right)
\in L^1 \cap L^2 + H_+^1 \cdot L^2 +  L^2 \cdot H^1 \subset L^2 \cap L^1.
\]
Since $\widetilde{G}_\lambda \in L_-^2$, Young's inequality implies  $\widetilde{G}_\lambda * (w(t)) \in L^\infty \cap C_0$. Let $g := -i e^{i\lambda x} \mathds{1}_{\mathbb{R}^-}$, then $g\in L^\infty$,  and by Young's inequality, we have $\|g * (w(t))\|_{L^\infty} \leq \|g\|_{L^\infty} \|w(t)\|_{L^1} = \|w(t)\|_{L^1} < +\infty$ . Thus, we obtain 
\begin{equation}\label{i-T-par-phi-lam}
(I - T_\lambda^-) \left( \partial_t \phi_\lambda^- \right)(t) = G_\lambda^- * (w(t)) = \left( g - \widetilde{G}_\lambda \right) * (w(t)) \in L^\infty.
\end{equation}
Because  $I - T_\lambda^-: L^\infty \to L^\infty$ is bijective,  it follows that $\partial_t \phi_\lambda^-(t) \in L^\infty$. 

Now, since $\partial_t\phi_\lambda^-(t)\in L^\infty$ and $u(t)\in H^1$, we have $(u\Pi(\overline u \partial_t\phi_\lambda^-))(t)\in L^2\cap L^1$. Combined with $w(t)\in L^2$,  this gives $\widetilde{G}_\lambda*((u\Pi(\overline u \partial_t \phi_\lambda^-)+w)(t))\in L^2*L^2\subset C_0$. By the dominated convergence theorem, it follows that  $\mathop{\rm lim}\limits_{x\to +\infty}g*((u\Pi(\overline u \partial_t \phi_\lambda^-)+w)(t))=0$. We conclude from \eqref{i-T-par-phi-lam} that
 $\partial_t\phi_\lambda^-(t,x)\to 0$ as $x\to+\infty$.
\end{proof}

\begin{theorem}\label{phi_lam_time}
    Assume that
    $u \in C^0\left( (T^-, T^+); H^2_s\cap H^3_+\right)$ solves the CMDNLS equation \eqref{21.1} with initial datum $u(0)=u_0 \in H^2_s \cap H^3_+$, 
     for some $T^- < 0 <T^+$ and  \( s > \frac 12 \).   For every $\lambda\in (0,+\infty)\setminus \sigma_{pp}(\mathbb{L}_{u_0})$, 	let $\phi_\lambda^\pm(t)\in L_{-s}^\infty$ denote the solution to 
    \begin{align}\nonumber
&\mathbb{L}_{u(t)}\phi_{\lambda}^\pm(t)=\lambda\phi_{\lambda}^\pm(t)+u(t),\\
&\phi_\lambda^+(t, x)\to 0,\quad x\to-\infty,\\
&\phi_\lambda^-(t, x)\to 0,\quad x\to+\infty,
\end{align}
    Then we have 
    \begin{equation}
        \partial_t \phi_\lambda^\pm = i \partial_x^2 \phi_\lambda^\pm + 2 u \partial_x \Pi (\overline{u}\phi_\lambda^\pm ),\quad \forall t\in (T^-,T^+).
    \end{equation}
\end{theorem}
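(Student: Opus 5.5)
We follow the same scheme as for $\phi_k$ and $\psi_\lambda^\pm$, working with $\phi_\lambda^-$ (the case $\phi_\lambda^+$ is symmetric, with $x\to-\infty$ instead of $x\to+\infty$). Differentiate the identity $\mathbb{L}_{u(t)}\phi_\lambda^-(t)=\lambda\phi_\lambda^-(t)+u(t)$ in $t$, and use the Lax equation $\partial_t\mathbb{L}_{u(t)}=[\widetilde{\mathbb{B}}_{u(t)},\mathbb{L}_{u(t)}]$ (valid since $\widetilde{\mathbb{B}}_u-\mathbb{B}_u=-i\mathbb{L}_u^2$ commutes with $\mathbb{L}_u$) together with $\partial_t u=\widetilde{\mathbb{B}}_{u}u$. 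We obtain, for
\[
f(t):=\partial_t\phi_\lambda^-(t)-\widetilde{\mathbb{B}}_{u(t)}\phi_\lambda^-(t),
\]
the homogeneous equation $\mathbb{L}_{u(t)}f(t)=\lambda f(t)$. Indeed, $\mathbb{L}_u\widetilde{\mathbb{B}}_u\phi_\lambda^-=\widetilde{\mathbb{B}}_u\mathbb{L}_u\phi_\lambda^- - [\widetilde{\mathbb{B}}_u,\mathbb{L}_u]\phi_\lambda^-$, and the term $\widetilde{\mathbb{B}}_u u=\partial_t u$ cancels the $\partial_t u$ on the right-hand side. This manipulation needs $\phi_\lambda^-$ regular enough for $\widetilde{\mathbb{B}}_u\phi_\lambda^-$ and $\mathbb{L}_u\widetilde{\mathbb{B}}_u\phi_\lambda^-$ to make sense locally. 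We get this by bootstrapping $-i\partial_x\phi_\lambda^-=u\Pi(\overline u\phi_\lambda^-)+\lambda\phi_\lambda^-+u$ with $u\in H^3_+\cap H^2_s$, as in the proof for $\phi_k$: this gives $\partial_x\phi_\lambda^-$, $\partial_x^2\phi_\lambda^-$, $\partial_x^3\phi_\lambda^-\in L^\infty_{-s}$ locally.

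Next we identify the behaviour of $f$ at $+\infty$. Lemma \ref{partial_tphilam_to_0} gives $\partial_t\phi_\lambda^-(t)\in L^\infty$ and $\partial_t\phi_\lambda^-(t,x)\to0$ as $x\to+\infty$. Corollary \ref{B(phi)inH1} gives $\widetilde{\mathbb{B}}_u(\phi_\lambda^-)(x)\to0$ as $x\to+\infty$ and boundedness on half-lines $(a,+\infty)$. Hence $f(t,x)\to0$ as $x\to+\infty$, and $f(t)$ is bounded on $(a,+\infty)$. Moreover $f\in L^\infty_{-s}$ globally: this follows from formula \eqref{tildBuphi-expBu}, which writes $\widetilde{\mathbb{B}}_u\phi_\lambda^-$ as an $H^1_+$ function minus $i\lambda^2\phi_\lambda^-$.

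Then we run the argument of Lemma \ref{philamequi} with source $u$ replaced by $0$. Since $f\in L^\infty_{-s}$ solves $\mathbb{L}_u f=\lambda f$ and tends to $0$ at $+\infty$, that lemma yields $f=G_\lambda^-*(u\Pi(\overline u f))$, that is $(\mathrm{id}-T_\lambda^-)f=0$. Lemma \ref{Tlaminj}, applicable because $\lambda\notin\sigma_{pp}(\mathbb{L}_{u_0})=\sigma_{pp}(\mathbb{L}_{u(t)})$ by unitary equivalence \eqref{uniequiLL2}, gives $f=0$, which is the claimed evolution equation. Note the contrast with Theorem \ref{psi_lam_time}: there the limit at infinity was $i\lambda^2e^{i\lambda x}$, producing the extra term $i\lambda^2\psi_\lambda^\pm$. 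Here the limit is $0$ because the normalization of $\phi_\lambda^-$ at $+\infty$ is zero, so no extra term appears.

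The main obstacle is justifying the passage from $\mathbb{L}_uf=\lambda f$ to the integral equation for $f$, which requires $f\in L^\infty_{-s}$ together with the decay at $+\infty$. We handle this by combining Lemma \ref{partial_tphilam_to_0} with Corollary \ref{B(phi)inH1} as above. A secondary point is the legitimacy of differentiating in $t$. This follows from the integral representation used in Lemma \ref{partial_tphilam_to_0}: the map $t\mapsto\phi_\lambda^-(t)=(\mathrm{id}-T_\lambda^-(t))^{-1}(G_\lambda^-*u(t))$ is $C^1$ into $L^\infty$, since $u\in C^1$ into $L^2_s\cap H^1_+$.
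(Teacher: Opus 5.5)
Your proposal is correct and follows the paper's proof essentially step for step. You derive $\mathbb{L}_{u(t)}m=\lambda m$ for $m=\partial_t\phi_\lambda^- - \widetilde{\mathbb{B}}_{u(t)}\phi_\lambda^-$ via the Lax equation, use Lemma \ref{partial_tphilam_to_0} and Corollary \ref{B(phi)inH1} for half-line boundedness and decay at $+\infty$, rerun the $\varepsilon$-regularized Fourier argument of Lemma \ref{philamequi} to get $(\mathrm{id}-T_\lambda^-)m=0$, and conclude $m=0$. The only differences are minor: you use injectivity on $L^\infty_{-s}$ (Lemma \ref{Tlaminj}) together with flow-invariance of $\sigma_{pp}$ where the paper cites bijectivity on $L^\infty$, and you spell out the time-differentiability and regularity points the paper leaves implicit.
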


\begin{proof}
    Using $\mathbb{L}_{u(t)} \phi_\lambda^-(t) = \lambda \phi_\lambda^- (t)+ u(t)$, $\frac{\partial}{\partial_t}  \mathbb{L}_{u(t)}=[\widetilde {\mathbb{B}}_{u(t)}, \mathbb{L}_{u(t)}]$ and $\partial_t u(t)= \widetilde {\mathbb{B}}_{u(t)}u(t)$, we obtain that   \begin{equation}\label{Lm=lambdam}
       \mathbb{L}_{u(t)} \left( \partial_t \phi_\lambda^-(t) - \widetilde{\mathbb{B}}_{u(t)} (\phi_\lambda^-(t)) \right) = \lambda \left( \partial_t \phi_\lambda^- (t)- \widetilde{\mathbb{B}}_{u(t)} (\phi_\lambda^-(t)) \right).
   \end{equation}
   Define  $m_\lambda^-(t):=\partial_t \phi_\lambda^- (t)- \widetilde{\mathbb{B}}_{u(t)} (\phi_\lambda^-(t))$, formula \eqref{Lm=lambdam} becomes 
   \begin{equation}\label{Lum=}
   \mathbb{L}_{u(t)}m_\lambda^-(t)=\lambda m_\lambda^-(t).
   \end{equation}
   From Lemma \ref{partial_tphilam_to_0} and Corollary \ref{B(phi)inH1},  we deduce that  $\sup_{x>a} |m_\lambda^-(t,x)|<+\infty $ and $\mathop{\rm lim}\limits_{x\to+\infty}m_\lambda^-(t,x)= 0$ for any $a\in\R$.   Applying the Fourier transform with respect to the space variable $x$ to \eqref{Lum=}, we obtain the equation in frequency space:
   \[
   \hat{m}_\lambda^-=\frac{\mathds{1}_{\xi\geq 0}}{\xi-(\lambda-i\varepsilon)}\mathcal{F}(u\Pi(\overline u m_\lambda^-))+\frac{i\varepsilon }{\xi-(\lambda-i\varepsilon)}\hat{m}_\lambda^-,
   \]
   where $\varepsilon>0$. 
Taking the inverse Fourier transform gives 
\[
m_\lambda^-=G_{\lambda-i\varepsilon}*(u\Pi(\overline u m_\lambda^-))+\mathcal{F}^{-1}(\frac{i\varepsilon }{\xi-(\lambda-i\varepsilon)}\hat{m}_\lambda^-).
\]
Following arguments similar to those in the proofs of \eqref{three_cv} and \eqref{convo}, and 
{using the facts that  $\sup_{x>a} |m_\lambda^-(t,x)|<+\infty $  and $\mathop{\rm lim}\limits_{x\to +\infty}m_\lambda^-(t,x)= 0$ for any $a\in\R$, we conclude that }
$$
m_\lambda^-=G_\lambda^-*(u\Pi(\overline u m_\lambda^-)) \implies (\mathrm{id}-T_\lambda^-)m_\lambda^-=0.
$$
 Since the operator $\mathrm{id}-T_\lambda^-: L^\infty \to L^\infty$ is bijective, it follows that $$m_\lambda^-(t)=\partial_t \phi_\lambda^-(t) - \widetilde{\mathbb{B}}_{u(t)} \phi_\lambda^-(t)=0.$$
\end{proof}
Using Theorem \ref{phi_lam_time}, \ref{psi_lam_time} and \eqref{3.2},\eqref{24}, we obtain the following theorem: 

\begin{theorem}
      
    Assume that
    $u \in C^0\left( (T^-, T^+); H^2_s\cap H^3_+\right)$ solves the CMDNLS equation \eqref{21.1} with initial datum $u(0)=u_0 \in H^2_s \cap H^3_+$, 
     for some $T^- < 0 <T^+$ and  \( s > \frac 12 \).   Let  $\phi_\lambda^\pm(x,t)$ and $\psi_\lambda^\pm(x,t)$  be given by Definition  \ref{defphilam} and \ref{defpsilam}. Let $\Gamma(\lambda,t)$ and $\beta(\lambda,t)$ be given by \eqref{gamma} and \eqref{3.5}. Then we have 
    \begin{equation}
        \partial_t\Gamma(\lambda, t)=0,\quad \partial_t\beta(\lambda,t)=-i\lambda^2\beta(\lambda,t).
    \end{equation}
\end{theorem}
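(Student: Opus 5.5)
The plan is to plug the time-evolution laws of Theorems \ref{phi_lam_time} and \ref{psi_lam_time} into the algebraic relations \eqref{3.2} and \eqref{24}, which hold at every fixed time $t$. The uniqueness built into the Jost functions then turns these relations into scalar ODEs for $\Gamma$ and $\beta$. Throughout we write $\widetilde{\mathbb{B}}:=\widetilde{\mathbb{B}}_{u(t)}$. Note that $\sigma_{pp}(\mathbb{L}_{u(t)})=\sigma_{pp}(\mathbb{L}_{u_0})$ by the unitary equivalence \eqref{uniequiLL2}, so all objects are defined for all $t$.

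\textbf{Step 1: $\Gamma$.} We have $\psi_\lambda^+(t)=\Gamma(\lambda,t)\psi_\lambda^-(t)$. First I would check that $t\mapsto\Gamma(\lambda,t)$ is differentiable. This follows from formula \eqref{gamma}, since $\partial_t\psi_\lambda^+\in L^\infty_{-s}$ (Lemma \ref{partialtpsito0}) and $\partial_t u\in L^2_s\cap L^\infty$, so one can differentiate under the integral. Differentiating the relation in $t$ gives
\[
\partial_t\psi_\lambda^+=(\partial_t\Gamma)\psi_\lambda^-+\Gamma\,\partial_t\psi_\lambda^- .
\]
By Theorem \ref{psi_lam_time}, $\partial_t\psi_\lambda^\pm=(\widetilde{\mathbb{B}}+i\lambda^2)\psi_\lambda^\pm$. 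By linearity of $\widetilde{\mathbb{B}}$, the left side equals $\Gamma(\widetilde{\mathbb{B}}+i\lambda^2)\psi_\lambda^-$, which is exactly the last term on the right. Hence $(\partial_t\Gamma)\psi_\lambda^-=0$. Since $\psi_\lambda^-(x)-e^{i\lambda x}\to0$ as $x\to+\infty$, $\psi_\lambda^-\not\equiv0$, and therefore $\partial_t\Gamma=0$.

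\textbf{Step 2: $\beta$.} Differentiate $\phi_\lambda^+-\phi_\lambda^-=\beta\psi_\lambda^-$ in $t$; differentiability of $\beta$ follows similarly from \eqref{3.5} and Lemma \ref{partial_tphilam_to_0}. Theorem \ref{phi_lam_time} gives $\partial_t(\phi_\lambda^+-\phi_\lambda^-)=\widetilde{\mathbb{B}}(\phi_\lambda^+-\phi_\lambda^-)=\beta\,\widetilde{\mathbb{B}}\psi_\lambda^-$. The right side is $(\partial_t\beta)\psi_\lambda^-+\beta(\widetilde{\mathbb{B}}+i\lambda^2)\psi_\lambda^-$. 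Comparing the two expressions yields
\[
(\partial_t\beta+i\lambda^2\beta)\psi_\lambda^-=0 ,
\]
and hence $\partial_t\beta=-i\lambda^2\beta$.

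\textbf{Main obstacle.} The only delicate point is justifying that $\Gamma(\lambda,\cdot)$ and $\beta(\lambda,\cdot)$ are $C^1$ and that the products may be differentiated. This means checking that $u\Pi(\overline u\,\partial_t\psi_\lambda^+)$ and $\partial_t u\,\Pi(\overline u\psi_\lambda^+)$ are in $L^1$, continuously in $t$. That integrability comes from $u,\partial_tu\in L^2_s$ and $\psi,\partial_t\psi\in L^\infty_{-s}$, using the bounds obtained in Lemmas \ref{partialtpsito0} and \ref{partial_tphilam_to_0}. Alternatively, one can avoid differentiating $\Gamma$ altogether by evaluating the relation at a fixed $x$. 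Then $\Gamma(\lambda,t)=\psi_\lambda^+(t,x)/\psi_\lambda^-(t,x)$ at any point $x$ where $\psi_\lambda^-(t,x)\neq0$ (such points exist for large $x$), so differentiability of $\Gamma$ is inherited from that of $\psi_\lambda^\pm$. The same works for $\beta$.
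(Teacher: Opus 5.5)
Your proposal is correct and follows the same route as the paper, which obtains the statement in one line by combining Theorems \ref{phi_lam_time} and \ref{psi_lam_time} with the relations \eqref{3.2} and \eqref{24}. Your discussion of why $\Gamma(\lambda,\cdot)$ and $\beta(\lambda,\cdot)$ are differentiable in $t$, including the pointwise-quotient alternative, makes explicit a point the paper leaves implicit.
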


 \section*{Acknowledgments}
Wang acknowledges the support from
the National Natural Science Foundation of China, Grant
No. 12371247, 12431008 and Beijing Natural Science Foundation Grant No. 1262012.
Sun acknowledges the support from the China Postdoctoral Science Foundation, Grant No. 2025M773094. Zhao acknowledges the support  from the Fundamental Research Funds for the Central Universities.

\bibliographystyle{plain}   

\begin{thebibliography}{10}
\bibitem{1} Abanov A G, Bettelheim E,  Wiegmann P. Integrable hydrodynamics of Calogero–Sutherland model: bidirectional Benjamin–Ono equation. J. Phys. A, 2009, 42(13): 135201.
	 		 	 				
\bibitem{4} Badreddine R. On the global well-posedness of the Calogero–Sutherland derivative nonlinear Schr\"odinger equation. Pure Appl. Anal., 2024, 6(2): 379-414.
	 					
\bibitem{5} Badreddine R. Traveling waves and finite gap potentials for the Calogero–Sutherland derivative nonlinear Schrödinger equation. Ann. Inst. H. Poincar\'e C Anal. Non Lin\'eaire, 42(4):1037–1092, 2025.
	 											
\bibitem{13} Coifman R R, Wickerhauser M V. The scattering transform for the Benjamin-Ono equation. Inverse Probl., 1990, 6(5): 825.


													
				
\bibitem{12}Fokas A S, Ablowitz M J. The inverse scattering transform for the Benjamin‐Ono equation—a pivot to multidimensional problems. Stud. Appl. Math., 1983, 68(1): 1-10.


\bibitem{Frank} Frank R, Read L. Jost solutions and direct scattering for the continuum Calogero-Moser equation. arXiv:2510.11403.



\bibitem{2} Gérard P, Lenzmann E. The Calogero–Moser derivative nonlinear Schr\"{o}dinger equation. Commun. Pure Appl. Math., 2024, 77(10): 4008-4062.

                
\bibitem{14} Kaup D J, Matsuno Y. The inverse scattering transform for the Benjamin–Ono equation. Stud. Appl. Math., 1998, 101(1): 73-98.
	 											

	 						
\bibitem{6} Killip R, Laurens T, Vişan M. Scaling-critical well-posedness for continuum Calogero–Moser models on the line. Commun. Amer. Math. Soc., 2025, 5(07): 284-320.
	 					 							
\bibitem{8}
Kim T, Kwon S. Soliton resolution for Calogero-Moser derivative nonlinear Schr\"{o}dinger equation. \textit{arXiv preprint} arXiv:2408.12843, 2024,  to appear in J. Eur. Math. Soc.
	 	
\bibitem{7} Kim K, Kim T, Kwon S. Construction of smooth chiral finite-time blow-up solutions to Calogero-Moser derivative nonlinear Schr\" {o}dinger equation.\textit{ arXiv preprint} arXiv:2404.09603, 2024, to appear in Mem. Amer. Math. Soc.		

	 		
\bibitem{3} Matsuno Y. Multiphase solutions and their reductions for a nonlocal nonlinear Schrödinger equation with focusing nonlinearity. Stud. Appl. Math., 2023, 151(3): 883-922.
	 			
\bibitem{9}Pelinovsky D. Intermediate nonlinear Schrödinger equation for internal waves in a fluid of finite depth. Phys. Lett. A, 1995, 197(5-6): 401-406.
	 								
\bibitem{10} Pelinovsky D E, Grimshaw R H J. A spectral transform for the intermediate nonlinear Schrödinger equation. J. Math. Phys., 1995, 36(8): 4203-4219.
	 		

\bibitem{Reed Simon book 2}Reed, M., Simon, B.
\emph{Methods of Modern Mathematical Physics: Vol.: 2.: Fourier
analysis, self-adjointness},  Academic Press, 1975.\label{Reed Simon book 2} 

\bibitem{Reed Simon book 4}Reed, M., Simon, B.
\emph{Methods of Modern Mathematical Physics: Vol.: 4.: Analysis of
Operators},  Academic Press New York, 1978.\label{Reed Simon book 4} 

\bibitem{Reed Simon book 1}Reed, M., Simon, B.
\emph{Methods of Modern Mathematical Physics: Vol.: 1.: Functional
analysis},  Gulf Professional Publishing, 1980.\label{Reed Simon book 1} 

\bibitem{RudinFA} Rudin, W. \emph{Functional Analysis}, McGraw-Hill Science/Engineering/Math, 2 edition, International
Series in Pure and Applied Mathematics, 1991.


\bibitem{SunCMP2021} Sun, R. Complete integrability of the Benjamin–Ono equation on the multi-soliton manifolds. Commun. Math. Phys., 2021, 383, 1051–1092.
	 						

\bibitem{11} Sun R. The intertwined derivative Schrödinger system of Calogero–Moser–Sutherland type. Lett. Math. Phys., 2024, 114(3): 74. 

                                                
\bibitem{15}Wu Y. Simplicity and finiteness of discrete spectrum of the Benjamin--Ono scattering operator. SIAM J.  Math. Anal., 2016, 48(2): 1348-1367.
	 											
\bibitem{16} Wu Y. Jost solutions and the direct scattering problem of the Benjamin--Ono equation. SIAM J.  Math. Anal., 2017, 49(6): 5158-5206.
\end{thebibliography}

\end{document}